\numberwithin{equation}{section}
\newcommand{\ssp}{\hspace{1pt}}
\newtheorem{proposition}{Proposition}[section]
\newtheorem{lemma}[proposition]{Lemma}
\newtheorem{corollary}[proposition]{Corollary}
\newtheorem{theorem}[proposition]{Theorem}
\theoremstyle{definition}
\newtheorem{definition}[proposition]{Definition}
\newtheorem{remark}[proposition]{Remark}
\begin{document}
\title{Rewriting History in Integrable Stochastic Particle Systems}

\author{Leonid Petrov and Axel Saenz}

\date{}

\maketitle

\begin{abstract} 
	Many integrable stochastic particle systems in one space dimension (such as TASEP --- Totally Asymmetric Simple Exclusion Process --- and its $q$-deformation, the $q$-TASEP) remain integrable if we equip each particle with its own speed parameter. In this work, we present intertwining relations between Markov transition operators of particle systems which differ by a permutation of the speed parameters. These relations generalize our previous works \cite{PetrovSaenz2019backTASEP}, \cite{petrov2019qhahn}, but here we employ a novel approach based on the Yang-Baxter equation for the higher spin stochastic six vertex model. Our intertwiners are Markov transition operators, which leads to interesting probabilistic consequences.

	First, we obtain a new Lax-type differential equation for the Markov transition semigroups of homogeneous, continuous-time versions of our particle systems. Our Lax equation encodes the time evolution of multipoint observables of the $q$-TASEP and TASEP in a unified way, which may be of interest for the asymptotic analysis of multipoint observables of these systems.

	Second, we show that our intertwining relations lead to couplings between probability measures on trajectories of particle systems which differ by a permutation of the speed parameters. The conditional distribution for such a coupling is realized as a ``rewriting history'' random walk which randomly resamples the trajectory of a particle in a chamber determined by the trajectories of the neighboring particles. As a byproduct, we construct a new coupling for standard Poisson processes on the positive real half-line with different rates.
\end{abstract}

\bigskip

\setcounter{tocdepth}{1}
\tableofcontents
\setcounter{tocdepth}{3}

\section{Introduction}
\label{sec:intro}

\subsection{Overview}
\label{sub:overview}

Integrable (also called ``exactly solvable'') stochastic interacting particle systems on the line are Markov chains on configurations of particles on $\mathbb{Z}$, which feature exact formulas governing their distributions at any given time. These formulas lead to precise control of the asymptotic behavior of these Markov chains in the limit to large scale and long times. In the past two decades, integrable particle systems have been instrumental in uncovering new universal asymptotic phenomena, including those present in the Kardar-Parisi-Zhang universality class. See Corwin \cite{CorwinKPZ}, \cite{Corwin2016Notices}, Halpin-Healy--Takeuchi \cite{halpin2015kpzCocktail}, and Quastel--Spohn \cite{QuastelSpohnKPZ2015}.

Initial progress for integrable stochastic particle systems was achieved through the use of determinantal techniques, e.g., see~Johansson \cite{johansson2000shape} for the asymptotic fluctuations of TASEP (Totally Asymmetric Simple Exclusion Process). More recently, new tools arising from quantum integrability, Bethe ansatz, and symmetric functions were applied to deformations of TASEP and related models. These deformations include ASEP, where particles may jump in both directions with asymmetric rates (Tracy--Widom \cite{TW_ASEP1}, \cite{TW_ASEP2}); random polymers (O'Connell \cite{Oconnell2009_Toda}, Corwin--O'Connell--Sepp\"al\"ainen--Zygouras \cite{COSZ2011}, O'Connell--Sepp\"al\"ainen--Zygouras \cite{OSZ2012}, Sepp\"al\"ainen \cite{Seppalainen2012}, Barraquand--Corwin \cite{CorwinBarraquand2015Beta}); and various $q$-deformations of the TASEP which modify its jump rates. Among the latter, in this paper, we consider the $q$-TASEP introduced by Borodin--Corwin \cite{BorodinCorwin2011Macdonald} (see also Sasamoto--Wadati \cite{SasamotoWadati1998}), and the $q$-Hahn TASEP introduced and studied in Povolotsky \cite{Povolotsky2013} and Corwin \cite{Corwin2014qmunu}.

One of the most recent achievements in the study of the structure of integrable stochastic particle systems is their unification under the umbrella of \emph{integrable stochastic vertex models} initiated in Borodin--Corwin--Gorin \cite{BCG6V}, Corwin--Petrov \cite{CorwinPetrov2015}, and Borodin--Petrov \cite{BorodinPetrov2016inhom}, \cite{BorodinPetrov2016_Hom_Lectures}. The integrability of the stochastic vertex models is powered by the \emph{Yang–Baxter equation}, which is a local symmetry of the models arising from the underlying algebraic structure. This is the central starting point for studying the stochastic vertex models.

\medskip

Ever since the original works on TASEP, it was clear that integrability in particle systems like TASEP is preserved when we introduce countably many extra parameters; see Gravner--Tracy--Widom \cite{Gravner-Tracy-Widom-2002a} and Its--Tracy--Widom \cite{Its-Tracy-Widom-2001a}. A typical example is when each particle has its own jump rate. One can trace the ability to perform such a multiparameter deformation to the underlying algebraic structure of the model, which connects it to a particular family of symmetric polynomials (e.g.,~the probability distribution of the TASEP may be written in terms of the Schur polynomials). In the framework of symmetric functions, interacting particle systems with different particle speeds already appeared in Vershik--Kerov \cite{Vershik1986} in connection with the Robinson--Schensted--Knuth (RSK) correspondence; see also O'Connell \cite{OConnell2003Trans}, \cite{OConnell2003} for further probabilistic properties of the RSK.

The multiparameter deformation of integrable stochastic particle systems should be contrasted with $q$-deformations, like the one turning TASEP into the $q$-TASEP. The latter introduces just one extra parameter while at the same time deforming the underlying symmetric functions in a nontrivial way (for $q$-TASEP, passing from the Schur functions to $q$-Whittaker functions). On the other hand, our multiparameter deformations rely on the presence of symmetry itself and can be readily combined with $q$-deformations.

Let us remark that TASEP in inhomogeneous space (when the jump rate of a particle depends on its location) does not seem to be integrable; see Costin--Lebowitz--Speer--Troiani \cite{costin2012blockage}, Janowsky--Lebowitz \cite{janowsky1992slow_bond}, and Sepp\"al\"ainen \cite{seppalainen2001slow_bond}. For this reason, control of the asymptotic fluctuations in this process requires very delicate asymptotic analysis; see Basu--Sidoravicius--Sly, \cite{Basuetal2014_slowbond}, Basu--Sarkar--Sly \cite{basu2017invariant}. Moreover, it is not known whether ASEP has any integrable multiparameter deformations. The stochastic six vertex model introduced and studied by Gwa--Spohn \cite{GwaSpohn1992} and Borodin--Corwin--Gorin \cite{BCG6V} scales to ASEP and admits such a multiparameter deformation, see Borodin--Petrov \cite{BorodinPetrov2016inhom}. However, the scaling to ASEP destroys this structure. Recently other families of spatially inhomogeneous integrable stochastic particle systems in one and two space dimensions were studied by Assiotis \cite{theodoros2019_determ}, Borodin--Petrov \cite{BorodinPetrov2016Exp}, Knizel--Petrov--Saenz \cite{SaenzKnizelPetrov2018}, and Petrov \cite{Petrov2017push}.

\medskip

Due to the underlying algebraic structure powered by symmetric functions, certain joint distributions in integrable stochastic particle systems are \emph{symmetric} under (suitably restricted classes of) permutations of their speed parameters. This symmetry is far from being evident from the definition of a particle system and is often observed only as a consequence of explicit formulas. In our previous works (Petrov--Saenz \cite{PetrovSaenz2019backTASEP}, Petrov \cite{petrov2019qhahn}, Petrov--Tikhonov \cite{PetrovTikhonov2019}), we explored various probabilistic consequences of these distributional symmetries. In particular, we constructed natural monotone couplings between fixed-time distributions in particle systems which differ by a permutation of the speed parameters. 

However, the analysis in our previous works was severely restricted to the case of the distinguished \emph{step} initial configuration in the particle systems and only to couplings of fixed-time distributions. This paper presents a new approach based on the Yang-Baxter equation and widely extends the scope of distributional symmetries and monotone couplings in integrable stochastic particle systems. In particular, we extend the previous results to both the general initial conditions and to couplings of measures on whole trajectories (and not only fixed-time distributions).

In the rest of the Introduction, we formulate the paper's main results. We use a simplified notation for some Markov operators that slightly differs from the notation later used in the rest of the text. We begin by presenting concrete new probabilistic results in the well-known setting of the two-particle continuous time TASEP and the Poisson processes on $(0,+\infty)$ in \Cref{sub:intro_2_cars,sub:intro_Poisson}.

\subsection{Coupling in the two-particle TASEP with different particle speeds}
\label{sub:intro_2_cars}

Before presenting our general results in \Cref{sub:intro_couplings} below,
let us illustrate them
in the simplest
nontrivial case, the continuous time TASEP with two
particles. Think of them as two cars, one fast and one slow,
driving on a one-lane road evolving in continuous time. The
speeds of the cars are $\alpha_0>\alpha_1>0$. These are the rates of independent exponential clocks associated with the cars.
When a clock rings, the car jumps by $1$ to the right if the
destination is not occupied by a car in front of it. 
In particular, the car in front performs a simple continuous
time Poisson random walk, while the motion of the second car
is more complicated than that of the first one due to blocking from the first car.

We consider two systems, the fast-slow (FS) and the
slow-fast (SF), depending on which car is in front. 

\begin{proposition}
	\label{prop:2cars_step_intro}
	If the cars start next to each other in both the FS and SF systems, then the probability law of the whole
	trajectory $\{ x_2(t) \}_{ t\in \mathbb{R}_{\ge0} }$ 
	of the car in the back is the same in both systems.
\end{proposition}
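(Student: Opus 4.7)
The plan is to reformulate the proposition as a statement about the departure process of an M/M/1 queue starting empty, and then to use a symmetry of the associated last-passage-percolation picture for the marginal law and an intertwining argument in the spirit of the rest of the paper for the joint law.

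First I introduce the gap process $G(t) = x_1(t) - x_2(t)$, which takes values in $\{1, 2, 3, \ldots\}$ with $G(0) = 1$ under the step initial condition. In the FS system, $G$ is a continuous-time birth-death chain on $\{1,2,\ldots\}$ with birth rate $\alpha_0$ (the front car's speed) and death rate $\alpha_1$, with no death at state $1$; in the SF system the two rates are swapped. Since $x_2$ increases by one at precisely the down-steps of $G$, knowing the whole trajectory $\{x_2(t)\}_{t \ge 0}$ is equivalent to knowing the joint law of the sequence of down-step times $\tau_1 < \tau_2 < \cdots$ of $G$. The claim thus reduces to the symmetry, under interchange of arrival and service rates, of the joint law of the departure times of an M/M/1 queue starting empty.

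Writing $A_k$ for the $k$-th front-car jump and using the memoryless property of the back-car Poisson clock, one obtains the recursion $\tau_k = \max(\tau_{k-1}, A_k) + E_k$, with $\tau_0 := 0$ and $E_k$ iid $\mathrm{Exp}(\beta)$, where $\beta$ is the back-car rate. Unfolding yields the last-passage-percolation formula
\[
\tau_k \;=\; \max_{1 \le j \le k} \bigl( X_1 + \cdots + X_j + E_j + \cdots + E_k \bigr),
\]
where the inter-arrival times $X_i = A_i - A_{i-1}$ are iid $\mathrm{Exp}(\alpha)$. The deterministic involution $(X_i, E_i) \mapsto (E_{k-i+1}, X_{k-i+1})$ leaves this maximum pointwise invariant while exchanging the two rate families, and therefore each marginal $\tau_k$ has the same law in FS and SF.

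The main obstacle is to upgrade this one-point symmetry to the joint law of the whole sequence $(\tau_k)_{k \ge 1}$, since the involution above depends on $k$ and so does not produce a single global coupling. One route, feasible for small $n$, is to compute the multivariate Laplace transform $\mathbb{E}\bigl[\exp(-\sum_{k=1}^n s_k \tau_k)\bigr]$ directly from the recursion; for $n=2$ a short computation expresses it as a rational function whose numerator and denominator are both manifestly symmetric in $\alpha_0, \alpha_1$. The more conceptual route, which also aligns with the body of this paper, is an intertwining argument: construct a Markov kernel $M$ on two-car configurations satisfying $M L_{\mathrm{FS}} = L_{\mathrm{SF}} M$, preserving the step initial distribution, and acting as the identity on the back-car coordinate. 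By general semigroup theory, $M$ then intertwines the semigroups $e^{t L_{\mathrm{FS}}}$ and $e^{t L_{\mathrm{SF}}}$, and its trivial action on $x_2$ forces equality of the joint law of the back-car trajectory in the two systems. The kernel $M$ should arise from the Yang-Baxter $R$-matrix associated with the two speed parameters $\alpha_0, \alpha_1$, which is how the proposition fits inside the paper's general framework.
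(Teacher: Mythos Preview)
Your route is different from the paper's. The paper's ``Idea of proof'' invokes the RSK construction: the pair $(x_1(t),x_2(t))$ is a deterministic functional of two independent Poisson counting processes of rates $\alpha_0,\alpha_1$, and the Bender--Knuth involution on two-row tableaux shows that the law of the second-row process $x_2$ is symmetric under exchanging the two rates. Your last-passage formula $\tau_k=\max_{1\le j\le k}(X_1+\cdots+X_j+E_j+\cdots+E_k)$ is precisely this two-row RSK in queueing language, and your $k$-dependent involution correctly gives the one-point law of each $\tau_k$; as you observe, it does not couple the $\tau_k$ jointly.

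The gap is that neither of your two routes to the joint law is carried out. The Laplace-transform computation is done only for $n=2$, and for the intertwining route you neither construct $M$ nor justify why an intertwining together with ``trivial action on $x_2$'' forces equality of \emph{joint} laws. Both points can be supplied. The kernel is exactly the swap operator $\tilde P^{(1)}$ of the paper: for $q=0$ it replaces $x_1$ by $x_2+1+\min(G,\,x_1-x_2-1)$ with $G$ geometric of parameter $\alpha_1/\alpha_0$; it intertwines the two generators in the direction $L_{\mathrm{FS}}M=ML_{\mathrm{SF}}$ (the reverse of what you wrote) and fixes the step configuration. The joint-law implication then follows by an induction on the number of time points: since $M$ moves only $x_1$, it commutes with multiplication by any function of $x_2$, so the nested conditional expectations $H_k^{\mathrm{FS}}(y)\coloneqq f_k(y_2)\,(P^{\mathrm{FS}}_{t_{k+1}-t_k}H^{\mathrm{FS}}_{k+1})(y)$ satisfy $H_k^{\mathrm{FS}}=MH_k^{\mathrm{SF}}$, and $\delta M=\delta$ then equates all finite-dimensional distributions of $x_2$. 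This filtering argument is not so standard that it can simply be asserted. It is worth noting that the paper itself remarks, just after its theorem on general initial data, that the bare intertwining is ``not enough'' for trajectories and therefore builds the heavier rewriting-history coupling; once one uses that $M$ acts as the identity on the $x_2$-coordinate, the intertwining already suffices for the present proposition, so your route, if completed, is in fact more economical here than the paper's main machinery.
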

In other words, the trajectory of the car in the back, $x_2(t)$,
depends on the parameters $\alpha_0,\alpha_1$ in a symmetric way. See
\Cref{fig:2cars_step} for an illustration.

\begin{proof}[Idea of proof of \Cref{prop:2cars_step_intro}]
	This
	statement can be traced back to the
	RSK construction of TASEP (presented,
	e.g., in Vershik--Kerov \cite{Vershik1986} or O'Connell
	\cite{OConnell2003Trans}, \cite{OConnell2003}).
	The RSK shows that $(x_1(t),x_2(t))$
	is a deterministic function of the system of 
	two independent continuous time Poisson random walks
	with rates $\alpha_0$ and $\alpha_1$. 
	While these deterministic functions are
	different in the SF and the FS systems, 
	one can readily verify (for example, via the Bender--Knuth involution \cite{bender1972enumeration})
	that they produce the same distribution of the trajectory
	$\{x_2(t)\}_{t\in \mathbb{R}_{\ge0}}$.
\end{proof}

\begin{figure}[htpb]
	\centering
	\includegraphics[width=.9\textwidth]{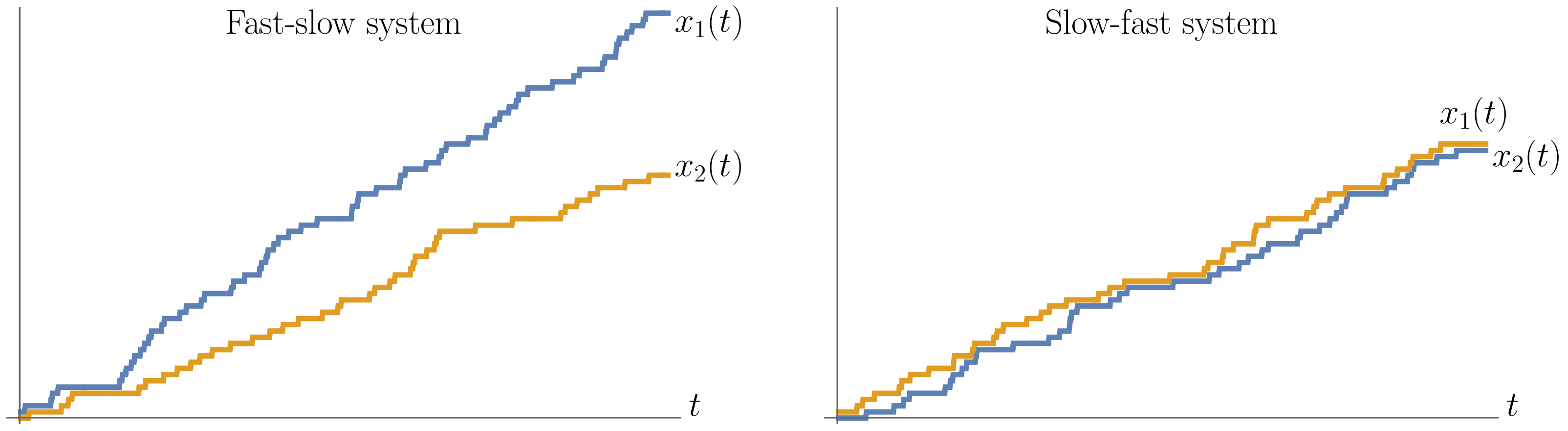}
	\caption{FS (left) and SF (right) systems of two cars
	started from the step initial configuration. In the FS
	system, the first car quickly runs to the front, and the
	evolution of the second (slow) car becomes an independent
	Poisson walk. In the SF system, the slow car in front often
	blocks the fast car in the back. However, in both systems,
	the trajectory of the car in the back, $\{x_2(t)\}_{t\in
	\mathbb{R}_{\ge0}}$, is the same in distribution.}
	\label{fig:2cars_step}
\end{figure}

The assumption that the cars start next to each other is
crucial for \Cref{prop:2cars_step_intro}. Indeed, consider
the initial condition $x_1^\circ>x_2^\circ$ for the FS and
SF systems so that $x_1^\circ-x_2^\circ$ is large. In the SF
system, the trajectory of the car in the back first evolves
as a random walk with the faster slope $\alpha_0$ and, then,
has slope $\alpha_1$ after catching up with the slow car.
This is very different from the behavior of the car in the
back in the FS system, where the slope is $\alpha_1$ the
whole time. So, \Cref{prop:2cars_step_intro} fails for an
arbitrary initial condition $(x_1^\circ,x_2^\circ)$.  See
\Cref{fig:2cars_general}, left and center, for an
illustration.

In this paper, we suitably modify
\Cref{prop:2cars_step_intro} to generalize it to arbitrary initial
conditions $(x_1^\circ,x_2^\circ)$. The modification involves
a randomization of the initial condition in the SF system. 
Define
\begin{equation*}
	y_1^\circ\coloneqq x_2^\circ+1+\min(G, x_1^\circ-x_2^\circ-1),
\end{equation*}
where $G$ is an independent geometric random variable 
with parameter 
$\alpha_1/\alpha_0$, that is,
\begin{equation}
	\label{eq:geometric_jump_operator}
	\mathbb{P}(G=k)=\left( 1-\alpha_1/\alpha_0 \right)(\alpha_1/\alpha_0)^{k},
	\qquad 
	k\in \mathbb{Z}_{\ge0}.
\end{equation}
The Markov map which turns $(x_1^\circ,x_2^\circ)$ into $(y_1^\circ,x_2^\circ)$ is an instance of the Markov swap operator $P^{(n)}$ 
(with $n=1$ here) entering \Cref{prop:swap_intro} below. 
For the next statement, the gap $x_1^\circ-x_2^\circ$ can be arbitrary, not necessarily large.
Denote the FS and SF systems with the corresponding initial conditions
by $\mathrm{FS}_{x_1^\circ,x_2^\circ}$ and $\mathrm{SF}_{y_1^\circ,x_2^\circ}$.

\begin{theorem}
	\label{thm:2cars_general_intro}
	The trajectory of the car in the back, $\{x_2(t)\}_{t\in \mathbb{R}_{\ge0}}$,
	is the same in distribution for
	$\mathrm{FS}_{x_1^\circ,x_2^\circ}$ and $\mathrm{SF}_{y_1^\circ,x_2^\circ}$,
	where $y_1^\circ$ (the initial condition for the car in the front in 
	SF) is random and given by \eqref{eq:geometric_jump_operator}.
\end{theorem}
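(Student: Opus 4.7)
The natural strategy is to reduce this trajectory-level claim to a single semigroup intertwining. Let $S = \{(x_1, x_2) \in \mathbb{Z}^2 : x_1 > x_2\}$, let $T_t^{\mathrm{FS}}$ and $T_t^{\mathrm{SF}}$ denote the continuous-time Markov semigroups of the two systems on $S$ with generators $\mathcal{L}^{\mathrm{FS}}$ and $\mathcal{L}^{\mathrm{SF}}$, and let $P$ be the Markov kernel on $S$ implementing the randomization of the theorem: $P$ sends $(x_1^\circ, x_2^\circ)$ to the distribution of $(y_1^\circ, x_2^\circ)$. The first step I would take is to establish the generator-level intertwining $\mathcal{L}^{\mathrm{FS}} P = P \mathcal{L}^{\mathrm{SF}}$ as operators on bounded functions on $S$; exponentiating this identity yields the semigroup intertwining $T_t^{\mathrm{FS}} P = P T_t^{\mathrm{SF}}$ for every $t \geq 0$.

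The crucial observation for lifting this to the trajectory level is that $P$ acts only on the $x_1$-coordinate. In particular, $P g = g$ for any bounded $g = g(x_2)$, and multiplication by any function of $x_2$ alone commutes with $P$. Given finitely many times $0 < t_1 < t_2 < \ldots < t_n$ and bounded functions $g_1, \ldots, g_n$ on $\mathbb{Z}$, I would define $F_n \coloneqq g_n$, $F_k \coloneqq g_k \cdot T_{t_{k+1} - t_k}^{\mathrm{FS}} F_{k+1}$, and similarly $G_n \coloneqq g_n$, $G_k \coloneqq g_k \cdot T_{t_{k+1} - t_k}^{\mathrm{SF}} G_{k+1}$. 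A short downward induction on $k$, combining the intertwining with the commutation of $P$ with $x_2$-multiplication, gives $F_k = P G_k$ for every $k$. Evaluating at $(x_1^\circ, x_2^\circ)$ then yields
\begin{equation*}
\mathbb{E}_{\mathrm{FS}_{x_1^\circ, x_2^\circ}}\!\left[\prod_{k=1}^n g_k\!\left(x_2(t_k)\right)\right]
= \mathbb{E}_{\mathrm{SF}_{y_1^\circ, x_2^\circ}}\!\left[\prod_{k=1}^n g_k\!\left(x_2(t_k)\right)\right],
\end{equation*}
so all finite-dimensional distributions of the $x_2$-trajectory agree, hence its full law does.

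The main obstacle is verifying the generator intertwining $\mathcal{L}^{\mathrm{FS}} P = P \mathcal{L}^{\mathrm{SF}}$. Concretely, this amounts to checking that a rate-$\alpha_0$ jump of $x_1$ followed by the geometric randomization of the gap produces the same joint law on $(y_1, x_2)$ as first randomizing and then applying a rate-$\alpha_1$ jump of $y_1$, together with the analogous identity for the jump of the back car (including the boundary case where the gap equals $1$, which blocks the back car). The precise geometric parameter $\alpha_1 / \alpha_0$ is exactly what makes these identities balance, and for two particles the entire verification reduces to a short case analysis. As the paper emphasizes, this identity is the simplest instance of the Yang-Baxter equation for the higher spin stochastic six vertex model, and its multi-particle extension is the backbone of the intertwining theorems proved in the body of the paper; the present theorem is thus the $n = 1$ specialization of a much more general statement.
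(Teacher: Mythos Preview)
Your argument is correct and takes a genuinely different route from the paper. The paper does \emph{not} prove this theorem via the semigroup intertwining $T^{\mathrm{FS}}P=PT^{\mathrm{SF}}$ alone; instead it constructs an explicit \emph{coupling} (the ``rewriting history'' operator from future to past, obtained by bijectivising the Yang--Baxter equation) which, given a trajectory of $\mathrm{FS}_{x_1^\circ,x_2^\circ}$, resamples $x_1(\cdot)$ as a reflected/sticky random walk in the chamber $x_2(t)<y_1(t)\le x_1(t)$ while leaving $x_2(\cdot)$ untouched, and shows the output has the law of $\mathrm{SF}_{y_1^\circ,x_2^\circ}$ (see \Cref{prop:cont_time_qTASEP_down_result} and \Cref{rmk:down_new_swap_TASEP}, and the analogous \Cref{cor:second_particle_symmetry} in discrete time). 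Since this coupling fixes $x_2(\cdot)$ pathwise, the theorem follows.

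Your approach is more elementary: it uses only the intertwining (which is exactly the $n=1$ case of \Cref{prop:Pn_T_intertw}) together with the locality of $P$, namely that $P$ commutes with multiplication by any function of $x_2$. Your downward induction $F_k=PG_k$ is clean and correct, and yields equality of all finite-dimensional $x_2$-distributions directly. It is worth noting that the Introduction explicitly asserts that the intertwining ``only implies the equality in distribution of $x_2(t)$ \ldots\ at each fixed time $t$, but not jointly for all times''; your argument shows that once one also uses that $P^{(1)}$ acts only on $x_1$ (which is part of its very definition), the joint statement does follow from the intertwining without any bijectivisation. What the paper's approach buys beyond yours is an explicit monotone coupling of the full two-particle trajectories, not merely equality of the marginal law of $x_2(\cdot)$; this stronger coupling is the point of the rewriting-history machinery and underlies the Poisson-process results in \Cref{sub:intro_Poisson}.
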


See \Cref{fig:2cars_general} for an illustration. When $x_1^\circ=x_2^\circ+1$, from \eqref{eq:geometric_jump_operator} we almost surely have $y_1^\circ=x_1^\circ$, and so \Cref{thm:2cars_general_intro} reduces to \Cref{prop:2cars_step_intro}. For general initial conditions, the intertwining result, i.e.~\Cref{prop:swap_intro} introduced in \Cref{sub:intro_Lax} below, is not enough to conclude the equality in distribution of the whole trajectories. Namely, \Cref{prop:swap_intro} only implies the equality in distribution of $x_2(t)$ in $\mathrm{FS}_{x_1^\circ,x_2^\circ}$ and $\mathrm{SF}_{y_1^\circ,x_2^\circ}$ at each fixed time $t$, but not \emph{jointly} for all times. We need a stronger coupling between measures briefly described in \Cref{sub:intro_couplings} below. To point to the relevant results in the main text, \Cref{thm:2cars_general_intro} follows from the general \Cref{thm:coupling_trajectories_with_Pn_swap} and its continuous-time corollary, \Cref{prop:cont_time_qTASEP_down_result} (in particular, see \Cref{rmk:down_new_swap_TASEP} for the TASEP case).

\begin{figure}[htpb]
	\centering
	\includegraphics[width=\textwidth]{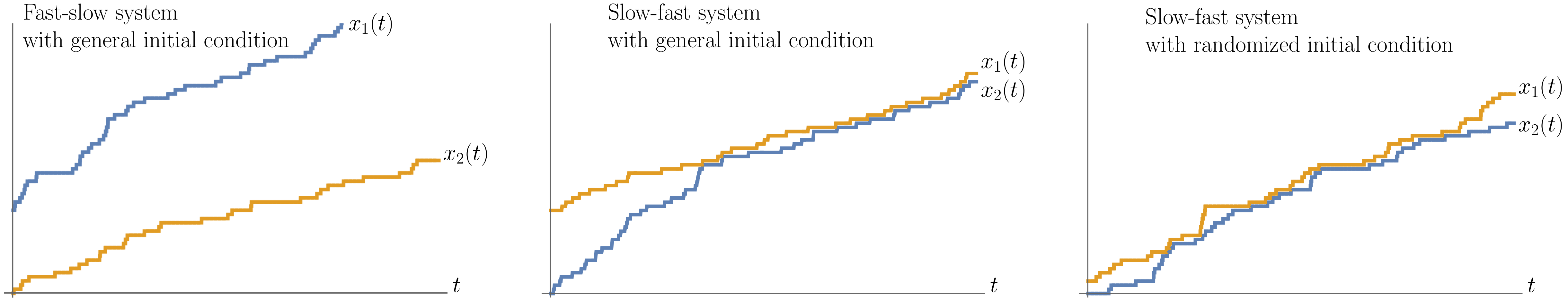}
	\caption{Left: The FS system started from a general fixed
	initial condition $(x_1^\circ,x_2^\circ)$. Center: The SF system started from the
	same initial condition $(x_1^\circ,x_2^\circ)$. Right: The SF system started
	from a randomized initial condition $(y_1^\circ,x_2^\circ)$
	depending on $(x_1^\circ,x_2^\circ)$. 
	The trajectories of
	the car in the back, $\{x_2(t)\}_{t\in \mathbb{R}_{\ge0}}$,
	are the same in distribution in the left and the right
	systems but are different from the trajectory in the center
	system.}
	\label{fig:2cars_general}
\end{figure}

Let us describe the coupling
between the two-particle systems 
$\mathrm{FS}_{x_1^\circ,x_2^\circ}$ and $\mathrm{SF}_{y_1^\circ,x_2^\circ}$ which leads to 
\Cref{thm:2cars_general_intro}.
Fix a terminal time $M\in \mathbb{R}_{\ge0}$. The coupling
(\emph{rewriting history operator from future to past}, in our terminology) replaces the 
trajectory $\{x_1(t)\}_{0\le t\le M}$ in 
$\mathrm{FS}_{x_1^\circ,x_2^\circ}$
by a new trajectory $\{y_1(t)\}_{0\le t\le M}$
such that $x_2(t)<y_1(t)\le x_1(t)$ for all~$t$ (so that the coupling is \emph{monotone}).
The construction of $y_1(t)$ proceeds in two steps:
\begin{enumerate}[$\bullet$]
	\item First, at time $t=M$, set $y_1(M)=x_2(M)+1+\min(G,x_1(M)-x_2(M)-1) $, 
		where $G$ is an independent geometric random variable with parameter $\alpha_1/\alpha_0$,
		as in \eqref{eq:geometric_jump_operator}.
	\item Then, start a continuous time Poisson random walk $y_1(t)$ in reverse time from $M$ to $0$ in the chamber
		$x_2(t)<y_1(t)\le x_1(t)$, under which the car $y_1$ jumps down by $1$ at rate
		$\alpha_0$ if $y_1(t)<x_1(t)$ and at rate $\alpha_0-\alpha_1$ if $y_1(t)=x_1(t)$.
		If $y_1(t)=x_2(t)+1$, the jump down is blocked. Also, if the top boundary of the chamber
		goes down by $1$, the walk $y_1(t)$ is deterministically pushed down.
\end{enumerate}

See \Cref{fig:down_intro} for an illustration.
\Cref{prop:cont_time_qTASEP_down_result} implies that
the distribution of the two-particle trajectory
$\{y_1(t),x_2(t)\}_{0\le t\le M}$
is the same as the trajectory of the whole two-particle system 
$\mathrm{SF}_{y_1^\circ,x_2^\circ}$.
In particular, the position $y_1(0)$ has the distribution
$y_1^\circ$ given by \eqref{eq:geometric_jump_operator}, 
and \Cref{thm:2cars_general_intro} follows.

\begin{figure}[htpb]
	\centering
	\includegraphics[width=.5\textwidth]{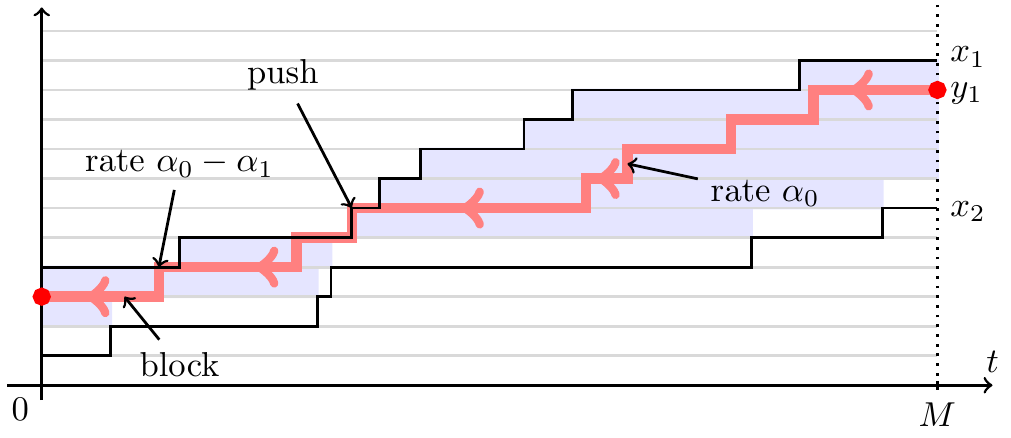}
	\caption{Construction of the coupling from FS to SF systems. The random walk
	$y_1(t)$ in reverse time is confined to the highlighted chamber. 
	In the bulk of the chamber, its rate of jumping down is $\alpha_0$, 
	and on the top boundary the rate is $\alpha_0-\alpha_1$.
	On the bottom boundary, the jumps down are blocked.}
	\label{fig:down_intro}
\end{figure}

The resampling of a trajectory as a random walk in a chamber
is reminiscent of the Brownian Gibbs property from
Corwin--Hammond \cite{corwin2014brownian}. However, there
are several notable differences. First, our resampling does
not preserve the distribution but interchanges the rates
$\alpha_0 \leftrightarrow \alpha_1$ (though in a continuous
time limit as in \Cref{sub:intro_Lax}, this issue
disappears). Second, the resampled trajectory is not a
simple random walk \emph{conditioned} to stay in the chamber
like in the Brownian Gibbs property, but rather a random
walk \emph{reflected} from the bottom wall and
\emph{sticking} to the top wall. In the Brownian setting,
processes sticking to one of the walls appeared in Warren
\cite{warren1997branching} and Howitt--Warren
\cite{howitt_warren2009dynamics} in a setting similar to
ours, see Section 1.3 in Petrov--Tikhonov
\cite{PetrovTikhonov2019} for a related discussion. Finally,
our resampling changes the trajectory and its endpoints,
while under the Brownian Gibbs property, the endpoints stay
fixed. We plan to explore Brownian limits of our
constructions in future work.

\subsection{Coupling of Poisson processes}
\label{sub:intro_Poisson}

In the previous \Cref{sub:intro_couplings}, we described a
monotone coupling from FS to SF, which turns the trajectory
$x_1(t)$, $0\le t\le M$, in FS having speed $\alpha_0$ into
a trajectory $y_1(t)$ which lies below $x_1(t)$ and has
lower speed $\alpha_1<\alpha_0$. The law of $y_1(t)$ depends
on the trajectory $x_2(t)$ of the particle in the back. 

Along with this monotone coupling, there is a monotone
coupling in another direction, \emph{rewriting history
operator from past to future}, which turns the particle's
trajectory in front of speed $\alpha_1$ into a trajectory
with a higher speed $\alpha_0>\alpha_1$. Recall that in
TASEP, the trajectories of the particles in front 
are continuous time Poisson simple random
walks, that is, they are \emph{counting functions of the standard
Poisson point processes} on $(0,+\infty)$. Remarkably, when
FS and SF start from the step initial configuration
$x_1(0)=y_1(0)=0$, $x_2(0)=-1$, the operator for rewriting
history from past to future does not depend on the
trajectory $x_2(t)$. Thus, it produces a monotone coupling
between two Poisson simple random walks of rates $\alpha_1$
and $\alpha_0>\alpha_1$, respectively. Let us describe this
coupling.

Let $y_1(t)$, $t\in \mathbb{R}_{\ge0}$, be a Poisson simple
random walk of rate $\alpha_1$ started from $y_1(0)=0$. Fix
$\alpha_0>\alpha_1$. Start another random walk $x_1(t)$
from $x_1(0)=0$ which lives in the chamber $x_1(t)\ge
y_1(t)$ for all $t$, and jumps up by $1$ at rate $\alpha_0$
if $x_1(t)>y_1(t)$, and at rate $\alpha_0-\alpha_1$ if
$x_1(t)=y_1(t)$. When the bottom boundary of the chamber
goes up by $1$ and $x_1(t)=y_1(t)$, the walk $x_1(t)$ is deterministically
pushed up. See \Cref{fig:Poisson_coupling_intro}
for an illustration.

\begin{theorem}
	\label{thm:Poisson_discrete_intro}
	The process $x_1(t)$ defined above (right before the Theorem) is a continuous time Poisson simple random walk with rate
	$\alpha_0$.
\end{theorem}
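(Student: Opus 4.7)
My plan is to show that the counting process $x_1(t)$ admits the deterministic compensator $\alpha_0 t$ in its own filtration. By Watanabe's characterization, this forces $x_1$ to be a homogeneous Poisson process of rate $\alpha_0$. The key observation is that although $x_1$'s local dynamics differ between the bulk of the chamber ($x_1>y_1$) and its lower boundary ($x_1=y_1$), the \emph{total} instantaneous rate at which $x_1$ increments by $1$ equals $\alpha_0$ in both regimes.

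To see this, fix the natural filtration $\mathcal{F}_t^{x_1,y_1}$ of the coupled process. When $x_1(t-)>y_1(t-)$, only $x_1$'s own clock, of rate $\alpha_0$, can fire a jump of $x_1$, contributing an intensity of $\alpha_0$. When $x_1(t-)=y_1(t-)$, $x_1$'s own clock has rate $\alpha_0-\alpha_1$ and triggers a jump on its own, while a tick of $y_1$'s clock (rate $\alpha_1$) also deterministically pushes $x_1$ upward by $1$. These two disjoint mechanisms contribute $(\alpha_0-\alpha_1)+\alpha_1=\alpha_0$. Hence the predictable intensity of the counting process $x_1$ with respect to $\mathcal{F}_t^{x_1,y_1}$ is the constant $\alpha_0$, and its compensator is $\alpha_0 t$.

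Since the joint compensator is deterministic, $x_1(t)-\alpha_0 t$ is a $\mathcal{F}_t^{x_1,y_1}$-martingale; projecting onto the smaller filtration $\mathcal{F}_t^{x_1}$ preserves the martingale property, so $\alpha_0 t$ is also the compensator of $x_1$ in its own filtration. Watanabe's theorem on counting processes with deterministic linear compensator then identifies $x_1$ as a Poisson process of rate $\alpha_0$.

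The main obstacle is to make the intensity calculation fully rigorous, because the push mechanism creates simultaneous jumps of $x_1$ and $y_1$ on the boundary $x_1=y_1$. A clean implementation is to construct $(x_1,y_1)$ explicitly from independent Poisson clocks and compute its infinitesimal generator acting on bounded functions $f(x_1,y_1)$, splitting the bulk contribution ($x_1>y_1$) from the boundary contribution ($x_1=y_1$). The generator makes the joint intensity of the jumps of $x_1$ visible and justifies the cancellation $(\alpha_0-\alpha_1)+\alpha_1=\alpha_0$, after which projection and Watanabe's theorem finish the argument.
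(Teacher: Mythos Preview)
Your proof is correct and takes a genuinely different route from the paper. The paper derives Theorem~\ref{thm:Poisson_discrete_intro} as a specialization of its general rewriting-history machinery: it invokes Theorem~\ref{thm:coupling_trajectories_with_Pn_swap} and its continuous-time corollary, Proposition~\ref{prop:cont_time_qTASEP_up_result} (see Remark~\ref{rmk:up_new_swap_TASEP} for the TASEP case $q=0$, $n=1$). That machinery is built on the vertical Yang--Baxter equation, stochastic bijectivisation, and Poisson-type limits of discrete intertwining relations.

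You instead give a direct martingale argument. Your central observation --- that the total instantaneous intensity for a jump of $x_1$ equals $\alpha_0$ both in the bulk ($x_1>y_1$, own clock of rate $\alpha_0$) and on the boundary ($x_1=y_1$, own clock of rate $\alpha_0-\alpha_1$ plus push by $y_1$'s clock of rate $\alpha_1$) --- is correct. Since the compensator $\alpha_0 t$ is deterministic and $x_1(t)-\alpha_0 t$ is adapted to $\mathcal{F}^{x_1}_t$, the projection step is valid, and Watanabe's characterization finishes the job. The generator construction you sketch is the standard way to make the intensity computation rigorous for a piecewise-constant Markov process with bounded jump rates.

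What each approach buys: the paper's route is far more general --- it simultaneously handles all particle indices $n$, arbitrary $q$, and yields full trajectory-level couplings --- but at the cost of heavy integrable-systems machinery. Your argument is self-contained and elementary, exploiting only the specific cancellation $(\alpha_0-\alpha_1)+\alpha_1=\alpha_0$, but it does not obviously extend to the $q$-deformed or multi-particle settings. It is worth noting that the paper itself remarks (after Corollary~\ref{cor:Poisson_dilation_lemma}) that direct proofs of related Poisson coupling statements, not relying on discrete models and Poisson limits, would be of interest; your argument supplies exactly such a proof for this theorem.
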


\begin{figure}[htpb]
	\centering
	\includegraphics[width=.5\textwidth]{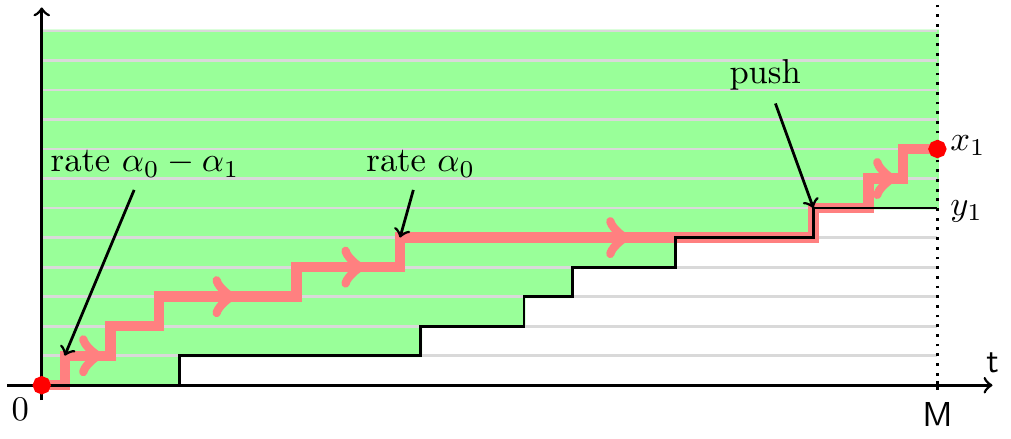}
	\caption{Monotone coupling from a Poisson random walk 
		$y_1(t)$
		of rate $\alpha_1$ to a Poisson random walk
		$x_1(t)$ of rate $\alpha_0>\alpha_1$. 
		The new process must lie in the highlighted chamber.
		In the bulk of the chamber, it has jump rate $\alpha_0$,
		and on the bottom boundary the jump rate is $\alpha_0-\alpha_1$.}
	\label{fig:Poisson_coupling_intro}
\end{figure}

\Cref{thm:Poisson_discrete_intro}
follows from our main result on coupling,
\Cref{thm:coupling_trajectories_with_Pn_swap}, and its
corollary for rewriting history from past to future in continuous time, \Cref{prop:cont_time_qTASEP_up_result} (in
particular, see \Cref{rmk:up_new_swap_TASEP} for the TASEP case).

\begin{remark} 
	We note that there is a well-known and simple randomized coupling between two
	Poisson processes on $(0,+\infty)$ (from a rate $\alpha_1$
	to the higher rate $\alpha_0>\alpha_1$). It
	is called \emph{thickening} and consists of simply adding
	to the process of rate $\alpha_1$ an extra independent
	Poisson point process configuration of rate
	$\alpha_0-\alpha_1$. The union of the two point
	configurations is a Poisson process of rate $\alpha_0$.
	The construction in \Cref{thm:Poisson_discrete_intro} is
	very different from such an independent thickening: it
	does not preserve all the points from the original process
	of rate $\alpha_1$, and has a Markov (and not independent) nature.
	
	Surprisingly, there also exist \emph{deterministic}
	couplings between Poisson processes on the entire line
	$\mathbb{R}$ from higher to lower rates which are
	translation invariant (constructed by Ball \cite{ball2005poisson}),
	and also non-translation invariant ones on an arbitrary set in both $\alpha_0$
	to $\alpha_1$ and the reverse directions, see
	Angel--Holroyd--Soo \cite{angel2011deterministic} and
	Gurel-Gurevich--Peled \cite{gurel2013poisson}.
\end{remark}

It is possible to make the Poisson rates $\alpha_0$ and
$\alpha_1$ equal to each other by taking a continuous time
Poisson type limit. This limit produces an interesting (and,
to the best of our knowledge, new) continuous time coupling
between Poisson processes on $(0,+\infty)$ of all possible
rates. This coupling is also monotone; that is, it increases
the rate while almost surely increasing the trajectory of
the Poisson process' counting function. This continuous time
coupling is described in \Cref{prop:neat_last_fact}.

\subsection{Intertwining relations for stochastic vertex models} 
\label{sub:intro_vertex_intertwining}

After highlighting two concrete applications in \Cref{sub:intro_2_cars,sub:intro_Poisson}, let us present our results in a more general setting.  

In the setting of the fully fused higher spin stochastic six
vertex model, we prove an \emph{intertwining} (also called
\emph{quasi-commutation}) relation between the transfer
matrices of two models, which differ by a permutation of the
speed parameters. This vertex model is defined in
Corwin--Petrov \cite{Corwin2014qmunu} and Borodin--Petrov
\cite{BorodinPetrov2016inhom}; we recall it in
\Cref{sec:stoch_vert_models,sec:YBE} below. We formulate the
intertwining as follows. Let us denote by $T$ and
$T_{\sigma_{n-1}}$ the one-step Markov operators (transfer
matrices) on the space $\left\{ 0,1 \right\}^{\mathbb{Z}}$
of particle configurations on $\mathbb{Z}$. The parameter
sequences in $T$ and $T_{\sigma_{n-1}}$ differ by the
elementary transposition $\sigma_{n-1}=(n-1,n)$ which
permutes the parameters associated with two neighboring
particles $x_n > x_{n+1}$.\footnote{Note that we shift the
indices for a better correspondence between particle systems
and vertex models.}

\begin{proposition}[\Cref{prop:Pn_T_intertw} in the text]
	\label{prop:swap_intro}
	There exists a one-step Markov transition operator denoted by $P^{(n)}$ such that
	\begin{equation}
		\label{eq:general_intertwining_intro}
		T\ssp P^{(n)}=P^{(n)}\ssp T_{\sigma_{n-1}},
	\end{equation}
	under a certain restriction on the parameters associated with the particles $x_n$ and $x_{n+1}$.
\end{proposition}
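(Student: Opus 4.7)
The plan is to derive the intertwining \eqref{eq:general_intertwining_intro} as a direct consequence of the Yang–Baxter equation (YBE) for the fully fused higher spin stochastic six vertex model. The first step is to view $T$ as the row-to-row transfer matrix obtained by contracting a common horizontal auxiliary space against an infinite product of vertex weights $L_{\alpha_k}$, where $L_{\alpha_k}$ is the fused $L$-operator associated to the vertical line carrying the speed parameter $\alpha_k$. The permutation $\sigma_{n-1}$ affects only two neighboring vertical lines, so $T$ and $T_{\sigma_{n-1}}$ differ only in the $(n-1)$-th and $n$-th factors of this product.

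Next, I would take $P^{(n)}$ to be the stochastic $R$-matrix of the higher spin model, acting on the two vertical (quantum) spaces attached to the particles $x_n$ and $x_{n+1}$, with spectral parameters $\alpha_{n-1}$ and $\alpha_n$. The YBE in its RLL form
\begin{equation*}
R_{\alpha_{n-1},\alpha_n}\,L_{\alpha_{n-1}}\,L_{\alpha_n} \;=\; L_{\alpha_n}\,L_{\alpha_{n-1}}\,R_{\alpha_{n-1},\alpha_n},
\end{equation*}
where each $L$ pairs the shared horizontal auxiliary space with one of the two vertical spaces, is precisely the local version of \eqref{eq:general_intertwining_intro}. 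This is the algebraic input I would rely on; it is available from the general higher spin YBE recalled in \Cref{sec:YBE}.

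To promote the local identity to an identity of transfer matrices, I would run the standard \emph{train argument}. Starting from one end of the horizontal row, I slide $R_{\alpha_{n-1},\alpha_n}$ past each vertex weight $L_{\alpha_k}$ for $k\notin\{n-1,n\}$; these moves are trivial because $R$ acts only on the $(n-1,n)$ pair of vertical lines and commutes with $L$-operators on disjoint factors. At the two crossings with lines $n-1$ and $n$, the YBE swaps $L_{\alpha_{n-1}}$ and $L_{\alpha_n}$. After $R$ has traversed the entire row, the sequence of vertex weights is precisely the one defining $T_{\sigma_{n-1}}$, and taking the boundary contraction over the auxiliary space yields $T\,P^{(n)} = P^{(n)}\,T_{\sigma_{n-1}}$.

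The main obstacle, and the part that truly belongs to this paper rather than to classical integrability, is to identify $P^{(n)}$ with an honest Markov transition operator. The $R$-matrix defined algebraically is only stochastic (nonnegative row-normalized entries) on a restricted locus in parameter space, and locating that locus — and then reading off the explicit conditional law, which in the TASEP reduction is the geometric swap rule \eqref{eq:geometric_jump_operator} — is where the probabilistic content lives and accounts for the parameter restriction in the statement. A secondary technicality is that $T$ acts on an infinite product of vertical spaces, so one must verify that the train argument converges in the same class of configurations used to define $T$; this is routine since $R$ is a local operator and only finitely many vertices in the train see a nontrivial move.
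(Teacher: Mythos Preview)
Your proposal is correct and matches the paper's approach: the paper states \Cref{prop:Pn_T_intertw} as an ``immediate consequence'' of the vertical Yang--Baxter equation (\Cref{prop:YBE}), illustrated in \Cref{fig:Pn_T_intertw}, which is exactly your single RLL move at columns $n-1,n$ together with trivial commutation at all other columns. Your identification of the parameter restriction with the nonnegativity of the cross vertex weights is also on target; the paper packages this as the condition $(u_n/u_{n-1},s_{n-1},s_n)\in\mathcal{R}$ established in \Cref{prop:R_nonnegative}, \Cref{prop:R_at_q0_nonnegative}, and \Cref{prop:qHahn_degeneration_of_R}.
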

The action of the Markov operator $P^{(n)}$ only moves the particle $x_n$ while preserving the locations of all other particles. Here and throughout the paper, we interpret the product of Markov operators as acting on measures from the right. That is, \eqref{eq:general_intertwining_intro} states that if we start from a fixed particle configuration, apply a random Markov step according to $T$, and then apply a Markov step according to $P^{(n)}$, then the resulting random particle configuration has the same distribution as the random particle configuration obtained by the action of $P^{(n)}$ followed by $T_{\sigma_{n-1}}$.

The intertwining relation \eqref{eq:general_intertwining_intro} is a consequence of the Yang-Baxter equation for the higher spin stochastic six vertex model. Our crucial observation is that under certain restrictions on the parameters, the intertwiner $P^{(n)}$ (coming from the corresponding R-matrix for the vertex model) is itself a one-step Markov transition operator. The restrictions on the parameters are required to make the transition probabilities of $P^{(n)}$ nonnegative.

Furthermore, we show
that a sequential application of the
operators $P^{(n)}$ over all $n=1,2,3,\ldots $ (denoted by $B$) intertwines
the transfer matrix $T$ with another transfer matrix $T_{\mathrm{shift}}$
obtained from $T$ by the one-sided shift of the parameter
sequence (which eliminates the first of the parameters with index $0$):
\begin{equation}
	\label{eq:shift_intertwining_intro}
	TB=B\ssp T_{\mathrm{shift}}.
\end{equation}
See \Cref{thm:B_T_intertw_relation} in the text.
We call $B$ the \emph{Markov shift operator}.
See
\Cref{sec:comm_rel}
for detailed formulations and proofs of the
general intertwining relations \eqref{eq:general_intertwining_intro}, \eqref{eq:shift_intertwining_intro}.

\subsection{Intertwining and Lax equation for the continuous time $q$-TASEP}
\label{sub:intro_Lax}

In \Cref{sub:intro_vertex_intertwining}, the transfer
matrices $T,T_{\mathrm{shift}}$ and the intertwiner $B$
are one-step Markov transition operators. 
It is well-known that the vertex model transfer matrices $T$ admit a
Poisson type limit 
to the $q$-TASEP.

Recall that the $q$-TASEP, introduced in Borodin--Corwin
\cite{BorodinCorwin2011Macdonald}, is a continuous time
Markov chain on particle configurations $\mathbf{x}=(x_1>x_2>x_3>\ldots)$ 
in $\mathbb{Z}$. Each particle $x_n$ has an
independent exponential clock of rate\footnote{That is, the random time $\xi$
after which the clock rings is distributed as
$\operatorname{Prob}(\xi>s)=$ $e^{-\lambda s}, s>0$, where
$\lambda=\alpha_{n-1}(1-q^{x_n-x_{n+1}-1})$ is the rate.}
$\alpha_{n-1}(1-q^{x_n-x_{n+1}-1})$. When the clock attached to
$x_n$ rings, this particle jumps by $1$ to the right. Note
that the jump rate of $x_n$ is zero 
when $x_n=x_{n+1}+1$, meaning that a particle cannot jump into an occupied location.
When $q=0$, the $q$-TASEP turns into the usual TASEP in
which each particle jumps to the right at rate $1$ unless the destination is occupied. 

Now, take the particle speeds in the $q$-TASEP to be the
geometric progression, $\alpha_j=r^j$, $j\in
\mathbb{Z}_{\ge0}$, where $0<r<1$. Sending $r\to1$ leads to
the $q$-TASEP with homogeneous speeds. Let
$\{T(t)\}_{t\in \mathbb{R}_{\ge0}}$ denote the corresponding Markov
transition semigroup. Before taking the limit $r \to 1$, the
application of the Markov shift operator $B$ as in \eqref{eq:shift_intertwining_intro}
turns the
sequence of speeds $(1,r,r^2,\ldots )$ into
$(r,r^2,r^3,\ldots )$.  This shift is the same as
multiplying all particle speeds by $r$ or, equivalently,
turning the time parameter $t$ into $rt$. Taking a second
Poisson-type limit in $B$ as $r\to 1$, we obtain an
intertwining relation for the continuous time $q$-TASEP with
homogeneous speeds.

Let us now describe the continuous time limit of the Markov shift operators.
This is a Markov semigroup
$\{B(\tau)\}_{\tau\in \mathbb{R}_{\ge0}}$
on the space of \emph{left-packed} particle
configurations $\mathbf{x}=(x_1>x_2>\ldots)$, i.e.,~configurations with
$x_n=-n$ for all sufficiently large $n$. 
Under $B(\tau)$, each 
particle $x_n$ has an independent
exponential clock with rate $n(x_n-x_{n+1}-1)$.
When a clock rings, 
the corresponding particle $x_n$ instantaneously jumps backwards to a new location
$x_n'$, $x_{n+1}< x_n'<x_n$, with probability
\begin{equation}
	\label{eq:q_Hammersley_process}
	\frac{1}{(x_n-x_{n+1}-1)(1-q^{x_n-x_n'})}
	\frac
	{\prod_{i=1}^{x_n-x_{n+1}-1}(1-q^i)}
	{\prod_{i=1}^{x_n'-x_{n+1}-1}(1-q^i)}.
\end{equation}
In particular, the particles almost surely jump to the left.
For left-packed configurations, the sum of the jump rates of
all possible particle jumps is finite, meaning that
$B(\tau)$ is well-defined. The process $B(\tau)$ was
introduced in Petrov \cite{petrov2019qhahn}, and its $q=0$ version
(for which the probabilities \eqref{eq:q_Hammersley_process} become uniform)
appeared under the name \emph{backwards Hammersley process}
in Petrov--Saenz \cite{PetrovSaenz2019backTASEP}. See \Cref{fig:intro_BHP}
for an illustration of the latter process.

\begin{figure}[htpb]
	\centering
	\includegraphics{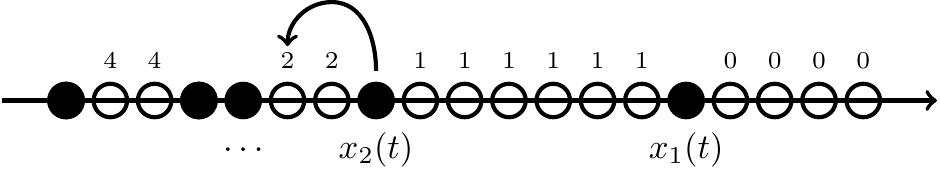}
		\caption{The $q=0$ version of the backwards process
		$B(\tau)$ may be alternatively defined as follows. Each
		hole in $\mathbb{Z}$ has an independent exponential clock
		with a rate equal to the number of particles to the right of
		this hole. When the clock at a hole rings, the leftmost of
		the particles to the right of the hole instantaneously jumps
		into this hole.}
	\label{fig:intro_BHP}
\end{figure}

We prove
the following intertwining relation between the $q$-TASEP and the backwards $q$-TASEP processes:
\begin{theorem}[\Cref{thm:mapping_qTASEP_TASEP_back_general_IC} in the text]
	\label{thm:cont_intertwining_intro}
	For any $t,\tau\in \mathbb{R}_{\ge0}$, we have
	\begin{equation}
		\label{eq:intro_qTASEP_intertwining}
		T(t)\ssp B(\tau) = B(\tau)\ssp T(e^{-\tau}t).
	\end{equation}
\end{theorem}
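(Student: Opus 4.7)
The plan is to iterate the discrete intertwining \eqref{eq:shift_intertwining_intro} and pass to the limit $r\to 1$ in two Poisson-type scaling regimes. Fix the geometric parameter sequence $\alpha_j = r^j$ with $0<r<1$. The well-known Poisson-type limit of the higher spin vertex transfer matrix $T$ produces the inhomogeneous $q$-TASEP semigroup with speeds $(1,r,r^2,\ldots)$, which I denote by $T_r(t)$. Applied to the identity $T\ssp B = B\ssp T_{\mathrm{shift}}$ of \Cref{thm:B_T_intertw_relation}, this first limit gives
\begin{equation*}
T_r(t)\ssp B = B\ssp T_r(r\ssp t),
\end{equation*}
because $T_{\mathrm{shift}}$ has parameters $(r,r^2,\ldots) = r\cdot(1,r,r^2,\ldots)$, which amounts to rescaling time by $r$. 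In this first limit the Markov shift operator $B$ is unchanged and still discrete.

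In the geometric case each $P^{(n)}$ depends only on the common ratio $r=\alpha_n/\alpha_{n-1}$, so $B$ is one fixed operator (depending only on $r$) independent of the particular geometric sequence on which it acts. Iterating the above identity $k$ times yields
\begin{equation*}
T_r(t)\ssp B^k = B^k\ssp T_r(r^k\ssp t), \qquad k\in\mathbb{Z}_{\ge 0}.
\end{equation*}
I then take the second Poisson-type limit: send $r\to 1$ and $k\to\infty$ with $k(1-r)\to \tau$. Under this scaling $r^k\to e^{-\tau}$, and the inhomogeneous speeds $(1,r,r^2,\ldots)$ collapse to the homogeneous ones $(1,1,\ldots)$, so $T_r(t)\to T(t)$ and $T_r(r^k t)\to T(e^{-\tau}\ssp t)$. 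Simultaneously, expanding the one-step operator as $B = I + (1-r)\ssp L_B + O\bigl((1-r)^2\bigr)$ identifies $L_B$ with the generator of the backwards semigroup $B(\tau)$ described just above the theorem; consequently $B^k\to B(\tau)$ under the scaling $k(1-r)\to\tau$. Passing to the limit in the iterated identity produces \eqref{eq:intro_qTASEP_intertwining}.

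The main obstacle will be the rigorous justification of the second Poisson limit, particularly the identification of $L_B$. This requires expanding the one-step jump distribution produced by the sequential application of the $P^{(n)}$ in powers of $(1-r)$, and recognizing the rate $n\ssp(x_n-x_{n+1}-1)$ for particle $x_n$ together with the conditional distribution \eqref{eq:q_Hammersley_process} of the target location. A second technical point is that $B(\tau)$ acts on infinite particle configurations, so one must restrict to left-packed states, where only finitely many gaps $x_n-x_{n+1}-1$ are positive and the total jump rate is finite; this makes $B(\tau)$ a well-defined Feller semigroup. With these ingredients, convergence of generators on a core of cylindrical functions upgrades to convergence of semigroups via standard Trotter--Kato type arguments, and combining this with the first limit inside the iterated intertwining yields the theorem.
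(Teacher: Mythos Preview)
Your proposal is correct and follows essentially the same route as the paper. The paper likewise specializes the general intertwining \eqref{eq:B_T_intertw_relation} to the geometric speed sequence, obtains the iterated relation $\tilde T^{\mathrm{qT}}_r(\mathsf{t})\,(\tilde B^{\mathrm{qT}}_r)^m=(\tilde B^{\mathrm{qT}}_r)^m\,\tilde T^{\mathrm{qT}}_r(r^m\mathsf{t})$ (equation \eqref{eq:B_T_intertw_relation_cont_qTASEP}), and then takes the Poisson-type limit $r\nearrow 1$, $m=\lfloor\tau/(1-r)\rfloor$, expanding $\varphi_{q,r^n,0}$ to first order in $1-r$ (equation \eqref{eq:qHahn_distribution_expansion}) to identify the backwards $q$-TASEP generator. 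One small point of care: in the product defining $B$ the $n$-th factor $P^{(0,n)}$ carries the ratio $\alpha_n/\alpha_0=r^n$, not $\alpha_n/\alpha_{n-1}=r$; this does not affect your conclusion that $B$ depends only on $r$, but it is the reason the factor $n$ appears in the rate $n(x_n-x_{n+1}-1)$ in the expansion.
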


Let us reformulate the intertwining relation
\eqref{eq:intro_qTASEP_intertwining} in probabilistic terms.
Fix a left-packed configuration $\mathbf{y}$, and let $\delta_{\mathbf{y}} B(\tau)$ be a \emph{random} configuration obtained from $\mathbf{y}$ by running the backwards
$q$-TASEP dynamics for time~$\tau$. Then, denote by $\mathbf{x}(t)$ the configuration of the $q$-TASEP at
time $t$ started with initial condition
$\mathbf{x}(0)=\mathbf{y}$. Now, fix $\tau$, and run the
backwards $q$-TASEP dynamics from the configuration
$\mathbf{x}(\mathsf{t})$ for time $\tau$. Then, the
distribution of the resulting configuration is \emph{the
same} as the distribution of the $q$-TASEP at time
$e^{-\tau}\mathsf{t}$ but started from the \emph{random}
initial configuration $\delta_{\mathbf{y}} B(\tau)$.

Identity \eqref{eq:intro_qTASEP_intertwining} reduces
to the result obtained earlier in 
Petrov--Saenz \cite{PetrovSaenz2019backTASEP}
and
Petrov \cite{petrov2019qhahn} when applied to the process started from the distinguished
\emph{step} initial configuration
$\mathbf{y}=\mathbf{x}_{step}$ for which $x_n(0)=-n$ for all
$n\in \mathbb{Z}_{\ge1}$. That is, 
\begin{equation}
	\label{eq:intro_qTASEP_intertwining_step}
	\delta_{\mathbf{x}_{step}} 
	\ssp
	T(t)\ssp B(\tau)
	=
	\delta_{\mathbf{x}_{step}} 
	\ssp
	T(e^{-\tau}t).
\end{equation}
Indeed, the action of $B(\tau)$ preserves the step configuration since it moves all the particles left. In probabilistic terms, \eqref{eq:intro_qTASEP_intertwining_step} shows that the backwards process $B(\tau)$ produces a coupling in the reverse time direction of the fixed-time distributions of the $q$-TASEP with the step initial configuration.

\medskip

We arrive at the following \emph{Lax equation} for the $q$-TASEP semigroup:
\begin{equation}
	\label{eq:intro_Lax}
	t\ssp \frac{d}{dt}\ssp T(t)=\left[ \mathsf{B},T(t) \right],
\end{equation}
where $\mathsf{B}$ is the infinitesimal Markov generator of the backwards $q$-TASEP. This follows by differentiating \eqref{eq:intro_qTASEP_intertwining} 
in $\tau$ at $\tau=0$ and slightly rewriting the result using the Kolmogorov (a.k.a~the Fokker--Planck) equation. Equivalently, in terms of expectations, for any left-packed initial configuration $\mathbf{y}$ and a generic function $F$ of the configuration, we have the following evolution equation for the observables:
\begin{equation}
	\label{eq:intro_Lax_2}
	t\ssp
	\frac{d}{dt}\ssp
	\mathbb{E}_{\mathbf{y}}\left( F(\mathbf{x}(t)) \right)=
	\mathsf{B} \ssp \mathbb{E}_{\mathbf{y}}
	\left[ F(\mathbf{x}(t)) \right]-
	\mathbb{E}_{\mathbf{y}}\left[ (\mathsf{B}\ssp F)(\mathbf{x}(t)) \right].
\end{equation}
We use the notation $\mathbb{E}_{\mathbf{y}}$ to denote the expectation with respect to the $q$-TASEP started from $\mathbf{y}$. The first generator $\mathsf{B}$ in the right-hand side of \eqref{eq:intro_Lax_2} acts on $\mathbb{E}_{\mathbf{y}} \left[ F(\mathbf{x}(t)) \right] $ as a function of $\mathbf{y}$, while the second one acts on the function $F$.

It is intriguing that while $\mathsf{B}$ and $T(t)$ depend on $q$, the form of the Lax equations \eqref{eq:intro_Lax}--\eqref{eq:intro_Lax_2} is \emph{the same} for the $q$-TASEP and its $q=0$ specialization, the TASEP. Though the structure and asymptotics of multipoint observables of TASEP is well-studied by now (e.g., see Liu \cite{liu2022multipoint}, and Johansson--Rahman \cite{JohanssonRahman2019}), its extension to $q$-TASEP is mostly conjectural at this point, see Dotsenko \cite{dotsenko2013two}, Prolhac--Spohn \cite{prolhac2011two}, Imamura--Sasamoto--Spohn \cite{imamura2013equal}, and Dimitrov \cite{dimitrov2020two} for related results.

We believe that our Lax equation could be employed to study multipoint asymptotics of the $q$-TASEP and, in a scaling limit, lead to Kadomtsev–Petviashvili (KP) or Korteweg–de Vries (KdV) type equations for limits of the observables \eqref{eq:intro_Lax_2}. The KP and KdV equations were recently derived by Quastel--Remenik \cite{quastel2019kp} for the KPZ fixed point process introduced earlier by Matetski--Quastel--Remekin \cite{matetski2017kpz}. We leave the asymptotic analysis of the Lax equation to future work.

\subsection{Coupling of measures on trajectories}
\label{sub:intro_couplings}

Let us briefly outline the scope of couplings between trajectories of various integrable stochastic particle systems obtained in the present paper. All these couplings, including the examples from \Cref{sub:intro_2_cars,sub:intro_Poisson}, are obtained from the intertwining relations like \eqref{eq:general_intertwining_intro}, \eqref{eq:intro_qTASEP_intertwining} through the \emph{bijectivisation} procedure. This idea originated in Diaconis--Fill \cite{DiaconisFill1990} and was later developed in the context of integrable stochastic particle systems in Borodin--Ferrari \cite{BorFerr2008DF}, Borodin--Gorin \cite{BorodinGorin2008}, Borodin--Petrov \cite{BorodinPetrov2013NN}, and Bufetov--Petrov \cite{BufetovPetrovYB2017}. The building block of all the intertwining relations is the Yang-Baxter equation. We first apply the bijectivisation procedure to the Yang-Baxter equation and obtain elementary Markov steps. They are conditional distributions corresponding to a coupling between two marginal distributions coming from two sides of the Yang-Baxter equation. The bijectivisation of the Yang-Baxter equation is not unique because the coupling is not unique. We focus on the simplest case, which has the ``maximal noise''. This is the case that introduces the most randomness and independence. We recall these constructions in \Cref{sec:bijectivisation}. Let us now describe the couplings we obtain.

\medskip

First, we begin with the intertwining relation
\eqref{eq:general_intertwining_intro} for the fully general fused stochastic
higher spin six vertex model. Graphically it is represented as follows:
\begin{equation*}
	T\ssp P^{(n)}\stackrel{\eqref{eq:general_intertwining_intro}}{=}P^{(n)}\ssp T_{\sigma_{n-1}}
	\qquad \Leftrightarrow
	\qquad 
	\begin{tikzpicture}[baseline=-4pt]
		\def\h{1.6}
		\def\v{.6}

		\node (g0) at (0,\v) {$\mathbf{x}$};
		\node (g1) at (1*\h,\v) {$\mathbf{x}'$};
		\node (gh0) at (0,   -1*\v) {$\mathbf{y}$};
		\node (gh1) at (1*\h,-1*\v) {$\mathbf{y}'$};

		\draw [densely dashed,thick,->] (g0)--(g1) node[midway,yshift=15pt,anchor=north] {$T$};
		\draw [densely dashed,thick,->] (gh0)--(gh1) node[midway,yshift=19pt,anchor=north] {$T_{\sigma_{n-1}}$};
		\draw [densely dashed,thick,->] (g0)--(gh0) node[midway,anchor=east] {$P^{(n)}$};
		\draw [densely dashed,thick,->] (g1)--(gh1) node[midway,anchor=west] {$P^{(n)}$};
	\end{tikzpicture}
\end{equation*}
In words,
fix a particle configuration $\mathbf{x}$, apply the Markov operator $T$, and then $P^{(n)}$.
Relation \eqref{eq:general_intertwining_intro} implies that the
distribution of the resulting random configuration $\mathbf{y}'$ 
is the same as if we first applied $P^{(n)}$ and then $T_{\sigma_{n-1}}$.
\Cref{sub:up_down_on_configurations} defines
two couplings based on the intertwining \eqref{eq:general_intertwining_intro}.
The coupling $D^{(n)}$ from the left- to the right-hand side 
of \eqref{eq:general_intertwining_intro}
(\emph{future to past} in our terminology)
samples $\mathbf{y}$ given $\mathbf{x},\mathbf{x}',\mathbf{y}'$.
The coupling $U^{(n)}$ in the opposite direction (\emph{past to future} in our terminology)
samples $\mathbf{x'}$ given $\mathbf{x},\mathbf{y},\mathbf{y}'$.
Both couplings are compatible with \eqref{eq:general_intertwining_intro}
in the sense that they satisfy a \emph{detailed balance equation}; see
\eqref{eq:Dn_Un_detailed_balance_equation} in the text.

\medskip

Iterating the couplings 
$D^{(n)},U^{(n)}$
over time, we obtain a bijectivisation of
the relation $T^k
P^{(n)}=P^{(n)}\ssp T_{\sigma_{n-1}}^k$ for any time
$k\in\mathbb{Z}_{\ge1}$. This
leads to two discrete-time Markov operators for
rewriting history (in two directions). Their general
construction is given in
\Cref{def:rewriting_history_operators} after
\Cref{thm:coupling_trajectories_with_Pn_swap}, and the
general definitions are expanded in terms of particle
systems in \Cref{sec:discrete_time_bijectivisation}.

We write down
concrete operators for rewriting history in $q$-TASEP 
in
\Cref{sub:rewriting_history_qTASEP_new,sub:rewriting_history_qTASEP_UP_new} by keeping $n$ fixed,
taking the Poisson type limit as $k\to\infty$ to continuous
time, and specializing to $q$-TASEP.
For $q=0$ and $n=1$, these rewriting history operators give rise to the coupling results
for the two-particle TASEP and Poisson processes
presented in \Cref{sub:intro_2_cars,sub:intro_Poisson}
above.

\medskip

Furthermore, we get a bijectivisation of the intertwining relation $T^kB=B\ssp T_{\mathrm{shift}}^k$ for any time $k\in \mathbb{Z}_{\ge1}$. This relation is an iteration of \eqref{eq:shift_intertwining_intro} over $k$. Namely, a bijectivisation of $T^kB=B\ssp T_{\mathrm{shift}}^k$ is obtained by iterating $D^{(n)}, U^{(n)}$ over $n=1,2,\ldots $ and then iterating over time $k$, respectively. 

Next, we specialize to $q$-TASEP, take the Poisson type limit as $k\to\infty$ to continuous time, and further take another Poisson type limit in the particle speeds $\alpha_j=r^j$ as $r\to1$ as explained in \Cref{sub:intro_Lax}. This leads to a bijectivisation of the continuous time relation $T(t)\ssp B(\tau)=B(\tau)\ssp T(e^{-\tau}t)$. The latter bijectivisation is a pair of continuous time Markov processes on the space of $q$-TASEP trajectories, which either speeds up or slows down the time in the process with homogeneous particle speeds. These rewriting history processes in continuous time are constructed and described in \Cref{sec:ind_bij_cont_time_new}. See \Cref{prop:action_Xi_down_cont_new,prop:action_Xi_up_cont} for the main results.

\medskip

While our exploration of couplings is extensive, in this paper we only
describe some of the possible constructions. One can
continue our methods in the following directions:
\begin{enumerate}[$\bullet$]
	\item 
		Colored stochastic higher spin vertex models, introduced and
		studied in Borodin--Wheeler
		\cite{borodin_wheeler2018coloured} and further works, also
		possess stochastic Yang-Baxter equations leading to
		intertwining, couplings, and corresponding Lax equations.
	\item 
		Within uncolored systems (the setting of the present
		paper), there are two natural directions. First, taking
		different bijectivisations of the Yang-Baxter equation
		(which are not maximally independent) could produce more
		couplings of particle system trajectories and Poisson
		processes with other nontrivial properties. 
	\item 
		We
		mainly restricted our couplings to the $q$-TASEP, for which
		the intertwiner $B$ preserves the distinguished step
		configuration. In a second natural
		direction within uncolored systems,
		focusing on the 
		Schur vertex model (discussed in
		\Cref{sec:schur_vertex_model} below), we see that the intertwiner \emph{does
		not} preserve the step configuration. Thus, the resulting
		couplings would not be monotone. It would be interesting to
		see which probabilistic properties these couplings still
		satisfy. 
\end{enumerate}
We plan to address these directions in future work.

\subsection{Outline}
\label{sub:outline}

The paper consists of two parts. In the first part, we derive intertwining relations and study their consequences. In more detail, in \Cref{sec:stoch_vert_models}, we recall the stochastic higher spin six vertex models, and in \Cref{sec:YBE}, write down a ``vertical'' Yang-Baxter equation for them. We also investigate conditions under which the cross vertex weights (that is, the R-matrix) are nonnegative and thus lead to Markov transition operators. Then, in \Cref{sec:comm_rel}, we prove our main intertwining results which in full generality follow directly from the Yang-Baxter equation. In \Cref{sec:comm_rel_specializations} and \Cref{sec:schur_vertex_model}, we specialize the general intertwining relations to concrete particle systems such as the $q$-Hahn TASEP, the $q$-TASEP, the TASEP, and the Schur vertex model.

In the second part, we use the intertwining relations to construct couplings between probability measures on trajectories of particle systems which differ by a permutation of the speed parameters. The couplings are based on the bijectivisation procedure for the Yang-Baxter equation, which we review in \Cref{sec:bijectivisation}. The conditional distribution for such a coupling is realized as a ``rewriting history'' process that randomly resamples a particle system's trajectory. In \Cref{sec:discrete_time_bijectivisation}, we construct rewriting history processes for general discrete time integrable stochastic interacting particle systems. In \Cref{sec:ind_bij_cont_time_new}, we specialize our constructions to concrete rewriting history dynamics for the continuous time $q$-TASEP and TASEP. Finally, in \Cref{sec:limit_equal_speeds_new}, we take a limit of our couplings to the case of homogeneous particle speeds, making the rewriting history dynamics evolve in continuous time. As a byproduct, in \Cref{sub:TASEP_equal_speeds}, we construct a new coupling of the standard Poisson processes on the positive real half-line with different rates.

\subsection{Acknowledgments}
\label{sub:acknowledgements}

LP is grateful to Alexei Borodin and Douglas Rizzolo for helpful discussions. The work of LP was partially supported by the NSF grants DMS-1664617 and DMS-2153869, and the Simons Collaboration Grant for Mathematicians 709055. This material is based upon work supported by the National Science Foundation under grant DMS-1928930 while both authors participated in the program ``Universality and Integrability in Random Matrix Theory and Interacting Particle Systems'' hosted by the Mathematical Sciences Research Institute in Berkeley, California, during the Fall 2021 semester.

\newpage
\part{Intertwining Relations for Integrable Stochastic Systems}

In the first part, we obtain new 
intertwining
relations 
between
transfer matrices (viewed as one-step Markov transition operators)
of the stochastic higher spin six vertex model
with different sequences of parameters. The intertwining
operators come from the R-matrix in the
vertical Yang-Baxter equation, and are also Markov transition operators.

\section{Stochastic higher spin six vertex model and exclusion process}
\label{sec:stoch_vert_models}

Here we recall the most general 
integrable 
stochastic
particle system considered in the paper,
in both vertex model and 
exclusion process
settings.
This material is well-developed in several works on 
stochastic vertex models.
In our exposition
we follow
\cite{CorwinPetrov2015arXiv} and
\cite{BorodinPetrov2016inhom}.

\subsection{The $q$-deformed beta-binomial distribution}
\label{sub:phi_distribution}

We need
the \emph{$q$-deformed beta-binomial distribution}
$\varphi_{q,\mu,\nu}$ from \cite{Povolotsky2013}, \cite{Corwin2014qmunu}.
Let $q\in[0,1)$.
Throughout the paper, we use the following notation for the 
$q$-Pochhammer symbols
\begin{equation}
 \label{eq:qPochhammer}
 (a;q)_k\coloneqq 
 (1-a)(1-aq)\ldots
 (1-aq^{k-1}),\quad k\ge1;
 \qquad 
 (a;q)_0\coloneqq 1,
 \qquad 
 (a;q)_{\infty}\coloneqq\prod_{i=0}^{\infty}(1-aq^i).
\end{equation}
For $k\le -1$, we use the standard convention 
\begin{equation}
 \label{eq:qPochhammer_negative}
 (a;q)_k=\frac{1}{(a/q;1/q)_{-k}}.
\end{equation}
For $m\in \mathbb{Z}_{\ge0}$, consider the following 
distribution on 
$\left\{ 0,1,\ldots,m \right\}$:
\begin{equation}
 \varphi_{q,\mu,\nu}(j\mid m)=
 \mu^j\,\frac{(\nu/\mu;q)_j(\mu;q)_{m-j}}{(\nu;q)_m}
 \frac{(q;q)_m}{(q;q)_j(q;q)_{m-j}}
 ,
 \qquad 
 0\le j\le m.
 \label{eq:phi_finite}
\end{equation}
Throughout the paper, we sometimes write
$\varphi_{q,\mu,\nu}(j\mid m)$ when $j>m$ or $j<0$, and agree that this expression equals zero in those cases.

When $m=+\infty$, extend the definition as
\begin{equation}
 \label{eq:phi_infinite}
 \varphi_{q,\mu,\nu}(j\mid \infty)=
 \mu^j\frac{(\nu/\mu;q)_j}{(q;q)_j}\frac{(\mu;q)_\infty}{(\nu;q)_\infty},
 \qquad 
 j\in \mathbb{Z}_{\ge0}.
\end{equation}
The quantities \eqref{eq:phi_finite} and \eqref{eq:phi_infinite} sum to one:
\begin{equation*}
 \sum_{j=0}^{m}\varphi_{q,\mu,\nu}(j\mid m)=1,\qquad m \in\left\{ 0,1,\ldots \right\}
 \cup\left\{ +\infty \right\}.
\end{equation*}

The distribution $\varphi_{q,\mu,\nu}$ depends on $q\in[0,1)$ 
and two other parameters~$\mu,\nu$.
We will use the following two cases in which
the weights
$\varphi_{q,\mu,\nu}(j\mid m)$ are nonnegative (and hence define a probability distribution):\footnote{These two cases do not
exhaust the full range of parameters $(q,\mu,\nu)$
for which the weights are nonnegative.
See, e.g., \cite[Section 6.6.1]{BorodinPetrov2016inhom}
for additional cases leading to nonnegative weights.}
\begin{align}
 \label{eq:phi_cases_nonnegative_1}
 &\textnormal{$0\le \mu\le 1$ and $\nu\le \mu$;}
 \\
 \label{eq:phi_cases_nonnegative_2}
 &\textnormal{$\nu\le 0$ and $\mu=q^J\nu$ for some $J\in \mathbb{Z}_{\ge0}$.}
\end{align}

\subsection{Stochastic vertex weights}
\label{sub:stochastic_fused_weights}

We consider the \emph{stochastic higher spin six vertex weights}
$L^{(J)}_{u,s}$ which depend on the following parameters:
\begin{equation}
 \label{eq:parameters_for_LJ_with_restrictions}
 q\in [0,1),
 \qquad 
 u\in [0,+\infty), 
 \qquad 
 s\in (-1,0],
 \qquad J\in \mathbb{Z}_{\ge1}.
\end{equation}
Here $q$ is the main ``quantum'' parameter, fixed throughout the paper, and all other parameters may vary from vertex to vertex.
The weights $L_{u,s}^{(J)}(i_1,j_1;i_2,j_2)$ are
indexed by a quadruple of integers, where
$i_1,i_2 \in \mathbb{Z}_{\ge0}$ and $j_1,j_2\in \left\{ 0,1,\ldots,J \right\}$,
and are defined as
\begin{equation}
 \label{eq:LJ_definition}
 \begin{split}
 &
 L^{(J)}_{u,s}(i_1,j_1;i_2,j_2)\coloneqq
 \mathbf{1}_{i_1+j_1=i_2+j_2}
 \frac{(-1)^{i_1}q^{
 \frac12 i_1(i_1+2j_1-1)}
 u^{i_1}s^{j_1+j_2-i_2}(u s^{-1};q)_{j_2-i_1}}
 {(q;q)_{i_2} (s u;q)_{i_2+j_2}
 (q^{J+1-j_1};q)_{j_1-j_2}}
 \\&\hspace{220pt}
 \times{}_{4}\bar{\phi}_3\left(\begin{matrix} q^{-i_2};q^{-i_1},s u q^{J},q s/u\\
 s^{2},q^{1+j_2-i_1},q^{J+1-i_2-j_2}\end{matrix}
 \bigg|\, q,q\right).
 \end{split}
\end{equation}
Here and throughout the paper
the notation $\mathbf{1}_{A}$ means
the indicator of an event or a condition $A$, and 
$_4\bar{\phi}_3$ is the regularized
(terminating)
$q$-hypergeometric series, where
\begin{equation}
 \label{eq:r_phi_r_reg_function}
 \begin{split}
 {}_{r+1}\bar{\phi}_r
 \left(
 \begin{matrix} 
 q^{-n};a_1,\ldots,a_r \\
 b_1, \ldots ,b_r
 \end{matrix}
 \bigg|\, q,z\right)
 &\coloneqq
 {}_{r+1}{\phi}_r
 \left(
 \begin{matrix} 
 q^{-n},a_1,\ldots ,a_r\\
 b_1,\ldots ,b_r
 \end{matrix}
 \bigg|\, q,z\right)
 \prod_{i=1}^{r}(b_i;q)_n
 \\
 &=
 \sum_{k=0}^{n}
 \frac{z^k (q^{-n};q)_k}{(q;q)_k}
 \prod_{i=1}^{r}(a_i;q)_k(b_iq^k;q)_{n-k}.
 \end{split}
\end{equation}
The condition that $L^{(J)}_{u,s}(i_1,j_1;i_2,j_2)$ vanishes unless $i_1+j_1=i_2+j_2$ is the \emph{path conservation property}: the total number of incoming paths (from below and from the left) is equal to the total number of outgoing paths (to the right and upwards) at a vertex; see \Cref{fig:LJ_vertex} for an illustration.

\begin{figure}[htb]
 \centering
 \includegraphics[width=.75\textwidth]{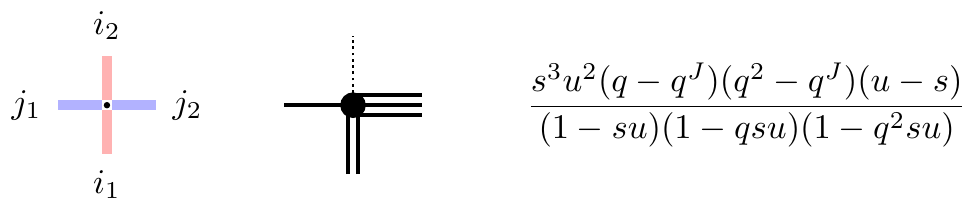}
 \caption{Left: Notation of incoming and outgoing path counts at a vertex.
 Center: The vertex of type $(2,1;0,3)$.
 Right: The weight $L^{(J)}_{u,s}(2,1;0,3)$.}
 \label{fig:LJ_vertex}
\end{figure}

The vertex weights $L^{(J)}_{u,s}$ are called \emph{stochastic} because they satisfy the following
properties:
\begin{proposition}
 \label{prop:LJ_properties}
 If the parameters $q,u,s,J$ satisfy \eqref{eq:parameters_for_LJ_with_restrictions},
 then
 \begin{enumerate}[\bf1.\/]
 \item We have
 $0\le L^{(J)}_{u,s}(i_1,j_1;i_2,j_2)\le 1$
 for all 
 $i_1,i_2 \in \mathbb{Z}_{\ge0}$ and
 $j_1,j_2\in \left\{ 0,1,\ldots,J \right\}$.
 \item For any fixed 
 $i_1\in \mathbb{Z}_{\ge0}$ and $j_1
 \in\left\{ 0,1,\ldots,J \right\}$,
 we have
 \begin{equation}
 \label{eq:LJ_sum_to_one_property}
 \sum_{i_2=0}^{\infty}\sum_{j_2=0}^{J}
 L^{(J)}_{u,s}(i_1,j_1;i_2,j_2)
 =1.
 \end{equation}
 Note that due to the path conservation, this sum is always finite.
 \end{enumerate}
\end{proposition}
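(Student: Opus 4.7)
The plan is to prove both parts of the Proposition via the \emph{fusion} procedure that builds $L^{(J)}_{u,s}$ from the basic $L^{(1)}_{u,s}$ stochastic six-vertex weights. Fusion manifestly preserves stochasticity, so the work will reduce to verifying the base case $J=1$ and then identifying the combinatorial output of fusion with the closed-form \eqref{eq:LJ_definition}.

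First, I would specialize \eqref{eq:LJ_definition} to $J=1$, in which case the $_4\bar{\phi}_3$ series collapses to a few terms and the formula reduces to the well-known stochastic six-vertex weight. Direct inspection of the finitely many configurations of $(j_1,j_2)\in\{0,1\}^2$, together with the path-conservation constraint $i_1+j_1=i_2+j_2$, shows that under the restrictions $q\in[0,1)$, $u\ge 0$, $s\in(-1,0]$, each $L^{(1)}$-weight lies in $[0,1]$ and $\sum_{i_2,j_2}L^{(1)}_{u,s}(i_1,j_1;i_2,j_2)=1$ for any fixed $(i_1,j_1)$. The restriction $s\le 0$ plays a role here by making certain numerator factors $1-s^2$-type nonnegative.

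Next, I would interpret $L^{(J)}$ as the fused weight obtained by stacking $J$ horizontal strips and $q$-symmetrizing. Concretely, one considers a column of $J$ basic vertices sharing vertical edges, with prescribed total incoming/outgoing horizontal counts $j_1,j_2\in\{0,\ldots,J\}$, and defines the fused weight as a sum over all intermediate states of products of $L^{(1)}$-weights, with the $q$-symmetrization weights of Kirillov--Reshetikhin forming a probability distribution on the orderings of the incoming edges. Since these symmetrization weights are nonnegative and sum to $1$, the fused weight is a convex combination of products of stochastic $L^{(1)}$-weights, and both claims of the Proposition follow immediately from the $J=1$ case. The remaining step is to verify that this combinatorial fusion formula coincides with the closed form \eqref{eq:LJ_definition} --- an identity established using a Sears / Jackson $q$-hypergeometric summation in \cite{CorwinPetrov2015arXiv} and \cite{BorodinPetrov2016inhom}, which I would cite rather than reproduce.

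The main obstacle is precisely the matching of the combinatorial fusion sum to the $_4\bar{\phi}_3$ closed form. The nonnegativity part of the Proposition is the more delicate half, because the $_4\bar{\phi}_3$ series in \eqref{eq:LJ_definition} has mixed-sign terms and positivity is not visible from the closed form alone; the fusion interpretation is what makes it manifest. A purely direct alternative would be to reduce the double sum in \eqref{eq:LJ_sum_to_one_property} to a single sum over $j_2\in\{0,\ldots,J\}$ (using path conservation to fix $i_2=i_1+j_1-j_2$) and apply a $q$-Saalschütz or Sears transformation; this bypasses fusion but depends on the same nontrivial $q$-series identity. Either route gives the full statement in a few lines once the relevant summation identity is in hand.
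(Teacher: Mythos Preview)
Your proposal is correct and follows essentially the same route as the paper: verify stochasticity directly for the explicit $J=1$ weights, then obtain the general $J$ case via fusion, which expresses $L^{(J)}_{u,s}$ as a convex combination of products of $L^{(1)}$-weights (with spectral parameters $u,qu,\ldots,q^{J-1}u$), and finally appeal to the literature for the identification of the fused sum with the closed form \eqref{eq:LJ_definition}. The paper's proof sketch is identical in structure, citing \cite{CorwinPetrov2015arXiv}, \cite{BorodinPetrov2016inhom}, and \cite{borodin_wheeler2018coloured} for the fusion details and \cite{Mangazeev2014} for the hypergeometric formula.
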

\begin{proof}[Idea of proof]
 First, observe that for $J=1$
 the sum in \eqref{eq:r_phi_r_reg_function}
 reduces to at most one term, 
 and the 
 vertex weights $L^{(1)}_{u,s}$ become 
 the following
 explicit rational functions:
 \begin{equation}
 \label{eq:L1_weights_explicit}
 \begin{array}{ll}
 L^{(1)}_{u,s}(g,0;g,0)=\dfrac{1-q^g s u}{1-s u}
 ,&\hspace{30pt}
 L^{(1)}_{u,s}(g,0;g-1,1)=
 \dfrac{-su(1-q^g)}{1-su};
 \\[10pt]
 L^{(1)}_{u,s}(g,1;g,1)=
 \dfrac{-su+q^g s^2}{1-su}
 ,&\hspace{30pt}
 L^{(1)}_{u,s}(g,1;g+1,0)=
 \dfrac{1-q^g s^2}{1-su}
 .
 \end{array}
 \end{equation}
 Then, both statements of the proposition are immediate for $J=1$. 
 The case of arbitrary $J$ follows from the $J=1$ case 
 using the \emph{stochastic fusion procedure}.
 This involves stacking
 $J$ vertices with weights
 $L^{(1)}_{u,s},
 L^{(1)}_{qu,s},
 \ldots, L^{(1)}_{q^{J-1}u,s}$,
 on top of each other, and 
 summing over all possible combinations of 
 outgoing paths. That is, the vertex weight $L^{(J)}_{u,s}(i_1,j_1;i_2,j_2)$ can
 be represented as a convex combination of products of the $J=1$ vertex weights with varying spectral parameters. 
 We refer to
 \cite[Theorem 3.15]{CorwinPetrov2015arXiv},
 \cite[Section~5]{BorodinPetrov2016inhom}, or
 \cite[Appendix B]{borodin_wheeler2018coloured}
 for details.
 We also remark that formula \eqref{eq:LJ_definition}
 for the fused weights is essentially due to 
 \cite{Mangazeev2014},
 and the fusion itself dates back to 
 \cite{KulishReshSkl1981yang}.
\end{proof}

Thanks to \Cref{prop:LJ_properties}, 
we can view each vertex weight 
$L^{(J)}_{u,s}(i_1,j_1;i_2,j_2)$
with fixed incoming path counts $(i_1,j_1)$
as a probability distribution on 
all possible combinations of outgoing paths $(i_2,j_2)$.
In \Cref{sub:particle_systems} below, we use this probabilistic
interpretation to 
build stochastic particle systems out of 
the vertex weights 
$L^{(J)}_{u,s}$.

Let us also make two remarks on the fusion 
procedure which was
mentioned in the proof of \Cref{prop:LJ_properties}.
\begin{remark}[Fusion]
 \label{rmk:LJ_rational_in_qJ}
 \begin{enumerate}[\bf1.\/]
 \item 
 The fused weights $L^{(J)}_{u,s}$
 \eqref{eq:LJ_definition}
 are manifestly rational in $q^J$.
 Therefore, 
 $q^J$ may be treated as an independent parameter and, moreover,
 may be specialized 
 to a complex number not necessarily from the set 
 $q^{\mathbb{Z}_{\ge1}}=\left\{ q,q^2,q^3,\ldots \right\}$. 
 This analytic continuation preserves the sum to one property
 \eqref{eq:LJ_sum_to_one_property} for the
 vertex weights
 $L^{(J)}_{u,s}(i_1,j_1;i_2,j_2)$ when summed over
 $i_2,j_2\ge0$
 (the path counts $i_1,j_1,i_2,j_2$ are always assumed to be nonnegative
 integers).
 Note however that the
 nonnegativity of the vertex weights $L^{(J)}_{u,s}$ 
 has to be checked separately after such a
 continuation.
 \item 
 For $J=1$, the weights $L_{u,s}^{(1)}$ may also be viewed as a stochastic fusion of the stochastic six vertex weights
 along the vertical edges.
 The latter arise from $L^{(1)}_{u,s}(i_1,j_1;i_2,j_2)$ 
 by taking the specialization $s=q^{-\frac{1}{2}}$, which forces
 the path counts $i_1,i_2 \in \left\{ 0,1 \right\}$
 (in addition to the constraint $j_1,j_2 \in\left\{ 0,1 \right\}$
 due to $J=1$).
 Note that 
 $s=q^{-\frac{1}{2}}$ falls outside of $(-1,0]$, contradicting the assumption
 \eqref{eq:parameters_for_LJ_with_restrictions},
 but one readily
 checks that the vertex weights $L^{(1)}_{u,\ssp q^{-1/2}}$ are also nonnegative for $q\in[0,1)$ and
 $u\ge q^{-\frac{1}{2}}$.
 \end{enumerate}
\end{remark}

The vertex weights $L^{(J)}_{u,s}$ generalize the
$q$-beta-binomial
distribution $\varphi_{q,\mu,\nu}$ described
in \Cref{sub:phi_distribution},
and reduce to it in two cases. First, 
setting $u=s$, we have
\cite[Proposition 6.7]{Borodin2014vertex}:
\begin{equation}
 \label{eq:qHahn_reduction_of_LJ}
 L^{(J)}_{s,s}(i_1,j_1;i_2,j_2)=
 \mathbf{1}_{i_1+j_1=i_2+j_2}
 \cdot\mathbf{1}_{j_2\le i_1}
 \cdot
 \varphi_{q,q^Js^2,s^2}(j_2\mid i_1).
\end{equation}
In order to make \eqref{eq:qHahn_reduction_of_LJ}
nonnegative, we should treat $\mu=q^Js^2$ as a
parameter independent of~$\nu$ (with $q^J$ not from $q^{\mathbb{Z}_{\ge1}}$),
and require that 
$0\le \mu\le 1$ and $\nu\le \mu$, as in 
the first case in \eqref{eq:phi_cases_nonnegative_1}.
This analytic continuation is necessary since the substitution
$u=s$ falls outside of the parameter range 
\eqref{eq:parameters_for_LJ_with_restrictions}.

Second, 
in the limit 
as $i_1,i_2\to+\infty$, we have
(e.g., see \cite[Appendix A.2]{BufetovMucciconiPetrov2018}):
\begin{equation}
 \label{eq:L_J_infinity_weights}
 L^{(J)}_{u,s}(\infty,j_1;\infty,j_2)=
 \varphi_{q,suq^J,su}(j_2\mid \infty).
\end{equation}
Here $j_2\in \mathbb{Z}_{\ge0}$ is arbitrary, 
\eqref{eq:L_J_infinity_weights}
does not depend on $j_1$, and 
the path conservation property disappears.
The weights 
\eqref{eq:L_J_infinity_weights}
are nonnegative for $J\in \mathbb{Z}_{\ge1}$; see
\eqref{eq:phi_cases_nonnegative_2}.
Another choice to make 
\eqref{eq:L_J_infinity_weights} nonnegative is to
take an analytic continuation with $su\ge0$ and $\mu=q^Jsu$
treated as an independent parameter with $q^J\notin q^{\mathbb{Z}_{\ge1}}$.

\subsection{Particle systems}
\label{sub:particle_systems}

We define two state spaces for two versions of our Markov dynamics.
\begin{definition}
 \label{def:state_spaces}
 The \emph{vertex model state space} is
 \begin{equation}
 \label{eq:G_state_space}
 \mathscr{G}\coloneqq
 \biggl\{ \mathbf{g}=(g_1,g_2,\ldots )\colon g_i\in \mathbb{Z}_{\ge0},\ 
 \sum_{i=1}^{\infty}g_i<\infty \biggr\}.
 \end{equation}
 The last condition means that only finitely many of the $g_i$'s are nonzero.
 
 The \emph{exclusion process state space} is
 \begin{equation}
 \label{eq:X_state_space}
 \mathscr{X}\coloneqq
 \Bigl\{
 \mathbf{x}=
 \left( x_1>x_2>x_3>\ldots \right)\colon x_i\in \mathbb{Z},
 \ \,
 x_n=-n \ \textnormal{for all
 sufficiently large $n$}
 \Bigr\}.
 \end{equation}
 We view $\mathbf{x}$ as a particle configuration in 
 $\mathbb{Z}$, which is empty far to the right and 
 densely packed far to the left. 
 In other words, every $\mathbf{x}\in \mathscr{X}$
 differs from the distinguished \emph{step configuration}
 $\mathbf{x}_{step}\coloneqq\left\{ -1>-2>-3>\ldots \right\}$
 by finitely many particle jumps to the right by one, when a particle may 
 only jump to an unoccupied location.
\end{definition}

\begin{definition}[Gap-particle transformation]
 \label{def:gap_particle_transform}
 Let the (well-known) bijection $\mathscr{X} \rightarrow \mathscr{G}$ with $\mathbf{x} \mapsto \mathbf{g}$ be defined as 
 \begin{equation}
 \label{eq:gap_particle_transform}
 g_i=x_{i}-x_{i+1}-1,\qquad i\ge1; 
 \end{equation}
 see \Cref{fig:stochastic_vertex_model_exclusion_process} 
 for an illustration. Additionally, we use the convention $g_0=x_0=+\infty$, 
 which extends \eqref{eq:gap_particle_transform} to $i=0$.
\end{definition}

We refer to \eqref{eq:gap_particle_transform} as the 
\emph{gap-particle transformation}. Note, in particular, that the distinguished step configuration of particles, $\mathbf{x}_{step}$, corresponds to the empty configuration $\mathbf{g}_{step}\coloneqq (0,0,\ldots )$. In the special case, when the updates are parallel and not sequential, this is the same as the zero range process (ZRP) / ASEP transformation, e.g., see \cite{Povolotsky2013}. 

We are now in a position to describe 
the \emph{fused stochastic higher spin six vertex} (\emph{FS6V\/}) model $\mathbf{g}(t)$
and its \emph{exclusion process} counterpart, $\mathbf{x}(t)$.
Both models were introduced in
\cite{CorwinPetrov2015arXiv} for homogeneous parameters $u_i\equiv u$, $s_i\equiv s$,
and their inhomogeneous versions were considered
in \cite{BorodinPetrov2016inhom}.
Here and throughout the rest of the section,
$t\in \mathbb{Z}_{\ge0}$ stands for discrete time in
$\mathbf{g}(t)$.

The time-homogeneous Markov
process $\{\mathbf{g}(t)\}_{t\in \mathbb{Z}_{\ge0}}$ on $\mathscr{G}$
depends on two sequences of parameters
\begin{equation}
 \label{eq:parameters_for_higher_spin_vertex_model}
 \mathbf{s}=(s_0,s_1,s_2,\ldots ),\quad
 s_i\in (-1,0];
 \qquad 
 \mathbf{u}=(u_0,u_1,u_2,\ldots ),\quad
 u_i\in [0,+\infty),
\end{equation}
as well as on the parameters $q\in [0,1)$ and $J \in \mathbb{Z}_{\geq 1}$, as in \eqref{eq:parameters_for_LJ_with_restrictions}.
For convergence reasons discussed in 
\Cref{lemma:higher_spin_model_is_well_defined} below, we
assume that
\begin{equation}
 \label{eq:uniformly_bounded_propagation_condition}
 \frac{(-s_i)(u_i-s_i)}{1-u_is_i}<1-\varepsilon<1
\end{equation}
for some fixed $\varepsilon>0$ and all $i$ large enough.
For future use, let us write 
$(\mathbf{u},\mathbf{s})\in \mathcal{T}$
if the parameters satisfy the conditions
\eqref{eq:parameters_for_higher_spin_vertex_model}--\eqref{eq:uniformly_bounded_propagation_condition}.

\begin{remark}
 \label{rmk:epsilon_condition}
 One readily sees that if $-1<s_i< 0$ and $u_i>0$, then 
 $\frac{(-s_i)(s_i-u_i)}{1-s_iu_i}< 1$.
 The condition \eqref{eq:uniformly_bounded_propagation_condition} 
 is stronger 
 than \eqref{eq:parameters_for_higher_spin_vertex_model}
 in that it does not allow these ratios
 to get arbitrarily close to $1$ as $i$ goes to infinity.
\end{remark}

Let us describe how to randomly update the FS6V model $\mathbf{g}(t)$ in (discrete) time $t$. Fix time $t\in \mathbb{Z}_{\geq 0}$, set $\mathbf{g}=\mathbf{g}(t)\in \mathscr{G}$, and let $\mathbf{g}'=\mathbf{g}(t+1)\in \mathscr{G}$ be the random update. The update is independent of time $t$ and occurs as follows:
\begin{equation}
 \label{eq:random_update_stoch_vertex_model}
 g_i' = g_i+h_{i-1}-h_i,
 \qquad i=1,2,\ldots ,
\end{equation}
so that $h_i \in \left\{ 0,1,\ldots,J \right\}$, with $i\in \mathbb{Z}_{\ge0}$, are random variables that 
are sampled sequentially
using the stochastic vertex weights $L^{(J)}_{u_i,s_i}$ for $i=0,1,2,\ldots $.
Namely, $h_0$ is sampled from the probability distribution
$L^{(J)}_{u_0,s_0}(\infty,0;\infty,h_0)$
\eqref{eq:L_J_infinity_weights}.
Then sequentially for $i=1,2,\ldots $, given $h_{i-1}$ and $g_i$,
we sample
the pair $(g_i',h_i)$ with $g_i'+h_i=g_i+h_{i-1}$
from the probability distribution
$L^{(J)}_{u_i,s_i}(g_i,h_{i-1};g_i',h_{i})$
\eqref{eq:LJ_definition}. Below, in \Cref{lemma:higher_spin_model_is_well_defined}, we show that eventually the update terminates, making it well-defined.

\begin{lemma}
 \label{lemma:higher_spin_model_is_well_defined}
 We have $h_i=0$ for all $i$ large enough for the update \eqref{eq:random_update_stoch_vertex_model} with probability $1$.
\end{lemma}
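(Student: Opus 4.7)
The plan is to exploit two features of the update rule \eqref{eq:random_update_stoch_vertex_model}: first, since $\mathbf{g}\in\mathscr{G}$, there exists a (random) index $N$ with $g_i=0$ for all $i>N$, so that beyond $N$ the variables $(h_i)$ evolve as an inhomogeneous Markov chain on the finite alphabet $\{0,1,\ldots,J\}$; second, under the hypothesis \eqref{eq:uniformly_bounded_propagation_condition} the state $0$ is absorbing and is reached uniformly quickly at each step. A Borel--Cantelli argument then concludes.

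First I would verify that $0$ is absorbing by plugging $(i_1,j_1,i_2,j_2)=(0,0,0,0)$ into \eqref{eq:LJ_definition}, which directly yields $L^{(J)}_{u,s}(0,0;0,0)=1$. Next, for the probability of absorption in a single step from an arbitrary $j_1\ge 1$, I would compute $L^{(J)}_{u,s}(0,j_1;j_1,0)$ by noting that with $i_1=0$ the factor $(q^{-i_1};q)_k=(1;q)_k$ in the regularized ${}_4\bar{\phi}_3$ series kills every term with $k\ge 1$; only the $k=0$ summand survives, and after cancellation with the prefactor one obtains the clean closed form
\begin{equation*}
	L^{(J)}_{u,s}(0,j_1;j_1,0)=\frac{(s^2;q)_{j_1}}{(su;q)_{j_1}}=\prod_{k=0}^{j_1-1}\biggl(1-\frac{q^k\ssp(-s)(u-s)}{1-su\ssp q^k}\biggr).
\end{equation*}

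The key step is a uniform lower bound for this expression. The elementary inequality $q^k(1-su)\le 1-suq^k$ (which reduces to $q^k\le 1$) gives $\frac{q^k(-s)(u-s)}{1-su\ssp q^k}\le \frac{(-s)(u-s)}{1-su}$, so the hypothesis \eqref{eq:uniformly_bounded_propagation_condition} yields $L^{(J)}_{u_i,s_i}(0,j_1;j_1,0)\ge \varepsilon^{j_1}\ge \varepsilon^{J}$ uniformly in $j_1\in\{1,\ldots,J\}$, for all $i$ sufficiently large. Since conditionally on $g_i=0$ the transition from $h_{i-1}=j_1$ to $h_i=0$ has probability exactly $L^{(J)}_{u_i,s_i}(0,j_1;j_1,0)$, it follows that $\mathbb{P}(h_i\ne 0 \text{ for all } M\le i\le M+k)\le (1-\varepsilon^J)^k$. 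This sum is geometric in $k$, so Borel--Cantelli combined with the absorbing nature of $0$ gives $h_i=0$ for all $i$ sufficiently large, almost surely.

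The main obstacle is extracting the clean closed form $L^{(J)}_{u,s}(0,j_1;j_1,0)=(s^2;q)_{j_1}/(su;q)_{j_1}$ from the intricate formula \eqref{eq:LJ_definition}; once this is in hand, the remainder is a routine geometric-absorption estimate powered by the uniform bound \eqref{eq:uniformly_bounded_propagation_condition}.
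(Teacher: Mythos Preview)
Your proof is correct and follows essentially the same strategy as the paper's: observe that $0$ is absorbing for the chain $(h_i)$ once past the index where all $g_i$ vanish, compute a specific one-step transition weight in closed form, bound it via \eqref{eq:uniformly_bounded_propagation_condition}, and finish with a geometric-decay argument.

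The only difference is in \emph{which} transition you bound. The paper computes the stay-put probability
\[
L^{(J)}_{u,s}(0,j;0,j)=\frac{s^{2j}(u/s;q)_j}{(su;q)_j}=\prod_{k=0}^{j-1}\frac{(-s)(u q^k-s)}{1-suq^k}
\]
and notes that the $k=0$ factor is precisely the quantity in \eqref{eq:uniformly_bounded_propagation_condition}, hence the product is $<1-\varepsilon$; combined with the (implicit) monotonicity $h_i\le h_{i-1}$ when $g_i=0$, this forces the chain down to $0$. You instead bound the direct-absorption probability $L^{(J)}_{u,s}(0,j;j,0)=(s^2;q)_j/(su;q)_j\ge\varepsilon^{J}$, which avoids appealing to monotonicity at the cost of the weaker per-step constant $\varepsilon^{J}$. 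Both routes are equally valid here. Your invocation of Borel--Cantelli is mild overkill (taking $k\to\infty$ in the geometric bound already gives the conclusion once combined with absorption), but harmless.
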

\begin{proof}
 We know that $g_i=0$ for all $i$ large enough. 
 Since $L^{(J)}_{u,s}(0,0;0,0)=1$, it suffices to note that
 all the probabilities of the form
 $L^{(J)}_{u_i,s_i}(0,j;0,j)=\frac{s_i^{2j}(u_i/s_i;q)_j}{(s_iu_i;q)_j}$,
 where $j \in \left\{ 0,1,\ldots,J \right\}$,
 are bounded away from $1$ 
 uniformly 
 in $j\ge 1$
 and sufficiently large $i$,
 due to \eqref{eq:uniformly_bounded_propagation_condition}.
 This means that once $i$ becomes large enough so that all further $g_i$'s are zero,
 then with probability $1$ the quantities $h_i$ eventually decrease to zero,
 and the update terminates.
\end{proof}

After the sequential update over $i=0,1,\ldots $ terminates
according to \Cref{lemma:higher_spin_model_is_well_defined},
we have reached the next state $\mathbf{g}'=\mathbf{g}(t+1)$.

\medskip

The trajectory of the 
Markov process $\left\{ \mathbf{g}(t) \right\}_{t\in \mathbb{Z}_{\ge0}}$
may be viewed as a random path ensemble
in the quadrant 
$\mathbb{Z}_{\ge1}^{2}$. Namely, 
the initial condition $\mathbf{g}(0)$ corresponds to the 
paths entering the quadrant from below,
and the quantities $h_0$ sampled at each time moment $t\ge1$
determine the paths entering from the left. 
The configuration $\mathbf{g}(t)\in \mathscr{G}$
describes the paths crossing 
the horizontal line at height $t+\frac{1}{2}$.
The random update from $\mathbf{g}(t)$ to $\mathbf{g}(t+1)$
determines the horizontal path counts at height $t+1$.
See \Cref{fig:stochastic_vertex_model_exclusion_process} for an illustration.

Denote by $T_{\mathbf{u},\mathbf{s}}$ the one-step Markov transition 
operator for the FS6V process $\{\mathbf{g}(t)\}_{t\in \mathbb{Z}_{\ge0}}$ on $\mathscr{G}$.
This operator also depends on the parameters $q$ and $J$, but we suppress this 
in the notation. In the literature on solvable lattice models
(for instance, \cite{baxter2007exactly}),
$T_{\mathbf{u},\mathbf{s}}$ is often referred to as the 
transfer matrix. 
Our transfer matrix is a Markov transition operator 
since the model is stochastic. Similar stochasticity of transfer matrices
was first observed in \cite{GwaSpohn1992}.

\begin{figure}[htb]
 \centering
 \includegraphics[width=.7\textwidth]{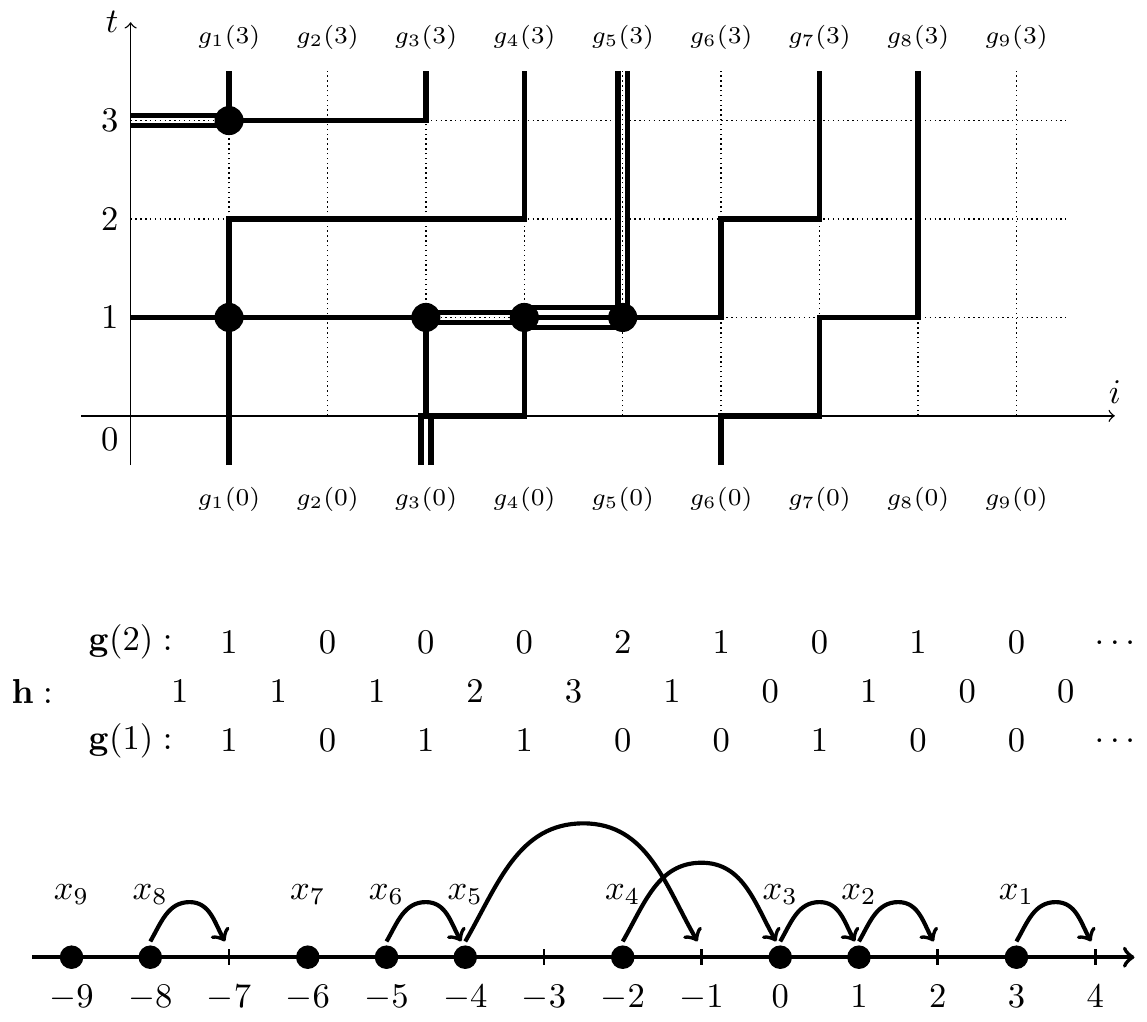}
 \caption{Top: A path ensemble corresponding to the time 
 evolution of $\mathbf{g}(t)$ for $t\in \left\{ 0,1,2,3 \right\}$.
 Middle: Update $\mathbf{g}(1)\to\mathbf{g}(2)$
 and the corresponding quantities $h_i$, $i\ge0$. 
 Bottom: 
 the corresponding exclusion process 
 update $\mathbf{x}(1)\to \mathbf{x}(2)$,
 where the particle configuration $\mathbf{x}(1)$ is shown,
 and the arrows represent sequential particle jumps.}
 \label{fig:stochastic_vertex_model_exclusion_process}
\end{figure}

\medskip

Finally, let us describe the Markov process 
$\{\mathbf{x}(t)\}_{t\in \mathbb{Z}_{\ge0}}$
on the space $\mathscr{X}$ induced by the FS6V process $\left\{ \mathbf{g}(t) \right\}$ via the gap-particle transformation \eqref{eq:gap_particle_transform}.
The random update from 
$\mathbf{x}(t)$
to 
$\mathbf{x}(t+1)$
is described as follows. First, 
the rightmost particle $x_1$ jumps to the right by a random distance
$h_0$ sampled from
$L^{(J)}_{u_0,s_0}(\infty,0;\infty,h_0)$
\eqref{eq:L_J_infinity_weights}.
Then, sequentially for $i=1,2,\ldots $, the particle $x_{i+1}$ jumps to the right
by a random distance $h_i$ sampled from
\begin{equation*}
 L^{(J)}_{u_i,s_i}(x_i(t)-x_{i+1}(t)-1,h_{i-1};x_{i}(t+1)-x_{i+1}(t+1)-1,h_i),
\end{equation*}
given that $x_{i}$ has jumped by the distance $h_{i-1}$. Note that the upper bound for the distance of each particle's jump is equal to the parameter $J$.

Due to the path conservation, we see that the dynamics of $\mathbf{x}(t)$
satisfies the exclusion rule,
that is, a particle may only jump to an unoccupied location. Consequently, the strict order of the particles $x_1>x_2>\ldots $
is preserved throughout the dynamics. 
We also note that the 
jump of each particle $x_i$ at time $t$ is governed by the 
parameters $(u_{i-1},s_{i-1})$ as well as the 
locations $x_i(t)$, $x_{i-1}(t)$, and $x_{i-1}(t+1)$.

We denote the one-step Markov transition operator
of the process $\{\mathbf{x}(t)\}_{t\in \mathbb{Z}_{\ge0}}$
on $\mathscr{X}$ by $\tilde{T}_{\mathbf{u},\mathbf{s}}$.
It is the image of the FS6V model 
operator $T_{\mathbf{u},\mathbf{s}}$ on $\mathscr{G}$
under the gap-particle transformation \eqref{eq:gap_particle_transform}.
Throughout the paper we adopt the same convention for all
Markov transition operators: 
$A$ on $\mathscr{G}$ corresponds to $\tilde{A}$ on $\mathscr{X}$.

\section{Yang-Baxter equation and cross vertex weights}
\label{sec:YBE}

The vertex weights $L^{(J)}_{u,s}$ \eqref{eq:LJ_definition} satisfy the Yang-Baxter equation, and this makes the stochastic processes from \Cref{sec:stoch_vert_models} very special, i.e.,~integrable. In short, the Yang-Baxter equation determines the (local) action of swapping two consecutive vertex weights in the FS6V model. This swapping action is represented by introducing a cross-vertex that is dragged across the vertex weights; see \Cref{fig:YBE}.

\subsection{Vertical Yang-Baxter equation}
\label{sub:YBE}

There are several possible Yang-Baxter equations that the vertex weights $L^{(J)}_{u,s}$'s may satisfy. For our purposes, we focus on the Yang-Baxter equation that may be represented by vertically dragging a cross vertex through two consecutive horizontal vertex weights.

Let $s_1,s_2\in (-1,0]$ and $z\ge 0$ be three parameters.
Define the cross vertex weights as follows:
\begin{equation}
 \label{eq:vertical_R_matrix_for_LJ}
 R_{z,s_1,s_2}(i_1,i_2;j_1,j_2)
 \coloneqq
 L^{(I_1)}_{s_1z,s_2}(j_1,i_2;i_1,j_2),
\end{equation}
with the right side given by \eqref{eq:LJ_definition} so that 
$I_1$ is determined by the identity$s_1=q^{-I_1/2}$. Here,
we treat $q^{-I_1/2}$ as an independent parameter
which enters $R_{z,s_1,s_2}$ in a rational manner, according to \Cref{rmk:LJ_rational_in_qJ}. 
Explicitly, we have
\begin{equation}
 \label{eq:vertical_R_matrix_for_LJ_explicit}
 \begin{split}
 &
 R_{z,s_1,s_2}(i_1,i_2;j_1,j_2)=
 \mathbf{1}_{j_1+i_2=i_1+j_2}
 \frac{(-s_1z)^{j_1}q^{
 \frac12 j_1(j_1+2i_2-1)}
 s_2^{i_2+j_2-i_1}(zs_1 s_2^{-1};q)_{j_2- j_1}}
 {(q;q)_{i_1} (zs_1s_2 ;q)_{i_1+j_2}
 (s_1^{-2}q^{1-i_2};q)_{i_2-j_2}}
 \\&\hspace{200pt}
 \times{}_{4}\bar{\phi}_3\left(\begin{matrix} q^{-i_1};q^{-j_1},zs_1^{-1}s_2
 ,q z^{-1}s_1^{-1}s_2\\
 s_2^{2},q^{1+j_2-j_1},s_1^{-2}q^{1-i_1-j_2}\end{matrix}
 \bigg|\, q,q\right),
 \end{split}
\end{equation}
where $i_1,i_2,j_1,j_2$ are arbitrary nonnegative integers.
The path conservation property in
\eqref{eq:vertical_R_matrix_for_LJ}
means that 
$R_{z,s_1,s_2}(i_1,i_2;j_1,j_2)$
vanishes unless $i_1+j_2=i_2+j_1$.
See \Cref{fig:R_matrix} for an illustration.

\begin{figure}[htb]
 \centering
 \includegraphics[width=.15\textwidth]{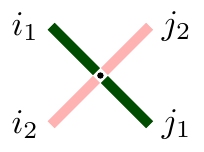}
 \caption{The weights $R_{z,s_1,s_2}$ 
 \eqref{eq:vertical_R_matrix_for_LJ} are attached to the cross vertices, i.e., vertices drawn on the lattice rotated by 
 $45^\circ$. The path counts $i_1,i_2,j_1,j_2$ are as shown in the figure.}
 \label{fig:R_matrix}
\end{figure}

The vertex weights $L^{(J)}_{u,s}$ and the cross vertex weights $R_{z,s_1,s_2}$ satisfy the following Yang-Baxter equation:

\begin{proposition}[Yang-Baxter equation]
 \label{prop:YBE}
 Fix the path counts $i_1,j_1 \in\left\{ 0,1,\ldots,J \right\}$,
 $i_2,i_3,j_2,j_3\in \mathbb{Z}_{\ge0}$ 
 and the parameters $u_1,u_2,s_1,s_2$. Then, we have
 \begin{equation}
 \label{eq:YBE_for_LJ}
 \begin{split}
 &
 \sum_{k_1,k_2,k_3}
 R_{\frac{u_2}{u_1},s_1,s_2}(j_3,k_2;k_3,j_2)
 \ssp
 L^{(J)}_{u_1,s_1}(i_2,i_1;k_2,k_1)
 \ssp
 L^{(J)}_{u_2,s_2}(i_3,k_1;k_3,j_1)
 \\
 &\hspace{60pt}
 =
 \sum_{k_1,k_2,k_3}
 L^{(J)}_{u_2,s_2}(k_3,i_1;j_3,k_1)\ssp
 L^{(J)}_{u_1,s_1}(k_2,k_1;j_2,j_1)\ssp
 R_{\frac{u_2}{u_1},s_1,s_2}(k_3,i_2;i_3,k_2).
 \end{split}
 \end{equation}
 See \Cref{fig:YBE} for an illustration.
 The sums in \eqref{eq:YBE_for_LJ} are over $k_1 \in\left\{ 0,1,\ldots,J \right\}$
 and $k_2,k_3\in \mathbb{Z}_{\ge0}$.
 However, both sum are finite due to the path conservation properties, making the Yang-Baxter equation \eqref{eq:YBE_for_LJ} an identity between rational 
 functions.
\end{proposition}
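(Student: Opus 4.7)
The plan is a two-stage lift from the basic six-vertex Yang-Baxter equation via fusion, followed by a rationality argument to cover the full parameter range. First, I would invoke the unfused Yang-Baxter equation for $L^{(1)}_{u,q^{-1/2}}$, where all horizontal and vertical path counts are restricted to $\{0,1\}$ by the specialization $s=q^{-1/2}$ (cf.\ \Cref{rmk:LJ_rational_in_qJ}). This classical stochastic six-vertex YBE can be verified directly by exhausting the finitely many admissible triples of path counts compatible with path conservation: there are only a handful of nonzero entries on each side of the hexagon, and one checks the rational identity term by term using the explicit formulas \eqref{eq:L1_weights_explicit}.

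Second, I would assemble both sides of \eqref{eq:YBE_for_LJ} out of this unfused identity by two orthogonal fusion procedures. As recalled in the sketch proof of \Cref{prop:LJ_properties}, vertical stacking of $J$ unfused vertices with spectral parameters $u,qu,\ldots,q^{J-1}u$ and summation over internal vertical edges with appropriate $q$-binomial weights recovers $L^{(J)}_{u,s}$. In precisely the same manner, horizontal fusion of $I_1$ unfused vertices recovers $L^{(I_1)}_{s_1z,s_2}$, which by \eqref{eq:vertical_R_matrix_for_LJ} is the cross weight $R_{z,s_1,s_2}$ at $s_1=q^{-I_1/2}$. Applying both fusions simultaneously to the three-vertex unfused YBE from the previous step, fusing each of the three strands of the hexagon according to whether it is an $L^{(J)}$-strand or an $L^{(I_1)}$-strand, produces \eqref{eq:YBE_for_LJ} once one checks that the internal $q$-binomial fusion weights telescope consistently on both sides of the hexagon. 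This bookkeeping is the main technical obstacle; it proceeds by induction in $J$ and in $I_1$ using the coproduct-like compatibility of fusion with the intertwining of transfer matrices, and the full argument is carried out in \cite[Section~5]{BorodinPetrov2016inhom} and \cite[Appendix~B]{borodin_wheeler2018coloured}.

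Finally, to reach the full range $s_1\in(-1,0]$ rather than only $s_1=q^{-I_1/2}$ with $I_1\in\mathbb{Z}_{\ge1}$, I would appeal to rationality, exactly as in \Cref{rmk:LJ_rational_in_qJ}. By the path conservation properties, both sides of \eqref{eq:YBE_for_LJ} are finite sums; each summand is a rational function of $q^{I_1}$ (and also of $q^J$). The fusion argument establishes \eqref{eq:YBE_for_LJ} on the infinite set $q^{I_1}\in\{q,q^{2},q^{3},\ldots\}$, and a rational function vanishing on such a set must be identically zero. The identity therefore extends to arbitrary $q^{-I_1/2}$, and in particular to the advertised $s_1\in(-1,0]$; the same continuation simultaneously allows $J$ to be treated as a formal parameter, which will be useful in \Cref{sec:comm_rel} when specializing the cross vertex weights to obtain the Markov intertwiners $P^{(n)}$.
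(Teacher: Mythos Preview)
Your proposal is correct and follows essentially the same route as the paper: reduce to the unfused stochastic six-vertex YBE (checked directly), lift by fusion in both the $J$- and $I_1$-directions with the cited references, and analytically continue in $q^{I_1}$ by rationality. The paper's proof additionally points to \cite[Propositions~A.1 and~A.3]{BufetovMucciconiPetrov2018} as a place where the fused identity is already recorded (up to gauge and path complementation), and to the color-blind specialization of the master YBE in \cite{bosnjak2016construction}, \cite[(C.1.2)]{borodin_wheeler2018coloured} as an alternative source.
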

Observe that the cross vertex weights $R_{\frac{u_2}{u_1},s_1,s_2}$ 
entering the Yang-Baxter equation
\eqref{eq:YBE_for_LJ}
do not depend on the parameter $J$ and only depend on the parameters
$u_1,u_2$ through their ratio.
\begin{proof}[Proof of \Cref{prop:YBE}]
 The Yang-Baxter equation $\eqref{eq:YBE_for_LJ}$ 
 follows by fusion from the simpler case when the paramters are $s_1=s_2=q^{-1/2}$ and $I_1=1$. 
 This simpler case of the Yang-Baxter equation may be checked through direct computations. Moreover, the latter equation 
 essentially coincides with
 \cite[Proposition A.1]{BufetovMucciconiPetrov2018},
 up to 
 the specialization of their parameter $s$ into
 $q^{-J/2}$
 and
 a gauge transformation
 making the vertex weights $w_{u,s}$ stochastic. Note that the
 cross vertex weights $r_{u/v}$ in 
 \cite[Proposition A.1]{BufetovMucciconiPetrov2018}
 are already stochastic (i.e.~satisfy the sum to one property).
 Then, our fused
 Yang-Baxter equation
 \eqref{eq:YBE_for_LJ} 
 follows from 
 \cite[Proposition~A.3]{BufetovMucciconiPetrov2018}
 (which is essentially a fusion of 
 \cite[Proposition A.1]{BufetovMucciconiPetrov2018}),
 up to a gauge transformation and 
 path complementation $i\mapsto I-i$.

 Alternatively, the fused Yang-Baxter 
 equation with stochastic vertex weights
 implying \eqref{eq:YBE_for_LJ}
 is a color-blind case of the 
 master Yang-Baxter equation coming from
 $U_q(\widehat{\mathfrak{sl}_n})$
 obtained in
 \cite{bosnjak2016construction}; see
 \cite[(C.1.2)]{borodin_wheeler2018coloured}.
\end{proof}

\begin{figure}[htb]
 \centering
 \includegraphics[width=.6\textwidth]{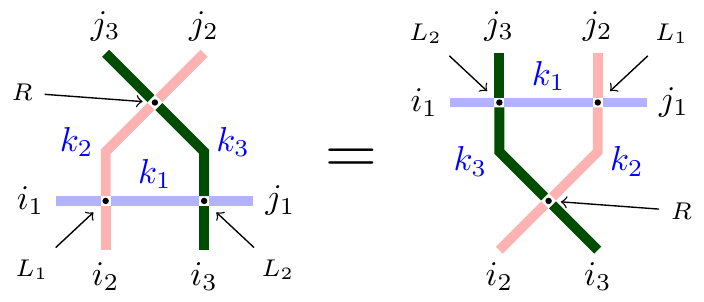}
 \caption{The Yang-Baxter equation
 \eqref{eq:YBE_for_LJ} is the equality of partition functions of the 
 two configurations in the figure, where the boundary 
 path counts $i_1,i_2,i_3,j_1,j_2,j_3$ are fixed, and the summation is 
 over all possible configurations of paths occupying the internal
 edges. These internal path counts are $k_1,k_2,k_3$.
 The vertex weights are $L_i=L^{(J)}_{u_i,s_i}$, and 
 $R=R_{\frac{u_2}{u_1},s_1,s_2}$.}
 \label{fig:YBE}
\end{figure}

\subsection{Nonnegativity}
\label{sub:nonnegativity}

From \Cref{prop:LJ_properties},
we see that the cross vertex weights $R_{z,s_1,s_2}$,
defined by
\eqref{eq:vertical_R_matrix_for_LJ},
satisfy the sum to one property
\begin{equation}
 \label{eq:R_sum_to_one}
 \sum_{i_1,j_2=0}^{\infty}
 R_{z,s_1,s_2}(i_1,i_2;j_1,j_2)=1
\end{equation}
for any fixed $i_2,j_1\in \mathbb{Z}_{\ge0}$.
Moreover, if the vertex weights $R_{z,s_1,s_2}$ are nonnegative, the vertex weights define a probability distribution. This distribution is on the top paths $(i_1, j_2)$ of a cross vertex for any fixed bottom paths $(i_2,j_1)$, see \Cref{fig:R_matrix}. Below, we show that the cross vertex weights are nonnegative under a suitable restriction of the parameters.

\begin{proposition}
 \label{prop:R_nonnegative}
 If 
 $q\in [0,1)$,
 $s_1,s_2\in (-1,0)$,
 and 
 $0\le z\le \min\bigl\{ \frac{s_1}{s_2}, \frac{s_2}{s_1},\frac{q}{s_1s_2} \bigr\}$,
 then 
 \begin{equation*}
 R_{z,s_1,s_2}(i_1,i_2;j_1,j_2)\ge0
 \end{equation*}
 for all 
 $i_1,j_1,i_2,j_2\in \mathbb{Z}_{\ge0}$.
\end{proposition}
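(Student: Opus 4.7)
The plan is to verify nonnegativity by carefully analyzing the signs in the explicit formula \eqref{eq:vertical_R_matrix_for_LJ_explicit}. Setting $s_1=-a_1$ and $s_2=-a_2$ with $a_1,a_2\in(0,1)$, the three conditions on $z$ become $0\le z\le \min(a_1/a_2,\,a_2/a_1,\,q/(a_1a_2))$, which in particular implies $z\le 1$. The path conservation constraint $j_1+i_2=i_1+j_2$ collapses $s_2^{i_2+j_2-i_1}$ to $s_2^{j_1}$, so the combined prefactor $(-s_1z)^{j_1}s_2^{i_2+j_2-i_1}$ simplifies to $(-1)^{j_1}(a_1a_2z)^{j_1}$, with nontrivial sign contribution $(-1)^{j_1}$.

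Next, I would determine the signs of the four $q$-Pochhammer symbols in the prefactor. Under our conditions, $(q;q)_{i_1}$, $(zs_1s_2;q)_{i_1+j_2}$ and $(zs_1/s_2;q)_{j_2-j_1}$ all have arguments in $[0,1]$ and are nonneg, provided one handles the negative-index convention \eqref{eq:qPochhammer_negative} in the case $j_2<j_1$ (which by path conservation is equivalent to $i_2<i_1$). The fourth symbol $(s_1^{-2}q^{1-i_2};q)_{i_2-j_2}$ is more delicate: because $s_1^{-2}=1/a_1^2>1$, its constituent factors $1-q^{1-i_2+i}/a_1^2$ are typically negative for small enough $i$, contributing a systematic sign $(-1)^{i_2-j_2}$ that must be tracked and eventually cancelled.

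The main work is the sign analysis of the truncated ${}_4\bar\phi_3$ series from \eqref{eq:r_phi_r_reg_function}. In each term indexed by $k$, the factors $(q^{-i_1};q)_k$ and $(q^{-j_1};q)_k$ each contribute $(-1)^k$, so their product is nonneg; the factor $(zs_1^{-1}s_2;q)_k=(a_2z/a_1;q)_k$ is nonneg since $z\le a_1/a_2$. The problematic factor is $(qz^{-1}s_1^{-1}s_2;q)_k$, whose argument $qa_2/(a_1z)$ may exceed $1$ and thus produce mixed signs within a single summand. The crucial structural observation is that our ${}_4\phi_3$ is \emph{balanced}: with $n=i_1$, a direct check yields $def=abcq^{1-n}=s_1^{-2}s_2^2q^{2-j_1-i_1}$. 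This enables the Sears transformation for balanced ${}_4\phi_3$ series, which replaces $(qz^{-1}s_1^{-1}s_2;q)_k$ with a factor of the form $(s_1s_2z/q;q)_k=(a_1a_2z/q;q)_k$, whose argument lies in $[0,1]$ precisely by the third condition $z\le q/(a_1a_2)$. In the transformed sum, every Pochhammer factor has nonneg sign under the first two conditions on $z$.

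The hardest step will be bookkeeping the cumulative sign after Sears' transformation: the prefactor carries $(-1)^{j_1}$ and $(-1)^{i_2-j_2}$, and Sears itself introduces additional sign contributions through prefactor symbols like $(q^{-i_1-j_2};q)_{i_1}$. All these sign factors must combine to $+1$ under exactly our three conditions on $z$, leaving a manifestly nonneg sum. Each of the three conditions plays a distinct role: $z\le a_2/a_1$ bounds $(zs_1/s_2;q)_{j_2-j_1}$, $z\le a_1/a_2$ bounds $(zs_1^{-1}s_2;q)_k$ in the series, and $z\le q/(a_1a_2)$ is used both for $(zs_1s_2;q)_{i_1+j_2}$ in the prefactor and for the transformed ${}_4\phi_3$ argument. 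Should the single application of Sears leave residual sign issues in edge cases (for example, when $i_2<j_2$ or $j_2<j_1$), I would iterate with a second application or use a related identity such as Watson's or Bailey's transformation to finish the argument.
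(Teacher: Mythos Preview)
Your plan has the right overall shape---separate the prefactor signs from the ${}_4\phi_3$ and then transform the series into something term-by-term nonnegative---but the central step is not actually carried out, and the transformation you single out is not the one that does the job.

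First, a small slip: from $j_1+i_2=i_1+j_2$ one gets $i_2+j_2-i_1=2j_2-j_1$, not $j_1$, so $s_2^{i_2+j_2-i_1}=(-1)^{j_1}a_2^{2j_2-j_1}$. The sign $(-1)^{j_1}$ survives, so your sign bookkeeping is unaffected, but the magnitude is wrong.

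The real gap is the ${}_4\phi_3$. You propose Sears' transformation and hope the resulting series is manifestly nonnegative, but you explicitly leave open whether the cumulative signs work out and fall back to ``iterate with a second application or use a related identity such as Watson's.'' In fact a single Sears move does not eliminate the problematic large-argument factors: after the Sears transformation actually written down later in the paper (see the proof of \Cref{prop:R_infinity_limit}), the lower parameters include $q^{1-j_1}z/(s_1s_2)$, which for $j_1\ge 1$ can exceed $1$ and so still produces mixed signs in the regularized product. The paper's proof instead invokes \cite[Proposition~A.8]{BufetovMucciconiPetrov2018}, whose nonnegativity argument is based on \emph{Watson's} transformation \cite[(III.19)]{GasperRahman}, sending the balanced ${}_4\phi_3$ to a very-well-poised ${}_8\phi_7$; this is the step that actually makes each summand nonnegative under the three bounds on $z$. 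Your proposal mentions Watson only as a last-resort fallback, whereas it is the key ingredient.

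There is also a structural piece you are missing. The paper does not attempt a single argument for all $(i_1,i_2,j_1,j_2)$: it first treats the case $i_1\le i_2$ (equivalently $j_1\le j_2$), where the prefactor analysis is clean, and then reduces $i_1>i_2$ to the previous case via the explicit symmetry
\[
R_{z,s_1,s_2}(i_1,i_2;j_1,j_2)=z^{\,j_2-i_2}\,\frac{s_2^{\,i_2+j_2}}{s_1^{\,i_1+j_1}}\,R_{z,s_2,s_1}(j_2,j_1;i_2,i_1),
\]
with a nonnegative prefactor. Without this split, your ``edge cases $i_2<j_2$ or $j_2<j_1$'' are not edge cases at all but half of the parameter range, and your Sears-based sign count would have to handle them directly.
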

\begin{proof}
 First, assume that $i_1\le i_2$,
 which is equivalent to $j_1\le j_2$.
 Rewrite
 \eqref{eq:vertical_R_matrix_for_LJ_explicit}
 via the ordinary 
 $q$-hypergeometric series
 $_4\phi_3$ \eqref{eq:r_phi_r_reg_function}:
 \begin{equation}
 \label{eq:R_positivity_proof_1}
 \begin{split}
 &
 R_{z,s_1,s_2}(i_1,i_2;j_1,j_2)=
 \mathbf{1}_{j_1+i_2=i_1+j_2}
 \frac{(-s_1z)^{j_1}q^{
 \frac12 j_1(j_1+2i_2-1)}
 s_2^{i_2+j_2-i_1}(zs_1 s_2^{-1};q)_{j_2-j_1}}
 {(q;q)_{i_1} (zs_1s_2 ;q)_{i_1+j_2}
 (s_1^{-2}q^{1-i_2};q)_{i_2-j_2}}
 \\&\hspace{20pt}
 \times
 (s_2^2;q)_{i_1}
 (q^{1+j_2-j_1};q)_{i_1}
 (s_1^{-2}q^{1-i_1-j_2};q)_{i_1}
 \cdot
 {}_{4}\phi_3\left(\begin{matrix} q^{-i_1},q^{-j_1},zs_1^{-1}s_2
 ,q z^{-1}s_1^{-1}s_2\\
 s_2^{2},q^{1+j_2-j_1},s_1^{-2}q^{1-i_1-j_2}\end{matrix}
 \bigg|\, q,q\right).
 \end{split}
 \end{equation}
 Note that $s_2^{i_2+j_2-i_1}
 =
 (-1)^{j_1}
 s_2^{2j_2}(-s_2)^{-j_1}$.
 Then, the following part of the prefactor
 \begin{equation*}
 \frac{
 (-s_1z)^{j_1} s_2^{2j_2}(-s_2)^{-j_1}
 q^{\frac{1}{2}j_1(j_1+2i_2-1)}
 (s_2^2;q)_{i_1}
 (q^{1+j_2-j_1};q)_{i_1}
 (zs_1 s_2^{-1};q)_{j_2-j_1}
 }
 {
 (q;q)_{i_1}
 (zs_1s_2;q)_{i_1+j_2}
 }
 \end{equation*}
 is clearly nonnegative
 under our conditions. Additionally, 
 observe that 
 \begin{equation*}
 (-1)^{j_1}\frac{(s_1^{-2}q^{1-i_1-j_2};q)_{i_1}}
 {(s_1^{-2}q^{1-i_2};q)_{i_2-j_2}}
 =(-1)^{j_1}\,(s_1^{-2}q^{1-j_1-i_2};q)_{j_1}\ge0,
 \end{equation*}
 as all factors in the above $q$-Pochhammer symbol 
 are nonpositive, and
 there are $j_1$ of them.
 
 Thus, it remains to establish
 the nonnegativity of the 
 $q$-hypergeometric 
 series $_4\phi_3$ in \eqref{eq:R_positivity_proof_1}.
 We use the nonnegativity result in the proof of
 \cite[Proposition A.8]{BufetovMucciconiPetrov2018}
 which is based on
 Watson's 
 transformation formula
 \cite[(III.19)]{GasperRahman}.
 That proof essentially
 established the nonnegativity of 
 \begin{equation}
 \label{eq:R_positivity_proof_2}
 {}_{4}\phi_3\left(\begin{matrix} 
 q^{-\mathsf{i}_2},q^{-\mathsf{i}_1},- q / (\mathsf{s}\xi), -\mathsf{s}\theta
 \\
 -\mathsf{s}/\xi,
 q^{1+\mathsf{j}_2-\mathsf{i}_2},-\theta q^{1-\mathsf{i}_1-\mathsf{j}_2}
 /\mathsf{s}\end{matrix}
 \bigg|\, q,q\right),
 \end{equation}
 where $\mathsf{i}_2+\mathsf{j}_1=\mathsf{i}_1+\mathsf{j}_2$, $\mathsf{i}_2\le 
 \mathsf{j}_2$ (so $\mathsf{i}_1\le\mathsf{j}_1$), and the parameters satisfy 
 \begin{equation}
 \label{eq:R_positivity_proof_conditions_old}
 q\in(0,1),\qquad 
 \mathsf{s}\in(-\sqrt{q},0), \qquad 
 \xi,\theta\in[-\mathsf{s},-\mathsf{s}^{-1}].
 \end{equation}
 Indeed, 
 one can 
 check that the prefactor 
 \begin{equation}
 \label{eq:R_positivity_proof_3}
 \frac{\mathsf{s}^{\mathsf{j}_2}
 (-\theta q^{1-\mathsf{i}_1-\mathsf{j}_2}/\mathsf{s};q)_{\mathsf{i}_2}}
 {
 (-q / (\mathsf{s}\xi);q)_{\mathsf{i}_1-\mathsf{j}_1}
 }
 \end{equation}
 in front of $_4\phi_3$ in
 \cite[(A.24)]{BufetovMucciconiPetrov2018} is already nonnegative.
 Namely, for $\mathsf{i}_1=\mathsf{j}_2=0$ this prefactor is $1$.
 For all other values of $\mathsf{i}_1,\mathsf{j}_2$ we see
 that $\mathsf{s}<0$, $-\theta q^{1-\mathsf{i}_1-\mathsf{j}_2+l}/\mathsf{s}\ge 1$
 for $0\le l\le \mathsf{i}_2-1$. Using \eqref{eq:qPochhammer_negative} we have
 $(-q / (\mathsf{s}\xi);q)_{\mathsf{i}_1-\mathsf{j}_1}^{-1}
 =(-1 / (\mathsf{s}\xi);1/q)_{\mathsf{j}_1-\mathsf{i}_1}$,
 and note that $-q^{-l}/(\mathsf{s}\xi)\ge1$ for $l\ge 0$.
 Thus, \eqref{eq:R_positivity_proof_3}
 is a product of $\mathsf{j}_2+\mathsf{i}_2+\mathsf{j_1}-\mathsf{i_1}=2\mathsf{j}_2$
 nonpositive factors, and hence is nonnegative.

 Now, \eqref{eq:R_positivity_proof_2} matches 
 the $_4\phi_3$ function in
 \eqref{eq:R_positivity_proof_1}
 when
 $\mathsf{i}_1=i_1$, $\mathsf{i}_2=j_1$,
 $\mathsf{j}_1=i_2$, $\mathsf{j}_2=j_2$,
 and
 \begin{equation*}
 \mathsf{s}=-\sqrt{z s_1s_2},
 \qquad 
 \xi=\sqrt{zs_1s_2^{-3}},\qquad 
 \theta=\sqrt{zs_1^{-3}s_2}.
 \end{equation*}
 Rewriting conditions \eqref{eq:R_positivity_proof_conditions_old}
 on $\mathsf{s},\xi,\theta$ in terms of $z,s_1,s_2$, 
 we arrive at the desired result for $i_1\le i_2$.

 For the remaining case $i_1>i_2$, one can check that $R_{z,s_1,s_2}$
 satisfies 
 \begin{equation}
 \label{eq:R_matrix_symmetry}
 R_{z,s_1,s_2}(i_1,i_2;j_1,j_2)=z^{j_2-i_2}\frac{s_2^{i_2+j_2}}{s_1^{i_1+j_1}}
 \,R_{z,s_2,s_1}(j_2,j_1;i_2,i_1).
 \end{equation}
 Thus, the case $i_1>i_2$ reduces to case $i_1 \leq i_2$ since the prefactor is nonnegative. This establishes the result for all cases.
\end{proof}

\subsection{Specialization at $q=0$}
\label{sub:spec_q0_of_R}

The expression for the cross vertex weights $R_{z,s_1,s_2}$ simplify considerably when $q=0$. Let us denote the specialization of the vertex weight at $q=0$ as follows
\begin{equation}\label{eq:R_at_q0_specialization}
 R^{(0)}_{z,s_1,s_2}(i_1,i_2;j_1,j_2) = R_{z,s_1,s_2}(i_1,i_2;j_1,j_2)\big\vert_{q=0}.
\end{equation}
Additionally, introduce the notation
\begin{equation}
 \label{eq:R_at_q0_notation}
 \begin{split}
 &
 \widehat R^{(0)}_{z,s_1,s_2}(i_1,i_2;j_1,j_2)
 \coloneqq
 \mathbf{1}_{i_1+j_2=i_2+j_1}
 \ssp
 z^{j_1}(s_1s_2)^{-j_1}
 s_2^{2j_2}
 \\&\hspace{120pt}
 \times
 \biggl( 
 \frac{(1-zs_1 s_2^{-1}\mathbf{1}_{j_2>j_1})(1-s_2^2\mathbf{1}_{i_1>0})
 (1-s_1^2\mathbf{1}_{i_2=0}\mathbf{1}_{j_1>0})}
 {1-zs_1s_2\mathbf{1}_{i_1+j_2>0}}
 \\
 &\hspace{255pt}
 -
 \mathbf{1}_{i_1=i_2>0}\mathbf{1}_{j_1>0} 
 \ssp
 \frac{(-s_1)(s_2z-s_1)}{1-zs_1s_2}
 \biggr).
 \end{split}
\end{equation}
We express the specialization at $q=0$ in terms of $\widehat R^{(0)}_{z,s_1,s_2}$:

\begin{proposition}
 \label{prop:R_at_q0}
 We have
 \begin{equation}
 \label{eq:R_at_q0}
 R^{(0)}_{z,s_1,s_2}(i_1,i_2;j_1,j_2)
 =
 \begin{cases}
 \widehat R^{(0)}_{z,s_1,s_2}(i_1,i_2;j_1,j_2),&\textnormal{if $j_1\le j_2$};\\
 z^{j_2-i_2}s_2^{i_2+j_2}s_1^{-i_1-j_1}\widehat R^{(0)}_{z,s_2,s_1}
 (j_2,j_1;i_2,i_1),&\textnormal{if $j_1\ge j_2$},
 \end{cases}
 \end{equation}
 with the weights on the left and right side of the equation given by \eqref{eq:R_at_q0_specialization} and \eqref{eq:R_at_q0_notation}.
\end{proposition}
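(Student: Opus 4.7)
The plan is to split the argument according to which case of \eqref{eq:R_at_q0} applies, using the symmetry \eqref{eq:R_matrix_symmetry} to handle the second case. Concretely, assuming $j_1 \ge j_2$, apply \eqref{eq:R_matrix_symmetry} at $q = 0$: this rewrites $R^{(0)}_{z,s_1,s_2}(i_1,i_2;j_1,j_2)$ as $z^{j_2-i_2}s_2^{i_2+j_2}s_1^{-i_1-j_1} R^{(0)}_{z,s_2,s_1}(j_2,j_1;i_2,i_1)$, and in the swapped weight the new $j_1$ and $j_2$ are $i_2$ and $i_1$, satisfying $i_2 \le i_1$ by path conservation $i_1 - i_2 = j_1 - j_2 \ge 0$. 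This is precisely the regime $j_1 \le j_2$ for the swapped weight, so the second line of \eqref{eq:R_at_q0} reduces to the first. Hence it suffices to establish $R^{(0)}_{z,s_1,s_2}(i_1,i_2;j_1,j_2) = \widehat R^{(0)}_{z,s_1,s_2}(i_1,i_2;j_1,j_2)$ under the assumption $j_1 \le j_2$ (equivalently $i_1 \le i_2$).

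For the first case, I would work directly from \eqref{eq:vertical_R_matrix_for_LJ_explicit}. Converting the regularized $_4\bar\phi_3$ to an ordinary $_4\phi_3$ via \eqref{eq:r_phi_r_reg_function} isolates a prefactor that, at $q = 0$, simplifies through the rules $(a;0)_n = 1 - a\mathbf{1}_{n>0}$ for $n \ge 0$ together with the convention \eqref{eq:qPochhammer_negative} applied to factors like $(s_1^{-2}q^{1-i_2};q)_{i_2-j_2}$ and $(s_1^{-2}q^{1-i_1-j_2};q)_{i_1}$ that carry negative powers of $q$ in their arguments. These simplifications produce precisely the indicator structure $(1 - zs_1 s_2\mathbf{1}_{j_2 > j_1})$, $(1 - s_2^2\mathbf{1}_{i_1 > 0})$, $(1 - s_1^2\mathbf{1}_{i_2 = 0}\mathbf{1}_{j_1 > 0})$, and $(1 - zs_1 s_2\mathbf{1}_{i_1+j_2 > 0})$ appearing in \eqref{eq:R_at_q0_notation}, together with the leading monomial $z^{j_1}(s_1 s_2)^{-j_1}s_2^{2j_2}$ after cancelling $s_2^{i_2+j_2-i_1} = (-1)^{j_1} s_2^{2j_2}(-s_2)^{-j_1}$ against signs coming from the negative-index Pochhammers.

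Next, I would analyze the $_4\phi_3$ sum term by term in $k \in \{0,1,\ldots,i_1\}$. The $k = 0$ term contributes $1$ and, combined with the prefactor, yields the main summand of $\widehat R^{(0)}$. For $k \ge 1$, each term carries an explicit $q^k$ from the argument $z = q$, which competes with negative powers of $q$ arising from $(q^{-i_1};q)_k(q^{-j_1};q)_k$ in the numerator and from $(q^{1+j_2-j_1};q)_k$ and $(s_1^{-2}q^{1-i_1-j_2};q)_k$ in the denominator. Careful counting of powers of $q$ shows that for $k \ge 2$ the net exponent of $q$ is strictly positive, so the term vanishes at $q = 0$; and for $k = 1$ the term survives only when $i_1 \ge 1$, $j_1 \ge 1$, and the factor $(q^{1+j_2-j_1};q)_1$ is non-vanishing at $q = 0$, which forces $j_2 = j_1$ (equivalently $i_1 = i_2$). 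Simplifying that single surviving term yields exactly the correction $-\mathbf{1}_{i_1=i_2>0}\mathbf{1}_{j_1>0}(-s_1)(s_2z - s_1)/(1 - zs_1s_2)$ appearing in \eqref{eq:R_at_q0_notation}.

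The main obstacle is the bookkeeping: individual factors in the prefactor and in the $k$-th summand of $_4\phi_3$ diverge or vanish as $q \to 0$, and the finite limit emerges only after carefully pairing singular numerator and denominator factors and tracking the resulting indicator functions. In particular, justifying that every $k \ge 2$ term vanishes requires a uniform estimate of the net $q$-power, and matching the $k = 1$ contribution to the compact correction term requires repeated use of path conservation $i_1 + j_2 = i_2 + j_1$ to consolidate various products of $s_1, s_2, z$ into the form $(-s_1)(s_2z - s_1)/(1 - zs_1s_2)$.
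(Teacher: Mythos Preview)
Your approach is essentially the paper's: reduce $j_1\ge j_2$ to $j_1\le j_2$ via the symmetry \eqref{eq:R_matrix_symmetry}, then expand the regularized ${}_4\bar\phi_3$ and do a $q$-power count to see that only $k=0$ and (when $i_1=i_2>0$, $j_1>0$) $k=1$ survive at $q=0$.

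One correction to your $k=1$ reasoning: the factor $(q^{1+j_2-j_1};q)_1=1-q^{1+j_2-j_1}$ equals $1$ at $q=0$ for every $j_2\ge j_1$, so it does not by itself force $j_2=j_1$. The actual mechanism is the global $q$-power count the paper makes explicit: the $k$-th summand carries total power
\[
\mathbf{1}_{j_2<j_1}\tbinom{j_1-j_2+1}{2}+\tfrac{1}{2}k(k-1)+k(i_2-i_1),
\]
which for $j_1\le j_2$ and $k=1$ equals $i_2-i_1=j_2-j_1$, hence vanishes iff $i_1=i_2$. This also handles $k\ge2$ uniformly. Your sketch already anticipates this (``careful counting of powers of $q$''); just replace the incorrect local explanation with the explicit exponent above.
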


\begin{proof}[Proof of \Cref{prop:R_at_q0}]
 Throughout the proof we assume that
 $j_1+i_2=i_1+j_2$ due to the path conservation property.
 We have
 \begin{equation}
 \label{eq:R_at_q0_proof1}
 \begin{split}
 &
 R_{z,s_1,s_2}(i_1,i_2;j_1,j_2)
 =
 \sum_{k=0}^{\min(i_1,j_1)}
 \frac{(-s_1z)^{j_1}q^{
 \frac12 j_1(j_1+2i_2-1)}
 s_2^{i_2+j_2-i_1}(zs_1 s_2^{-1};q)_{j_2-j_1}}
 {(q;q)_{i_1} (zs_1s_2 ;q)_{i_1+j_2}
 (s_1^{-2}q^{1-i_2};q)_{i_2-j_2}}
 \frac{q^k}{(q;q)_k}
 (q^{-i_1};q)_k
 \\&
 \hspace{20pt}\times
 (q^{-j_1};q)_k
 \bigl(\tfrac{zs_2}{s_1};q\bigr)_k
 \bigl(\tfrac{qs_2}{zs_1};q\bigr)_k
 (s_2^2q^k;q)_{i_1-k}
 (q^{1+j_2-j_1+k};q)_{i_1-k}
 (s_1^{-2}q^{1-i_1-j_2+k};q)_{i_1-k}.
 \end{split}
 \end{equation}
 Setting $q=0$ eliminates all the terms in the 
 sum containing a positive power of $q$.
 There are no 
 negative powers of $q$, which follows from the fusion construction
 of $R_{z,s_1,s_2}$, see the proof of \Cref{prop:YBE}.
 A positive power of $q$ may only come from the 
 following terms:
 \begin{equation*}
 \frac{
 q^{
 \frac12 j_1(j_1+2i_2-1)+k}
 (zs_1 s_2^{-1};q)_{j_2-j_1}}
 {(s_1^{-2}q^{1-i_2};q)_{i_2-j_2}}
 (q^{-i_1};q)_k
 (q^{-j_1};q)_k
 (s_1^{-2}q^{1-i_1-j_2+k};q)_{i_1-k}.
 \end{equation*}
 For example, $(zs_1 s_2^{-1};q)_{j_2-j_1}$ yields 
 $q^{\frac{1}{2}(j_1-j_2)(j_1-j_2+1)}$
 for $j_2<j_1$, see \eqref{eq:qPochhammer_negative}.
 Overall, one can check that the total power of $q$
 is equal to
 \begin{equation}
 \label{eq:R_at_q0_proof2}
 \mathbf{1}_{j_2<j_1}\tbinom{j_1-j_2+1}{2}
 +
 \tfrac{1}{2} k(k-1) +
 k(i_2-i_1)
 \end{equation}
 If $j_1\le j_2$, then \eqref{eq:R_at_q0_proof2}
 equal to zero in the following cases:
 \begin{enumerate}[$\bullet$]
 \item either $k=0$;
 \item or $i_1=i_2$ and $k=1$. 
 \end{enumerate}
 We obtain \eqref{eq:R_at_q0_notation} by the contribution of the two cases above.
 Setting $q=0$ and $k=0$ in the sum in
 \eqref{eq:R_at_q0_proof1},
 we get the first summand in \eqref{eq:R_at_q0_notation}.
 For $i_1=i_2$, we get the additional
 second summand in \eqref{eq:R_at_q0_notation}
 coming from the term with $k=1$.
 This proves \eqref{eq:R_at_q0}
 for $j_1\le j_2$. We use the symmetry \eqref{eq:R_matrix_symmetry} to obtain the result when $j_1\ge j_2$
 This completes the proof.
\end{proof}

We extend the nonnegativity result of \Cref{prop:R_nonnegative} for the specialization at $q=0$. 
Note that \Cref{prop:R_nonnegative} restricts $z$ to $0$ for $q=0$. Due to this, we need to independently
find a range of parameters $(z,s_1,s_2)$ for which 
the weights $R^{(0)}_{z,s_1,s_2}$ are nonnegative:
\begin{proposition}
 \label{prop:R_at_q0_nonnegative}
 If $q=0$, $s_1,s_2\in (-1,0)$, 
 $0\le z\le \min\{\frac{s_1}{s_2},\frac{s_2}{s_1}\}$,
 and $s_1^2+s_2^2 \le 1+zs_1s_2$, then
 \begin{equation*}
 R^{(0)}_{z,s_1,s_2}(i_1,i_2;j_1,j_2)\ge0
 \end{equation*}
 for all 
 $i_1,j_1,i_2,j_2\in \mathbb{Z}_{\ge0}$.
\end{proposition}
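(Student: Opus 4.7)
The plan is to use the explicit formula for $R^{(0)}_{z,s_1,s_2}$ already established in \Cref{prop:R_at_q0} and verify nonnegativity by direct case analysis, since the $q=0$ specialization collapses the $_4\phi_3$ series to at most one or two explicit terms. By the symmetry identity \eqref{eq:R_matrix_symmetry} specialized at $q=0$, it suffices to treat the case $j_1 \le j_2$; the prefactor $z^{j_2-i_2}s_2^{i_2+j_2}s_1^{-i_1-j_1}$ appearing in the symmetry is manifestly nonnegative (since $s_1,s_2\in(-1,0)$, one checks that $s_2^{i_2+j_2}s_1^{-i_1-j_1}$ has sign $(-1)^{(i_2+j_2)+(i_1+j_1)}$ which by the conservation law $i_1+j_2=i_2+j_1$ equals $+1$), and the hypotheses on $(z,s_1,s_2)$ are symmetric under $s_1\leftrightarrow s_2$.

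Assume then $j_1\le j_2$ and use $R^{(0)}_{z,s_1,s_2}=\widehat R^{(0)}_{z,s_1,s_2}$ from \eqref{eq:R_at_q0_notation}. First I would handle the overall prefactor $z^{j_1}(s_1s_2)^{-j_1}s_2^{2j_2}$: since $z\ge 0$, $s_1s_2>0$ and $s_2^{2j_2}\ge0$, this factor is nonnegative. It remains to show the bracketed expression is nonnegative, which I split into two subcases depending on whether the indicator $\mathbf{1}_{i_1=i_2>0}\mathbf{1}_{j_1>0}$ vanishes.

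In the first subcase, only the fraction survives. I would check each factor individually: the denominator $1-zs_1s_2\mathbf{1}_{i_1+j_2>0}$ is strictly positive because $z\le \min(s_1/s_2,s_2/s_1)$ forces $zs_1s_2\le \min(s_1^2,s_2^2)<1$; the factor $1-s_2^2\mathbf{1}_{i_1>0}$ and the factor $1-s_1^2\mathbf{1}_{i_2=0}\mathbf{1}_{j_1>0}$ are both positive since $s_1,s_2\in(-1,0)$; and $1-zs_1s_2^{-1}\mathbf{1}_{j_2>j_1}\ge 0$ by the assumption $z\le s_2/s_1$ (noting $s_1/s_2>0$). So this subcase is immediate.

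The main obstacle, and the reason for the extra hypothesis $s_1^2+s_2^2\le 1+zs_1s_2$, is the second subcase $i_1=i_2>0$ and $j_1>0$. Here path conservation forces $j_1=j_2$, so $\mathbf{1}_{j_2>j_1}=0$ and $\mathbf{1}_{i_2=0}=0$, while $\mathbf{1}_{i_1>0}=1$. The bracket then simplifies to
\begin{equation*}
\frac{1-s_2^2}{1-zs_1s_2}-\frac{(-s_1)(s_2z-s_1)}{1-zs_1s_2}
=\frac{1-s_1^2-s_2^2+zs_1s_2}{1-zs_1s_2},
\end{equation*}
whose denominator is positive as above, and whose numerator is nonnegative precisely by the final hypothesis $s_1^2+s_2^2\le 1+zs_1s_2$. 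This closes the case analysis and proves the proposition.
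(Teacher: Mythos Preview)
Your proof is correct and follows essentially the same approach as the paper: use the explicit $q=0$ formula from \Cref{prop:R_at_q0}, reduce by symmetry to $j_1\le j_2$, and observe that the only case where the bracketed expression in \eqref{eq:R_at_q0_notation} is not a product of individually nonnegative factors is $i_1=i_2>0$, $j_1>0$, which forces $j_1=j_2$ and yields exactly the numerator $1-s_1^2-s_2^2+zs_1s_2$. The paper's proof states this more tersely (splitting on $i_1\ne i_2$ versus $i_1=i_2$), but you have carried out the same computation with the indicators expanded.
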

\begin{proof}
 For $i_1\ne i_2$, one can check that 
 conditions 
 $s_1,s_2\in (-1,0)$ and
 $0\le z\le \min\{\frac{s_1}{s_2},\frac{s_2}{s_1}\}$
 are enough for nonnegativity since only the first summand in 
 \eqref{eq:R_at_q0_notation}
 is present. When $i_1=i_2$, 
 in the 
 second summand in 
 \eqref{eq:R_at_q0_notation}
 we have $s_2z-s_1\ge0$, so the second summand is negative. 
 Combining it with the first summand leads to 
 the additional condition
 $s_1^2+s_2^2 \le 1+zs_1s_2$.
\end{proof}

\subsection{Specialization to $q$-beta-binomial cross vertex weights}
\label{sub:R_Hahn_specialization}

For $q>0$, the 
cross vertex weights 
$R_{\frac{u_2}{u_1},s_1,s_2}$ 
\eqref{eq:vertical_R_matrix_for_LJ_explicit} have a complicated 
$q$-hypergeometric expression, even when $J=1$ and the lattice vertex weights 
$L^{(J)}_{u_i,s_i}$ 
entering the Yang-Baxter equation
\eqref{eq:YBE_for_LJ}
are explicit rational functions
(see \eqref{eq:L1_weights_explicit}).
There are several ways to specialize the 
cross vertex
parameters 
to simplify
$R_{\frac{u_2}{u_1},s_1,s_2}$.
For instance, one may take finite spin specializations by setting, in the simplest case,
$s_1=s_2=q^{-\frac{1}{2}}$. However, this specialization
would
bound the gap sizes in the higher spin
exclusion process $\mathbf{x}(t)$
defined in \Cref{sub:particle_systems} and, as a result, 
we will not consider this specialization here.
Instead, we distinguish a specialization 
reducing
$R_{\frac{u_2}{u_1},s_1,s_2}$ 
to the $q$-beta-binomial distribution:
\begin{proposition}
 \label{prop:qHahn_degeneration_of_R}
 If $u_2/u_1=s_2/s_1$,
 then the cross vertex weights 
 in the Yang-Baxter equation in 
 \Cref{prop:YBE} 
 are simplified as
 \begin{equation}
 \label{eq:R_matrix_qHahn_reduction}
 R_{\frac{s_2}{s_1},s_1,s_2}
 (i_1,i_2;j_1,j_2)
 =
 \mathbf{1}_{i_1+j_2=i_2+j_1}
 \cdot
 \mathbf{1}_{j_2\le j_1}
 \cdot
 \varphi_{q,s_2^2/s_1^2,s_2^2}(j_2\mid j_1),
 \end{equation}
 where $\varphi_{q,s_2^2/s_1^2,s_2^2}$
 is the 
 $q$-beta-binomial distribution
 \eqref{eq:phi_finite}. 
\end{proposition}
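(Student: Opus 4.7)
The plan is to apply the two reductions already recorded in the paper in sequence: the definition of the cross vertex weights in terms of fused $L$-weights, and the $q$-Hahn degeneration $u=s$ of the fused $L$-weights.

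First, I would go back to the definition \eqref{eq:vertical_R_matrix_for_LJ},
\begin{equation*}
	R_{z,s_1,s_2}(i_1,i_2;j_1,j_2)=L^{(I_1)}_{s_1z,\,s_2}(j_1,i_2;i_1,j_2),
	\qquad s_1=q^{-I_1/2},
\end{equation*}
and specialize $z=s_2/s_1$. Then $s_1z=s_2$, so the right-hand side becomes $L^{(I_1)}_{s_2,s_2}(j_1,i_2;i_1,j_2)$, i.e.~a fused $L$-weight at the distinguished point $u=s$.

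Next I would invoke the $q$-Hahn reduction \eqref{eq:qHahn_reduction_of_LJ} of the fused weight at $u=s$, namely
\begin{equation*}
	L^{(J)}_{s,s}(i_1',j_1';i_2',j_2')=\mathbf{1}_{i_1'+j_1'=i_2'+j_2'}\cdot\mathbf{1}_{j_2'\le i_1'}\cdot \varphi_{q,\,q^Js^2,\,s^2}(j_2'\mid i_1'),
\end{equation*}
with the relabeling $(i_1',j_1',i_2',j_2')=(j_1,i_2,i_1,j_2)$, $s=s_2$, and $J=I_1$. Under this relabeling the path-conservation indicator becomes $\mathbf{1}_{j_1+i_2=i_1+j_2}$, and the inequality constraint becomes $\mathbf{1}_{j_2\le j_1}$, matching the statement.

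Finally, I would translate the parameter $q^J$ of the $q$-beta-binomial distribution. Since $s_1=q^{-I_1/2}$ gives $q^{I_1}=s_1^{-2}$, the first argument of $\varphi_{q,\,q^{I_1}s_2^2,\,s_2^2}$ becomes $s_2^2/s_1^2$, producing exactly $\varphi_{q,s_2^2/s_1^2,s_2^2}(j_2\mid j_1)$ as claimed. I do not anticipate any real obstacle: the identity is obtained by a direct substitution, and the only subtlety is the bookkeeping of the $R\leftrightarrow L$ reindexing and the analytic-continuation identification of $q^J$ with $s_1^{-2}$ noted in \Cref{rmk:LJ_rational_in_qJ}, which is already justified in the paper.
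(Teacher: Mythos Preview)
Your proof is correct and follows exactly the same approach as the paper: substitute $z=s_2/s_1$ into the definition \eqref{eq:vertical_R_matrix_for_LJ} to obtain $L^{(I_1)}_{s_2,s_2}$, then apply the $q$-Hahn reduction \eqref{eq:qHahn_reduction_of_LJ} with $q^{I_1}=s_1^{-2}$. The paper's proof is simply a one-line reference to these two ingredients, and your write-up spells out the relabeling in full.
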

\begin{proof}
 This is a combination of
 \eqref{eq:vertical_R_matrix_for_LJ}
 and the reduction of the weights $L^{(J)}_{s,s}$
 \eqref{eq:qHahn_reduction_of_LJ}. Here we set
 $q^{I_1}=s_1^{-2}$.
\end{proof}

Recall \eqref{eq:phi_cases_nonnegative_1}
and note that 
$R_{\frac{s_2}{s_1},s_1,s_2}
(i_1,i_2;j_1,j_2)\ge0$ for all $i_1,i_2,j_1,j_2\in \mathbb{Z}_{\ge0}$
if
\begin{equation}
 \label{eq:R_matrix_qHahn_reduction_nonnegative}
 s_1,s_2 \in (-1,0],\qquad |s_2|\le |s_1|.
\end{equation}
We see that the specialization $z=s_2/s_1$
extends the range of nonnegativity of the cross vertex weights compared to the conditions of 
\Cref{prop:R_nonnegative}.
In particular, the condition $s_2^2\le q$ is dropped.

\begin{definition}
 \label{def:parameter_range_R}
 Let $\mathcal{R}$ be the range of parameters so that $(z,s_1,s_2)\in \mathcal{R}$ if
 \begin{enumerate}[$\bullet$]
 \item 
 either $q\in (0,1)$,
 $s_1,s_2\in (-1,0)$,
 and 
 $0\le z\le \min\bigl\{ \frac{s_1}{s_2}, \frac{s_2}{s_1},\frac{q}{s_1s_2} \bigr\}$
 as in \Cref{prop:R_nonnegative};
 \item 
 or $q=0$,
 $s_1,s_2\in (-1,0)$, 
 $0\le z\le \min\{\frac{s_1}{s_2},\frac{s_2}{s_1}\}$,
 and $s_1^2+s_2^2 \le 1+zs_1s_2$
 as in \Cref{prop:R_at_q0_nonnegative};
 \item 
 or 
 $z=s_2/s_1$ and $s_1,s_2\in (-1,0]$ with
 $|s_2|\le |s_1|$ as in \eqref{eq:R_matrix_qHahn_reduction_nonnegative}. 
 \end{enumerate}
 Note that the cross vertex weights $R_{z,s_1,s_2}(i_1,i_2;j_1,j_2)$ are nonnegative
 for all $(z,s_1,s_2)\in \mathcal{R}$.
\end{definition}

Let us describe the probabilistic interpretation of the specialization
$R_{\frac{s_2}{s_1},s_1,s_2}(i_1,i_2;j_1,j_2)$
viewed as the distribution of the top paths $(i_1,j_2)$
given the bottom paths $(i_2,j_1)$, see \Cref{fig:R_matrix}
for an illustration.
From \eqref{eq:R_matrix_qHahn_reduction}
we see that 
$j_1$ paths coming from southeast
randomly split into $j_2$ and $j_1-j_2$
according to the $q$-beta-binomial distribution 
$\varphi_{q,s_2^2/s_1^2,s_2^2}(j_2\mid j_1)$. Then
$j_2$ paths continue in the northeast direction,
while $j_1-j_2$ paths turn in the northwest direction. All the southwest $i_2$ paths simply continue in the northwest direction, so that
$i_1=i_2+j_1-j_2$.

\subsection{Limit to infinitely many paths}
\label{sub:R_infinite_limit}

We will also need a limit of the cross vertex weights 
$R_{z,s_1,s_2}(i_1,i_2;j_1,j_2)$ as the 
number of southwest incoming paths $i_2$ grows to infinity:
\begin{proposition}
 \label{prop:R_infinity_limit}
 Let $j_1,j_2\in \mathbb{Z}_{\ge0}$. We have
 $\lim\limits_{L\to+\infty}R_{z,s_1,s_2}(L,L+j_2-j_1;j_1,j_2)
 =
 R^{\mathrm{bdry}}_{z,s_1,s_2}(j_1,j_2)$,
 where
 $R^{\mathrm{bdry}}_{z,s_1,s_2}(j_1,j_2)$ is, by definition, equal to
 \begin{equation}
 \label{eq:R_circ_as_the_infinity_limit_def}
 \frac{(-1)^{j_1}q^{\frac{1}{2}j_1(j_1-1)}s_2^{2j_2}(s_1z/s_2;q)_{j_2-j_1}}
 {(q;q)_{j_2}(s_2^2;q)_{j_1}}\ssp
 \frac{(s_2^2;q)_{\infty}}{(zs_1s_2;q)_{\infty}}
 \,{}_{3}\bar{\phi}_2
 \left(
 \begin{matrix} 
 q^{-j_1};zs_2/s_1, q^{j_2-j_1}zs_1/s_2\\
 q^{j_2-j_1+1},q^{1-j_1}z/(s_1s_2)
 \end{matrix}
 \bigg|\, q,q
 \right).
 \end{equation}
 Here ${}_{3}\bar{\phi}_2$ is the regularized 
 (terminating)
 $q$-hypergeometric series 
 \eqref{eq:r_phi_r_reg_function}.
 Moreover, for any fixed $j_1\in \mathbb{Z}_{\ge0}$ we have
 \begin{equation}
 \label{eq:R_circ_sum_to_one}
 \sum_{j_2=0}^{\infty} R^{\mathrm{bdry}}_{z,s_1,s_2}(j_1,j_2)=1.
 \end{equation}
\end{proposition}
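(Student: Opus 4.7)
The plan is to take the limit directly in the explicit formula \eqref{eq:vertical_R_matrix_for_LJ_explicit}, specialized to $i_1=L$ and $i_2=L+j_2-j_1$ so that path conservation is respected. I would expand the regularized $_{4}\bar\phi_3$ using its series representation \eqref{eq:r_phi_r_reg_function} and apply the Pochhammer inversion identity
\begin{equation*}
(aq^{-n};q)_n=(-a)^n\ssp q^{-n(n+1)/2}\ssp(a^{-1}q;q)_n
\end{equation*}
to the three $L$-dependent ``negative'' Pochhammers that obstruct a direct limit: the factor $(q^{-L};q)_k$ in the summand, the factor $(s_1^{-2}q^{1-L-j_2+j_1};q)_{L-j_1}$ in the denominator of the prefactor, and the factor $(s_1^{-2}q^{1-L-j_2+k};q)_{L-k}$ appearing inside the $k$-th summand. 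After inversion, each of these contributes an explicit algebraic factor (a power of $-s_1^{-2}$ and a $q$-exponent carrying the full $L$-dependence) and a ``tail'' Pochhammer such as $(q^{L-k+1};q)_k$ or $(s_1^2q^{j_2+L-j_1};q)_{j_1-k}$ which converges to $1$ as $L\to\infty$.

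The central calculation is to verify that the quadratic- and linear-in-$L$ parts of the $q$-exponents from the prefactor and from the $k$-th summand cancel exactly, leaving only $q^{j_2k}$ inside the sum, and that the powers of $-s_1^{-2}$ combine into the bounded factor $(-s_1^{-2})^{j_1-k}$. All remaining Pochhammers have direct limits: $(q;q)_L\to(q;q)_\infty$, $(zs_1s_2;q)_{L+j_2}\to(zs_1s_2;q)_\infty$, $(s_2^2q^k;q)_{L-k}\to(s_2^2q^k;q)_\infty$, and $(q^{1+j_2-j_1+k};q)_{L-k}\to(q^{1+j_2-j_1+k};q)_\infty$.

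Next, I would reassemble these infinite-product limits into the regularized form demanded by \eqref{eq:R_circ_as_the_infinity_limit_def}, using the splitting $(a;q)_\infty=(a;q)_m\ssp(aq^m;q)_\infty$. This converts $(q^{1+j_2-j_1+k};q)_\infty$ into $((q;q)_\infty/(q;q)_{j_2})\ssp(q^{1+j_2-j_1+k};q)_{j_1-k}$, and $1/(s_2^2;q)_k$ into $(s_2^2q^k;q)_{j_1-k}/(s_2^2;q)_{j_1}$, producing the prefactor $(s_2^2;q)_\infty/[(q;q)_{j_2}(s_2^2;q)_{j_1}(zs_1s_2;q)_\infty]$ and a terminating regularized $_3\bar\phi_2$ in the remaining sum. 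The $_3\bar\phi_2$ that emerges naturally carries upper parameters $zs_2/s_1,qs_2/(zs_1)$ and argument $s_1^2q^{j_2}$; to bring it to the form stated in \eqref{eq:R_circ_as_the_infinity_limit_def}, with upper parameters $zs_2/s_1,q^{j_2-j_1}zs_1/s_2$ and argument $q$, I would apply Sears' transformation for a terminating balanced $_3\phi_2$ (a degeneration of the terminating balanced $_4\phi_3$ transformation), which simultaneously shifts the argument and generates the expected $(-1)^{j_1}q^{\frac12 j_1(j_1-1)}s_2^{-j_1}z^{j_1}s_1^{j_1}$ prefactor correction.

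For the normalization identity \eqref{eq:R_circ_sum_to_one}, I would pass to the limit in the finite sum-to-one property \eqref{eq:R_sum_to_one}. Writing it with $i_2=M$ and using path conservation gives $\sum_{j_2=0}^{M+j_1}R_{z,s_1,s_2}(M+j_1-j_2,M;j_1,j_2)=1$, and the first part of the proposition yields termwise convergence of each summand to $R^{\mathrm{bdry}}_{z,s_1,s_2}(j_1,j_2)$ as $M\to\infty$. Within the parameter ranges of \Cref{prop:R_nonnegative,prop:R_at_q0_nonnegative,def:parameter_range_R} where $R\ge 0$, the prefactor $s_2^{2j_2-j_1}$ forces geometric decay in $j_2$ uniformly in $M$, so dominated convergence closes the argument; outside this range the identity extends by analytic continuation. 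I expect the main obstacle to be not the limit itself but the bookkeeping of Sears' transformation: accurately matching the signs, $q$-exponents, and $s_2$-powers between the naturally arising $_3\bar\phi_2$ and the one in \eqref{eq:R_circ_as_the_infinity_limit_def} is delicate and error-prone.
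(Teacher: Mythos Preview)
Your approach is correct in principle, and for the normalization \eqref{eq:R_circ_sum_to_one} it coincides with the paper's argument (uniform geometric tail in $j_2$ via the factor $s_2^{2j_2}$, then dominated convergence). For the first claim, however, the paper takes a different and shorter route.

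Both you and the paper invoke Sears' transformation, but at opposite ends of the computation. You compute the $L\to\infty$ limit \emph{first}, expanding the regularized ${}_4\bar\phi_3$, inverting the $L$-dependent Pochhammers, checking cancellation of the $q^{\pm Lj_1}$ factors, and reassembling into a ${}_3\bar\phi_2$ whose parameters and argument do not immediately match \eqref{eq:R_circ_as_the_infinity_limit_def}; you then need a second step, a ${}_3\phi_2$ transformation, to reach the target form. The paper instead applies Sears' ${}_4\phi_3$ transformation \cite[(III.15)]{GasperRahman} to \eqref{eq:vertical_R_matrix_for_LJ_explicit} \emph{before} taking any limit. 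The point of doing this first is that the transformed ${}_4\phi_3$ has exactly one upper parameter $q^{i_2+1}$ and one lower parameter $q^{i_2}zs_1s_2$ carrying the $i_2$-dependence; sending $i_2\to\infty$ simply deletes this pair, leaving directly the ${}_3\phi_2$ with upper parameters $q^{-j_1},\,zs_2/s_1,\,q^{j_2-j_1}zs_1/s_2$ and lower parameters $q^{1+j_2-j_1},\,q^{1-j_1}z/(s_1s_2)$---already in the form of \eqref{eq:R_circ_as_the_infinity_limit_def}. The prefactor limits are then all elementary.

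So what the paper's ordering buys is twofold: it avoids your Pochhammer-inversion bookkeeping entirely, and it lands on the stated ${}_3\bar\phi_2$ without needing a second hypergeometric transformation at the end. Your approach would work, but the step you flag as ``delicate and error-prone'' is precisely the one the paper sidesteps.
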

\begin{proof}
 We apply the Sears' transformation formula 
 \cite[(III.15)]{GasperRahman}
 to 
 ${}_{4}{\phi}_3$
 in $R_{z,s_1,s_2}$
 \eqref{eq:vertical_R_matrix_for_LJ_explicit}.
 After necessary simplifications,
 we obtain a terminating $q$-hypergeometric series
 ${}_{4}{\phi}_3$:
 \begin{equation}
 \label{eq:R_circ_as_the_infinity_limit_def_proof}
 \begin{split}
 &
 R_{z,s_1,s_2}(i_1,i_2;j_1,j_2)=
 \frac{
 s_2^{2j_2-j_1}
 z^{j_1}s_1^{-j_1}(s_1s_2/z;q)_{j_1}
 (zs_1 s_2^{-1};q)_{j_2-j_1}
 (s_2^2;q)_{i_1}
 (q^{1+j_2-j_1};q)_{i_1}}
 {(q;q)_{i_1} 
 (s_2^2;q)_{j_1}
 (z s_1s_2;q)_{i_2}
 }
 \\&\hspace{90pt}\times
 \mathbf{1}_{j_1+i_2=i_1+j_2}\cdot
 {}_{4}\phi_3\left(\begin{matrix} q^{-j_1},zs_2s_1^{-1},q^{j_2-j_1}zs_1s_2^{-1},
 q^{i_2+1}\\
 q^{1+j_2-j_1},q^{1-j_1}zs_1^{-1}s_2^{-1},q^{i_2}zs_1s_2\end{matrix}
 \bigg|\, q,q\right).
 \end{split}
 \end{equation}
 Note that in ${}_{4}\phi_3$ there is one upper and one lower parameter that each contain $q^{i_2}$
 as a factor. Then, sending $i_2$ to infinity eliminates these upper and lower parameters
 in ${}_{4}\phi_3$, producing
 $_3\phi_2$ with the remaining parameters.
 The prefactor in front of $_3\phi_2$
 readily leads to that in
 \eqref{eq:R_circ_as_the_infinity_limit_def}; recall the regularization \eqref{eq:r_phi_r_reg_function}.
 This completes the proof of the first claim.

 For the second claim,
 recall that
 the quantities
 $R_{z,s_1,s_2}
 (i_2+j_1-j_2,i_2;j_1,j_2)$
 \eqref{eq:R_circ_as_the_infinity_limit_def_proof} 
 sum to one over $j_2\ge0$ for any fixed $(i_2,j_1)$, 
 see \eqref{eq:R_sum_to_one}. 
 Due to the presence of the factor $s_2^{2j_2}$ in 
 \eqref{eq:R_circ_as_the_infinity_limit_def_proof},
 one can check that the tail 
 \begin{equation*}
 \sum_{j_2\ge M}R_{z,s_1,s_2}
 (i_2+j_1-j_2,i_2;j_1,j_2)
 \end{equation*}
 is bounded above
 uniformly in $i_2$
 by $\mathrm{const}\cdot
 (1-\varepsilon)^{M}$ for some $\varepsilon>0$.
 This implies that we can take the 
 limit $i_2\to+\infty$ inside the sum, resulting in 
 \eqref{eq:R_circ_sum_to_one}.
\end{proof}

Let us write down the specializations
of 
$R^{\mathrm{bdry}}_{z,s_1,s_2}(j_1,j_2)$
to 
$q=0$ and to the
the $q$-beta-binomial distribution, 
similar to \Cref{sub:spec_q0_of_R,sub:R_Hahn_specialization}. 
For $q=0$, we have the specialization \eqref{eq:R_at_q0_notation}--\eqref{eq:R_at_q0} for finite $i_1,i_2$. Then, by taking the limit as $i_1,i_2$ increase arbitrarily large, we obtain the following specialization:
\begin{equation}
 \label{eq:R_circ_at_q0}
 R^{\mathrm{bdry},(0)}_{z,s_1,s_2}(j_1,j_2)
 \coloneqq
 R^{\mathrm{bdry}}_{z,s_1,s_2}(j_1,j_2)
 \big\vert_{q=0}
 =
 \begin{cases}
 \widehat R^{\mathrm{bdry},(0)}_{z,s_1,s_2}(j_1,j_2),&\textnormal{if 
 $j_1\le j_2$};
 \\[4pt]
 \dfrac{s_2^{2j_2}s_1^{-2j_1}}{1-s_1^2\mathbf{1}_{j_2=0}}
 \widehat R^{\mathrm{bdry},(0)}_{z,s_2,s_1}(j_2,j_1),&\textnormal{if 
 $j_1> j_2$},
 \end{cases}
\end{equation}
where
\begin{equation}
 \label{eq:R_circ_at_q0_notation}
 \widehat R^{\mathrm{bdry},(0)}_{z,s_1,s_2}(j_1,j_2)
 \coloneqq
 \frac{z^{j_1}(s_1s_2)^{-j_1}
 s_2^{2j_2}}
 {1-zs_1s_2}
 \bigl(
 (1 - zs_1s_2^{-1} \mathbf{1}_{j_2>j_1})(1- s_2^2)+(z s_1 s_2 -s_1^2) \mathbf{1}_{j_1=j_2 >0}
 \bigr).
\end{equation}

In the $q$-beta-binomial specialization $z=s_2/s_1$,
the limiting weights
$R^{\mathrm{bdry}}_{z,s_1,s_2}(j_1,j_2)$
are exactly the same as 
pre-limit ones (see \Cref{prop:qHahn_degeneration_of_R}):
\begin{equation}
 \label{eq:qHahn_reduction_of_R_circ}
 R^{\mathrm{bdry}}_{\frac{s_2}{s_1},s_1,s_2}(j_1,j_2)=
 \mathbf{1}_{j_2\le j_1}
 \cdot
 \varphi_{q,s_2^2/s_1^2,s_2^2}(j_2\mid j_1).
\end{equation}
Indeed, setting $z=s_2/s_1$ eliminates the dependence of 
$R_{z,s_1,s_2}(i_1,i_2;j_1,j_2)$
on $i_1,i_2$. Then, one can immediately take the limit
$i_1,i_2\to+\infty$
as in \Cref{prop:R_infinity_limit}.

\medskip

We also observe that the limiting weights
$R^{\mathrm{bdry}}_{z,s_1,s_2}$ are nonnegative if $(z,s_1,s_2)\in \mathcal{R}$, see \Cref{def:parameter_range_R}.

\section{Intertwining relations}
\label{sec:comm_rel}

In this section we present our first main result, the 
intertwining
(or quasi-commutation) relations for the Markov transition 
operator of the stochastic higher spin six vertex model.
Here we discuss the result at the level of Markov operators
based on vertex weights. Then, in 
\Cref{sec:comm_rel_specializations,sec:schur_vertex_model}
below, we present its specializations
to exclusion processes on the line, such as $q$-TASEP and TASEP, and 
to the Schur vertex model.

\subsection{Swap operators}
\label{sub:swapping_operator}

Recall the state spaces
$\mathscr{G}$ and $\mathscr{X}$,
from \Cref{def:state_spaces}, and the 
Markov operators $T_{\mathbf{u},\mathbf{s}}$ 
and $\tilde{T}_{\mathbf{u},\mathbf{s}}$
on $\mathscr{G}$ and $\mathscr{X}$, respectively and described in \Cref{sub:particle_systems},
coming from the stochastic higher spin six vertex model
$\mathbf{g}(t)$
and its exclusion process counterpart $\mathbf{x}(t)$.
These Markov operators depend
on two sequences of parameters 
$(\mathbf{u},\mathbf{s})\in \mathcal{T}$
(i.e.~parameters satisfying the conditions given by
\eqref{eq:parameters_for_higher_spin_vertex_model}--
\eqref{eq:uniformly_bounded_propagation_condition}).

Here we define new Markov operators 
on $\mathscr{G}$
based on the stochastic cross vertex weights
$R_{z,s_1,s_2}$ and $R^{\mathrm{bdry}}_{z,s_1,s_2}$. 
Via the gap-particle correspondence,
these operators also define the corresponding Markov operators on $\mathscr{X}$.
\begin{definition}[Markov swap operators]
 \label{def:Pn_operator}
 For $n\ge1$
 and $(z,s_1,s_2)\in \mathcal{R}$
 (the range of parameters given in \Cref{def:parameter_range_R}),
 let $P^{(n)}_{z,s_1,s_2}$
 be the Markov operator 
 acting on $\mathscr{G}$ by 
 randomly changing the 
 coordinates $(g_{n-1},g_n)$ into
 $(g_{n-1}',g_n')$
 sampled from the cross vertex weights
 \begin{equation}
 \label{eq:Pn_operator_definition}
 \begin{cases}
 R_{z,s_1,s_2}(g_{n-1}',g_{n-1};g_n,g_n'),&n\ge2;\\
 R^{\mathrm{bdry}}_{z,s_1,s_2}(g_n,g_n'),&n=1.
 \end{cases}
 \end{equation}
 In particular, we always have $g_{n-1}'+g_n'=g_{n-1}+g_n$.
 The boundary case $n=1$ is consistent with our usual 
 agreement $g_0=g_0'=+\infty$.
 By definition, the operator
 $P^{(n)}_{z,s_1,s_2}$ 
 does not change all other coordinates $g_j$, where $j\ne n-1,n$.

 Additionally, let 
 $\tilde P^{(n)}_{z,s_1,s_2}$
 be the corresponding operator on $\mathcal{X}$ induced via the gap-particle duality.
 This operator randomly
 moves the particle $x_n$ to a new location $x_n'$
 based on the locations of the 
 neighboring particles $x_{n-1}$ and $x_{n+1}$,
 with probabilities coming from \eqref{eq:Pn_operator_definition}
 via the gap-particle transformation 
 (\Cref{def:gap_particle_transform}).
\end{definition}

The Yang-Baxter equation 
implies an intertwining relation
between $P^{(n)}$ and $T$.
This relation may also be called a (quasi-)commutation between the operators.
The following statement 
is an immediate consequence of 
\Cref{prop:YBE}:
\begin{proposition}
 \label{prop:Pn_T_intertw}
 Fix $n\ge1$.
 Let $(\mathbf{u},\mathbf{s})\in \mathcal{T}$
 be such that 
 $\bigl( \frac{u_n}{u_{n-1}},s_{n-1},s_n \bigr)\in \mathcal{R}$.
 Then, we have
 \begin{equation}
 \label{eq:Pn_T_intertw}
 T_{\mathbf{u},\mathbf{s}}
 \ssp
 P^{(n)}_{u_n/u_{n-1},s_{n-1},s_n}
 =
 P^{(n)}_{u_n/u_{n-1},s_{n-1},s_n}
 T_{\sigma_{n-1}\mathbf{u},\sigma_{n-1}\mathbf{s}},
 \end{equation}
 where $\sigma_{n-1}=(n-1,n)$ is the $n$-th elementary transposition
 in the symmetric group acting on $\mathbb{Z}_{\ge0}$.
 The same identity holds if all the operators 
 in \eqref{eq:Pn_T_intertw}
 are replaced by their counterparts acting in the space $\mathscr{X}$.
 See \Cref{fig:Pn_T_intertw} for an illustration.
\end{proposition}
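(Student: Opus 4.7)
The plan is to prove \eqref{eq:Pn_T_intertw} by expressing both sides as partition functions of path ensembles on a half-infinite horizontal row of $L^{(J)}$-vertices decorated with one cross vertex, and then applying the local Yang-Baxter identity \eqref{eq:YBE_for_LJ}.

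First I would unpack the operators in index form. The transition weight $(T_{\mathbf{u},\mathbf{s}}\,P^{(n)})(\mathbf{g}\to\mathbf{g}^{\star})$ is the sum over an intermediate configuration $\mathbf{g}'$ and the internal horizontal path counts $h_0,h_1,\ldots$ of
\[
R_{u_n/u_{n-1},s_{n-1},s_n}(g_{n-1}^{\star},g_{n-1}';g_n',g_n^{\star})\,\prod_{i\ge 0} L^{(J)}_{u_i,s_i}(g_i,h_{i-1};g_i',h_i),
\]
with the conventions $g_0=g_0'=\infty$, $h_{-1}=0$, and $g_i^{\star}=g_i'$ for $i\notin\{n-1,n\}$; pictorially the cross vertex sits \emph{above} the $L$-row between columns $n-1$ and $n$. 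Analogously, $(P^{(n)}\,T_{\sigma_{n-1}\mathbf{u},\sigma_{n-1}\mathbf{s}})(\mathbf{g}\to\mathbf{g}^{\star})$ is the same type of partition function with the cross vertex placed \emph{below} the row and with the $L$-parameters at columns $n-1,n$ swapped.

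For $n\ge 2$, the two pictures agree outside the local triangle formed by the cross vertex and the two $L$-vertices at columns $n-1,n$. Cancelling the identical $L$-factors at columns $i\notin\{n-1,n\}$, the desired identity reduces to \eqref{eq:YBE_for_LJ} applied with $(u_1,s_1)=(u_{n-1},s_{n-1})$ and $(u_2,s_2)=(u_n,s_n)$: the YBE external indices $i_1,i_2,i_3,j_1,j_2,j_3$ correspond to $h_{n-2},g_{n-1},g_n,h_n,g_n^{\star},g_{n-1}^{\star}$ and the internal $k_1,k_2,k_3$ to $h_{n-1},g_{n-1}',g_n'$. Summing over the remaining $h_i$'s for $i\ne n-1$ yields \eqref{eq:Pn_T_intertw}. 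The boundary case $n=1$ is handled by the same argument after passing to the $i_2\to\infty$ limit in the YBE, which by \Cref{prop:R_infinity_limit} replaces $R$ with $R^{\mathrm{bdry}}$ and matches the leftmost boundary condition $L^{(J)}_{u_0,s_0}(\infty,0;\infty,h_0)$. The statement for $\tilde T$ and $\tilde P^{(n)}$ on $\mathscr{X}$ then follows by conjugating with the gap-particle bijection of \Cref{def:gap_particle_transform}.

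The main obstacle is justifying the rearrangement of the infinite sum over $h_0,h_1,\ldots$ required to invoke the local YBE, and (for $n=1$) the interchange of the limit $i_2\to\infty$ with the outer sums. The uniform propagation bound \eqref{eq:uniformly_bounded_propagation_condition} together with \Cref{lemma:higher_spin_model_is_well_defined} confine the support of the nonzero $h_i$'s to an almost-surely finite initial segment, which gives absolute summability and localizes the YBE manipulation to a finite window; the exponential tail estimate underlying \Cref{prop:R_infinity_limit} then supplies the dominated-convergence bound needed for the $n=1$ boundary limit.
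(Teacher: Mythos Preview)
Your argument is correct and is precisely the detailed unpacking of what the paper merely states as ``an immediate consequence of \Cref{prop:YBE}'', including the boundary case $n=1$ handled via \Cref{prop:R_infinity_limit}. Your convergence concerns are more cautious than necessary: as noted in \Cref{rmk:product_of_Markov_operators}, path conservation makes all intermediate sums finite for fixed $\mathbf{g},\mathbf{g}^\star$, so no rearrangement or dominated-convergence argument is actually required.
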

Thus, we have established \Cref{prop:swap_intro} from the Introduction.

\begin{figure}[htb]
 \centering
 \includegraphics[width=.98\textwidth]{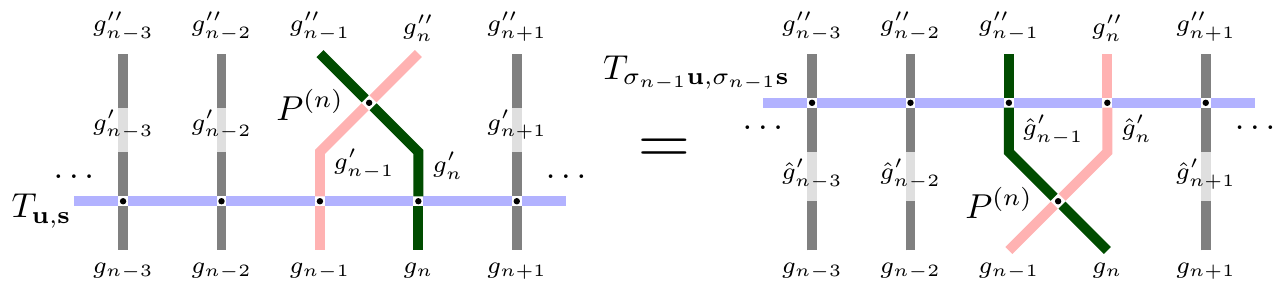}
 \caption{Relation \eqref{eq:Pn_T_intertw}--\eqref{eq:product_of_Markov_operators}
 between Markov operators 
 $T_{\mathbf{u},\mathbf{s}}$ and
 $P^{(n)}=P^{(n)}_{u_n/u_{n-1},s_{n-1},s_n}$.}
 \label{fig:Pn_T_intertw}
\end{figure}

\begin{remark}
 \label{rmk:product_of_Markov_operators}
 In \eqref{eq:Pn_T_intertw}
 and throughout the paper, we adopt 
 the convention that the product of Markov operators
 follows the order of their action on measures.
 In particular, on arbitrary
 delta measures $\delta_{\mathbf{g}}$, where $\mathbf{g}\in \mathscr{G}$
 is fixed,
 the identity \eqref{eq:Pn_T_intertw}
 is expanded as follows
 \begin{equation}
 \label{eq:product_of_Markov_operators}
 \sum_{\mathbf{g}'\in \mathscr{G}}
 T_{\mathbf{u},\mathbf{s}}
 (\mathbf{g},\mathbf{g}')
 \ssp
 P^{(n)}_{u_n/u_{n-1},s_{n-1},s_n}
 (\mathbf{g}',\mathbf{g}'')
 =
 \sum_{\hat{\mathbf{g}}'\in \mathscr{G}}
 P^{(n)}_{u_n/u_{n-1},s_{n-1},s_n}
 (\mathbf{g},\hat{\mathbf{g}}')
 \ssp
 T_{\sigma_{n-1}\mathbf{u},\sigma_{n-1}\mathbf{s}}
 (\hat{\mathbf{g}}',\mathbf{g}''),
 \end{equation}
 for any fixed $\mathbf{g},\mathbf{g}''\in \mathscr{G}$;
 see \Cref{fig:Pn_T_intertw}
 for an illustration.
 Note that both sums in \eqref{eq:product_of_Markov_operators} are finite due to the path conservation property, which is built into the operator $P^{(n)}$.
\end{remark}

\subsection{Shift operator}
\label{sub:shifting_operator}

Let us now consider a product of the operators $P^{(n)}$ over 
all $n\ge1$ with parameters chosen in such a way that the 
iterated intertwining relations
\eqref{eq:Pn_T_intertw}
lead to the shifting in
$\mathbf{u}, \mathbf{s}$:
\begin{equation}
 \label{eq:shifting_action}
 \mathsf{sh}\coloneqq\ldots \sigma_2 \sigma_1 \sigma_0 ,
 \qquad 
 \mathsf{sh}(u_0,u_1,u_2,\ldots )=
 (u_1,u_2,\ldots ),
 \qquad 
 \mathsf{sh}(s_0,s_1,s_2,\ldots )=
 (s_1,s_2,\ldots ).
\end{equation}
That is, we swap the parameters 
$(u_0,s_0)$ first with $(u_1,s_1)$,
then with 
$(u_2,s_2)$, and so on all the way to infinity. 
As a result, the parameters 
$(u_0,s_0)$ disappear, leading to the shift \eqref{eq:shifting_action}.
First, we need certain assumptions on the parameters:

\begin{definition}
 \label{def:B_operator_uniform_conditions_on_parameters}
 Denote by 
 $\mathcal{B}$ the
 space of sequences $(\mathbf{u},\mathbf{s})$
 as in \eqref{eq:parameters_for_higher_spin_vertex_model}
 such that:
 \begin{enumerate}[$\bullet$]
 \item 
 $\bigl(\frac{u_n}{u_0},s_0,s_n\bigr)\in \mathcal{R}$ for all $n\ge1$;
 \item
 There exists $\varepsilon>0$
 such that
 \begin{equation}
 \label{eq:B_operator_uniform_conditions_on_parameters}
 \frac{(-s_n)(u_ns_0-u_0s_n)}{u_0-s_0s_nu_n}<1-\varepsilon<1
 \end{equation}
 for all sufficiently large $n$.
 \end{enumerate}
\end{definition}

Similarly to \Cref{rmk:epsilon_condition}, the condition
$\bigl(\frac{u_n}{u_0},s_0,s_n\bigr)\in \mathcal{R}$
implies that the ratios in \eqref{eq:B_operator_uniform_conditions_on_parameters}
are already $\le 1$. 
However, these ratios must be bounded away from $1$ as $n$ grows.

We are now in a position to 
define the 
Markov operator 
on the space $\mathscr{G}$
which acts 
on the stochastic higher spin six vertex model
$T_{\mathbf{u},\mathbf{s}}$
by shifting the parameter sequences.

\begin{definition}[Markov shift operator]
 \label{def:B_operator}
 Let $(\mathbf{u},\mathbf{s})\in \mathcal{B}$. 
 We define the operator 
 $B_{\mathbf{u},\mathbf{s}}$ on $\mathscr{G}$
 by
 \begin{equation}
 \label{eq:B_operator}
 B_{\mathbf{u},\mathbf{s}}
 \coloneqq 
 P^{(1)}_{\frac{u_1}{u_0},s_0,s_1}
 P^{(2)}_{\frac{u_2}{u_0},s_0,s_2}
 P^{(3)}_{\frac{u_3}{u_0},s_0,s_3}
 \ldots 
 \end{equation}
 (the order follows the action on measures, cf.
 \Cref{rmk:product_of_Markov_operators}).
 See \Cref{fig:B_operator} for an 
 illustration.
 By means of the gap-particle transformation
 (\Cref{def:gap_particle_transform}),
 we also obtain a corresponding operator $\tilde B_{\mathbf{u},\mathbf{s}}$
 acting in the space $\mathscr{X}$ of particle configurations.
\end{definition}

\begin{figure}[htb]
 \centering
 \includegraphics[width=.45\textwidth]{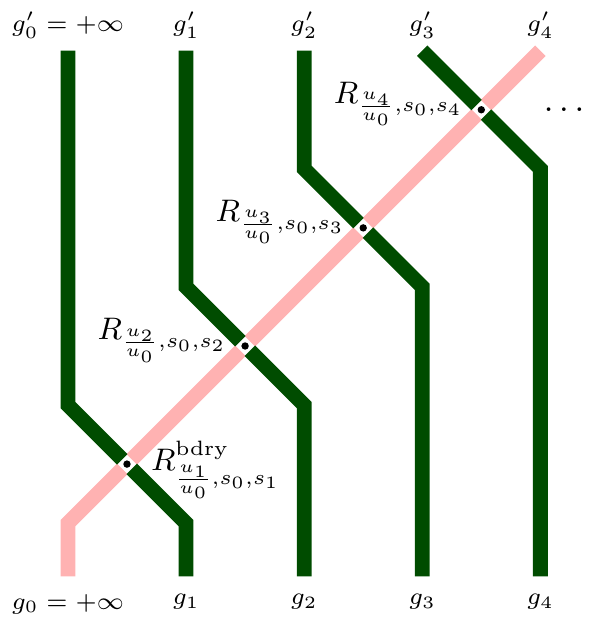}
 \caption{The path configuration 
 whose weight is the 
 matrix element
 $B_{\mathbf{u},\mathbf{s}}(\mathbf{g},\mathbf{g}')$
 of the Markov
 shift operator from
 \Cref{def:B_operator}.%
 }
 \label{fig:B_operator}
\end{figure}

\begin{lemma}
 \label{lemma:B_operator_well_defined}
 If $(\mathbf{u},\mathbf{s})\in \mathcal{B}$,
 then the shift operator 
 $B_{\mathbf{u},\mathbf{s}}$ is well-defined.
\end{lemma}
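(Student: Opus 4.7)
The goal is to verify that starting from any $\mathbf{g}\in\mathscr{G}$, the sequential random updates prescribed by $P^{(1)},P^{(2)},\ldots$ stabilize after finitely many steps almost surely, and that the resulting random configuration remains in $\mathscr{G}$. My plan is to track the propagation of path counts across the row of cross vertices graphically (as in \Cref{fig:B_operator}) by means of a single scalar process, reduce its eventual behaviour to a one-step inequality, and identify the hypothesis \eqref{eq:B_operator_uniform_conditions_on_parameters} as exactly what is needed.

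Let $H_n$ denote the horizontal path count flowing between cross $n$ and cross $n+1$ in the graphical representation, so the $R$-matrix conservation at cross $n\ge 2$ reads $g_{n-1}'+H_n=H_{n-1}+g_n$. The boundary weight $R^{\mathrm{bdry}}_{u_1/u_0,s_0,s_1}$ is a genuine probability distribution on $\mathbb{Z}_{\ge 0}$ by \eqref{eq:R_circ_sum_to_one}, so $H_1<\infty$ almost surely. Pick $N$ with $g_n=0$ for all $n>N$; for $n>N$ the recursion collapses to $H_n=H_{n-1}-g_{n-1}'$, so $(H_n)_{n>N}$ is a nonincreasing sequence of nonnegative integers and converges almost surely to a random limit $H_\infty\ge 0$.

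The crux is to prove $H_\infty=0$ almost surely. Given $H_{n-1}=k\ge 1$ and $g_n=0$, the event $\{H_n=k\}$ corresponds to the cross configuration $(i_1,i_2;j_1,j_2)=(0,k;0,k)$. Substituting $i_1=0$ into \eqref{eq:vertical_R_matrix_for_LJ_explicit} collapses the ${}_4\bar\phi_3$ to $1$, and a short computation yields
\[
R_{u_n/u_0,\,s_0,\,s_n}(0,k;0,k)=\prod_{l=0}^{k-1}\frac{s_n^{2}-c_n q^{l}}{1-c_n q^{l}},\qquad c_n\coloneqq \frac{u_n}{u_0}\,s_0 s_n.
\]
From the definition of $\mathcal{R}$ (\Cref{def:parameter_range_R}) one readily checks $0\le c_n\le s_n^{2}$, so each factor lies in $[0,1)$; the upper bound by $s_n^{2}<1$ comes from rearranging $(s_n^{2}-c_nq^{l})/(1-c_nq^{l})\le s_n^{2}\iff c_nq^{l}(1-s_n^{2})\ge 0$. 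The $l=0$ factor simplifies to $(-s_n)(u_ns_0-u_0s_n)/(u_0-s_0s_nu_n)$, which by \eqref{eq:B_operator_uniform_conditions_on_parameters} is at most $1-\varepsilon$ for all large $n$. Combining,
\[
R_{u_n/u_0,\,s_0,\,s_n}(0,k;0,k)\;\le\;(1-\varepsilon)\,s_n^{2(k-1)}\;\le\;1-\varepsilon,
\]
uniformly in $k\ge 1$ for all sufficiently large $n$.

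With this uniform bound in hand, the remainder is a routine stopping argument. Conditional on the past and on $\{H_{n-1}\ge 1\}$, the probability that $H_n=H_{n-1}$ is at most $1-\varepsilon$, so the probability that the wave stays constant through $M$ consecutive large crosses is at most $(1-\varepsilon)^{M}$; letting $M\to\infty$ forces $\mathbb{P}(H_\infty\ge 1)=0$. Once $H_n$ hits $0$ the subsequent crosses have vanishing SW and SE inputs, and since $R_{z,s_0,s_n}(0,0;0,0)=1$ they act as the identity. Therefore only finitely many coordinates of $\mathbf{g}$ are modified, $\mathbf{g}'\in\mathscr{G}$, and $B_{\mathbf{u},\mathbf{s}}$ is a bona fide Markov operator on $\mathscr{G}$. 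The principal technical point is the uniform-in-$k$ bound on $R(0,k;0,k)$, and hypothesis \eqref{eq:B_operator_uniform_conditions_on_parameters} is calibrated precisely for this.
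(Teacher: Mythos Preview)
Your proof is correct and follows essentially the same route as the paper. Both arguments reduce the well-definedness to showing that the probability $R_{u_n/u_0,s_0,s_n}(0,j;0,j)$ of the propagating path count staying constant is bounded away from $1$ uniformly in $j\ge 1$ for large $n$; you expand this quantity as an explicit product and bound it factor by factor, while the paper writes it in $q$-Pochhammer form $s_n^{2j}(u_ns_0/(u_0s_n);q)_j/(u_ns_0s_n/u_0;q)_j$ and asserts the same bound more tersely, but the content is identical.
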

\begin{proof}
 Since
 $(u_n/u_0,s_0,s_n)\in \mathcal{R}$ for all $n\ge1$,
 all vertex weights involved in the operators
 $P^{(n)}_{u_n/u_0,s_0,s_n}$
 in the product \eqref{eq:B_operator}
 are nonnegative. 
 Next, the second condition in
 \eqref{eq:B_operator_uniform_conditions_on_parameters}
 implies that no path escapes to infinity 
 under the action of $B_{\mathbf{u},\mathbf{s}}$,
 similarly to the proof of
 \Cref{lemma:higher_spin_model_is_well_defined}. 
 Indeed, we have
 \begin{equation*}
 R_{\frac{u_n}{u_0},s_0,s_n}(0,j;0,j)=\frac{s_n^{2j}(u_ns_0/(u_0s_n);q)_{j}}
 {(u_ns_0s_n/u_0;q)_{j}},
 \qquad j \in \mathbb{Z}_{\ge0},
 \end{equation*}
 and due to 
 \eqref{eq:B_operator_uniform_conditions_on_parameters}, these quantities
 are bounded away from $1$ uniformly in $j\ge1$ for all $n$ sufficiently large.
 This completes the proof.
\end{proof}

The first main structural result is the next \Cref{thm:B_T_intertw_relation}.
It concerns the stochastic higher spin six vertex model
$T_{\mathbf{u},\mathbf{s}}$ with general horizontal
spin $J$, and shows how the operator $B_{\mathbf{u},\mathbf{s}}$
acts on it by shifting the parameter sequence. 
Below in \Cref{sec:comm_rel_specializations}, we consider specializations of \Cref{thm:B_T_intertw_relation} to 
simpler particle systems, which leads to known and 
new results. Most importantly, whereas the known results only applied to step initial conditions, the new results apply to stochastic
particle systems started from a arbitrary initial condition.

\begin{theorem}
 \label{thm:B_T_intertw_relation}
 Let $(\mathbf{u},\mathbf{s})\in \mathcal{T}\cap \mathcal{B}$.
 Then
 \begin{equation}
 \label{eq:B_T_intertw_relation}
 T_{\mathbf{u},\mathbf{s}}
 \ssp
 B_{\mathbf{u},\mathbf{s}}
 =
 B_{\mathbf{u},\mathbf{s}}
 \ssp
 T_{\mathsf{sh}(\mathbf{u}),\mathsf{sh}(\mathbf{s})},
 \end{equation}
 where $\mathsf{sh}$ is the shift \eqref{eq:shifting_action}.
 The same identity holds if all the operators 
 in \eqref{eq:B_T_intertw_relation}
 are replaced
 (via the gap-particle transformation of \Cref{def:gap_particle_transform})
 by their corresponding counterparts acting on the space $\mathscr{X}$.
\end{theorem}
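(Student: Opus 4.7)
The plan is to derive the intertwining relation by iterating the single-swap intertwining \eqref{eq:Pn_T_intertw} of \Cref{prop:Pn_T_intertw} and then passing to an infinite limit, controlled by the convergence properties that ensure $B_{\mathbf{u},\mathbf{s}}$ is well-defined (\Cref{lemma:B_operator_well_defined}).

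First I would introduce the truncated shift operator
\begin{equation*}
B_N \coloneqq P^{(1)}_{u_1/u_0,\ssp s_0,s_1}\ssp P^{(2)}_{u_2/u_0,\ssp s_0,s_2}\cdots P^{(N)}_{u_N/u_0,\ssp s_0,s_N}
\end{equation*}
and the permutation $\pi_N \coloneqq \sigma_{N-1}\cdots \sigma_1 \sigma_0$, which sends $(u_0,u_1,\ldots)$ to $(u_1,u_2,\ldots ,u_N,u_0,u_{N+1},u_{N+2},\ldots)$ and similarly for $\mathbf{s}$. By induction on $N$, I claim that
\begin{equation*}
T_{\mathbf{u},\mathbf{s}}\ssp B_N = B_N\ssp T_{\pi_N \mathbf{u},\ssp \pi_N \mathbf{s}}.
\end{equation*}
The base case $N=1$ is exactly \Cref{prop:Pn_T_intertw} with $n=1$. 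For the inductive step, $T_{\mathbf{u},\mathbf{s}}\ssp B_N = B_{N-1}\ssp T_{\pi_{N-1}\mathbf{u},\ssp \pi_{N-1}\mathbf{s}}\ssp P^{(N)}_{u_N/u_0,\ssp s_0,s_N}$, and then I apply \Cref{prop:Pn_T_intertw} with $n = N$: in $\pi_{N-1}\mathbf{u}$ the entry at position $N-1$ is $u_0$ and the entry at position $N$ is $u_N$ (and likewise for $\mathbf{s}$ with $s_0,s_N$), so the ratio and subscripts of $P^{(N)}$ align perfectly. The assumption $(\mathbf{u},\mathbf{s})\in\mathcal{B}$ gives $(u_N/u_0,s_0,s_N)\in \mathcal{R}$, which is the nonnegativity hypothesis needed to invoke \Cref{prop:Pn_T_intertw}. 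Applying it yields $P^{(N)}_{u_N/u_0,\ssp s_0,s_N}\ssp T_{\sigma_{N-1}\pi_{N-1}\mathbf{u},\ssp \sigma_{N-1}\pi_{N-1}\mathbf{s}} = P^{(N)}_{u_N/u_0,\ssp s_0,s_N}\ssp T_{\pi_N \mathbf{u},\ssp \pi_N \mathbf{s}}$, completing the induction.

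Next I would pass to the limit $N\to\infty$ on the level of matrix elements between fixed $\mathbf{g},\mathbf{g}''\in \mathscr{G}$. On the left, $B_N \to B_{\mathbf{u},\mathbf{s}}$ by the very construction of the infinite product in \Cref{def:B_operator}, justified by \Cref{lemma:B_operator_well_defined}. On the right, I observe the following stabilization: the matrix element $T_{\pi_N \mathbf{u},\ssp \pi_N \mathbf{s}}(\mathbf{g}',\mathbf{g}'')$ for any fixed $\mathbf{g}',\mathbf{g}''\in\mathscr{G}$ depends only on finitely many parameters (those at sites where the horizontal path count $h_i$ is positive for some contributing configuration, which is eventually zero by the analog of \Cref{lemma:higher_spin_model_is_well_defined} applied to the permuted sequence). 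Since $\pi_N$ shifts the extra pair $(u_0,s_0)$ out to position $N$, for $N$ larger than the support of the relevant configurations this matrix element is exactly $T_{\mathsf{sh}(\mathbf{u}),\ssp \mathsf{sh}(\mathbf{s})}(\mathbf{g}',\mathbf{g}'')$. The finite-support property of $T_{\mathbf{u},\mathbf{s}}(\mathbf{g},\cdot)$ and dominated convergence then permit interchanging the limit with the (finite) sum in $(B_N\ssp T_{\pi_N \mathbf{u},\pi_N \mathbf{s}})(\mathbf{g},\mathbf{g}'')$, yielding $(B_{\mathbf{u},\mathbf{s}}\ssp T_{\mathsf{sh}(\mathbf{u}),\mathsf{sh}(\mathbf{s})})(\mathbf{g},\mathbf{g}'')$. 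The $\mathscr{X}$-version is then immediate from the gap-particle transformation of \Cref{def:gap_particle_transform}.

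The main obstacle is the convergence step: one must verify that the permuted parameter sequences $(\pi_N\mathbf{u},\pi_N\mathbf{s})$ genuinely lie in $\mathcal{T}$ uniformly in $N$ (so that each $T_{\pi_N \mathbf{u},\pi_N \mathbf{s}}$ is itself well-defined), and that the ``tail'' of the sum defining each matrix element has uniformly small contributions. Both rely on the uniform propagation bound \eqref{eq:uniformly_bounded_propagation_condition} from $\mathcal{T}$ together with the analogous bound \eqref{eq:B_operator_uniform_conditions_on_parameters} from $\mathcal{B}$: the former controls how far paths travel in a single application of $T$, while the latter prevents paths from escaping to infinity under $B$. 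Once uniform sub-stochasticity is in hand, the exchange of limit and summation is routine and the two sides of \eqref{eq:B_T_intertw_relation} agree as matrix elements, hence as operators.
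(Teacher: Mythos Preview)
Your approach is correct and is exactly the paper's: its proof is the single line ``This is simply an iteration of \Cref{prop:Pn_T_intertw},'' and your truncation-and-limit argument faithfully expands that iteration. One small slip: it is $T(\cdot,\mathbf{g}'')$, not $T_{\mathbf{u},\mathbf{s}}(\mathbf{g},\cdot)$, that has finite support (the paper notes just after the proof that for fixed $\mathbf{g},\mathbf{g}''\in\mathscr{G}$ the intermediate sums are finite), and this is precisely what justifies your interchange of limit and sum on the right-hand side.
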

\begin{proof}
 This
 is simply an iteration of
 \Cref{prop:Pn_T_intertw}.
\end{proof}

Note that the intermediate summations, as in \Cref{rmk:product_of_Markov_operators}, arising
in the products 
in both sides of 
\eqref{eq:B_T_intertw_relation}
are finite. Indeed,
for any fixed $\mathbf{g},\mathbf{g}''\in \mathscr{G}$
there are only finitely many $\mathbf{g}'\in \mathscr{G}$
such that 
$T_{\mathbf{u},\mathbf{s}}(\mathbf{g},\mathbf{g}')
\ssp
B_{\mathbf{u},\mathbf{s}}(\mathbf{g}',\mathbf{g}'')>0$.
Moreover, one can check similarly to the proofs of \Cref{lemma:higher_spin_model_is_well_defined,lemma:B_operator_well_defined} that under the condition $(\mathbf{u},\mathbf{s})\in \mathcal{T}\cap \mathcal{B}$, both products in \eqref{eq:B_T_intertw_relation} are well-defined as Markov operators in $\mathscr{G}$. That is, 
with probability $1$, the product of the Markov operators does not allow paths to run off to infinity if $(\mathbf{u},\mathbf{s})\in \mathcal{T}\cap \mathcal{B}$.

\section{Application to $q$-Hahn TASEP and its specializations}
\label{sec:comm_rel_specializations}

In this section we consider specializations
of \Cref{thm:B_T_intertw_relation}
to the $q$-Hahn TASEP \cite{Povolotsky2013}, \cite{Corwin2014qmunu}, 
various $q$-TASEPs 
\cite{SasamotoWadati1998},
\cite{BorodinCorwin2011Macdonald}, \cite{BorodinCorwin2013discrete},
and
the usual TASEP.
We recover the previously known results 
on parameter symmetry
obtained in
\cite{PetrovSaenz2019backTASEP},
\cite{petrov2019qhahn}, and extend them to 
arbitrary initial data.
All Poisson-type limit transitions involved 
in this section are the same as in \cite{petrov2019qhahn},
and therefore we only sketch the details of how discrete time Markov 
chains become continuous time Markov jump processes.

\subsection{$q$-Hahn Boson and $q$-Hahn TASEP}
\label{sub:qHahn}

Let us first recall 
\cite[Section 6.6]{BorodinPetrov2016inhom}
how the stochastic higher spin six vertex
model $T_{\mathbf{u},\mathbf{s}}$
from \Cref{sub:particle_systems}
specializes to the $q$-Hahn Boson system
from
\cite{Povolotsky2013}, \cite{Corwin2014qmunu}. 
This is achieved by setting
\begin{equation}
	\label{eq:qHahn_Boson_parameters}
	u_i=s_i\in (-1,0],\quad i\in \mathbb{Z}_{\ge0},\qquad 
	q^J=\gamma\in\bigl[1,\sup\nolimits_{i\ge0} s_i^{-2}\bigr).
\end{equation}
The stochastic vertex weights
$L^{(J)}_{s_i,s_i}$ for all $i\ge0$ 
are the $q$-beta-binomial
weights
$\varphi_{q,\gamma s_i^2,s_i^2}$ 
\eqref{eq:phi_finite},
see
\eqref{eq:qHahn_reduction_of_LJ}--\eqref{eq:L_J_infinity_weights}.
These weights are nonnegative under the parameter assumptions 
\eqref{eq:qHahn_Boson_parameters}.
Thus, the resulting
stochastic vertex model is
called the (\emph{stochastic}) \emph{$q$-Hahn Boson system}.
Denote its one-step Markov transition operator
acting on $\mathscr{G}$ by
$T_{\gamma,\mathbf{s}}^{\mathrm{qHahn}}$,
and the corresponding Markov transition operator
acting on $\mathscr{X}$
(via the gap-particle transformation, see \Cref{def:gap_particle_transform})
by
$\tilde T_{\gamma,\mathbf{s}}^{\mathrm{qHahn}}$.
Throughout this section it is convenient to 
work in the exclusion process state space $\mathscr{X}$.

The stochastic particle system on $\mathscr{X}$
with Markov transition operator 
$\tilde T_{\gamma,\mathbf{s}}^{\mathrm{qHahn}}$
is called the
\emph{$q$-Hahn TASEP}.
In $q$-Hahn TASEP, the updates are performed \emph{in parallel},
as opposed to the sequential update
in the general case. That is,
each particle $x_n$ jumps to the right independently
of other particles by a random distance $h_{n-1}$, 
where $0\le h_{n-1}\le x_{n-1}-x_n-1 $,
with probability
\begin{equation}
	\label{eq:qHahn_jumping_probability}
	\varphi_{q,\gamma s_{n-1}^2,s_{n-1}^2}(h_{n-1}\mid x_{n-1}-x_n-1),
	\qquad 
	n=1,2,\ldots .
\end{equation}
Here, we use the notation $h_{n-1}$ in agreement with
\Cref{sub:particle_systems}.
For $n=1$, we have $x_0=+\infty$, so the jumping distribution
is given by
\eqref{eq:phi_infinite}.

\medskip

Let us denote the 
$q$-Hahn specialization of the 
swap operator
(\Cref{def:Pn_operator})
acting on the space $\mathscr{G}$
by $P^{(n),\mathrm{qHahn}}_{s_{n-1},s_n}$,
and its corresponding counterpart acting on the space
$\mathscr{X}$ by $\tilde P^{(n),\mathrm{qHahn}}_{s_{n-1},s_n}$. 
These operators involve the 
cross vertex weights
\begin{equation}
	\label{eq:qHahn_Boson_R_weights_for_swap}
	\begin{split}
		R^{\mathrm{bdry}}_{\frac{s_1}{s_0},s_0,s_1}(g_1,g_1')
		&=\mathbf{1}_{g_1'\le g_1}\cdot \varphi_{q,s_1^2/s_0^2,s_1^2}(g_1'\mid g_1)
		,\\
		R_{\frac{s_n}{s_{n-1}},s_{n-1},s_n}(g_{n-1}',g_{n-1};g_n,g_n')
		&=
		\mathbf{1}_{g_{n-1}'+g_n'=g_{n-1}+g_n}
		\cdot
		\mathbf{1}_{g_n'\le g_n}\cdot
		\varphi_{q,s_n^2/s_{n-1}^2,s_n^2}(g_n'\mid g_n),
	\end{split}
\end{equation}
where $n\ge2$,
which specialize to the $q$-beta-binomial 
distribution (\Cref{prop:qHahn_degeneration_of_R}).
By \eqref{eq:R_matrix_qHahn_reduction_nonnegative},
the operators
$P^{(n),\mathrm{qHahn}}_{s_{n-1},s_n}$ and
$\tilde P^{(n),\mathrm{qHahn}}_{s_{n-1},s_n}$ have nonnegative matrix elements
if $s_{n-1},s_n\in(-1,0]$ and $|s_n|\le |s_{n-1}|$.

\begin{remark}
	\label{rmk:qHahn_not_only_process_see_S8}
	Setting $s_i=u_i$ is not the only way of making the cross vertex weights 
	to take the 
	simpler
	$q$-beta-binomial form.
	For instance, one could take $J\in \mathbb{Z}_{\ge 1}$ 
	and require that $s_i/s_j=u_i/u_j$ for all $i,j$.
	We do not focus on this case in the current \Cref{sec:comm_rel_specializations}
	since this section is
	devoted to extending existing results 
	from \cite{PetrovSaenz2019backTASEP}, \cite{petrov2019qhahn}
	on $q$-Hahn TASEP and 
	its specializations.
	Results very similar to
	the ones below in \Cref{sec:comm_rel_specializations}
	hold for the subfamily of stochastic higher spin six vertex models
	with 
	$s_i/s_j=u_i/u_j$ for all $i,j$.
	We return to this subfamily in \Cref{sec:discrete_time_bijectivisation} below.
\end{remark}

\Cref{prop:Pn_T_intertw} extends the action of the $q$-Hahn swap operators from
\cite{petrov2019qhahn}
to general initial data.
Let us fix some notation to formulate the result. 
Fix a discrete time $t\in \mathbb{Z}_{\ge0}$ and a particle configuration $\mathbf{y}\in \mathscr{X}$.
Let $\mathbf{x}(t)$
be the particle configuration of the $q$-Hahn TASEP at time $t$
started from the initial particle configuration $\mathbf{x}(0)=\mathbf{y}$,
with parameters $\mathbf{s}=(s_0,s_1,s_2,\ldots )$, 
$-1<s_i\le 0$.
Additionally, let $n\in \mathbb{Z}_{\ge1}$ and assume that $|s_n|\le |s_{n-1}|$.
Applying the swap operator $\tilde P^{(n),\mathrm{qHahn}}_{s_{n-1},s_n}$
to the configuration 
$\mathbf{x}(t)$ at time $t$ moves a single particle
$x_n(t)$ to a random new location $x_n'(t)$ with probability
\begin{equation}
	\label{eq:qHahn_swap_operator_explicit}
	\varphi_{q,s_n^2/s_{n-1}^2,s_{n}^2}(x_n'(t)-x_{n+1}(t)-1\mid 
	x_n(t)-x_{n+1}(t)-1).
\end{equation}
Denote the resulting configuration by $\mathbf{x}'(t)$.

\begin{proposition}
	[Extension of {\cite[Theorem 3.8]{petrov2019qhahn} to general initial data}]
	\label{prop:qHahn_Pn_T_intertw}
	Take the notation above. Then, the random configuration
	$\mathbf{x}'(t)$ coincides in distribution
	with 
	the configuration of the $q$-Hahn TASEP at time $t$
	started from a \emph{random} initial configuration 
	$\delta_{\mathbf{y}}\tilde P^{(n),\mathrm{qHahn}}_{s_{n-1},s_n}$
	that evolves with the swapped parameters $\sigma_{n-1}\mathbf{s}$,
	where $\sigma_{n-1}=(n-1,n)$ is the $n$-th elementary transposition.
\end{proposition}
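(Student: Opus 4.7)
The plan is to obtain this proposition as a direct specialization of \Cref{prop:Pn_T_intertw} followed by a time iteration, transported from the vertex state space $\mathscr{G}$ to the particle state space $\mathscr{X}$ via the gap-particle correspondence of \Cref{def:gap_particle_transform}. No additional Yang-Baxter input should be required beyond what is already encoded in \Cref{prop:Pn_T_intertw}.

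First, I would verify that the parameter assumptions of \Cref{prop:Pn_T_intertw} are met under the $q$-Hahn specialization $u_i=s_i$, $q^J=\gamma$. Under this specialization and the standing assumption $s_i\in(-1,0]$, the condition \eqref{eq:uniformly_bounded_propagation_condition} is trivial (the numerator vanishes), so $(\mathbf{u},\mathbf{s})\in\mathcal{T}$ automatically. For the cross-vertex parameters we get $(u_n/u_{n-1},s_{n-1},s_n)=(s_n/s_{n-1},s_{n-1},s_n)$, which together with the proposition's hypothesis $|s_n|\le|s_{n-1}|$ places us in the third branch of \Cref{def:parameter_range_R}. By \Cref{prop:qHahn_degeneration_of_R} the cross weights then collapse to the $q$-beta-binomial form \eqref{eq:qHahn_Boson_R_weights_for_swap}, so the general swap operator $P^{(n)}_{s_n/s_{n-1},s_{n-1},s_n}$ is exactly the $q$-Hahn swap $P^{(n),\mathrm{qHahn}}_{s_{n-1},s_n}$. \Cref{prop:Pn_T_intertw} therefore gives the one-step identity
\[
	T_{\gamma,\mathbf{s}}^{\mathrm{qHahn}}\,P^{(n),\mathrm{qHahn}}_{s_{n-1},s_n}
	=
	P^{(n),\mathrm{qHahn}}_{s_{n-1},s_n}\,T_{\gamma,\sigma_{n-1}\mathbf{s}}^{\mathrm{qHahn}}
\]
as Markov operators on $\mathscr{G}$, and by the gap-particle correspondence the same identity holds with tildes on $\mathscr{X}$.

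Next I would iterate this one-step relation in discrete time. A straightforward induction, inserting $T P = P T_\sigma$ between successive powers of $T$, yields $(T^{\mathrm{qHahn}}_{\gamma,\mathbf{s}})^{t}\,P^{(n),\mathrm{qHahn}}_{s_{n-1},s_n}=P^{(n),\mathrm{qHahn}}_{s_{n-1},s_n}\,(T^{\mathrm{qHahn}}_{\gamma,\sigma_{n-1}\mathbf{s}})^{t}$ for every $t\in\mathbb{Z}_{\ge0}$, and likewise on $\mathscr{X}$. Evaluating this operator identity on the delta measure $\delta_{\mathbf{y}}$ gives the probabilistic content of the proposition: the left-hand side is precisely the law of $\mathbf{x}'(t)$, obtained by running $q$-Hahn TASEP with parameters $\mathbf{s}$ from $\mathbf{y}$ for $t$ steps and then applying the single-particle swap move \eqref{eq:qHahn_swap_operator_explicit}; the right-hand side is the law obtained by first resampling the initial condition via $\tilde P^{(n),\mathrm{qHahn}}_{s_{n-1},s_n}$ and then running $q$-Hahn TASEP with the swapped parameters $\sigma_{n-1}\mathbf{s}$ for $t$ steps.

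There is no serious obstacle here; the work is essentially bookkeeping. The one mild thing worth checking is that the intermediate sums in the products of operators on $\mathscr{G}$ are finite, so that the iterated identity is a genuine equality of stochastic kernels evaluable on delta measures: path conservation (baked into $P^{(n)}$, see \Cref{rmk:product_of_Markov_operators}) handles the sums involving the swap, while the standing assumption $(\mathbf{u},\mathbf{s})\in\mathcal{T}$ ensures well-definedness of $T^t$ via \Cref{lemma:higher_spin_model_is_well_defined}. Nothing beyond these points is needed, so the proof reduces to assembling the specialization, the time iteration, and the gap-particle translation.
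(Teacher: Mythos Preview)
Your proposal is correct and follows essentially the same approach as the paper's own proof, which simply states that the result is the $q$-Hahn specialization of the intertwining relation \eqref{eq:Pn_T_intertw} of \Cref{prop:Pn_T_intertw} applied $t$ times and passed to the space $\mathscr{X}$. You have merely made explicit the parameter-range verifications (that $(\mathbf{u},\mathbf{s})\in\mathcal{T}$ trivially and $(s_n/s_{n-1},s_{n-1},s_n)\in\mathcal{R}$ via the third branch of \Cref{def:parameter_range_R}) and the time iteration that the paper leaves to the reader.
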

\begin{proof}
	This is the $q$-Hahn specialization of 
	the intertwining relation
	\eqref{eq:Pn_T_intertw}
	of \Cref{prop:Pn_T_intertw}
	between the swap operator and the time evolution operator
	(applied $t$ times). Note that this result is formulated for the
	space $\mathscr{X}$ of particle configurations in $\mathbb{Z}$.
\end{proof}

The swap operator 
$\tilde P^{(n),\mathrm{qHahn}}_{s_{n-1},s_n}$ preserves
the step initial configuration
$\mathbf{y}=\mathbf{x}_{step}$
due to the presence of the indicator $\mathbf{1}_{g_n'\le g_n}$
in \eqref{eq:qHahn_Boson_R_weights_for_swap}.
Thus, the swap operator does not 
randomize the initial configuration
for the $q$-Hahn TASEP started with $\mathbf{x}_{step}$.
In the case of step initial data,
\Cref{prop:qHahn_Pn_T_intertw} was proven in
\cite{petrov2019qhahn} (up to matching notation
$\nu_i=s_{i-1}^2$)
using exact formulas which are not readily available 
for general initial data.
Finally, observe that
$\tilde P^{(n),\mathrm{qHahn}}_{s_{n-1},s_n}$
becomes the identity operator
when $s_n=s_{n-1}$,
and the statement of \Cref{prop:qHahn_Pn_T_intertw} in this case
is trivial (while still true).

\medskip

One may also specialize 
\Cref{thm:B_T_intertw_relation}
to the case of $q$-Hahn TASEP. The result is a shift operator
which acts by removing the parameter
$s_0$ if $|s_0|\ge |s_n|$ for all $n\ge1$.
In a continuous time limit, this leads to an extension of
\cite[Theorem 4.7]{petrov2019qhahn}
to general initial data.
The original result \cite[Theorem 4.7]{petrov2019qhahn}
for the step initial data 
follows by observing that
the
$q$-Hahn specialization of the 
shift operator preserves $\mathbf{x}_{step}$.
To avoid cumbersome notation, in this paper we only consider the
continuous limit for the case of $q$-TASEP, see
\Cref{sub:QTASEP_cont,sub:QTASEP_cont_back} below.

\subsection{Intertwining relation for geometric $q$-TASEP}
\label{sub:QTASEP_discr}

Let us now consider the limit of the $q$-Hahn TASEP leading to 
the discrete time $q$-TASEP: 
\begin{equation}
	\label{eq:from_qH_to_qTASEP}
	s_n^2\to 0,\qquad \gamma s_n^2\to a_n\in(0,1),\qquad n=0,1,\ldots. 
\end{equation}
This also implies that $s_n^2/s_{k}^2\to a_n/a_k$.
Under this limit, the $q$-Hahn TASEP turns into the
\emph{discrete time geometric $q$-TASEP} introduced
in \cite{BorodinCorwin2013discrete}.
During each time step in
this process, each particle $x_n$, $n\in \mathbb{Z}_{\ge1}$, jumps to the right
independently of other particles 
by a random distance $h_{n-1}$ with probability
(see \eqref{eq:qHahn_jumping_probability})
\begin{equation}
	\label{eq:geometric_qTASEP_jumping_probability}
	\varphi_{q,a_{n-1},0}(h \mid g)=
	a_{n-1}^h
	(a_{n-1};q)_{g-h}
	\ssp
	\frac{(q;q)_g}{(q;q)_h(q;q)_{g-h}}
	,
	\qquad h=h_{n-1},\ g=x_{n-1}-x_n-1.
\end{equation}
When $n=1$, we have $g=+\infty$, by agreement.
Each $a_{n-1}$ may be viewed as the 
speed parameter attached to the particle $x_{n}$ in the $q$-TASEP.
When $q=0$,
the jumping distance \eqref{eq:geometric_qTASEP_jumping_probability} becomes
$h_{n-1}=\min(\eta,x_{n-1}-x_n-1)$, where $\eta\in \mathbb{Z}_{\ge0}$
is a geometric random variable with $\mathbb{P}(\eta=k)=a_{n-1}^k(1-a_{n-1})$. Hence, the name ``geometric''.

\medskip

Observe that the geometric $q$-TASEP swap operator
depends only on the ratio of the speed parameters,
and involves the vertex weights
$\varphi_{q,a_n/a_{n-1},0}$ similarly to 
\eqref{eq:qHahn_Boson_R_weights_for_swap}--\eqref{eq:qHahn_swap_operator_explicit}.
The swap operator and the $q$-TASEP evolution satisfy
a relation similar to \Cref{prop:qHahn_Pn_T_intertw}.

\medskip

Let us further specialize the speed parameters in the $q$-TASEP
by setting 
\begin{equation}
	\label{eq:qTASEP_regular_parameters_specialization}
	a_k=\alpha r^{k},\qquad  k\in \mathbb{Z}_{\ge0}, 
\end{equation}
where $\alpha,r\in (0,1)$ 
are fixed.
Denote the 
Markov transition operator of this $q$-TASEP acting on $\mathscr{X}$
by $\tilde T^{\mathrm{qT}}_{\alpha,r}$.
Using \Cref{def:B_operator},
let us also denote by 
$\tilde B^{\mathrm{qT}}_{r}$ the 
corresponding
shift operator.
Note that it does not depend on $\alpha$
and involves the vertex weights
$\varphi_{q,r^n,0}$, where $r^n=a_n/a_0$.
\begin{proposition}
	\label{prop:gqT_shift}
	Fix $t\in \mathbb{Z}_{\ge0}$ and $\mathbf{y}\in \mathscr{X}$.
	Let $\mathbf{x}(t)$ be the configuration of 
	the geometric $q$-TASEP 
	$\tilde T^{\mathrm{qT}}_{\alpha,r}$ at time $t$,
	started from $\mathbf{y}=\mathbf{x}(0)$. Also, let $\mathbf{x}'(t)$
	be the configuration resulting from applying the shift operator $\tilde B^{\mathrm{qT}}_{r}$
	to $\mathbf{x}(t)$.
    Then,
	$\mathbf{x}'(t)$ coincides in distribution with the $q$-TASEP
	$\tilde T^{\mathrm{qT}}_{\alpha r,r}$
	at time $t$ started from a random initial configuration
	$\delta_{\mathbf{y}}\tilde B^{\mathrm{qT}}_{r}$
	and evolving with the modified 
	parameters $a_k'=\alpha r^{k+1}$, $k\in \mathbb{Z}_{\ge0}$.
\end{proposition}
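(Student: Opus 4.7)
The plan is to derive the proposition as an iterated, doubly-specialized version of the master intertwining relation \eqref{eq:B_T_intertw_relation}. First I would specialize \Cref{thm:B_T_intertw_relation} to $q$-Hahn TASEP by setting $u_i=s_i\in(-1,0]$ and $q^J=\gamma$ as in \eqref{eq:qHahn_Boson_parameters}. Under this specialization the cross vertex weights in $B_{\mathbf{u},\mathbf{s}}$ become the $q$-beta-binomial weights $\varphi_{q,s_n^2/s_0^2,s_n^2}$ by \eqref{eq:qHahn_Boson_R_weights_for_swap}, and the shift $\mathsf{sh}$ acts on the sequence $\mathbf{s}$ alone. This yields $\tilde T^{\mathrm{qHahn}}_{\gamma,\mathbf{s}}\ssp \tilde B^{\mathrm{qHahn}}_{\gamma,\mathbf{s}} = \tilde B^{\mathrm{qHahn}}_{\gamma,\mathbf{s}}\ssp \tilde T^{\mathrm{qHahn}}_{\gamma,\mathsf{sh}(\mathbf{s})}$ on $\mathscr{X}$, provided the parameters lie in $\mathcal{T}\cap\mathcal{B}$.

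Next I would pass to the geometric $q$-TASEP by the limit \eqref{eq:from_qH_to_qTASEP}: $s_n^2\to 0$ and $\gamma s_n^2\to a_n\in(0,1)$. On the time-evolution side, $\tilde T^{\mathrm{qHahn}}_{\gamma,\mathbf{s}}$ converges to the geometric $q$-TASEP operator whose one-particle jump law is given by \eqref{eq:geometric_qTASEP_jumping_probability}. On the shift-operator side, each factor $\tilde P^{(n),\mathrm{qHahn}}_{s_0,s_n}$ involves $\varphi_{q,s_n^2/s_0^2,s_n^2}$, and the ratio $s_n^2/s_0^2=(\gamma s_n^2)/(\gamma s_0^2)$ converges to $a_n/a_0$, while $s_n^2\to 0$, so the limiting swap weights are $\varphi_{q,a_n/a_0,0}$. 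Finally I specialize to $a_k=\alpha r^k$: then $a_n/a_0=r^n$ (independent of $\alpha$), so the limit of the shift operator is precisely $\tilde B^{\mathrm{qT}}_r$ of the proposition, and the shifted speed sequence $\mathsf{sh}(a_0,a_1,\ldots)=(a_1,a_2,\ldots)=(\alpha r,\alpha r^2,\ldots)$ is exactly the sequence $a'_k=\alpha r^{k+1}$. This gives the one-step identity $\tilde T^{\mathrm{qT}}_{\alpha,r}\ssp \tilde B^{\mathrm{qT}}_r=\tilde B^{\mathrm{qT}}_r\ssp \tilde T^{\mathrm{qT}}_{\alpha r,r}$. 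Iterating this $t$ times pushes $\tilde B^{\mathrm{qT}}_r$ to the left past all $t$ copies of $\tilde T^{\mathrm{qT}}_{\alpha,r}$, yielding
\begin{equation*}
    (\tilde T^{\mathrm{qT}}_{\alpha,r})^t\ssp \tilde B^{\mathrm{qT}}_r=\tilde B^{\mathrm{qT}}_r\ssp (\tilde T^{\mathrm{qT}}_{\alpha r,r})^t,
\end{equation*}
which, applied to the delta measure $\delta_{\mathbf{y}}$, is exactly the claimed equality in distribution between $\mathbf{x}'(t)$ and the $q$-TASEP $\tilde T^{\mathrm{qT}}_{\alpha r,r}$ run for time $t$ from initial distribution $\delta_{\mathbf{y}}\tilde B^{\mathrm{qT}}_r$.

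The main obstacle I expect is not the algebra (which is immediate from \Cref{thm:B_T_intertw_relation}) but the bookkeeping needed to justify the limit \eqref{eq:from_qH_to_qTASEP} inside the infinite product \eqref{eq:B_operator}. Specifically, one must check two things uniformly along the limit: (i) nonnegativity of every cross vertex weight, which is immediate since $0<r^n\le 1$ places $(r^n,s_0,s_n)$ in the $q$-beta-binomial branch of $\mathcal{R}$ in \Cref{def:parameter_range_R}; and (ii) the no-escape condition \eqref{eq:B_operator_uniform_conditions_on_parameters}, guaranteeing the infinite product $\tilde B^{\mathrm{qT}}_r$ is well-defined on $\mathscr{X}$. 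Since $r\in(0,1)$, the weights $\varphi_{q,r^n,0}$ concentrate on $0$ as $n\to\infty$ uniformly in the relevant sense (the probability of a nonzero jump at slot $n$ is bounded by $\mathrm{const}\cdot r^n$), which gives a Borel--Cantelli-type argument that only finitely many particles are moved by $\tilde B^{\mathrm{qT}}_r$, as in the proofs of \Cref{lemma:higher_spin_model_is_well_defined,lemma:B_operator_well_defined}. With these two verifications, the limit of each identity $(\tilde T^{\mathrm{qHahn}}_{\gamma,\mathbf{s}})^t\ssp \tilde B^{\mathrm{qHahn}}_{\gamma,\mathbf{s}}=\tilde B^{\mathrm{qHahn}}_{\gamma,\mathbf{s}}\ssp (\tilde T^{\mathrm{qHahn}}_{\gamma,\mathsf{sh}(\mathbf{s})})^t$, applied to a fixed $\delta_{\mathbf{y}}$ and evaluated against a cylinder event, reduces to a finite sum of products of matrix elements that converge termwise, so the intertwining survives the limit and the proof is complete.
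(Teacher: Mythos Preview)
Your proposal is correct and follows essentially the same route as the paper: invoke \Cref{thm:B_T_intertw_relation}, specialize to the $q$-Hahn setting $u_i=s_i$, pass to the geometric $q$-TASEP via \eqref{eq:from_qH_to_qTASEP}, set $a_k=\alpha r^k$, and iterate. The paper's proof is a two-sentence version of this; the only difference is that the paper dispatches the well-definedness of $\tilde B^{\mathrm{qT}}_r$ by noting that the second condition of \Cref{def:B_operator_uniform_conditions_on_parameters} is \emph{identically zero} when $u_i=s_i$ (so \Cref{lemma:B_operator_well_defined} applies directly), rather than via a separate tail bound on $\varphi_{q,r^n,0}$ as you sketch.
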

In terms of the operators, the statement is equivalent to 
the following intertwining relation:
\begin{equation}
	\label{eq:geo_qTASEP_shift_operator}
	\bigl(\tilde T^{\mathrm{qT}}_{\alpha,r}\bigr)^t
	\tilde B^{\mathrm{qT}}_{r}
	=
	\tilde B^{\mathrm{qT}}_{r}
	\bigl(\tilde T^{\mathrm{qT}}_{\alpha r,r}\bigr)^t,
\end{equation}
where the order of the operators is understood 
as in \Cref{rmk:product_of_Markov_operators},
and $(\cdots)^t$ means raising to the nonnegative integer power $t$.
\begin{proof}[Proof of \Cref{prop:gqT_shift}]
	This is a specialization of 
	\Cref{thm:B_T_intertw_relation}
	formulated in terms of particle systems.
	Note that the operator
	$\tilde T^{\mathrm{qT}}_{\alpha,r}$
	is well-defined for $a_k=\alpha r^k$, 
	since $0<a_k<1$ for all $k$.
	Also, the shift operator 
	$\tilde B^{\mathrm{qT}}_{r}$
	is well-defined by \Cref{lemma:B_operator_well_defined}
	since $a_k>a_{k+1}$ for all $k$, and the second condition
	in \Cref{def:B_operator_uniform_conditions_on_parameters}
	holds trivially if $u_i = s_i$ for all $i$.
\end{proof}

\subsection{Limit to continuous time $q$-TASEP}
\label{sub:QTASEP_cont}

Let us now take a Poisson-type limit to continuous time for the geometric
$q$-TASEP $\tilde T^{\mathrm{qT}}_{\alpha,r}$.
This is achieved by letting
\begin{equation}
	\label{eq:alpha_t_limit_to_cont_time_qTASEP}
	\alpha\to 0,
	\qquad 
	t=\lfloor 
	(1-q)\mathsf{t}/\alpha
	\rfloor ,
\end{equation}
where 
$\mathsf{t}\in \mathbb{R}_{\ge0}$ is the new continuous time variable.
Indeed, observe the expansion
\begin{equation}
	\label{eq:alpha_t_limit_to_cont_time_qTASEP_1}
	\varphi_{q,\mu,0}(h\mid g)=
	\begin{cases}
		1+O(\mu),& h=0;\\[4pt]
		\dfrac{1-q^g}{1-q}\ssp\mu+O(\mu^2),& h=1;\\[7pt]
		O(\mu^2),&h\ge1,
	\end{cases}
	\qquad \qquad 
	\mu\to 0.
\end{equation}
This means that particles jump very rarely for small $\alpha$ in discrete time. 
Moreover, when a particle jumps, 
it jumps by one with much higher probability than any other distance greater than one. 
Then, by speeding up the time, the
discrete jumping distributions 
$\varphi_{q,\alpha r^{n-1},0}$ \eqref{eq:geometric_qTASEP_jumping_probability}
lead to independent exponential clocks. Therefore,
under the resulting 
\emph{continuous time $q$-TASEP} \cite{BorodinCorwin2011Macdonald}, 
each particle $x_n$ has an independent exponential clock
of rate $r^{n-1}\left( 1-q^{g_n} \right)$,
where $g_n=x_n-x_{n+1}-1$ (the factor $1-q$ in the rate in \eqref{eq:alpha_t_limit_to_cont_time_qTASEP_1}
is removed by the time scaling \eqref{eq:alpha_t_limit_to_cont_time_qTASEP}). 
When the clock attached to the particle $x_n$ rings, this 
particle jumps by $1$ to the right. Note that when $g_n=0$, the jump rate of $x_n$
is zero, which means that a particle cannot jump into an occupied location.

\begin{remark}[TASEP specialization, $q=0$]
	\label{rmk:TASEP_definition}
	The continuous time $q$-TASEP
	with the sequence of speeds $(1,r,r^2,\ldots )$
	turns into the \emph{TASEP} with these speeds when $q=0$.
	Under TASEP, each particle $x_n$ has an independent
	exponential clock with rate $r^{n-1}$. When 
	a clock rings, the corresponding particle jumps
	to the right by one, provided that the destination is unoccupied.
	Otherwise, the jump of the particle is blocked.

	Moreover, in the case $r=1$, we recover the well-known 
	\emph{homogeneous continuous time TASEP} in which the speeds 
	of all particles are equal to $1$.
\end{remark}

Let us denote the continuous time Markov semigroup on $\mathscr{X}$
corresponding to the continuous time $q$-TASEP
with 
particle speeds $(1,r,r^2,\ldots )$
by
$\{\tilde T^{\mathrm{qT}}_r(\mathsf{t})\}_{\mathsf{t}\in \mathbb{R}_{\ge0}}$.
In the case $r=1$, the process given by the semigroup
$\tilde T^{\mathrm{qT}}_1(\mathsf{t})$ 
(which we will denote simply by $\tilde T^{\mathrm{qT}}(\mathsf{t})$)
is the \emph{homogeneous $q$-TASEP}, where all particles have speeds equal to $1$.

Taking the continuous time limit
\eqref{eq:alpha_t_limit_to_cont_time_qTASEP}
in \eqref{eq:geo_qTASEP_shift_operator}, we get the following intertwining
relations for any $m\in \mathbb{Z}_{\ge1}$:
\begin{equation}
	\label{eq:B_T_intertw_relation_cont_qTASEP}
	\tilde T^{\mathrm{qT}}_r(\mathsf{t})
	\bigl(  
	\tilde B^{\mathrm{qT}}_{r}
	\bigr)^{m}
	=
	\bigl(  
	\tilde B^{\mathrm{qT}}_{r}
	\bigr)^{m}\ssp
	\tilde T^{\mathrm{qT}}_r(r^{m}\ssp \mathsf{t}).
\end{equation}
Indeed, shifting
the sequence of speed parameters as
$(1,r,r^2,\ldots )\mapsto (r^m,r^{m+1},r^{m+2},\ldots )$
means slowing down all the particles by 
the factor $r^{m}$, which is equivalent to looking at the 
$q$-TASEP
distribution
at an earlier time $r^{m}\ssp\mathsf{t}$.

\subsection{Mapping $q$-TASEP back in time}
\label{sub:QTASEP_cont_back}

We now aim to take one more Poisson-type limit 
in the intertwining relation 
\eqref{eq:B_T_intertw_relation_cont_qTASEP}. Let
\begin{equation}
	\label{eq:qTASEP_back_Poisson_limit}
	r= 1-\varepsilon,\qquad m=\lfloor \tau/\varepsilon \rfloor ,\qquad 
	\varepsilon\searrow 0,
\end{equation}
where $\tau\in \mathbb{R}_{\ge0}$ is a new continuous time parameter.
Under \eqref{eq:qTASEP_back_Poisson_limit}, one readily sees that
the $q$-TASEP Markov operators in both sides of
\eqref{eq:B_T_intertw_relation_cont_qTASEP}
turn into the operators
$\tilde T^{\mathrm{qT}}(\mathsf{t})$
and 
$\tilde T^{\mathrm{qT}}(e^{-\tau} \ssp \mathsf{t})$, respectively.
Recall that the latter two operators
correspond the homogeneous $q$-TASEP where all particles move with homogeneous speed one.

Let us consider the limit of 
$\bigl(  
	\tilde B^{\mathrm{qT}}_{r}
\bigr)^{m}$.
In particular, consider the cross
vertex weights~\eqref{eq:qHahn_Boson_R_weights_for_swap}
in the limit $\varepsilon \to 0$
under the specialization \eqref{eq:from_qH_to_qTASEP} and  \eqref{eq:qTASEP_regular_parameters_specialization}.
For any fixed $n\in \mathbb{Z}_{\ge0}$, we have:
\begin{equation}
	\label{eq:qHahn_distribution_expansion}
	\varphi_{q,r^n,0}(g' \mid g)=
	\begin{cases}
		\displaystyle\frac{n}{1-q^{g-g'}}\frac{(q;q)_g}{(q;q)_{g'}}
		\ssp \varepsilon+O(\varepsilon^2),& 0\le g'\le g-1;
		\\[8pt]
		1-ng\ssp\varepsilon+O(\varepsilon^2),& g'=g.
	\end{cases}
\end{equation}
Note that the quantity $r^n$ arises as $a_n/a_0$, see \eqref{eq:qTASEP_regular_parameters_specialization}.

We have the following interpretation for the expansion \eqref{eq:qHahn_distribution_expansion}.
For small $\varepsilon$,
the action of a single shift operator
$\tilde B^{\mathrm{qT}}_{r}$
does not change the particle configuration with high probability.
Speeding up the time leads to exponential particle jumps
with rates coming from the coefficients of the $\varepsilon$-terms 
in \eqref{eq:qHahn_distribution_expansion}. In the limit regime
\eqref{eq:qTASEP_back_Poisson_limit}
the operators 
$\bigl(  
\tilde B^{\mathrm{qT}}_{r}
\bigr)^{m}$ on $\mathscr{X}$
converge into a continuous time Markov semigroup
$\{\tilde B^{\mathrm{qT}}(\tau)\}_{\tau\in \mathbb{R}_{\ge0}}$
on $\mathscr{X}$, where the 
convergence is in the sense of matrix elements of Markov operators
on $\mathscr{X}$.

We call the Markov semigroup
$\tilde B^{\mathrm{qT}}(\tau)$
on $\mathscr{X}$ the \emph{backwards $q$-TASEP dynamics}.
Under this dynamics,
each particle $x_n$ has an independent
exponential clock with rate $ng_n=n(x_n-x_{n+1}-1)$.
When a clock rings, 
the corresponding particle $x_n$ instantaneously jumps backwards to a new location
$x_n'<x_n$ with probability
\begin{equation}
	\label{eq:backwards_jumping_probability}
	\frac{1}{g_n(1-q^{x_n-x_n'})}\frac{(q;q)_{g_n}}{(q;q)_{g_n'}},
	\qquad \textnormal{where}\quad
	g_n=x_n-x_{n+1}-1,\quad
	g_n'=x_n'-x_{n+1}-1.
\end{equation}
Observe that for any configuration in $\mathscr{X}$,
the sum of the jump rates of all possible particle jumps is finite, meaning that 
the backwards $q$-TASEP on $\mathscr{X}$ is well-defined.

\begin{remark}
	\label{rmk:TASEP_backwards_dynamics}
	In the TASEP specialization, when $q=0$ (cf. \Cref{rmk:TASEP_definition}),
	the probabilities \eqref{eq:backwards_jumping_probability}
	define a uniform distribution. Therefore, under the backwards
	dynamics, when the clock of the particle $x_n$
	rings (with rate $n(x_n-x_{n+1}-1)$), 
	this particle selects one of the following locations 
	\begin{equation*}
		\left\{ x_{n+1}+1,x_{n+1}+2,\ldots,x_n-2,x_n-1  \right\}
	\end{equation*}
	uniformly at random, and instantaneously jumps into the 
	selected location.
	Thus, setting $q=0$ turns the backwards $q$-TASEP dynamics
	$\tilde B^{\mathrm{qT}}(\tau)$
	into the (\emph{inhomogeneous})
	\emph{backwards Hammersley process} introduced
	in \cite{PetrovSaenz2019backTASEP} (see \Cref{fig:intro_BHP} for an illustration).
\end{remark}

Taking the Poisson-type limit 
\eqref{eq:qTASEP_back_Poisson_limit}
of the intertwining relation \eqref{eq:B_T_intertw_relation_cont_qTASEP},
we immediately obtain the main result of \Cref{sec:comm_rel_specializations}
(this is \Cref{thm:cont_intertwining_intro} from the Introduction):

\begin{theorem}
	\label{thm:mapping_qTASEP_TASEP_back_general_IC}
	Let 
	$\{\tilde T^{\mathrm{qT}}(\mathsf{t})\}_{\mathsf{t}\in \mathbb{R}_{\ge0}}$
	and
	$\{\tilde B^{\mathrm{qT}}(\tau)\}_{\tau\in \mathbb{R}_{\ge0}}$
	be the
	Markov semigroups of the homogeneous $q$-TASEP 
	and the backwards $q$-TASEP
	on $\mathscr{X}$, respectively. Then
	\begin{equation}
		\label{eq:mapping_qTASEP_TASEP_back_general_IC}
		\tilde T^{\mathrm{qT}}(\mathsf{t})
		\ssp
		\tilde B^{\mathrm{qT}}(\tau)
		=
		\tilde B^{\mathrm{qT}}(\tau)
		\ssp
		\tilde T^{\mathrm{qT}}\bigl(e^{-\tau}\mathsf{t}\bigr)
		\qquad 
		\textnormal{for all $\mathsf{t}, \tau \in \mathbb{R}_{\ge0}$}.
	\end{equation}
	The same identity holds if all the operators 
	are replaced
	(via the gap-particle transformation of \Cref{def:gap_particle_transform})
	by their counterparts acting in the vertex model space $\mathscr{G}$.
\end{theorem}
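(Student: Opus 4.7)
The plan is to derive \eqref{eq:mapping_qTASEP_TASEP_back_general_IC} directly as the Poisson-type limit of the already established discrete-in-$m$ intertwining \eqref{eq:B_T_intertw_relation_cont_qTASEP}, under the scaling \eqref{eq:qTASEP_back_Poisson_limit} in which $r=1-\varepsilon$ and $m=\lfloor \tau/\varepsilon\rfloor$ as $\varepsilon\searrow 0$. The relation \eqref{eq:B_T_intertw_relation_cont_qTASEP} holds as an identity of Markov transition operators on $\mathscr{X}$ for every positive integer $m$, so it suffices to pass to the limit on both sides and identify the objects produced.

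On the left-hand side of \eqref{eq:B_T_intertw_relation_cont_qTASEP}, the homogeneous-speed limit $\tilde T^{\mathrm{qT}}_r(\mathsf{t})\to \tilde T^{\mathrm{qT}}(\mathsf{t})$ is immediate from the $q$-TASEP jump rates $r^{n-1}(1-q^{g_n})\to 1-q^{g_n}$ as $r\to 1$, in the sense of matrix elements on $\mathscr{X}$. The convergence $(\tilde B^{\mathrm{qT}}_r)^m\to \tilde B^{\mathrm{qT}}(\tau)$ is precisely what was established from the expansion \eqref{eq:qHahn_distribution_expansion}: each single application of $\tilde B^{\mathrm{qT}}_r$ gives rise to particle jumps at rate $\varepsilon\cdot n\ssp g_n/(1-q^{g_n-g_n'})\cdot (q;q)_{g_n}/(q;q)_{g_n'}$ to leading order, and iterating $m=\lfloor\tau/\varepsilon\rfloor$ times produces the continuous-time backwards $q$-TASEP semigroup with the jump rates and probabilities in \eqref{eq:backwards_jumping_probability}. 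On the right-hand side, $r^m=(1-\varepsilon)^{\lfloor\tau/\varepsilon\rfloor}\to e^{-\tau}$, whence $\tilde T^{\mathrm{qT}}_r(r^m\mathsf{t})\to \tilde T^{\mathrm{qT}}(e^{-\tau}\mathsf{t})$.

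The only real subtlety is justifying the interchange of the limit $\varepsilon\searrow 0$ with the composition of Markov operators. Writing matrix elements explicitly, for any $\mathbf{y},\mathbf{z}\in\mathscr{X}$,
\begin{equation*}
\bigl(\tilde T^{\mathrm{qT}}_r(\mathsf{t})\ssp(\tilde B^{\mathrm{qT}}_r)^m\bigr)(\mathbf{y},\mathbf{z})=\sum_{\mathbf{w}\in\mathscr{X}}\tilde T^{\mathrm{qT}}_r(\mathsf{t})(\mathbf{y},\mathbf{w})\ssp(\tilde B^{\mathrm{qT}}_r)^m(\mathbf{w},\mathbf{z}).
\end{equation*}
Because $\tilde T^{\mathrm{qT}}_r$ only moves particles rightward and $\tilde B^{\mathrm{qT}}_r$ only moves them leftward (the factor $1-q^{g_n}$ and the indicator inside \eqref{eq:qHahn_Boson_R_weights_for_swap} force $\mathbf{w}\ge \mathbf{y}$ and $\mathbf{w}\ge \mathbf{z}$ coordinatewise), the effective sum is over those $\mathbf{w}$ that dominate both $\mathbf{y}$ and $\mathbf{z}$ and differ from each by only finitely many sites in the left-packed regime. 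A uniform-in-$\varepsilon$ tail bound is provided by the elementary stochastic domination of the $n$-th particle displacement in $\tilde T^{\mathrm{qT}}_r(\mathsf{t})$ by a rate-$1$ Poisson process on time $\mathsf{t}$, which has all exponential moments; the analogous bound for $(\tilde B^{\mathrm{qT}}_r)^m$ follows from the expansion \eqref{eq:qHahn_distribution_expansion}. Dominated convergence then identifies the limit of each side with $(\tilde T^{\mathrm{qT}}(\mathsf{t})\ssp\tilde B^{\mathrm{qT}}(\tau))(\mathbf{y},\mathbf{z})$ and $(\tilde B^{\mathrm{qT}}(\tau)\ssp\tilde T^{\mathrm{qT}}(e^{-\tau}\mathsf{t}))(\mathbf{y},\mathbf{z})$ respectively.

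The main obstacle is thus not the algebraic identity itself, which is already encoded in \Cref{thm:B_T_intertw_relation}, but the analytic matter of controlling the double Poisson limit uniformly so that operator composition passes to the limit; this is where the left-packed structure of $\mathscr{X}$ and the monotonicity of both semigroups (one pushing right, one pushing left) do the essential work.
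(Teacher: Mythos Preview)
Your approach is essentially the same as the paper's: both derive \eqref{eq:mapping_qTASEP_TASEP_back_general_IC} as the Poisson-type limit \eqref{eq:qTASEP_back_Poisson_limit} of the discrete intertwining \eqref{eq:B_T_intertw_relation_cont_qTASEP}, identifying $\tilde T^{\mathrm{qT}}_r(\mathsf{t})\to\tilde T^{\mathrm{qT}}(\mathsf{t})$, $(\tilde B^{\mathrm{qT}}_r)^m\to\tilde B^{\mathrm{qT}}(\tau)$, and $r^m\to e^{-\tau}$. The paper is considerably terser, simply declaring the result immediate from this limit and relegating the operator convergences to the discussion preceding the theorem; you go further by addressing the interchange of limit and composition via monotonicity and dominated convergence, which the paper does not spell out. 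One small slip: your displayed ``rate $\varepsilon\cdot n\ssp g_n/(1-q^{g_n-g_n'})\cdots$'' has an extra $g_n$ in the numerator compared to \eqref{eq:qHahn_distribution_expansion}; the per-step transition probability to a specific $g_n'$ is $\varepsilon\cdot n/(1-q^{g_n-g_n'})\cdot(q;q)_{g_n}/(q;q)_{g_n'}$, while $n g_n$ is the total clock rate after summing over destinations.
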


\Cref{thm:mapping_qTASEP_TASEP_back_general_IC}
may be reformulated equivalently
in terms 
of stochastic particle systems on $\mathbb{Z}$.
Fix $\mathbf{y}\in \mathscr{X}$,
and let $\mathbf{x}(\mathsf{t})$ denote
the configuration of the homogeneous 
$q$-TASEP at time $\mathsf{t}$ started with initial condition $\mathbf{x}(0)=\mathbf{y}$.
Fix $\tau$, and run 
the backwards $q$-TASEP dynamics from the configuration $\mathbf{x}(\mathsf{t})$
for time $\tau$. Then, the
distribution of the resulting configuration
is the same as the distribution of the 
$q$-TASEP at time $e^{-\tau}\mathsf{t}$ with \emph{random} initial configuration
$\delta_{\mathbf{y}}\tilde B^{\mathrm{qT}}(\tau)$.

We recover the $\nu=0$ case\footnote{An
intertwining relation for general $\nu$ is also readily obtained in a continuous
time limit from the shift operator for the $q$-Hahn TASEP, but 
in the present paper
we omit 
this statement, as well as its limit to the 
beta polymer as in \cite[Section 6]{petrov2019qhahn}.} of
\cite[Theorem 4.7]{petrov2019qhahn} by setting the initial configuration $\mathbf{y}$ is $\mathbf{x}_{step}$. In particular, note that the configuration $\mathbf{x}_{step}$ is fixed by $\tilde B^{\mathrm{qT}}(\tau)$. The Theorem in \cite[Theorem 4.7]{petrov2019qhahn} states that the backwards dynamics maps the distribution of 
$q$-TASEP with step initial data backwards in time, from $\mathsf{t}$ to $e^{-\tau}\mathsf{t}$, by applying the backwards $q$-TASEP for time $\tau$. Moreover, setting $q=0$
recovers 
\cite[Theorem 1]{PetrovSaenz2019backTASEP}
for the homogeneous TASEP.


\subsection{Lax equation for $q$-TASEP and TASEP}
\label{sub:Lax_equations}

We obtain a Lax type equation for the 
$q$-TASEP (and TASEP in the special case $q=0$), arising from 
identity 
\eqref{eq:mapping_qTASEP_TASEP_back_general_IC}
established in \Cref{thm:mapping_qTASEP_TASEP_back_general_IC}.
Our computations in this subsection are informal,
though we believe that
the end results
\eqref{eq:Lax_equation_qTASEP}
and 
\eqref{eq:Lax_equation_qTASEP_expectations}
become rigorous 
in appropriate spaces of functions.
We believe that our Lax equation could be employed to study
multipoint asymptotics of the $q$-TASEP and, in a scaling
limit, lead to Kadomtsev–Petviashvili (KP) or Korteweg–de
Vries (KdV) type equations recently derived in
\cite{quastel2019kp} for the KPZ fixed point process
\cite{matetski2017kpz}. We leave the asymptotic analysis of
the Lax equation to future work.

\medskip

Let 
$\tilde{\mathsf{T}}$
and
$\tilde{\mathsf{B}}$
denote the infinitesimal generators of the $q$-TASEP and the 
backwards $q$-TASEP, respectively.
Multiply both sides of \eqref{eq:mapping_qTASEP_TASEP_back_general_IC}
by $\tilde T^{\mathrm{qT}}(\mathsf{t}-e^{-\tau}\mathsf{t})$
from the right. Using the semigroup property of $\tilde T^{\mathrm{qT}}(\mathsf{t})$, 
we obtain
\begin{equation*}
	\tilde T^{\mathrm{qT}}(\mathsf{t})
	\ssp
	\tilde B^{\mathrm{qT}}(\tau)
	\ssp
	\tilde T^{\mathrm{qT}}(\mathsf{t}-e^{-\tau}\mathsf{t})
	=
	\tilde B^{\mathrm{qT}}(\tau)
	\ssp
	\tilde T^{\mathrm{qT}}\bigl(\mathsf{t}\bigr).
\end{equation*}
Fix $\mathsf{t}>0$, and differentiate this identity 
in $\tau$ at $\tau=0$. We obtain
\begin{equation*}
	\tilde T^{\mathrm{qT}}(\mathsf{t})
	\bigl( \tilde{\mathsf{B}}+\mathsf{t}\cdot 
	\tilde{\mathsf{T}} \bigr)
	=\tilde{\mathsf{B}}\ssp
	\tilde T^{\mathrm{qT}}(\mathsf{t}).
\end{equation*}
Dividing by $\mathsf{t}$, rewrite this as
\begin{equation}
	\label{eq:Lax_equation_qTASEP_proof}
	\tilde T^{\mathrm{qT}}(\mathsf{t})
	\ssp
	\tilde{\mathsf{T}}
	=
	\left[ \tfrac{1}{\mathsf{t}} \tilde{\mathsf{B}}, 
	\tilde T^{\mathrm{qT}}(\mathsf{t}) \right],
\end{equation}
where $[\cdot,\cdot]$ is the commutator of operators.
Using Kolmogorov (also called Fokker--Planck) equation,
we can express the left-hand side as a derivative in $\mathsf{t}$.
Thus, we obtain
\begin{equation}
	\label{eq:Lax_equation_qTASEP}
	\frac{d}{d\ssp \mathsf{t}}
	\tilde{\mathsf{T}}^{\mathrm{qT}}(\mathsf{t})
	=
	\left[ \tfrac{1}{\mathsf{t}} \tilde{\mathsf{B}}, 
	\tilde T^{\mathrm{qT}}(\mathsf{t}) \right],
\end{equation}
a differential equation for
the $q$-TASEP semigroup in the Lax form.

Let us apply the Lax equation
to an 
arbitrary (sufficiently nice)
function
$F$ on the space~$\mathscr{X}$.
Note that we have
$\bigl(\tilde{\mathsf{T}}^{\mathrm{qT}}(\mathsf{t})
F\bigr)(\mathbf{y})=	
\mathbb{E}_{\mathbf{y}}\left[ F(\mathbf{x}(\mathsf{t})) \right]$,
where the expectation is with respect to the $q$-TASEP at time $\mathsf{t}$
started from $\mathbf{y}$, since $\tilde{\mathsf{T}}^{\mathrm{qT}}(\mathsf{t})$ is a Markov semigroup.
Then,
from \eqref{eq:Lax_equation_qTASEP_proof}, we obtain the following:
\begin{equation}
	\label{eq:Lax_equation_qTASEP_expectations}
	\mathsf{t}\,
	\mathbb{E}_{\mathbf{y}}\bigl[ \bigl(\tilde{\mathsf{T}}\ssp
	F\bigr)(\mathbf{x}(\mathsf{t})) \bigr]
	=
	\tilde{\mathsf{B}}
	\ssp
	\mathbb{E}_{\mathbf{y}}
	\left[ F(\mathbf{x}(\mathsf{t})) \right]
	-
	\mathbb{E}_{\mathbf{y}}
	\bigl[ \bigl(\tilde{\mathsf{B}}\ssp F\bigr)(\mathbf{x}(\mathsf{t})) \bigr],
\end{equation}
where the operator $\tilde{\mathsf{B}}$ on the right side acts on 
the expectation as a function in~$\mathbf{y}$, for the first term, and
on the function $F$, for the second term.

\medskip

Identity \eqref{eq:Lax_equation_qTASEP_expectations}
generalizes
\cite[Proposition 5.3]{petrov2019qhahn}
(and also \cite[Proposition 7.1]{PetrovSaenz2019backTASEP} when $q=0$)
by allowing an arbitrary initial condition~$\mathbf{y}$.
Indeed, if $\mathbf{y}=\mathbf{x}_{step}$, then
$\tilde{\mathsf{B}}
\ssp\mathbb{E}_{\mathbf{y}}
\left[ F(\mathbf{x}(\mathsf{t})) \right]=0$
because $\mathbf{x}_{step}$ is an absorbing state for $\tilde{\mathsf{B}}$.
Thus, the combined generator 
$\mathsf{t}\ssp\tilde{\mathsf{T}}+\tilde{\mathsf{B}}$
satisfies 
\begin{equation*}
	\mathbb{E}_{\mathbf{x}_{step}}
	\bigl[ 
		\bigl(
			\mathsf{t}\ssp\tilde{\mathsf{T}}F+\tilde{\mathsf{B}}F
		\bigr)(\mathbf{x}(\mathsf{t}))
	\bigr]=0,
\end{equation*}
so the process with this combined generator
preserves
the time $\mathsf{t}$ distribution 
of the $q$-TASEP
started 
from the step initial
configuration.
This preservation of measure
was proven in \cite{petrov2019qhahn}
using contour integral formulas available for the $q$-TASEP distribution with the
step initial configuration,
and for $q=0$ in \cite{PetrovSaenz2019backTASEP}
using a different approach.
Moreover, using duality, in \cite{petrov2019qhahn} it was shown that the 
process with the combined generator converges to its stationary distribution
when started from an arbitrary initial configuration in $\mathscr{X}$.

\section{Application to Schur vertex model}
\label{sec:schur_vertex_model}

The Schur vertex model
studied in
\cite{SaenzKnizelPetrov2018}
is the $J=1$, $q=0$ specialization of the
stochastic higher spin six vertex model 
$\mathbf{g}(t)$ defined in 
\Cref{sub:particle_systems}.
The name ``Schur'' comes from the fact that
some joint distributions in $\mathbf{g}(t)$
are expressed through the Schur processes 
\cite[Theorem~3.5]{SaenzKnizelPetrov2018}.
This model can be equivalently reformulated as a certain corner growth
\cite[Section~1.2]{SaenzKnizelPetrov2018}, and is also equivalent
to the generalized TASEP of
\cite{derbyshev2012totally} and
\cite{Povolotsky2013},
which appeared (in the form of tandem
queues and first passage percolation models)
already in \cite{woelki2005steady} and \cite{Matrin-batch-2009}.
Here we outline the specialization of the general
shift operator from \Cref{sub:shifting_operator} to this model. We also
observe that in contrast with the 
$q$-Hahn TASEP and its specializations,
in the Schur vertex model the shift operator \emph{does not} preserve the 
distinguished initial configuration $\mathbf{g}_{step}$.

The Schur vertex model scales to
a version of the TASEP in continuous inhomogeneous space 
\cite[Theorem 2.7]{SaenzKnizelPetrov2018}.
It would be interesting to see how the shift operators 
behave under this scaling, but 
we do not pursue this analysis here.

\subsection{Schur vertex model}
\label{sub:Schur_vertex_model_def}

The \emph{Schur vertex model}
depends on the parameters $u_i,s_i$
as in \eqref{eq:parameters_for_higher_spin_vertex_model}.
For simplicity, here we can take the parameters $s_i$ to be homogeneous, 
$s_i\equiv s\in(-1,0)$.
Denote $\nu=s^2\in(0,1)$ and $-su_i=a_i\ge0$.
In term of these parameters,
condition \eqref{eq:uniformly_bounded_propagation_condition}
means that the $a_i$'s should be uniformly bounded from above.

The transition probabilities in the Schur vertex
model are the $q=0$ specializations of
\eqref{eq:L1_weights_explicit}. They are given by
\begin{equation}
	\label{eq:Schur_vertex_model_weights}
	\begin{split}
		&L^{\mathrm{Schur}}_{a_i,\nu}(0,0;0,0)=1
		,
		\qquad 
		L^{\mathrm{Schur}}_{a_i,\nu}(0,1;0,1)=\frac{\nu+a_i}{1+a_i}
		,
		\qquad 
		L^{\mathrm{Schur}}_{a_i,\nu}(0,1;1,0)=\frac{1-\nu}{1+a_i};
		\\
		&
		L^{\mathrm{Schur}}_{a_i,\nu}(g,0;g,0)=
		L^{\mathrm{Schur}}_{a_i,\nu}(g,1;g+1,0)=
		\frac{1}{1+a_i},\qquad g\ge1;
		\\[4pt]
		&
		L^{\mathrm{Schur}}_{a_i,\nu}(g,0;g-1,1)=
		L^{\mathrm{Schur}}_{a_i,\nu}(g,1;g,1)=
		\frac{a_i}{1+a_i},\qquad g\ge1
		.
	\end{split}
\end{equation}
Throughout this section it is convenient to work in the vertex model state space
$\mathscr{G}$ (\Cref{def:state_spaces}).
We interpret $\mathbf{g}_i$ for each $i\in \mathbb{Z}_{\ge1}$ as 
the number of particles at location $i$, 
where multiple particles per site are allowed.
Let $T^{\mathrm{Schur}}_{\nu,\mathbf{a}}$
denote the Markov transition operator
for the Schur vertex model
acting in $\mathscr{G}$.

Let us describe the dynamics for the Markov operator $T^{\mathrm{Schur}}_{\nu,\mathbf{a}}$. At each time step, the stacks of particles are updated in parallel. First, 
each nonempty stack of particles $\mathbf{g}_i(t)>0$
emits a single particle with probability
$a_i / (1+a_i)$. Then, the emitted particle instantaneously 
travels to the right by a 
random distance $\min(\eta,k+1)$, where $\eta$ is a random variable 
in $\mathbb{Z}_{\ge1}$ with distribution
\begin{equation*}
	\mathbb{P}\left( \eta=j \right)
	=
	\frac{1-\nu}{1+a_{i+j}}
	\prod_{m=1}^{j-1}\frac{\nu+a_{i+m}}{1+a_{i+m}},
	\qquad 
	j\ge1,
\end{equation*}
and $k\ge0$ 
is the number of empty stacks after 
$\mathbf{g}_i(t)$, i.e.~$\mathbf{g}_{i+1}(t)=\ldots=\mathbf{g}_{i+k}(t)=0 $
and $\mathbf{g}_{i+k+1}(t)>0$.
If $\mathbf{g}_i(t)$ is the rightmost nonempty stack, then $k=+\infty$.

\subsection{Shift operator for the Schur vertex model}
\label{sub:Schur_vertex_model_comm_rel}

Let $B^{\mathrm{Schur}}_{\nu,\mathbf{a}}$ denote the Markov shift operator
(\Cref{def:B_operator})
for the Schur
vertex model.
It 
acts on the space $\mathscr{G}$ and 
involves the 
cross vertex weights 
$R^{\mathrm{bdry},(0)}_{a_1/a_0,-\sqrt\nu,-\sqrt\nu}$ 
\eqref{eq:R_circ_at_q0}
and
$R^{(0)}_{a_n/a_0,-\sqrt\nu,-\sqrt\nu}$ 
\eqref{eq:R_at_q0}
for $n\ge2$.
For the nonnegativity of these weights, the parameters must 
satisfy the conditions in \Cref{prop:R_at_q0_nonnegative},
which means 
\begin{equation}
	\label{eq:B_operator_uniform_conditions_on_parameters_Schur_case}
	2-\frac{1}{\nu}\le \frac{a_n}{a_0}\le 1,\qquad n\ge1.
\end{equation}
Note that the lower bound on $a_n/a_0$
is restrictive only for $\nu> \frac{1}{2}$.
The operator $B^{\mathrm{Schur}}_{\nu,\mathbf{a}}$ is well-defined
due to \Cref{lemma:B_operator_well_defined},
since the condition \eqref{eq:B_operator_uniform_conditions_on_parameters}
is automatic for our specialization of parameters.
The next statement readily follows from 
\Cref{thm:B_T_intertw_relation}:
\begin{proposition}
	\label{prop:B_Schur_shift}
	Let $\nu\in(0,1)$ and the parameters
	$a_n\ge0$ be uniformly bounded from above and satisfy
	\eqref{eq:B_operator_uniform_conditions_on_parameters_Schur_case}.
	Then
	\begin{equation}
		\label{eq:B_Schur_shift}
		T^{\mathrm{Schur}}_{\nu,\mathbf{a}}\ssp
		B^{\mathrm{Schur}}_{\nu,\mathbf{a}}
		=
		B^{\mathrm{Schur}}_{\nu,\mathbf{a}}\ssp
		T^{\mathrm{Schur}}_{\nu,\mathsf{sh}(\mathbf{a})},
	\end{equation}
	where $\mathsf{sh}$ is the shift of the sequence
	$\mathbf{a}=(a_0,a_1,a_2,\ldots )$ as in \eqref{eq:shifting_action}.
\end{proposition}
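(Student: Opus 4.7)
The plan is to realize the Schur vertex model as a concrete specialization of the general stochastic higher spin six vertex model and verify that the hypotheses of \Cref{thm:B_T_intertw_relation} are satisfied, after which the shift relation \eqref{eq:B_Schur_shift} will follow immediately. Concretely, I would set $J=1$, $q=0$, $s_i\equiv s=-\sqrt{\nu}$, and $u_i=a_i/\sqrt{\nu}$, so that $-su_i=a_i$, matching the weights \eqref{eq:Schur_vertex_model_weights}. Under this matching, the operator $T^{\mathrm{Schur}}_{\nu,\mathbf{a}}$ equals the general $T_{\mathbf{u},\mathbf{s}}$, and the Schur shift operator $B^{\mathrm{Schur}}_{\nu,\mathbf{a}}$ equals the general $B_{\mathbf{u},\mathbf{s}}$ because the relevant cross vertex weights become $R^{(0)}_{a_n/a_0,-\sqrt{\nu},-\sqrt{\nu}}$ and $R^{\mathrm{bdry},(0)}_{a_1/a_0,-\sqrt{\nu},-\sqrt{\nu}}$ by construction.

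The first step is to verify $(\mathbf{u},\mathbf{s})\in\mathcal{T}$, which requires the propagation bound \eqref{eq:uniformly_bounded_propagation_condition}. A direct computation gives
\begin{equation*}
\frac{(-s_i)(u_i-s_i)}{1-u_i s_i}=\frac{a_i+\nu}{1+a_i},
\end{equation*}
which is bounded away from $1$ uniformly in $i$ precisely because the $a_i$ are uniformly bounded above (the supremum gives the required $\varepsilon>0$).

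The second step is to verify $(\mathbf{u},\mathbf{s})\in\mathcal{B}$. The first condition, $(u_n/u_0,s_0,s_n)\in\mathcal{R}$, specializes to $(a_n/a_0,-\sqrt{\nu},-\sqrt{\nu})\in\mathcal{R}$. Since $q=0$ and $s_1=s_2=-\sqrt{\nu}\in(-1,0)$, the $q=0$ clause of \Cref{def:parameter_range_R} applies; the constraint $0\le z\le\min\{s_1/s_2,s_2/s_1\}=1$ becomes $a_n/a_0\le1$, and the constraint $s_1^2+s_2^2\le 1+zs_1s_2$ becomes $2\nu\le 1+(a_n/a_0)\nu$, i.e. $a_n/a_0\ge 2-1/\nu$. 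These are exactly the hypotheses \eqref{eq:B_operator_uniform_conditions_on_parameters_Schur_case} of the proposition. The second condition, inequality \eqref{eq:B_operator_uniform_conditions_on_parameters}, simplifies under the Schur specialization to
\begin{equation*}
\frac{(-s_n)(u_n s_0-u_0 s_n)}{u_0-s_0 s_n u_n}=\frac{\nu(a_0-a_n)}{a_0-\nu a_n}\le \nu<1,
\end{equation*}
using $0\le a_n\le a_0$, so the uniform gap $\varepsilon=1-\nu>0$ works for all $n$.

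Once both $(\mathbf{u},\mathbf{s})\in\mathcal{T}$ and $(\mathbf{u},\mathbf{s})\in\mathcal{B}$ are established, \Cref{thm:B_T_intertw_relation} applies directly and yields \eqref{eq:B_Schur_shift}, since $\mathsf{sh}$ acts on $\mathbf{u}$ and on the (constant) sequence $\mathbf{s}$ compatibly with the Schur specialization and corresponds to the shift of $\mathbf{a}$. The only mildly delicate point is bookkeeping the parameter conditions: the lower bound $a_n/a_0\ge 2-1/\nu$ is vacuous for $\nu\le 1/2$ but restrictive otherwise, and it arises specifically from the $q=0$ clause of $\mathcal{R}$ (\Cref{prop:R_at_q0_nonnegative}), which handles the degenerate subcase $i_1=i_2$ in the cross vertex weight that is not covered by the generic $q>0$ analysis. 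No further obstacle appears, as the well-definedness of $B^{\mathrm{Schur}}_{\nu,\mathbf{a}}$ on $\mathscr{G}$ is guaranteed by \Cref{lemma:B_operator_well_defined}.
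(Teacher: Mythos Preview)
Your proposal is correct and follows exactly the same approach as the paper, which states that the proposition ``readily follows from \Cref{thm:B_T_intertw_relation}.'' You have simply made explicit the parameter bookkeeping that the paper leaves implicit: identifying the specialization $q=0$, $s_i\equiv -\sqrt{\nu}$, $u_i=a_i/\sqrt{\nu}$, and verifying the membership $(\mathbf{u},\mathbf{s})\in\mathcal{T}\cap\mathcal{B}$ via the explicit computations of the two ratios.
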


In contrast with the 
$q$-Hahn TASEP and its specializations 
considered in \Cref{sec:comm_rel_specializations}
and in the previous papers
\cite{PetrovSaenz2019backTASEP} and \cite{petrov2019qhahn},
the shift operator 
$B^{\mathrm{Schur}}_{\nu,\mathbf{a}}$,
for the Schur vertex model, 
\emph{does not preserve} the distinguished 
empty 
configuration $\mathbf{g}_{step}\in \mathscr{G}$:

\begin{proposition}
	\label{prop:B_Schur_0_at_step_nonabsorbing}
	Let the parameters $\{a_n \}$ satisfy 
	\eqref{eq:B_operator_uniform_conditions_on_parameters_Schur_case}.
	Then the
	action of $B^{\mathrm{Schur}}_{\nu,\mathbf{a}}$
	on $\mathbf{g}_{step}$ changes 
	$\mathbf{g}_{step}$ with positive probability.
\end{proposition}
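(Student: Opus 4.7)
The plan is to establish that the first factor in the infinite product defining $B^{\mathrm{Schur}}_{\nu,\mathbf{a}}$ injects a particle at site $1$ with positive probability, and that this injected particle survives every subsequent swap. Recall from \Cref{def:B_operator} that $B^{\mathrm{Schur}}_{\nu,\mathbf{a}} = P^{(1)}_{z_1,-\sqrt{\nu},-\sqrt{\nu}}\ssp P^{(2)}_{z_2,-\sqrt{\nu},-\sqrt{\nu}}\cdots$, where $z_n = a_n/a_0$, and only $P^{(1)}$ draws paths from the boundary value $g_0 = +\infty$. Note that, for $n\ge2$, a direct inspection of \eqref{eq:R_at_q0_notation} with all four path counts equal to zero yields $R^{(0)}_{z,-\sqrt{\nu},-\sqrt{\nu}}(0,0;0,0) = 1$, so these operators cannot themselves introduce paths out of the empty pair $(0,0)$.

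First I would compute the relevant boundary cross vertex weight via \eqref{eq:R_circ_at_q0} and \eqref{eq:R_circ_at_q0_notation}. Setting $j_1 = 0$, $j_2 = 1$, $s_1 = s_2 = -\sqrt{\nu}$, and $z = z_1$, and using that $j_2 > j_1$ while $\mathbf{1}_{j_1=j_2>0}=0$, a direct substitution produces
\begin{equation*}
 R^{\mathrm{bdry},(0)}_{z_1,-\sqrt{\nu},-\sqrt{\nu}}(0,1) \ssp=\ssp \frac{\nu\ssp(1-z_1)(1-\nu)}{1-z_1\nu},
\end{equation*}
which is strictly positive as soon as $0 < z_1 < 1$ and $0 < \nu < 1$. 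Under \eqref{eq:B_operator_uniform_conditions_on_parameters_Schur_case}, this regime is permitted (and is the generic non-degenerate one; if all $a_n = a_0$, then the shift of $\mathbf{a}$ is tautologically $\mathbf{a}$ itself and there is nothing to prove). Consequently, with positive probability, $P^{(1)}$ transforms the pair $(g_0,g_1) = (+\infty,0)$ into $(+\infty,1)$.

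Next I would argue that this injected particle cannot be destroyed by any $P^{(n)}$ with $n\ge2$, because the cross vertex weights $R^{(0)}_{z,s_1,s_2}(i_1,i_2;j_1,j_2)$ vanish unless $i_1+j_2 = i_2+j_1$; see \Cref{def:Pn_operator}. Applied to a pair $(g_{n-1},g_n)$, this conservation law forces $g_{n-1}' + g_n' = g_{n-1} + g_n$, so the total path count $\sum_i g_i$ is preserved by every $P^{(n)}$ with $n\ge2$. Moreover, by \Cref{lemma:B_operator_well_defined}, which applies to this Schur specialization since \eqref{eq:B_operator_uniform_conditions_on_parameters} is automatic when $u_i = s_i$ for all $i$, the infinite product defining $B^{\mathrm{Schur}}_{\nu,\mathbf{a}}$ terminates almost surely. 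Thus the injected particle ends at some finite site with probability $1$, so $B^{\mathrm{Schur}}_{\nu,\mathbf{a}}\ssp\delta_{\mathbf{g}_{step}}$ assigns positive mass to configurations containing at least one particle, which is exactly the claim.

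The only genuinely computational step is the evaluation of $R^{\mathrm{bdry},(0)}_{z_1,-\sqrt{\nu},-\sqrt{\nu}}(0,1)$ from the explicit formulas in \Cref{sub:R_infinite_limit}; the remainder is bookkeeping via path conservation and the already-established well-definedness of the shift operator. What makes the statement interesting, and different from the $q$-Hahn TASEP setting of \Cref{sub:qHahn}, is precisely the absence of the indicator $\mathbf{1}_{g_n'\le g_n}$ in the Schur $R$-matrix, which allows $g_1'$ to strictly exceed the initial value $g_1 = 0$.
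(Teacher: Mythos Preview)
Your proof is correct and follows essentially the same approach as the paper: compute the boundary cross vertex weight to show that $P^{(1)}$ injects paths with positive probability, then invoke path conservation to argue that these paths persist through the subsequent operators $P^{(2)},P^{(3)},\ldots$. The paper computes the full distribution $R^{\mathrm{bdry},(0)}_{z,-\sqrt\nu,-\sqrt\nu}(0,j)=\nu^j(1-\nu)(1-z\mathbf{1}_{j>0})/(1-\nu z)$ for all $j\ge0$, while you compute only the $j=1$ case, but either suffices.

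One minor slip: you justify the applicability of \Cref{lemma:B_operator_well_defined} by saying that \eqref{eq:B_operator_uniform_conditions_on_parameters} is automatic ``when $u_i=s_i$ for all $i$'', but that is the $q$-Hahn specialization of \Cref{sub:qHahn}, not the Schur model. In the Schur model one has $s_i\equiv-\sqrt{\nu}$ while $u_i=a_i/\sqrt{\nu}$, so $u_i\ne s_i$ in general. The condition \eqref{eq:B_operator_uniform_conditions_on_parameters} is still automatic here, but for a different reason: with $s_0=s_n=-\sqrt{\nu}$ the ratio in \eqref{eq:B_operator_uniform_conditions_on_parameters} simplifies to $\nu(a_0-a_n)/(a_0-\nu a_n)\le\nu<1$. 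This does not affect your main argument, since the paper already records the well-definedness of $B^{\mathrm{Schur}}_{\nu,\mathbf{a}}$ in the paragraph preceding \Cref{prop:B_Schur_shift}.
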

\begin{proof}
	From \eqref{eq:R_circ_at_q0}--\eqref{eq:R_circ_at_q0_notation}
	we have
	\begin{equation}
		\label{eq:R_bdry_0_at_step_nonabsorbing}
		R^{\mathrm{bdry},(0)}_{z,-\sqrt\nu,-\sqrt\nu}
		(0,j)=\frac{\nu^j(1-\nu)(1-z\mathbf{1}_{j>0})}{1-\nu z},
		\qquad j\in \mathbb{Z}_{\ge0}.
	\end{equation}
	This means that applying the first operator
	$P^{(1)}$ (see
	\eqref{eq:B_operator})
	to $\mathbf{g}_{step}$
	introduces a random number of paths 
	according to the distribution \eqref{eq:R_bdry_0_at_step_nonabsorbing}
	with $z=a_1/a_0$.
	These paths do not disappear after the application of the further 
	operators $P^{(2)},P^{(3)},\ldots $ in 
	\eqref{eq:B_operator} due to path conservation.
	Moreover, from \eqref{eq:R_at_q0_notation}--\eqref{eq:R_at_q0}
	we have
	\begin{equation}
		\label{eq:R_0_at_step_nonabsorbing}
		R^{(0)}_{z,-\sqrt\nu,-\sqrt\nu}(i,j;0,j-i)
		=\frac{\nu^{j-i}(1-\nu\mathbf{1}_{i>0})(1-z\mathbf{1}_{i<j})}
		{1-\nu z\mathbf{1}_{j>0}},
		\qquad i\in {0,1,\ldots,j },
	\end{equation}
	where $z=a_n/a_0$ for $n\geq 2$.
	This implies that the operator
	$B^{\mathrm{Schur}}_{\nu,\mathbf{a}}$
	indeed does not preserve the distinguished empty configuration
	$\mathbf{g}_{step}$.
\end{proof}

\newpage
\part{Bijectivisation and Rewriting History}

In the second part, we describe how the 
intertwining relations obtained in the first part lead to 
couplings between trajectories of the stochastic vertex model
(and the corresponding exclusion process)
with different sequences of parameters. 
The passage from intertwining relations to
couplings, a ``bijectivisation'', 
is by now a well-known technique that 
originated in \cite{DiaconisFill1990}
and was later
developed in the context of integrable stochastic particle systems
in 
\cite{BorFerr2008DF}, 
\cite{BorodinGorin2008}, 
\cite{BorodinPetrov2013NN}, 
\cite{BufetovPetrovYB2017}.
Here, we apply a bijectivisation in a new setting leading to couplings of probability measures on trajectories
under time evolution of integrable stochastic systems.

\section[Bijectivisation and coupling of trajectories. General constructions]{Bijectivisation and coupling of trajectories.\\General constructions}
\label{sec:bijectivisation}

In this section, we return to the general
setup of the fused stochastic higher spin six vertex model
as in \Cref{sec:stoch_vert_models,sec:YBE,sec:comm_rel}.
We construct couplings
between measures on trajectories of the stochastic higher spin six vertex model
with different sequences of parameters
by applying a bijectivisation
\cite{BufetovPetrovYB2017}
(also called a ``probabilistic bijection'', e.g., see \cite{aigner2020_Macdonald_RSK})
to the Yang-Baxter equation and iterating it.
This section focuses on a general discussion which does not
rely on any particular choice of a bijectivisation of the Yang-Baxter equation. 
In further sections, we consider 
the simplest, i.e.~\emph{independent}, bijectivisation
in a subfamily of vertex models.
This subfamily is still quite general and, in particular, 
includes
$q$-TASEP and TASEP.

\subsection{Bijectivisation of summation identities}
\label{sub:bij_basics}

We begin by recalling the basic notion of a bijectivisation for a summation identity with finitely many terms, see \cite[Section 2]{BufetovPetrovYB2017}. Let $A, B$ be two disjoint finite sets. Also, introduce a positive weight function $w(x)$ for each element $x \in A \cup B$ so that the following identity holds:
\begin{equation}
	\label{eq:bijectivisation_of_identity}
	\sum_{a\in A}w(a)
	=
	\sum_{b\in B}w(b).
\end{equation}
In particular, identity \eqref{eq:bijectivisation_of_identity} defines probability distributions on the sets $A$ and $B$ with probability weights proportional to $\{w(a)\}_{a\in A}$ and $\{w(b)\}_{b\in B}$, respectively. A bijectivisation is a coupling between these two probability distributions, expressed via conditional probabilities. 

More precisely, a \emph{bijectivisation} is a family of forward and backward transition probabilities $p^{\mathrm{fwd}}(a \rightarrow b)\ge0$, $p^{\mathrm{bwd}}(b \rightarrow a)\ge0$, for $a\in A$, $b\in B$, satisfying the following 
\emph{stochasticity} and \emph{detailed balance equation}:
\begin{equation}
	\label{eq:basic_bij_properties}
	\begin{split}
		&\sum_{b \in B} p^{\mathrm{fwd}}(a \rightarrow b) = 1 \quad   \forall a \in A,
		\qquad 
		\qquad 
		\sum_{a \in A} p^{\mathrm{bwd}}(b \rightarrow a) = 1 \quad  \forall b \in B .
		\\
		&\hspace{35pt}
		w(a)\, p^{\mathrm{fwd}}(a \rightarrow b) = 
		w(b)\, p^{\mathrm{bwd}}(b \rightarrow a),
		\qquad 
		\forall a\in A,\ b\in B.
	\end{split}
\end{equation}

For general sets $A$ and
$B$, a bijectivisation exists and it is not unique.
However, in the special case when the cardinality of the sets $A$ or $B$, i.e.~$|A|$ or $|B|$, is equal to $1$,
a bijectivisation is unique. For instance, when
$|A|=1$ and $A=\left\{ a_0 \right\}$, we have
\begin{equation*}
	p^{\mathrm{fwd}}(a_0 \rightarrow b)=\frac{w(b)}{w(a_0)},
	\qquad 
	p^{\mathrm{bwd}}(b \rightarrow a_0)=1, \qquad 
	\forall b\in B.
\end{equation*}
In the case when $|A|=|B|=2$, the dimension of the space of all possible
solutions to the linear equations
\eqref{eq:basic_bij_properties}
is equal to one, meaning that there is a one-parameter family of bijectivizations for this case.

\subsection{Bijectivisation of the vertical Yang-Baxter equation}
\label{sub:YBE_bijectivisation}

Let us determine a bijectivisation to the vertical Yang-Baxter
equation from \Cref{prop:YBE}. 
The equation depends on four parameters $u_1,u_2\ge0$ and $s_1,s_2\in(-1,0]$.
Recall that the path conservation implies that the sums in both sides of the 
Yang-Baxter equation \eqref{eq:YBE_for_LJ} are actually finite.
Additionally, all terms in the sums for the Yang-Baxter equation are nonnegative
if $(u_2/u_1,s_1,s_2)\in \mathcal{R}$; see \Cref{def:parameter_range_R}.
For fixed
$i_1,j_1 \in \left\{ 0,1,\ldots,J  \right\}$,
$i_2,i_3,j_2,j_3\in \mathbb{Z}_{\ge0}$,
we denote the terms on the left and right side of the Yang-Baxter equation, respectively, by the following weight functions:
\begin{equation}
	\label{eq:YBE_terms_w_LHS_RHS}
	\begin{split}
		&
		w_{i_1,j_1}^{\mathrm{LHS}}(k_2,k_3\mid i_2,i_3;j_2,j_3)=
		R_{\frac{u_2}{u_1},s_1,s_2}(j_3,k_2;k_3,j_2)
		\ssp
		L^{(J)}_{u_1,s_1}(i_2,i_1;k_2,k_1)
		\ssp
		L^{(J)}_{u_2,s_2}(i_3,k_1;k_3,j_1),
		\\&
		w_{i_1,j_1}^{\mathrm{RHS}}(k_3',k_2'\mid i_2,i_3;j_2,j_3)=
		L^{(J)}_{u_2,s_2}(k_3',i_1;j_3,k_1')\ssp
		L^{(J)}_{u_1,s_1}(k_2',k_1';j_2,j_1)\ssp
		R_{\frac{u_2}{u_1},s_1,s_2}(k_3',i_2;i_3,k_2')
		,
	\end{split}
\end{equation}
where $k_1,k_1'$ are omitted in the notation of the weight functions since they may be determined 
through the path conservation:
\begin{equation*}
	k_1=i_1+i_2-k_2,
	\qquad 
	k_1'=j_1+j_2-k_2'.
\end{equation*}

Throughout the current \Cref{sec:bijectivisation},
we denote any choice of transition probabilities
coming from a bijectivisation of the
Yang-Baxter equation \eqref{eq:YBE_for_LJ}
by 
\begin{equation}
	\label{eq:bij_p_up_p_down}
	p^{\downarrow}_{i_1,j_1}[(k_2,k_3)\to
	(k_3',k_2')\mid i_2,i_3,j_2,j_3]
	\qquad \textnormal{and}\qquad 
	p^{\uparrow}_{i_1,j_1}[(k_3',k_2')\to
	(k_2,k_3)\mid i_2,i_3,j_2,j_3].
\end{equation}
Here, the down and up arrows indicate the direction in which the 
cross vertex is moved.
See \Cref{fig:YBE_bijectivisation} for an illustration.

\begin{figure}[htb]
	\centering
	\includegraphics[width=.89\textwidth]{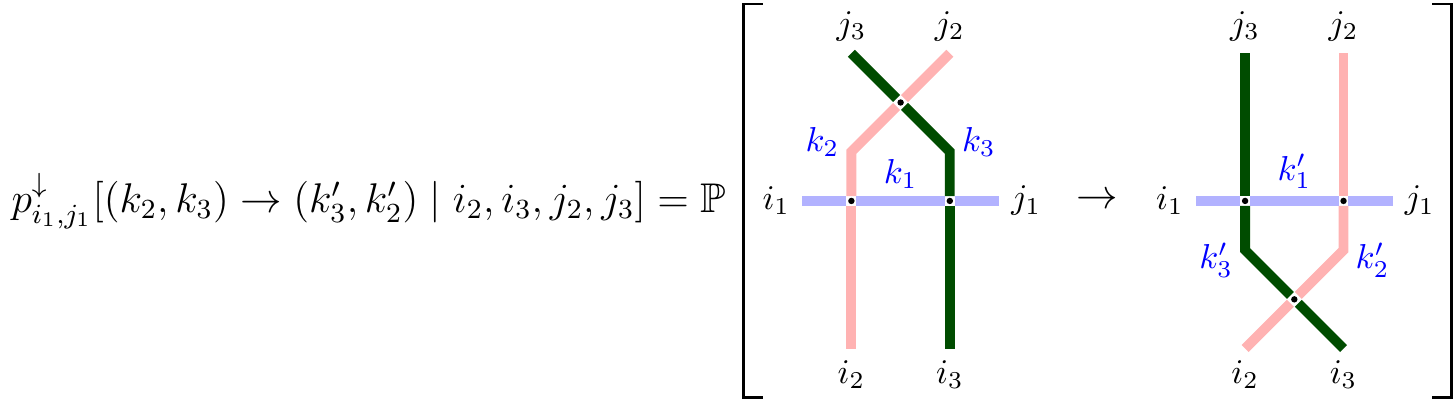}
	\caption{Graphical illustration of the 
	down transition probability coming from 
	a bijectivisation of the Yang-Baxter equation.
	The up transition is similar, with the cross vertex moving upwards instead.}
	\label{fig:YBE_bijectivisation}
\end{figure}

\begin{remark}[Bijectivisation with infinitely many paths]
	\label{rmk:left_boundary_up_bijectivisation}
	We tacitly included the case of infinitely many paths
	$i_2=j_3=+\infty$
	(arising at the left boundary of the stochastic
	higher spin six vertex model, see \Cref{sub:R_infinite_limit}) 
	into the notation \eqref{eq:bij_p_up_p_down}.
	In this case, the range 
	of the down transition probability
	(i.e.~the set of possible values of $(\infty,k_2')$)
	is always finite.
	However, the range of the up transition probability
	\begin{equation*}
		p^{\uparrow}_{*,j_1}[(\infty,k_2')\to
		(\infty,k_3)\mid \infty, i_3, j_2, \infty]
	\end{equation*}
	(``$*$'' means that there is no dependence on $i_1$)
	may be infinite, since both $k_1$ and $k_3$ may be arbitrarily 
	large, provided that $i_3+k_1=k_3+j_1$, because the stochastic higher
	spin six vertex model allows for an unbounded
	number of paths per horizontal edge by letting $q^J$ be an independent parameter
	with $J\notin \mathbb{Z}_{\ge1}$.

	Throughout the rest of this section, we
	assume that a well-defined bijectivisation
	at the left boundary
	exists. 
	In considering explicit bijectivisations for the present paper,
	we restrict our attention to models with $J=1$
	and, thus, the issue of an infinite range of the up transition
	probability does not arise.
\end{remark}

In the current \Cref{sec:bijectivisation}, 
we explain the general framework 
for a bijectivisation and couplings of measures on trajectories of stochastic vertex models.
We do not pursue an explicit computation of possible transition probabilities 
$p^{\downarrow}_{i_1,j_1}$
and 
$p^{\uparrow}_{i_1,j_1}$
in the fully general case when all three vertex weights entering the 
Yang-Baxter equation have a $q$-hypergeometric form.
Below in \Cref{sec:discrete_time_bijectivisation}, we focus on a, still rather general,
subfamily of vertex models
for which the cross vertex weights
factorize and become $q$-beta-binomial as in \Cref{sub:R_Hahn_specialization}.
Moreover, we set $J=1$ in the weights $L^{(J)}_{u_i,s_i}$, 
which forces $i_1,j_1$ to be either zero or one. 
For this subfamily, an explicit treatment of a bijectivisation is accessible.

\subsection{Down and up transitions on vertex model configurations}
\label{sub:up_down_on_configurations}

Recall the space $\mathscr{G}$ 
whose elements encode vertical paths crossing a given
horizontal slice in the stochastic higher spin six vertex model,
see \Cref{sub:particle_systems}.
Recall the transition operator of the stochastic vertex 
model $T_{\mathbf{u},\mathbf{s}}$ (\Cref{sub:particle_systems})
and
the swap operator $P^{(n)}_{z,s_1,s_2}$
(\Cref{sub:swapping_operator}).
These operators satisfy a (quasi-)computation relation
(\Cref{prop:Pn_T_intertw})
which follows from the Yang-Baxter equation.
Here we employ bijectivisation of this intertwining relation
to 
define up and down transitions 
on vertex model configurations.

Fix
$n\in \mathbb{Z}_{\ge1}$ and abbreviate throughout the rest of this section:
\begin{equation}
	\label{eq:comm_bij_abbreviation_Pn}
	P^{(n)}=P^{(n)}_{u_n/u_{n-1},s_{n-1},s_n},
	\qquad 
	T=T_{\mathbf{u},\mathbf{s}},
	\qquad 
	T_{\sigma}=
	T_{\sigma\mathbf{u},\sigma\mathbf{s}},
\end{equation}
where $\sigma$ is an arbitrary permutation from the 
infinite symmetric group (that is, $\sigma$ acts on $\mathbb{Z}_{\ge0}$ by 
fixing all but finitely many points).
The intertwining relation from \Cref{prop:Pn_T_intertw}
is
\begin{equation}
	\label{eq:TP_PT_comm_rel_simpler}
	TP^{(n)}=P^{(n)}T_{\sigma_{n-1}},
\end{equation}
see \Cref{fig:comm_rel}, left, for an illustration.
Here $\sigma_{n-1}=(n-1,n)$ is an elementary transposition.
Recall that we are writing products of Markov operators
as acting on measures, cf. \Cref{rmk:product_of_Markov_operators}.

Relation \eqref{eq:TP_PT_comm_rel_simpler} follows from a single Yang-Baxter
equation illustrated in \Cref{fig:Pn_T_intertw}. 
Note that all the terms of this Yang-Baxter equation are nonnegative
if we assume that the parameters $\mathbf{u},\mathbf{s}$
of the operators \eqref{eq:comm_bij_abbreviation_Pn} 
satisfy the conditions of \Cref{prop:Pn_T_intertw}.
Taking a bijectivisation \eqref{eq:bij_p_up_p_down}
of this Yang-Baxter equation, we arrive at the following down and up
Markov operators.

\begin{figure}[htb]
	\centering
	\includegraphics[width=.8\textwidth]{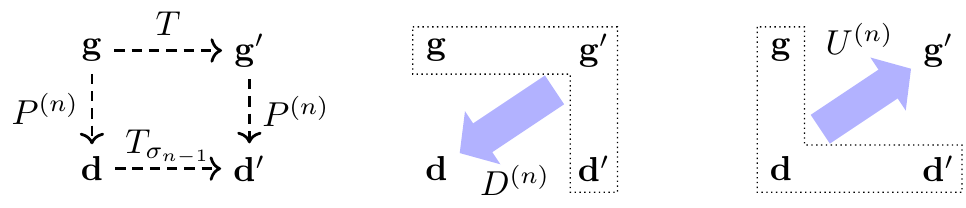}
	\caption{Left: A commuting diagram of Markov operators,
		where $\mathbf{g},\mathbf{g}',\mathbf{d},\mathbf{d}'\in \mathscr{G}$
		with the notation from \eqref{eq:comm_bij_abbreviation_Pn}.
		The element $\mathbf{g}$ is fixed, and all other elements are random. 
		Intertwining means that the distributions of
		$\mathbf{d}'$ obtained along both paths (right-down and down-right)
		coincide.
		Center and right: Markov operators $D^{(n)}$
		and $U^{(n)}$ constructed from bijectivisation.}
	\label{fig:comm_rel}
\end{figure}

\begin{definition}[Down Markov operators for swaps]
	\label{def:Dn_Markov_operator_from_bij}
	Fix $n\in \mathbb{Z}_{\ge1}$ and
	$\mathbf{g},\mathbf{g}',\mathbf{d}'\in \mathscr{G}$
	such that $T(\mathbf{g},\mathbf{g}')\ssp P^{(n)}(\mathbf{g}',\mathbf{d}')\ne 0$.
	Define a random 
	element $\mathbf{d}\in \mathscr{G}$ 
	such that
	$d_l=g_l$ for all $l\ne n-1,n$,
	and
	$d_{n-1},d_n$ are random and chosen from the distribution
	\begin{equation*}
		p^{\downarrow}_{i_1,j_1}[(g_{n-1}',g_n')\to
		(d_{n-1},d_n)\mid g_{n-1},g_n,d_{n-1}',d_n'],
	\end{equation*}
	with $i_1,j_1$ given by the numbers of horizontal paths determined from the 
	configurations of vertical paths
	\begin{equation}
		\label{eq:down_up_Markov_operators_definition_of_i1_j1}
		\textstyle i_1 =
		\sum_{l \ge n-1}d_l'
		-
		\sum_{l \ge n-1}g_l
		,
		\qquad 
		j_1 =
		\sum_{l \ge n+1}d_l'
		-
		\sum_{l \ge n+1}g_l.
	\end{equation}
	Let $D^{(n)}(\mathbf{g}'\to \mathbf{d}\mid \mathbf{g},\mathbf{d}')$
	denote the probability weight of $\mathbf{d}$,
	and call $D^{(n)}=D^{(n)}_{u_{n-1},s_{n-1};u_n,s_n}$ 
	the \emph{down Markov operator} corresponding
	to the swap operator $P^{(n)}$ at sites $n-1,n$. 
	See \Cref{fig:comm_rel}, center, for an illustration.
\end{definition}

\begin{definition}[Up Markov operators for swaps]
	\label{def:Un_Markov_operator_from_bij}
	Fix $n\in \mathbb{Z}_{\ge1}$ 
	and 
	$\mathbf{g},\mathbf{d},\mathbf{d}'\in \mathscr{G}$
	such that $P^{(n)}(\mathbf{g},\mathbf{d})
		\ssp
		T_{\sigma_{n-1}}(\mathbf{d},\mathbf{d}')\ne 0$.
	Define a random element $\mathbf{g}'\in \mathscr{G}$
	such that $g'_l=d'_l$ for all $l\ne n-1,n$,
	and $g'_{n-1},g'_n$ are random and chosen from the distribution
	\begin{equation*}
		p^{\uparrow}_{i_1,j_1}
		[(d_{n-1},d_n)\to
		(g'_{n-1},g'_n)\mid g_{n-1},g_n,d_{n-1}',d_n'],
	\end{equation*}
	with $i_1,j_1$ given by \eqref{eq:down_up_Markov_operators_definition_of_i1_j1}.
	Let $U^{(n)}(\mathbf{d}\to\mathbf{g}'\mid \mathbf{g},\mathbf{d}')$
	denote
	the probability weight of $\mathbf{g}'$,
	and call $U^{(n)}=U^{(n)}_{u_{n-1},s_{n-1};u_n,s_n}$ the \emph{up Markov operator}
	corresponding to $P^{(n)}$.
	See \Cref{fig:comm_rel}, right, for an illustration.
\end{definition}

The operators $D^{(n)}, U^{(n)}$
depend
on the parameters $u_{n-1},s_{n-1},u_n,s_n$
(which we often omit from the notation)
and on the choice of bijectivisation
which typically is not unique. 
For any choice of bijectivisation, 
the down and up operators
satisfy the stochasticity
\begin{equation*}
	\sum_{\mathbf{d}\in \mathscr{G}}D^{(n)}(\mathbf{g}'\to \mathbf{d}\mid
	\mathbf{g},\mathbf{d}')=1 \quad \forall \mathbf{g},\mathbf{g}',\mathbf{d'};
	\qquad 
	\sum_{\mathbf{g}'\in \mathscr{G}}U^{(n)}(\mathbf{d}\to \mathbf{g}'\mid
	\mathbf{g},\mathbf{d}')=1 \quad \forall \mathbf{g},\mathbf{d},\mathbf{d'},
\end{equation*}
and the detailed balance equation
\begin{equation}\label{eq:Dn_Un_detailed_balance_equation}
	T(\mathbf{g},\mathbf{g}')
	\ssp
	P^{(n)}(\mathbf{g}',\mathbf{d}')
	\ssp
	D^{(n)}(\mathbf{g}'\to \mathbf{d}\mid \mathbf{g},\mathbf{d}')
	=
	P^{(n)}(\mathbf{g},\mathbf{d})
	\ssp
	T_{\sigma_{n-1}}(\mathbf{d},\mathbf{d}')
	\ssp
	U^{(n)}(\mathbf{d}\to\mathbf{g}'\mid \mathbf{g},\mathbf{d}')
\end{equation}
for any quadruple $\mathbf{g},\mathbf{g}',\mathbf{d},\mathbf{d}'\in \mathscr{G}$.
Note that when, say, 
$T(\mathbf{g},\mathbf{g}')
\ssp
P^{(n)}(\mathbf{g}',\mathbf{d}')=0$
(in contradiction with the assumption in \Cref{def:Dn_Markov_operator_from_bij}),
the value of $D^{(n)}$ is irrelevant in \eqref{eq:Dn_Un_detailed_balance_equation}
and, additionally, the corresponding value of
$U^{(n)}$ must be zero to satisfy the detailed balance.
Moreover, observe that summing \eqref{eq:Dn_Un_detailed_balance_equation}
over $\mathbf{d}$ and $\mathbf{g}'$
results in the intertwining relation
\eqref{eq:TP_PT_comm_rel_simpler}.

\subsection{Down and up transitions related to the Markov shift operator}
\label{sub:bij_of_shift_operator}

Throughout the rest of this section we continue to use 
abbreviations \eqref{eq:comm_bij_abbreviation_Pn},
and also introduce the following abbreviations
\begin{equation}
	\label{eq:comm_bij_abbreviation_Pn0_further}
	P^{(0,n)}=
	P^{(n)}_{u_n/u_{0},s_{0},s_n}
	,
	\qquad 
	B=B_{\mathbf{u},\mathbf{s}}
	,
	\qquad 
	T_{\mathsf{sh}}=T_{\mathsf{sh}(\mathbf{u}),\mathsf{sh}(\mathbf{s})},
\end{equation}
where $\mathsf{sh}$ is the shift \eqref{eq:shifting_action}.
Recall that the Markov shift operator $B$ is obtained by 
iterating the swap operators
$P^{(0,n)}$ over all $n\in \mathbb{Z}_{\ge1}$, see
\eqref{eq:B_operator}.
Iterating the down or up operators
in a similar manner would result in Markov operators on $\mathscr{G}$
denoted by 
$D^{\bullet}$ and $U^{\bullet}$
which satisfy the following detailed balance equation:
\begin{equation}
	\label{eq:shift_ops_detailed_balance_equation}
	T(\mathbf{g},\mathbf{g}')
	\ssp
	B(\mathbf{g}',\mathbf{d}')
	\ssp
	D^{\bullet}(\mathbf{g}'\to \mathbf{d}\mid \mathbf{g},\mathbf{d}')
	=
	B(\mathbf{g},\mathbf{d})
	\ssp
	T_{\mathsf{sh}}(\mathbf{d},\mathbf{d}')
	\ssp
	U^{\bullet}(\mathbf{d}\to\mathbf{g}'\mid \mathbf{g},\mathbf{d}')
\end{equation}
for any 
$\mathbf{g},\mathbf{g}',\mathbf{d},\mathbf{d}'\in \mathscr{G}$.
Graphically, one can extract the definition
of $D^{\bullet}$ and $U^{\bullet}$ from the tower of intertwining
relations in \Cref{fig:comm_rel_shifting}.
However, to describe these operators in full detail we need some notation
and observations.

\begin{figure}[htb]
	\centering
	\includegraphics[width=.45\textwidth]{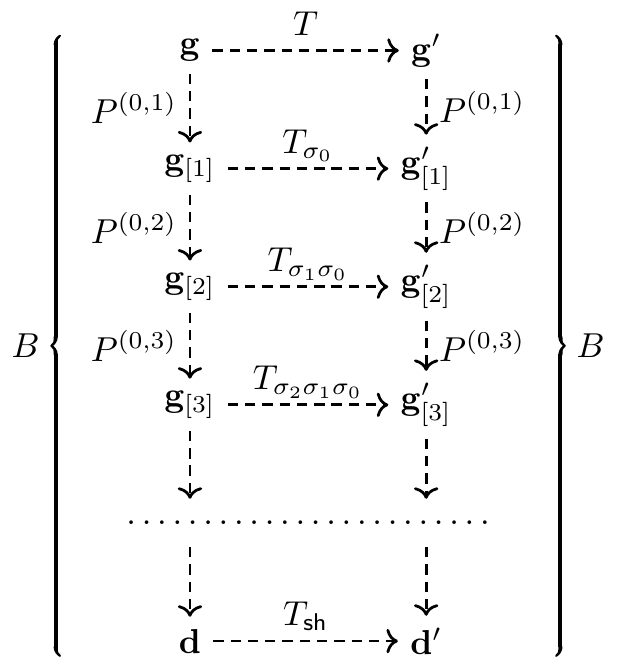}
	\caption{
		Relation $TB=BT_{\mathsf{sh}}$
		is a consequence of the tower of intertwining relations displayed
		in the figure. 
		The random configuration $\mathbf{d}\in \mathscr{G}$ 
		distributed according to $D^{\bullet}(\mathbf{g}'\to \mathbf{d}\mid 
		\mathbf{g},\mathbf{d}')$ is constructed as follows.
		Fix $\mathbf{g},\mathbf{g}',\mathbf{d}'\in \mathscr{G}$. 
		This completely determines
		$\mathbf{g}'_{[m]}$
		for all $m\ge1$,
		see \eqref{eq:h_m_for_tower}--\eqref{eq:g_m_for_tower}. 
		Apply the down Markov operator for swaps
		to each square of the tower from the top of the diagram to the bottom; see \Cref{def:Dn_Markov_operator_from_bij}.
		First, use 
		$D^{(0,1)}$ 
		to sample $\mathbf{g}_{[1]}$ given $\mathbf{g},\mathbf{g}',\mathbf{g}'_{[1]}$,
		and continue consecutively using $D^{(0,m)}$ to sample
		to sample $\mathbf{g}_{[m]}$ given $\mathbf{g}_{[m-1]},\mathbf{g}'_{[m]},\mathbf{g}'_{[m+1]}$ for $m\geq2$. Once the update
		terminates, we get the desired random element $\mathbf{d}\in \mathscr{G}$. The fact that the sequence terminates is due to \Cref{lemma:B_update_terminates}.
		The up operator $U^{\bullet}$ is defined similarly
		by applying the up Markov operator for swaps to each square of the tower from
		bottom of the diagram to the top; see \Cref{def:Un_Markov_operator_from_bij}}
	\label{fig:comm_rel_shifting}
\end{figure}

Denote
\begin{equation}
	\label{eq:D0n_U0n_abbreviation_third_time}
	D^{(0,n)}=D^{(n)}_{u_0,s_0;u_n,s_n},
	\qquad 
	U^{(0,n)}=U^{(n)}_{u_0,s_0;u_n,s_n}.
\end{equation}
That is, $D^{(0,n)}$ randomly changes $(g_{n-1}',g_n')$ to $(d_{n-1},d_n)$,
but uses the parameters
$(u_0,s_0)$ and $(u_n,s_n)$
instead of the ones in \Cref{def:Dn_Markov_operator_from_bij},
and similarly for $U^{(0,n)}$.
Let the parameters
of the vertex model
satisfy 
$(\mathbf{u},\mathbf{s})\in \mathcal{T}\cap \mathcal{B}$,
so that the operators $T,T_{\mathsf{sh}}$, and $B$ are well-defined
(see \eqref{eq:parameters_for_higher_spin_vertex_model}--\eqref{eq:uniformly_bounded_propagation_condition} 
and \Cref{def:B_operator_uniform_conditions_on_parameters}).
Moreover, assume that the up transition 
$U^{(0,1)}$ at the left boundary is also well-defined,
cf.
\Cref{rmk:left_boundary_up_bijectivisation}.

Now, let us encode the path configurations 
at intermediate horizontal slices in
the vertex model for
$B(\mathbf{g},\mathbf{d})$
given in 
\Cref{fig:B_operator}.
For $\mathbf{g},\mathbf{d}\in \mathscr{G}$ with $B(\mathbf{g},\mathbf{d})\ne 0$, 
we denote the $m^{th}$ horizontal slice 
by $\mathbf{g}_{[m]}$ and it is given by
\begin{equation}
	\label{eq:g_m_for_tower}
	(\mathbf{g}_{[m]})_l=\begin{cases}
		d_l,&l<m;\\
		h_m,&l=m;\\
		g_l,&l>m;
	\end{cases}
\end{equation}
for $m\in \mathbb{Z}_{\ge0}$ so that
the number of vertical arrows at the $m^{th}$ position is given by
\begin{equation}
	\label{eq:h_m_for_tower}
	h_m\coloneqq \sum_{l\ge m}d_l-\sum_{l\ge m+1}g_l,\qquad m\in \mathbb{Z}_{\ge1}.
\end{equation}
Note that $h_m=0$
for all sufficiently large $m$
since $\mathbf{d},\mathbf{g}\in \mathscr{G}$.
Also, note that $\mathbf{g}_{[0]}$ coincides with $\mathbf{g}$.

\begin{lemma}
	\label{lemma:B_update_terminates}
	Let $\mathbf{g},\mathbf{d}\in \mathscr{G}$ and
	$M>1+\max\left\{ l\in \mathbb{Z}_{\ge1}\colon g_l>0 \ \textnormal{or}\ d_l>0 \right\}$.
	Then
	\begin{equation}
		\label{eq:B_as_a_tower_of_P0n_operators}
		B(\mathbf{g},\mathbf{d})
		=
		\prod_{m=1}^{M}
		P^{(0,m)}(\mathbf{g}_{[m-1]},\mathbf{g}_{[m]})
	\end{equation}
	and $\mathbf{d}=\mathbf{g}_{[M]}$.
\end{lemma}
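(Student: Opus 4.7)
The plan is to unfold the definition of $B$ as the sequential product of the swap operators $P^{(0,m)}$ and to observe that, once both $\mathbf{g}$ and $\mathbf{d}$ are fixed, the intermediate configurations are uniquely determined and the product truncates. The first step is purely combinatorial: since $P^{(0,m)}$ only modifies coordinates $m-1$ and $m$, an intermediate configuration $\mathbf{g}_{[m]}$ produced after $P^{(0,1)}, \ldots, P^{(0,m)}$ must satisfy $(\mathbf{g}_{[m]})_l = d_l$ for $l < m$ (no later operator can touch these positions, so they must already agree with the final output $\mathbf{d}$) and $(\mathbf{g}_{[m]})_l = g_l$ for $l > m$ (no earlier operator has modified them). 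Hence only the value $h_m := (\mathbf{g}_{[m]})_m$ remains undetermined.

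Next, I would exploit the path conservation built into the bulk $R$-matrix. For $m \ge 2$, the operator $P^{(0,m)}$ uses $R_{u_m/u_0,s_0,s_m}$, whose weights vanish unless $g_{m-1}' + g_m' = g_{m-1} + g_m$. Writing this constraint at each step gives the linear recursion $h_m = h_{m-1} + g_m - d_{m-1}$. Solving the recursion backward, starting from any $M$ with $g_l = d_l = 0$ for $l \ge M$ and noting that the same formula forces $h_M = 0$, yields the closed-form expression \eqref{eq:h_m_for_tower} and in particular shows that $\mathbf{g}_{[M]} = \mathbf{d}$.

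Finally, I would verify that the tail of the infinite product defining $B$ is trivial. For $m > M$, the operator $P^{(0,m)}$ acts at positions $m-1,m$ whose entries in $\mathbf{d}$ are both zero, and direct inspection of \eqref{eq:vertical_R_matrix_for_LJ_explicit} shows $R_{u_m/u_0,s_0,s_m}(0,0;0,0) = 1$: all prefactors reduce to $1$ and the terminating ${}_4\bar\phi_3$ with upper argument $q^{-0} = 1$ evaluates to $1$. Therefore each such factor contributes $1$, the infinite product truncates at step $M$, and \eqref{eq:B_as_a_tower_of_P0n_operators} follows.

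The one delicate point is the boundary step $m=1$: unlike the bulk steps, $P^{(0,1)}$ uses $R^{\mathrm{bdry}}$ and does not obey a path conservation law, so $h_1$ is not pinned down by an independent constraint at step $1$ itself. I would make explicit that $h_1$ is determined by backward propagation of the bulk recursion from the stabilization condition $h_M = 0$, giving $h_1 = \sum_{l \ge 1} d_l - \sum_{l \ge 2} g_l$, and that the boundary factor $R^{\mathrm{bdry}}_{u_1/u_0,s_0,s_1}(g_1,h_1)$ then enters as the unique $m=1$ contribution to the product; the fact that this is consistent with the infinite-product definition of $B$ from \Cref{def:B_operator} is precisely what \Cref{lemma:B_operator_well_defined} guarantees, so no additional convergence argument is needed beyond what has already been established.
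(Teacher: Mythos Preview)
Your proposal is correct and follows the same line as the paper's proof, just with more detail. The paper takes the formulas \eqref{eq:h_m_for_tower}--\eqref{eq:g_m_for_tower} for $h_m$ and $\mathbf{g}_{[m]}$ as already given (they appear just before the lemma) and simply verifies that $h_m=0$ for $m\ge M$, whence $P^{(0,m)}(\mathbf{g}_{[m-1]},\mathbf{g}_{[m]})=\mathbf{1}_{\mathbf{g}_{[m]}=\mathbf{g}_{[m-1]}}$ and the product truncates; your derivation of those formulas from path conservation and the locality of each $P^{(0,m)}$, together with your explicit handling of the boundary step $m=1$, makes rigorous what the paper leaves to the vertex-model picture in \Cref{fig:B_operator}.
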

\Cref{lemma:B_operator_well_defined}
essentially
shows that for fixed $\mathbf{g}$, 
the sum of \eqref{eq:B_as_a_tower_of_P0n_operators}
over all $\mathbf{d}$ is equal to $1$.
Since
$M$ depends
on $\mathbf{d}$ in \eqref{eq:B_as_a_tower_of_P0n_operators},
\Cref{lemma:B_update_terminates} does not imply
\Cref{lemma:B_operator_well_defined}.
\begin{proof}[Proof of \Cref{lemma:B_update_terminates}]
	For all $m\ge M$,
	we have $h_m=0$ by the lower bound on $M$.
	Hence,
	$\mathbf{g}_{[m-1]}=\mathbf{g}_{[m]}$ for $m\geq M$.
	In particular, for $m\ge M$,
	we have
	$P^{(0,m)}(\mathbf{g}_{[m-1]},\mathbf{g}_{[m]})
	=
	\mathbf{1}_{\mathbf{g}_{[m]}=\mathbf{g}_{[m-1]}}$
	since the path configuration, as
	in \Cref{fig:B_operator},
	is empty to the right of location $M$.
	Thus, we may truncate the infinite product of 
	swap operators $P^{(0,m)}$ that define
	the operator $B$, see \eqref{eq:B_operator}, 
	to the product of the swap operators $P^{(0,m)}$ 
	in \eqref{eq:B_as_a_tower_of_P0n_operators}.
	Additionally, note that the
	$\mathbf{g}_{[m]}$'s stabilize to $\mathbf{d}$.
	This completes the proof.
\end{proof}

For $\mathbf{g}',\mathbf{d}'$ with
$B(\mathbf{g}',\mathbf{d}')\ne 0$
and $m\in \mathbb{Z}_{\ge0}$,
define $\mathbf{g}'_{[m]}$ 
in the same way as in
\eqref{eq:h_m_for_tower}--\eqref{eq:g_m_for_tower}.
The construction of the tower of intertwining relations 
given
in \Cref{fig:comm_rel_shifting}
follows from 
the representation \eqref{eq:B_as_a_tower_of_P0n_operators} of the shift operators
together with \eqref{eq:TP_PT_comm_rel_simpler}.
Note, in particular, that the tower is finite since the $\mathbf{g}_{[m]}$ and $\mathbf{g}'_{[m]}$ stabilize as is shown below.

\begin{lemma}
	\label{lemma:bijectivisation_for_large_M}
	Let $\mathbf{g},\mathbf{g}',\mathbf{d},\mathbf{d}'\in \mathscr{G}$,
	and $M>1+\max\left\{ l\in \mathbb{Z}_{\ge1}\colon
	\max (g_l,g_l',d_l,d_l')>0 \right\}$.
	Then, for any $m\ge M$, we have
	\begin{equation}
		\label{eq:bijectivisation_large_M_1}
		T_{\sigma_{m-2}\sigma_{m-3}\ldots\sigma_1\sigma_0 }
		(\mathbf{g}_{[m-1]},\mathbf{g}'_{[m-1]})=
		T_{\sigma_{m-1}\sigma_{m-2}\sigma_{m-3}\ldots\sigma_1\sigma_0 }
		(\mathbf{g}_{[m]},\mathbf{g}'_{[m]}),
	\end{equation}
	and
	\begin{equation}
		\label{eq:bijectivisation_large_M_2}
		\begin{split}
			D^{(0,m)}(\mathbf{g}'_{[m-1]}\to \mathbf{g}_{[m]}\mid 
			\mathbf{g}_{[m-1]},\mathbf{g}_{[m]}')
			&=
			\mathbf{1}_{\mathbf{g}_{[m]}=\mathbf{g}_{[m-1]}}
			,
			\\
			U^{(0,m)}(\mathbf{g}_{[m]}\to \mathbf{g}'_{[m-1]}\mid 
			\mathbf{g}_{[m-1]},\mathbf{g}_{[m]}')&=
			\mathbf{1}_{\mathbf{g}'_{[m-1]}=\mathbf{g}_{[m]}'}.
		\end{split}
	\end{equation}
\end{lemma}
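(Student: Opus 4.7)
The plan is to use the fact that everything is supported strictly to the left of position $M-1$, so that the swap $P^{(0,m)}$ for $m\ge M$ acts in a fully trivial region. The first step is to verify the auxiliary claim that $\mathbf{g}_{[m-1]}=\mathbf{g}_{[m]}$ and $\mathbf{g}'_{[m-1]}=\mathbf{g}'_{[m]}$ for all $m\ge M$: by the definition of $\mathbf{g}_{[m]}$ the two configurations can only differ at positions $m-1$ and $m$, where the candidate values are $h_{m-1}$ versus $d_{m-1}$, and $g_m$ versus $h_m$. For $m\ge M$ each of these four numbers is zero, either by direct invocation of the support assumption on $g,g',d,d'$ or by evaluating the tail sums defining $h_{m-1},h_m$; the same argument applies to the primed tower. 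In particular, the common configuration $\mathbf{g}_{[m]}$ is supported on positions $\ell<M-1$, and likewise for $\mathbf{g}'_{[m]}$.

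Next I would prove \eqref{eq:bijectivisation_large_M_1}. The transfer matrix element factors into local vertex weights $L^{(J)}_{u_\cdot,s_\cdot}$, one per position, with horizontal counts $h_i$ fixed by path conservation from the boundary configurations. Since both top and bottom configurations vanish at every $\ell\ge M-1$, conservation forces $h_i=0$ for all $i\ge M-2$, and each vertex at position $i\ge M-1$ reduces to $L^{(J)}(0,0;0,0)=1$, independent of the parameters used there. The two sides of \eqref{eq:bijectivisation_large_M_1} involve the same configurations, and their parameter sequences differ only at positions $m-1$ and $m$; for $m\ge M$ both of these positions lie in the trivial region just described, so the two products agree.

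For \eqref{eq:bijectivisation_large_M_2}, recall that $D^{(0,m)}$ and $U^{(0,m)}$ act nontrivially only at positions $m-1,m$ and come from bijectivising the vertical Yang-Baxter equation whose boundary data consists of two horizontal counts $i_1,j_1$ and four vertical counts $i_2,i_3,j_2,j_3$ read off from the surrounding configurations. The first paragraph gives that the four vertical counts vanish, and a direct evaluation of the cumulative-sum formula for $i_1,j_1$ from \Cref{def:Dn_Markov_operator_from_bij} shows that the horizontal counts vanish as well for $m\ge M$. With all six counts zero, path conservation collapses each sum in the Yang-Baxter equation to the unique term $k_1=k_2=k_3=0$, and the equation degenerates to $1=1$ between singleton summation sets. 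The unique bijectivisation of such a trivial identity is deterministic, assigning forward and backward probability one, which is precisely the indicator appearing in \eqref{eq:bijectivisation_large_M_2}. The only real obstacle in the whole proof is the bookkeeping needed to verify these six zero counts, and this is handled entirely by the stabilization established in the first paragraph.
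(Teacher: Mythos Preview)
Your proof is correct and follows essentially the same approach as the paper. Both arguments hinge on the observation that for $m\ge M$ the relevant positions $m-1,m$ lie in the empty region, so the transfer matrices coincide (the paper phrases this as ``the two matrices differ only by the location of $s_0$, which does not matter in the empty region''), and the swap $P^{(0,m)}$ is the identity there. For \eqref{eq:bijectivisation_large_M_2} the paper simply notes that $P^{(0,m)}$ acting as the identity together with \eqref{eq:bijectivisation_large_M_1} forces the detailed balance equation to have a unique solution, whereas you unpack this one level further by showing all six boundary counts of the underlying Yang--Baxter equation vanish, collapsing it to a singleton-to-singleton bijectivisation; these are the same observation expressed at different levels of abstraction. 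One minor remark: your phrase ``conservation forces $h_i=0$ for all $i\ge M-2$'' is slightly compressed---conservation alone only forces $h_i$ to be constant in that range, and it is the requirement that the transition land in $\mathscr{G}$ (equivalently, eventual termination) that pins this constant to zero---but this is implicit in your setup and does not affect correctness.
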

\begin{proof}
	Observe that the
	transfer matrices 
	$T_{\sigma_{m-2}\ldots\sigma_1\sigma_0 }$
	and
	$T_{\sigma_{m-1}\sigma_{m-2}\ldots\sigma_1\sigma_0 }$
	differ only by the location of the parameter $s_0$. Moreover, for $m\ge M$, the action of the these transfer matrices on the configuration $\mathbf{g}_{[m]}$ does not depend on $s_0$ since the configuration is empty to the right of $M$. Therefore, the action is the same. This proves \eqref{eq:bijectivisation_large_M_1}.
	
	Next, notice that 
	$P^{(0,m)}$ acts as identity 
	on our elements
	for $m\ge M$, see the proof of \Cref{lemma:B_update_terminates}.
	Then, along with \eqref{eq:bijectivisation_large_M_1},
	this implies that the detailed balance equation for $D^{(0,m)},U^{(0,m)}$
	has a unique solution given by \eqref{eq:bijectivisation_large_M_2}.
	This completes the proof.
\end{proof}

\begin{definition}
	[Down operator for shift]
	\label{def:D_bullet_operator}
	Let $\mathbf{g},\mathbf{g}',\mathbf{d}'\in \mathscr{G}$
	be such that $T(\mathbf{g},\mathbf{g}')B(\mathbf{g}',\mathbf{d}')\ne 0$.
	The \emph{down Markov operator} corresponding to the shift operator $B$ is defined as follows:
	\begin{equation}
		\label{eq:D_bullet_operator}
		D^{\bullet}(\mathbf{g}'\to \mathbf{d}\mid \mathbf{g},\mathbf{d}')
		\coloneqq
		\prod_{m=1}^{\infty}
		D^{(0,m)}(\mathbf{g}'_{[m-1]}\to \mathbf{g}_{[m]}\mid 
		\mathbf{g}_{[m-1]},\mathbf{g}_{[m]}'),
		\qquad \mathbf{d}=\lim_{m\to+\infty}\mathbf{g}_{[m]}\in \mathscr{G}.
	\end{equation}
	Due to \Cref{lemma:bijectivisation_for_large_M},
	the product is actually finite and the limit stabilizes. See \Cref{fig:comm_rel,fig:comm_rel_shifting} for an illustration
\end{definition}
Note that for any $\mathbf{g},\mathbf{g}',\mathbf{d}'\in \mathscr{G}$,
there are only finitely many $\mathbf{d}\in \mathscr{G}$
for which \eqref{eq:D_bullet_operator} is nonzero.
This is due to the fact that there are only finitely many $\mathbf{d}$
for which $T_{\mathsf{sh}}(\mathbf{d},\mathbf{d}')\ne 0$, see
the desired detailed balance equation \eqref{eq:shift_ops_detailed_balance_equation}.

\begin{definition}
	[Up operator for shift]
	\label{def:U_bullet_operator}
	Let $\mathbf{g},\mathbf{d},\mathbf{d}'\in \mathscr{G}$
	be such that $B(\mathbf{g},\mathbf{d}) T_\mathsf{sh}(\mathbf{d},\mathbf{d}')\ne 0$.
	The \emph{up Markov operator} corresponding to the shift operator $B$ is defined as follows:
	\begin{equation}
		\label{eq:U_bullet_operator}
		U^{\bullet}(\mathbf{d}\to \mathbf{g}'\mid \mathbf{g},\mathbf{d}')
		\coloneqq
		\prod_{m=1}^{\infty}
		U^{(0,m)}(\mathbf{g}_{[m]}\to \mathbf{g}'_{[m-1]}\mid 
		\mathbf{g}_{[m-1]},\mathbf{g}_{[m]}'),
		\qquad \mathbf{g}'=\mathbf{g}'_{[0]}\in \mathscr{G}.
	\end{equation}
	Due to \Cref{lemma:bijectivisation_for_large_M},
	the product is actually finite. See \Cref{fig:comm_rel,fig:comm_rel_shifting} for an illustration.
\end{definition}

We assume that the bijectivisation at the left boundary is well-defined, so that the 
whole up operator $U^{\bullet}$ is also well-defined.
In contrast with the down operator $D^{\bullet}$, in 
\eqref{eq:U_bullet_operator}, the number of possible
outcomes $\mathbf{g}'$ for any fixed $\mathbf{g},\mathbf{d},\mathbf{d}'$
may be infinite. These infinitely many choices 
arise at the left boundary, for $m=1$, as explained in \Cref{rmk:left_boundary_up_bijectivisation}.

One readily sees that the
operators $D^{\bullet}$ and $U^{\bullet}$
from \Cref{def:D_bullet_operator,def:U_bullet_operator}
satisfy the detailed balance
equation
\eqref{eq:shift_ops_detailed_balance_equation}
involving the shift operator $B$ and the stochastic higher
spin six vertex model transfer matrices $T$ and $T_{\mathsf{sh}}$. This follows directly from the detailed balance equations for the Markov swap operators $D^{(0,n)}$ and $U^{(0,n)}$ given by \eqref{eq:D0n_U0n_abbreviation_third_time}.

\begin{remark}
	\label{rmk:D_U_operators_in_space_X}
	Here, and in 
	\Cref{sub:up_down_on_configurations}, 
	we argued in terms of
	the space $\mathscr{G}$ of vertex model
	configurations.
	Note that we may define corresponding
	down and up Markov operators
	to act on the space $\mathscr{X}$ of
	exclusion process configurations
	via the gap-particle transformation
	(\Cref{def:gap_particle_transform}).
	Following our convention so far, 
	we use the notation $\tilde D^{(n)},\tilde D^{(0,n)},\tilde D^{\bullet}$, and so on,
	to denote these operators acting on $\mathscr{X}$.
\end{remark}

\subsection{Coupling of measures on trajectories via rewriting history}
\label{sub:coupling_measures_on_trajectories}

We couple together trajectories of two instances of
the stochastic higher spin six vertex model with different parameters
through the use of
the down and up Markov operators 
defined in 
\Cref{sub:up_down_on_configurations,sub:bij_of_shift_operator}.
Here, we only consider this construction for the swap operator
$P^{(n)}$
and the operators $D^{(n)},U^{(n)}$ from \Cref{sub:up_down_on_configurations}.
The couplings involving the shift operator $B$
work very similarly, and they
will be discussed in a continuous time limit
in \Cref{sec:limit_equal_speeds_new} below.

Fix $n\in \mathbb{Z}_{\ge1}$ 
and
take parameters 
$(\mathbf{u},\mathbf{s})\in \mathcal{T}$ such that 
$\bigl( \frac{u_n}{u_{n-1}},s_{n-1},s_n \bigr)\in \mathcal{R}$.
That is, we assume that the parameters satisfy the conditions of 
\Cref{prop:Pn_T_intertw},
so that the Markov operators
$T,T_{\sigma_{n-1}},P^{(n)}$ 
\eqref{eq:comm_bij_abbreviation_Pn}
are well-defined.
Let $D^{(n)}$ and $U^{(n)}$ be the operators
from \Cref{sub:up_down_on_configurations}
providing a bijectivisation of the intertwining relation
$TP^{(n)}=P^{(n)}T_{\sigma_{n-1}}$
from \Cref{prop:Pn_T_intertw}.

Fix $M\ge 1$
and an initial configuration $\hat{\mathbf{g}}
\in \mathscr{G}$.
Denote by 
$\{\mathbf{g}(t)\}_{0\le t\le M}$ the 
stochastic higher spin six vertex model
with parameters $(\mathbf{u},\mathbf{s})$
started from $\hat{\mathbf{g}}$.
Also, denote by 
$\{\mathbf{d}(t)\}_{0\le t\le M}$
the vertex model with parameters
$(\sigma_{n-1}\mathbf{u},\sigma_{n-1}\mathbf{s})$
started from a random initial configuration
$\delta_{\hat{\mathbf{g}}}P^{(n)}$.
Let $\mathfrak{T}$ and $\mathfrak{T}^{\sigma_{n-1}}$, respectively, 
denote the measures on trajectories of these processes 
on $\mathscr{G}$.
Then, in particular, the probability weights for these measures are given by
\begin{equation}
	\label{eq:measures_on_trajectories_for_Pn}
	\begin{split}
		&
		\mathfrak{T}
		(\mathbf{g}(0),\mathbf{g}(1),\ldots,\mathbf{g}(M) )=
		\mathbf{1}_{\mathbf{g}(0)=\hat{\mathbf{g}}}
		\ssp
		T(\mathbf{g}(0),\mathbf{g}(1))\ssp
		T(\mathbf{g}(1),\mathbf{g}(2))
		\ldots 
		T(\mathbf{g}(M-1),\mathbf{g}(M))
		,\\
		&
		\mathfrak{T}^{\sigma_{n-1}}
		(\mathbf{d}(0),\mathbf{d}(1),\ldots,\mathbf{d}(M) )
		=
		P^{(n)}(\hat{\mathbf{g}},\mathbf{d}(0))
		\ssp
		T_{\sigma_{n-1}}(\mathbf{d}(0),\mathbf{d}(1))
		\ldots
		T_{\sigma_{n-1}}(\mathbf{d}(M-1),\mathbf{d}(M)).
	\end{split}
\end{equation}
The iterated intertwining relation
$T^M P^{(n)}=P^{(n)}(T_{\sigma_{n-1}})^{M}$
implies that the distribution of the final state
$\mathbf{d}(M)$ 
of $\mathfrak{T}^{\sigma_{n-1}}$
is the same as the distribution
of $\delta_{\mathbf{g}(M)}P^{(n)}$, obtained by applying
$P^{(n)}$ to the final state of $\mathfrak{T}$
(see \Cref{fig:history_rewriting_Pn} for an illustration).
The next statement 
extends this 
identity in distribution
to couplings between
\emph{joint distributions in time};
this is the main result of the current \Cref{sec:bijectivisation}.
These couplings have a 
sequential nature 
(where the time $t$ runs through
$t\in \left\{ 0,1,\ldots,M  \right\}$),
and may be thought of as ``rewriting the history'' of a vertex model.
There are two 
distinct sequential
couplings corresponding to the direction in which the time $t$
is varied.

\begin{figure}[htb]
	\centering
	\includegraphics[width=\textwidth]{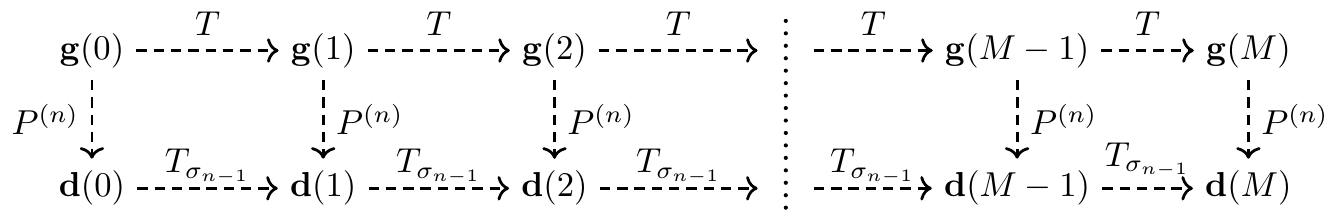}
	\caption{The chain of intertwining
	relations providing the coupling of trajectories 
	(\Cref{thm:coupling_trajectories_with_Pn_swap})
	of the 
	two processes in \eqref{eq:measures_on_trajectories_for_Pn}.}
	\label{fig:history_rewriting_Pn}
\end{figure}

\begin{theorem}
	\label{thm:coupling_trajectories_with_Pn_swap}
	\begin{enumerate}[\bf1.\/]
		\item 
			\textnormal{(Rewriting history from future to past)}
			Fix $\hat{\mathbf{g}}\in \mathscr{G}$.
			Let $\{\mathbf{g}(t)\}_{0\le t\le M}$ be distributed according to $\mathfrak{T}$.
			First, 
			apply $P^{(n)}$ to $\mathbf{g}(M)$, and denote this 
			random configuration by $\mathbf{d}'(M)$.
			Sequentially in the order $t=M-1,M-2,\ldots,1,0 $,
			let $\mathbf{d}'(t)$ be sampled
			from 
			\begin{equation}
				\label{eq:Dn_in_history_rewriting_swap}
				D^{(n)}(\mathbf{g}(t+1)\to \mathbf{d}'(t)\mid 
				\mathbf{g}(t),\mathbf{d}'(t+1)).
			\end{equation}
			Then, the joint distribution 
			of $\{\mathbf{d}'(t)\}_{0\le t\le M}$
			is equal to $\mathfrak{T}^{\sigma_{n-1}}$.
		\item
			\textnormal{(Rewriting history from past to future)}
			Fix $\hat{\mathbf{g}}\in \mathscr{G}$.
			Let
			$\{\mathbf{d}(t)\}_{0\le t\le M}$ 
			be
			distributed according to $\mathfrak{T}^{\sigma_{n-1}}$,
			where now the
			initial condition
			is random and depends on $\hat{\mathbf{g}}$.
			Sequentially in the order $t=1,2,\ldots,M $, 
			let $\mathbf{g}'(t)$ be sampled from
			\begin{equation}
				\label{eq:Un_in_history_rewriting_swap}
				U^{(n)}(\mathbf{d}(t-1)\to \mathbf{g}'(t)\mid \mathbf{g}'(t-1),\mathbf{d}(t)),
			\end{equation}
			where, by agreement, $\mathbf{g}'(0)=\hat{\mathbf{g}}$.
			Then, the joint distribution 
			of $\{\mathbf{g}'(t)\}_{0\le t\le M}$
			is equal to $\mathfrak{T}$.
	\end{enumerate}
\end{theorem}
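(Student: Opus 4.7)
The plan is to prove both parts by writing out the full joint density (trajectory plus auxiliary randomness) and then iteratively collapsing it using the detailed balance equation \eqref{eq:Dn_Un_detailed_balance_equation} together with the stochasticity of $D^{(n)}$ and $U^{(n)}$. The two parts are mirror images of each other: Part 1 collapses $\mathbf{g}(t)$'s from the right (large $t$) using stochasticity of $U^{(n)}$, and Part 2 collapses $\mathbf{d}(t)$'s from the left (small $t$) using stochasticity of $D^{(n)}$.

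For Part 1, I would write out the full joint distribution of $(\mathbf{g}(0),\ldots,\mathbf{g}(M),\mathbf{d}'(M),\mathbf{d}'(M-1),\ldots,\mathbf{d}'(0))$ as
\begin{equation*}
\mathbf{1}_{\mathbf{g}(0)=\hat{\mathbf{g}}}
\prod_{t=0}^{M-1}T(\mathbf{g}(t),\mathbf{g}(t+1))\cdot
P^{(n)}(\mathbf{g}(M),\mathbf{d}'(M))\cdot
\prod_{t=0}^{M-1}D^{(n)}(\mathbf{g}(t+1)\to\mathbf{d}'(t)\mid\mathbf{g}(t),\mathbf{d}'(t+1)),
\end{equation*}
and then marginalize over $\mathbf{g}(M),\mathbf{g}(M-1),\ldots,\mathbf{g}(1)$ in that order. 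At the first step, the three factors involving $\mathbf{g}(M)$, namely $T(\mathbf{g}(M-1),\mathbf{g}(M))$, $P^{(n)}(\mathbf{g}(M),\mathbf{d}'(M))$, and $D^{(n)}(\mathbf{g}(M)\to\mathbf{d}'(M-1)\mid\mathbf{g}(M-1),\mathbf{d}'(M))$, combine by \eqref{eq:Dn_Un_detailed_balance_equation} into
\begin{equation*}
P^{(n)}(\mathbf{g}(M-1),\mathbf{d}'(M-1))\,T_{\sigma_{n-1}}(\mathbf{d}'(M-1),\mathbf{d}'(M))\,U^{(n)}(\mathbf{d}'(M-1)\to\mathbf{g}(M)\mid\mathbf{g}(M-1),\mathbf{d}'(M)),
\end{equation*}
and summing the latter over $\mathbf{g}(M)$ using stochasticity of $U^{(n)}$ eliminates $\mathbf{g}(M)$ altogether, producing the factor $P^{(n)}(\mathbf{g}(M-1),\mathbf{d}'(M-1))\,T_{\sigma_{n-1}}(\mathbf{d}'(M-1),\mathbf{d}'(M))$. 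Now the expression has exactly the same shape as before but with $M$ replaced by $M-1$, so the same move applies at each subsequent time. Iterating down to $t=0$ and using $\mathbf{g}(0)=\hat{\mathbf{g}}$ delivers the density in \eqref{eq:measures_on_trajectories_for_Pn} for $\mathfrak{T}^{\sigma_{n-1}}$.

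For Part 2, the argument is dual. I would write the joint density of $(\mathbf{d}(0),\ldots,\mathbf{d}(M),\mathbf{g}'(1),\ldots,\mathbf{g}'(M))$ using $\mathfrak{T}^{\sigma_{n-1}}$ together with the sequential $U^{(n)}$ resampling, and then marginalize $\mathbf{d}(0),\mathbf{d}(1),\ldots,\mathbf{d}(M-1)$ in that order. At each step, the three factors $P^{(n)}(\mathbf{g}'(t-1),\mathbf{d}(t-1))$, $T_{\sigma_{n-1}}(\mathbf{d}(t-1),\mathbf{d}(t))$, $U^{(n)}(\mathbf{d}(t-1)\to\mathbf{g}'(t)\mid\mathbf{g}'(t-1),\mathbf{d}(t))$ collapse by \eqref{eq:Dn_Un_detailed_balance_equation} into $T(\mathbf{g}'(t-1),\mathbf{g}'(t))\,P^{(n)}(\mathbf{g}'(t),\mathbf{d}(t))\,D^{(n)}(\mathbf{g}'(t)\to\mathbf{d}(t-1)\mid\mathbf{g}'(t-1),\mathbf{d}(t))$, and summing over $\mathbf{d}(t-1)$ via stochasticity of $D^{(n)}$ removes it. After $M$ such steps, summing over the final $\mathbf{d}(M)$ against $P^{(n)}$ also gives $1$, and what remains is exactly $\mathfrak{T}$.

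The main thing I would be careful about is justifying the exchange of summation orders, i.e.~that one may legitimately peel off one time slice at a time. Because the configurations live in $\mathscr{G}$ and every operator involved preserves the total path count locally, all sums are finite (or absolutely convergent on $\mathscr{G}$, by the well-posedness arguments already invoked in \Cref{lemma:higher_spin_model_is_well_defined} and \Cref{lemma:B_operator_well_defined}), so Fubini-type rearrangement is unproblematic; the argument is genuinely an inductive cascade of \eqref{eq:Dn_Un_detailed_balance_equation} and has no deeper obstacle beyond that bookkeeping.
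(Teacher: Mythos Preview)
Your proof is correct and follows essentially the same approach as the paper: both arguments iterate the detailed balance equation \eqref{eq:Dn_Un_detailed_balance_equation} together with the stochasticity of $U^{(n)}$ (for Part~1) or $D^{(n)}$ (for Part~2), peeling off one time slice at a time in the appropriate direction. The only cosmetic difference is that the paper phrases the induction in terms of conditional distributions $\mathrm{Prob}(\mathbf{d}'(t+1),\mathbf{d}'(t)\mid\mathbf{g}(t))$, whereas you write out the full joint density and collapse it; these are the same computation.
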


\begin{proof}
    The results follow by iterating 
	the detailed balance equation \eqref{eq:Dn_Un_detailed_balance_equation}
	involving $T,T_{\sigma_{n-1}}$, and $P^{(n)}$. 
	Moreover, by the Markov property, it suffices to 
	consider joint distributions at adjacent time moments $t,t+1$, 
	and use induction in $t$. This induction is descending or ascending in the 
	first or the second part, respectively.
	See \Cref{fig:history_rewriting_Pn} for an illustration of the 
	notation employed throughout the proof. We give more details below.
	
	Consider the first part. Inductively, we show that the transition probability $ \mathbf{d}'(t) \rightarrow  \mathbf{d}'(t+1)$ is equal to $T_{\sigma_{n-1}} (\mathbf{d}'(t) ,  \mathbf{d}'(t+1))$ if the transition probability 
	for $\mathbf{g}(t) \rightarrow \mathbf{g}(t+1)$ is equal to $T( \mathbf{g}(t), \mathbf{g}(t+1))$. Additionally, for the induction argument, we show that the transition probability for $\mathbf{g}(t) \rightarrow \mathbf{d}'(t)$ is equal to $P^{(n)}(\mathbf{g}(t), \mathbf{d}'(t))$ if the transition probability for $\mathbf{g}(t+1) \rightarrow \mathbf{d}'(t+1)$ is equal to $P^{(n)}(\mathbf{g}(t+1), \mathbf{d}'(t+1))$. We start the induction by noting that the conditions are true for the first step when $t+1 = M$ by assumption. In the following, we carry out the computations for the induction step. 
	
	Let us assume that 
	$\mathbf{g}(t)$ is known. That is, the 
	following computations are conditioned on $\mathbf{g}(t)$. Then, we have the following chain of
	expressions for the 
	joint distribution of 
	$\mathbf{d}'(t),\mathbf{d}'(t+1)$
	conditioned on $\mathbf{g}(t)$:
	\begin{equation*}
	    \begin{split}
	    &
			\mathrm{Prob}\left(
				\mathbf{d}'(t+1),\mathbf{d}'(t) \mid \mathbf{g}(t) 
			\right)
			\\
	    &=  
			\mathrm{Prob}\left(\mathbf{d}'(t) \mid \mathbf{g}(t)\right) \ssp \mathrm{Prob}\left(\mathbf{d}'(t+1) \mid \mathbf{d}'(t) , \mathbf{g}(t) \right)
			\\
	    &= \sum_{\mathbf{g}(t+1)} \mathrm{Prob}(\mathbf{g}(t+1), \mathbf{d}'(t+1),\mathbf{d}'(t) \mid \mathbf{g}(t) )\\
	    &=\sum_{\mathbf{g}(t+1)} \mathrm{Prob}( \mathbf{g}(t+1) \mid \mathbf{g}(t))\ssp \mathrm{Prob}( \mathbf{d}'(t+1) \mid \mathbf{g}(t+1), \mathbf{g}(t))\ssp \mathrm{Prob}( \mathbf{d}'(t) \mid \mathbf{g}(t+1), \mathbf{g}(t), \mathbf{d}'(t+1)) \\
	    &=\sum_{\mathbf{g}(t+1)}T(\mathbf{g}(t),\mathbf{g}(t+1))
		\ssp
		P^{(n)}(\mathbf{g}(t+1),\mathbf{d}'(t+1))
		\ssp
		D^{(n)}(\mathbf{g}(t+1)\to \mathbf{d}'(t)\mid 
		\mathbf{g}(t),\mathbf{d}'(t+1))\\
	    &=\sum_{\mathbf{g}(t+1)}P^{(n)}(\mathbf{g}(t),\mathbf{d}'(t))
		\ssp
		T_{\sigma_{n-1}}(\mathbf{d}'(t),\mathbf{d}'(t+1)) U^{(n)}(\mathbf{d}'(t) \to\mathbf{g}(t+1)\mid 
		\mathbf{g}(t),\mathbf{d}'(t+1))\\
	    &=P^{(n)}(\mathbf{g}(t),\mathbf{d}'(t))
		\ssp
		T_{\sigma_{n-1}}(\mathbf{d}'(t),\mathbf{d}'(t+1)).
	    \end{split}
	\end{equation*}
	We used the induction hypothesis on the fourth equality,
	detailed balance equation for the fifth equality, and the
	stochasticity for the sixth equality. 
	From the identity above
	we see that
	the conditional distribution
	of $\mathbf{d}'(t+1)$ given $\mathbf{d}'(t)$
	is $T_{\sigma_{n-1}}(\mathbf{d}'(t),\mathbf{d}'(t+1))$, which is independent
	of $\mathbf{g}(t)$.
	Moreover,
	the marginal 
	distribution of $\mathbf{d}'(t)$ is 
	$P^{(n)}(\mathbf{g}(t),\mathbf{d}'(t))$,
	which allows to continue the induction.
	Thus, the result for the first part
	follows.
	
	The second part is proven similarly, with a simplification 
	that we do not need to condition the computations on
	$\mathbf{d}(t)$ 
	due to the other direction of the Markov step $P^{(n)}$.
	This completes the proof.
\end{proof}

\begin{definition}[Markov operators for rewriting history]
	\label{def:rewriting_history_operators}
	Fix a trajectory 
	$\{\mathbf{g}(t)\}_{0\le t\le M}$ 
	of the stochastic higher spin six vertex model
	with some initial data~$\hat{\mathbf{g}}$,
	and also fix an \emph{arbitrary} 
	configuration $\mathbf{d}'(M)$ at the final time such that
	$P^{(n)}(\mathbf{g}(M),\mathbf{d}'(M))\ne 0$.
	Given $\mathbf{d}'(M)$, 
	denote 
	by 
	$H_n^{\leftarrow}$
	the Markov operator that maps the 
	trajectory
	$\{\mathbf{g}(t)\}_{0\le t\le M}$ 
	to the trajectory
	$\{\mathbf{d}'(t)\}_{0\le t\le M}$
	by the
	sequential application of $D^{(n)}$
	as in 
	the first part of \Cref{thm:coupling_trajectories_with_Pn_swap}.
	The operator 
	$H_n^{\leftarrow}$ may be viewed
	as a Markov process with initial
	condition
	$\mathbf{d}'(M)$ and running \emph{backwards in time}, from future to past.

	Similarly, fix a trajectory 
	$\{\mathbf{d}(t)\}_{0\le t\le M}$ with some initial data $\hat{\mathbf{d}}$,
	and fix an \emph{arbitrary} configuration $\hat{\mathbf{g}}$
	such that $P^{(n)}(\hat{\mathbf{g}},\hat{\mathbf{d}})\ne 0$.
	Given $\hat{\mathbf{g}}$, denote by $H^{\rightarrow}_{n}$ the Markov operator
	that
	maps the trajectory 
	$\{\mathbf{d}(t)\}_{0\le t\le M}$
	to the trajectory
	$\{\mathbf{g}'(t)\}_{0\le t\le M}$
	by the sequential application of $U^{(n)}$ as in the second part
	of \Cref{thm:coupling_trajectories_with_Pn_swap}.
	The operator
	$H^{\rightarrow}_{n}$ may be viewed as a Markov process with initial condition $\hat{\mathbf{g}}$ and running \emph{forward in time}, from past to future.

	We call 
	$H_n^{\leftarrow}$ 
	and 
	$H_n^{\rightarrow}$ the \emph{Markov operators for rewriting history}
	corresponding to the swap operator $P^{(n)}$.
\end{definition}

Note that both $H_n^{\leftarrow}$ and $H^{\rightarrow}_n$
act locally and change only the components
$g_{n-1},g_n$ along the trajectory of the stochastic history of the spin six vertex
model. This locality comes from the same feature of the Markov swap operator 
$P^{(n)}$.

We reformulate \Cref{thm:coupling_trajectories_with_Pn_swap},
with this definition. 
Recall that measures
on trajectories are defined by
\eqref{eq:measures_on_trajectories_for_Pn}.
\begin{corollary}
	\label{cor:coupling_trajectories_with_Pn_swap}
	\begin{enumerate}[\bf1.\/]
		\item 
			If a trajectory
			$\{\mathbf{g}(t)\}_{0\le t\le M}$
			has distribution $\mathfrak{T}$ 
			and
			$\mathbf{d}'(M)$
			has distribution $\delta_{\mathbf{g}(M)}P^{(n)}$,
			then the application of 
			$H_n^{\leftarrow}$ 
			(with initial condition
			$\mathbf{d}'(M)$)
			to 
			$\{\mathbf{g}(t)\}_{0\le t\le M}$
			produces a trajectory with distribution $\mathfrak{T}^{\sigma_{n-1}}$.
		\item 
			If 
			a trajectory 
			$\{\mathbf{d}(t)\}_{0\le t\le M}$
			has distribution $\mathfrak{T}^{\sigma_{n-1}}$
			(in particular, its initial condition $\hat{\mathbf{d}}$ 
			has distribution $\delta_{\hat{\mathbf{g}}}P^{(n)}$, where $\hat{\mathbf{g}}$
			is fixed),
			then the application of 
			$H^{\rightarrow}_{n}$ 
			(with initial condition $\hat{\mathbf{g}}$)
			to
			$\{\mathbf{d}(t)\}_{0\le t\le M}$
			produces a trajectory with distributed
			$\mathfrak{T}$.
	\end{enumerate}
\end{corollary}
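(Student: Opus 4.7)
The plan is to recognize that Corollary \ref{cor:coupling_trajectories_with_Pn_swap} is essentially a repackaging of Theorem \ref{thm:coupling_trajectories_with_Pn_swap} in the notational language of the rewriting history operators $H_n^{\leftarrow}$ and $H_n^{\rightarrow}$ introduced in Definition \ref{def:rewriting_history_operators}. Since the theorem is already proved, the work amounts to verifying that the sequential constructions inside Definition \ref{def:rewriting_history_operators} agree, step by step, with the constructions appearing in the two parts of the theorem, and that the initial conditions for $H_n^{\leftarrow}$ and $H_n^{\rightarrow}$ are chosen compatibly.

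For Part 1, I would start from a trajectory $\{\mathbf{g}(t)\}_{0\le t\le M}$ with distribution $\mathfrak{T}$ and a configuration $\mathbf{d}'(M)$ sampled from $\delta_{\mathbf{g}(M)}P^{(n)}$. By Definition \ref{def:rewriting_history_operators}, the operator $H_n^{\leftarrow}$ with initial datum $\mathbf{d}'(M)$ constructs the trajectory $\{\mathbf{d}'(t)\}_{0\le t\le M}$ by setting, for $t=M-1,M-2,\ldots,0$, the configuration $\mathbf{d}'(t)$ to be a sample from $D^{(n)}(\mathbf{g}(t+1)\to \mathbf{d}'(t)\mid \mathbf{g}(t),\mathbf{d}'(t+1))$ as in \eqref{eq:Dn_in_history_rewriting_swap}. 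This is literally the recipe in Part 1 of Theorem \ref{thm:coupling_trajectories_with_Pn_swap}, and the joint law of $\{\mathbf{d}'(t)\}$ thus equals $\mathfrak{T}^{\sigma_{n-1}}$ by that theorem.

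For Part 2, I would argue in the same way: given $\{\mathbf{d}(t)\}_{0\le t\le M}$ with distribution $\mathfrak{T}^{\sigma_{n-1}}$ (so that $\mathbf{d}(0)$ is distributed as $\delta_{\hat{\mathbf{g}}}P^{(n)}$), Definition \ref{def:rewriting_history_operators} prescribes that $H_n^{\rightarrow}$ with initial datum $\hat{\mathbf{g}}$ produces $\{\mathbf{g}'(t)\}_{0\le t\le M}$ by sequentially sampling, for $t=1,2,\ldots,M$, the configuration $\mathbf{g}'(t)$ from $U^{(n)}(\mathbf{d}(t-1)\to \mathbf{g}'(t)\mid \mathbf{g}'(t-1),\mathbf{d}(t))$ as in \eqref{eq:Un_in_history_rewriting_swap}, with $\mathbf{g}'(0)=\hat{\mathbf{g}}$. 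This is exactly the construction in Part 2 of Theorem \ref{thm:coupling_trajectories_with_Pn_swap}, whose conclusion yields $\{\mathbf{g}'(t)\}\sim \mathfrak{T}$.

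There is no serious obstacle here: both halves reduce to checking that the nomenclature in Definition \ref{def:rewriting_history_operators} matches the inductive updates already analyzed in Theorem \ref{thm:coupling_trajectories_with_Pn_swap}. The one point worth emphasizing in the write-up is the compatibility of the boundary data — namely, that the hypothesis on $\mathbf{d}'(M)$ in Part 1 (resp.\ on $\hat{\mathbf{g}}$ in Part 2) ensures $P^{(n)}(\mathbf{g}(M),\mathbf{d}'(M))\ne 0$ (resp.\ $P^{(n)}(\hat{\mathbf{g}},\hat{\mathbf{d}})\ne 0$) with probability one, so that the sequential sampling rules defining $H_n^{\leftarrow}$ and $H_n^{\rightarrow}$ are well-posed along the full trajectory.
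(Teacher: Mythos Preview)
Your proposal is correct and matches the paper's approach: the corollary is presented there as a direct reformulation of Theorem~\ref{thm:coupling_trajectories_with_Pn_swap} in the language of Definition~\ref{def:rewriting_history_operators}, with no additional argument given. Your unpacking of the definitions and the note on well-posedness of the initial data are exactly what is implicit in the paper's one-line introduction to the corollary.
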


\section{Application to discrete-time particle systems}
\label{sec:discrete_time_bijectivisation}

We now consider the simplest
bijectivisation for a 
subfamily 
of stochastic higher spin six vertex models
We call this the independent bijectivisation.
The advantage of this subfamily is that the cross vertex weights 
factorize into the $q$-beta-binomial form.
The subfamily of stochastic higher spin six vertex models 
is still quite general and, in particular, 
includes
$q$-TASEP and TASEP.
In the following, we also translate the Markov operators 
$H_n^{\leftarrow}$ and $H^{\rightarrow}_n$
for rewriting history in these vertex models
into the language of particle systems.

\subsection{Notation and independent bijectivisation}
\label{sub:discrete_bij_subsection_8_1}

Consider the setting of \Cref{sub:YBE_bijectivisation}
(bijectivisation of a single Yang-Baxter equation)
and take $J=1$, $u_1=-\beta s_1$, $u_2=-\beta s_2$,
where $s_1,s_2\in(-1,0)$ with 
$|s_2|\le |s_1|$, and $\beta>0$.
Then, the vertex weights 
in the Yang-Baxter equation \eqref{eq:YBE_for_LJ}
become as in \Cref{fig:vertex_weights_for_bij}.
In particular, the cross vertex weights 
factorize into the $q$-beta-binomial form,
see \Cref{prop:qHahn_degeneration_of_R}.
Our conditions on the parameters make all the terms in the 
Yang-Baxter equation, i.e.~the weights
\eqref{eq:YBE_terms_w_LHS_RHS}, nonnegative.

\begin{figure}[htb]
	\centering
	\includegraphics[height=.55\textwidth]{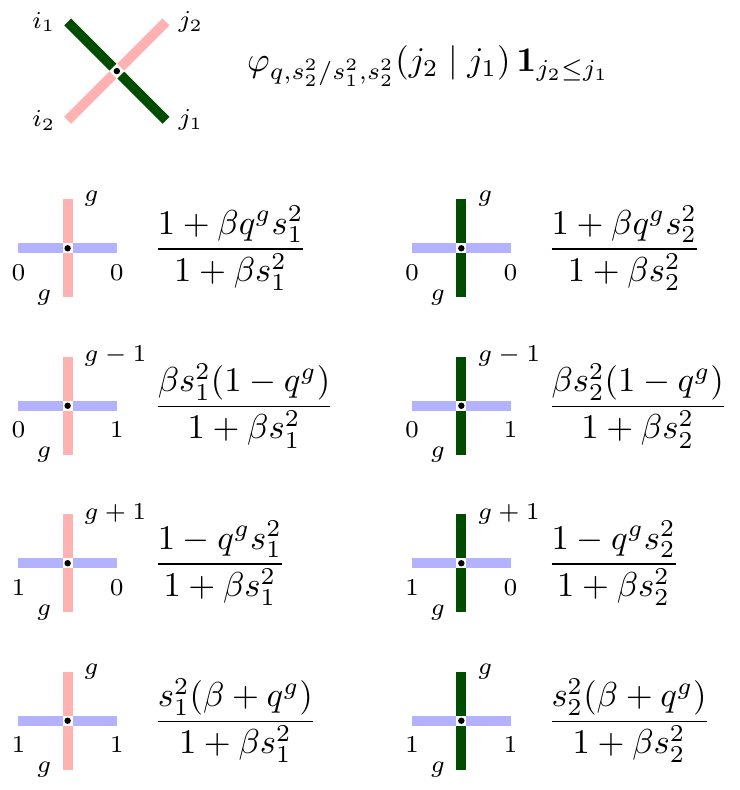}
	\caption{Vertex weights entering the Yang-Baxter equation considered in 
		the current
		\Cref{sec:discrete_time_bijectivisation}.}
	\label{fig:vertex_weights_for_bij}
\end{figure}

Recall that the boundary conditions for the Yang-Baxter
equation are encoded by 
$i_1,j_1 \in\left\{ 0,1,\ldots,J  \right\}$
and $i_2,i_3,j_2,j_3\in \mathbb{Z}_{\ge0}$,
see \Cref{fig:YBE}.
There are only four possible
values for
the pair $(i_1,j_1)\in\left\{ 0,1 \right\}^2$
encoding the horizontal boundary conditions
since we are taking $J=1$.
Fixing $i_1,j_1$, let us
employ the shorthand notation
\begin{equation}
	\label{eq:81_notation_1}
	i_2=a,\qquad i_3=b,\qquad j_2=c.
\end{equation}
We always have $0\le c\le b+1$, otherwise the cross vertex weight vanishes.
The value of $j_3$ is recovered from the path conservation property:
\begin{equation}
	\label{eq:81_notation_2}	
	j_3=a+b-c+i_1-j_1.
\end{equation}
Assuming that $a,b,c$ are also fixed, 
the terms 
\eqref{eq:YBE_terms_w_LHS_RHS}
in both sides of the Yang-Baxter equation,
as well as the transition probabilities \eqref{eq:bij_p_up_p_down},
may all be encoded by the numbers of paths through the internal horizontal edge $k_1,k_1'\in\left\{ 0,1 \right\}$, 
see \Cref{fig:YBE,fig:YBE_bijectivisation}.
Indeed, given $k_1$, we may reconstruct $k_2,k_3$ from $a,b,c,i_1,j_1$,
and similarly for $k_2',k_3'$ given $k_1'$.
We use the following shorthand notation for the corresponding weights and
transition probabilities:
\begin{equation}
	\label{eq:81_notation_3}	
	w^{\mathrm{LHS}}_{i_1,j_1}(k_1),
	\qquad 
	w^{\mathrm{RHS}}_{i_1,j_1}(k_1'),
	\qquad 
	p^{\downarrow}_{i_1,j_1}[k_1\to k_1'],
	\qquad 
	p^{\uparrow}_{i_1,j_1}[k_1'\to k_1].
\end{equation}
The Yang-Baxter equation thus takes the form
\begin{equation}
	\label{eq:YBE_through_shorthand_notation}
	w^{\mathrm{LHS}}_{i_1,j_1}(0)+
	w^{\mathrm{LHS}}_{i_1,j_1}(1)=
	w^{\mathrm{RHS}}_{i_1,j_1}(0)+
	w^{\mathrm{RHS}}_{i_1,j_1}(1).
\end{equation}

To simplify the constructions of our couplings, 
throughout the rest of the paper
we consider the so-called \emph{independent bijectivisation}
so that the transition, say $k_1\to k_1'$,
depends only on the end state $k_1'$ and the boundary conditions, and 
not on $k_1$. More specifically, we give the following
definition:

\begin{definition}
	\label{def:independent_bijectivisation}
	For a fixed set of boundary parameters $a\in \mathbb{Z}_{\ge0}\cup\left\{+\infty \right\}$,
	$b,c\in \mathbb{Z}_{\ge0}$ and $i_1,j_1\in\left\{ 0,1 \right\}$, the transition probabilities for the \emph{independent bijectivisation} are given by:
	\begin{equation}
		\label{eq:bij_independent}
		p^{\downarrow}_{i_1,j_1}[k_1\to k_1']
		\coloneqq
		\frac{w^{\mathrm{RHS}}_{i_1,j_1}(k_1')}
		{w^{\mathrm{RHS}}_{i_1,j_1}(0)+w^{\mathrm{RHS}}_{i_1,j_1}(1)}
		\qquad 
		p^{\uparrow}_{i_1,j_1}[k_1'\to k_1]
		\coloneqq
		\frac{w^{\mathrm{LHS}}_{i_1,j_1}(k_1)}
		{w^{\mathrm{LHS}}_{i_1,j_1}(0)+w^{\mathrm{LHS}}_{i_1,j_1}(1)},
	\end{equation}
	with $k_1,k_1'\in\left\{ 0,1 \right\}$.
\end{definition}
\begin{remark}
	Note that the denominator is nonzero
	in each of the two expressions in \eqref{eq:bij_independent}.
	Otherwise, there
	is no Yang-Baxter equation with the given
	boundary conditions $a,b,c,i_1,j_1$
	and, correspondingly, there is no bijectivisation.
	One readily sees that the
	transition probabilities in \eqref{eq:bij_independent} 
	are always nonnegative and 
	satisfy the detailed balance equation 
	\eqref{eq:basic_bij_properties}.
	Observe that this bijectivisation corresponds to taking the
	coupling of measures on $A$ and $B$ described in \Cref{sub:bij_basics}
	to be simply the product measure.
	For this reason we call \eqref{eq:bij_independent}
	the \emph{independent bijectivisation}.

	The case $a=+\infty$ in \eqref{eq:bij_independent}
	corresponds to having infinitely
	many paths through the leftmost vertical edges, but 
	this does not present an issue since $J=1$; see \Cref{rmk:left_boundary_up_bijectivisation}. 
	In other words, the limits of 
	$p^{\downarrow}_{i_1,j_1}[k_1\to k_1']$
	and 
	$p^{\uparrow}_{i_1,j_1}[k_1'\to k_1]$
	as $a\to+\infty$ exist and give a well-defined bijectivisation of the Yang-Baxter
	equation with $i_2=j_3=+\infty$.
\end{remark}

Formulas arising from 
\eqref{eq:bij_independent}
do not have factorized denominators and, in general, can have a rather
complicated form despite the simplicity of the general definition. For example, we have
\begin{multline*}
	p^{\downarrow}_{0,0}[0\to 1]
	=
	p^{\downarrow}_{0,0}[1\to 1]
	\\=
	\frac{
		(1-q^{c-1}s_1^2)\beta s_2^2(1-q^{a+b-c+1})\ssp\varphi(c-1\mid b)
	}
	{(1+q^c \beta s_1^2)(1+q^{a+b-c}\beta s_2^2)\ssp\varphi(c\mid b)
	+(1-q^{c-1}s_1^2)\beta s_2^2(1-q^{a+b-c+1})\ssp\varphi(c-1\mid b)},
\end{multline*}
where we abbreviated $\varphi=\varphi_{q,s_2^2/s_1^2,s_2^2}$.
Note that this expression admits a straightforward
limit as $a\to+\infty$, making the bijectivisation at the left edge well-defined.

\subsection{Rewriting history in particle systems from future to past}
\label{sub:discrete_bij_particle_systems}

Let us now take the full stochastic higher spin six vertex model
with $J=1$, $u_i=-\beta s_i$, $i\in \mathbb{Z}_{\ge0}$,
where the parameters of the model are 
\begin{equation*}
	\beta>0,\qquad 
	\mathbf{s}=(s_0,s_1,s_2,\ldots ),\quad -1<s_i<0.
\end{equation*}
This vertex model corresponds to a discrete time stochastic 
particle system $\{\mathbf{x}(t)\}_{t\in \mathbb{Z}_{\ge0}}$ 
on the space $\mathscr{X}$ of 
particle configurations on $\mathbb{Z}$,
via the gap-particle transformation
(\Cref{def:gap_particle_transform}).
For any $n\geq 1$, the bijectivisation defined in \Cref{sub:discrete_bij_subsection_8_1}
gives rise
to the
Markov operators for rewriting history
as in \Cref{def:rewriting_history_operators}.
We denote the corresponding operators by
$\tilde H_n^{\leftarrow}$
and
$\tilde H_n^{\rightarrow}$,
following the convention that operators on $\mathscr{X}$ include a tilde.

Fix $n\ge1 $
and assume that
$|s_{n-1}|\ge |s_n|$.
In this subsection, we describe the Markov operator
$\tilde H_n^{\leftarrow}$
and, in the following \Cref{sub:discrete_bij_particle_systems_UP},
we describe the Markov operator
$\tilde H_n^{\rightarrow}$.
We first consider 
the down Markov operator 
$\tilde D^{(n)}$
from \Cref{def:Dn_Markov_operator_from_bij}.
For a fixed time $0\le t\le M-1$, the action of
$\tilde D^{(n)}$ as in \eqref{eq:Dn_in_history_rewriting_swap}
depends on the trajectories of the
neighboring particles around the $n$-th one:
\begin{equation*}
	\mathrm{x}_{n-1}\coloneqq x_{n-1}(t+1)
	>
	\mathrm{x}_{n+1}\coloneqq x_{n+1}(t+1),\qquad 
	\mathrm{x}_{n-1}-i_1=x_{n-1}(t)
	> 
	\mathrm{x}_{n+1}-j_1=x_{n+1}(t),
\end{equation*}
where $i_1,j_1\in \left\{ 0,1 \right\}$.
Given the new location 
$\mathrm{x}_n'$ of the $n$-th particle 
at time $t+1$, 
$\tilde D^{(n)}$
maps the two-time trajectory of the 
$n$-th particle,
\begin{equation*}
	\mathrm{x}_n-k_1= x_n(t)
	\le 
	\mathrm{x}_n = x_n(t+1)
	,
\end{equation*}
where $k_1\in \left\{ 0,1 \right\}$, into 
a random new trajectory
\begin{equation*}
	\mathrm{x}_n'-k_1'
	\le 
	\mathrm{x}_n' 
	,\qquad 
	k_1'\in\left\{ 0,1 \right\}.
\end{equation*}
See \Cref{fig:Dn_Un_for_particles}, left,
for an illustration.
The coordinates introduced above must satisfy
\begin{equation}
	\label{eq:Dn_history_action_inequalities}
	\begin{split}
		\mathrm{x_{n-1}}>\mathrm{x}_n>\mathrm{x}_{n+1}
		,\qquad &
		\mathrm{x_{n-1}}-i_1>\mathrm{x}_n-k_1>\mathrm{x}_{n+1}-j_1
		,
		\\
		\mathrm{x_{n-1}}>\mathrm{x}_n'>\mathrm{x}_{n+1}
		,\qquad &
		\mathrm{x_{n-1}}-i_1>\mathrm{x}_n'-k_1'>\mathrm{x}_{n+1}-j_1,
		\\
		\mathrm{x}_n \ge \mathrm{x}_n' > \mathrm{x}_{n+1},\qquad &
		\mathrm{x}_n-k_1 \ge \mathrm{x}_n'-k_1'>\mathrm{x}_{n+1}-j_1.
	\end{split}
\end{equation}
The inequalities on the last line in \eqref{eq:Dn_history_action_inequalities} come from the
$q$-beta-binomial specialization of the cross vertex weights
$R_{s_n/s_{n-1},s_{n-1},s_n}(\mathsf{i}_1,\mathsf{i}_2;\mathsf{j}_1,\mathsf{j}_2)$,
which contain the indicator $\mathbf{1}_{\mathsf{j}_2\le \mathsf{j}_1}$,
see \Cref{prop:qHahn_degeneration_of_R}.

\begin{figure}[htb]
	\centering
	\includegraphics[height=.45\textwidth]{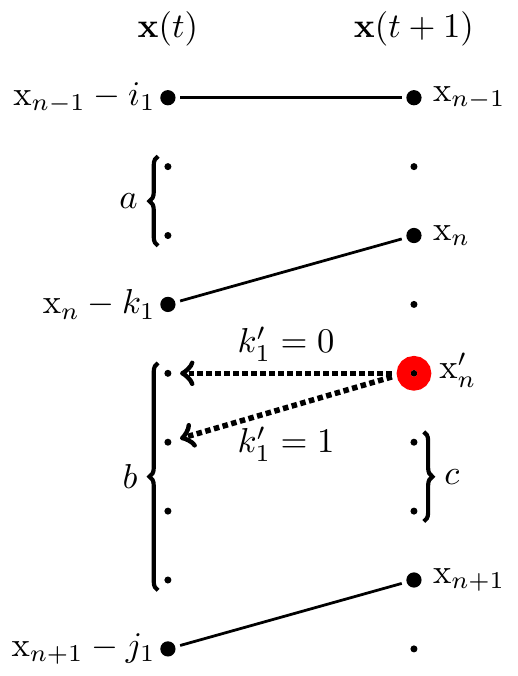}
	\hspace{70pt}
	\includegraphics[height=.45\textwidth]{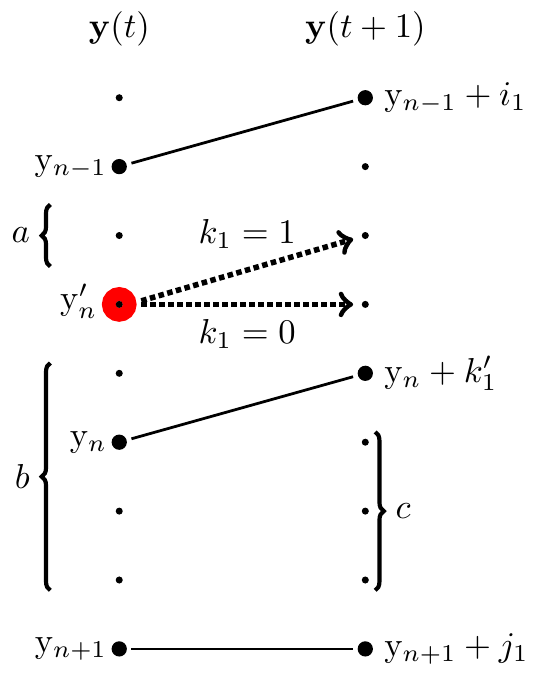}
	\caption{Left: Action of $\tilde D^{(n)}$ with boundary conditions
		$i_1=0$, $j_1=1$. Given the location at time $t+1$, the particle $\mathrm{x}_n'$
		randomly chooses its location at time $t$
		from the possible locations $\{ \mathrm{x}_n',\mathrm{x}_n'-1 \}$ with probabilities determined by
		$p^{\downarrow}_{i_1,j_1}$,
		independent of $k_1$ unless $\mathrm{x}_n'$ is pushed down or blocked.
		\\Right:
		Action of $\tilde U^{(n)}$ with boundary conditions
		$i_1=1$, $j_1=0$.
		Similarly, given the location at time $t$, the particle $\mathrm{y}_n'$ randomly 
		chooses its location at time $t+1$ from the possible locations
		$\{ \mathrm{y}_n', \mathrm{y}_n'+1 \}$ with probabilities
		determined by $p^{\uparrow}_{i_1,j_1}$,
		independent of $k_1'$ 
		unless $\mathrm{y}_n'$ is pushed up or blocked.}
	\label{fig:Dn_Un_for_particles}
\end{figure}

The probability to select 
$k_1'\in\left\{ 0,1 \right\}$ is 
$p^{\downarrow}_{i_1,j_1}[k_1\to k_1']$ from
\eqref{eq:bij_independent}
with parameters $(s_1,s_2)$ replaced by 
$(s_{n-1},s_n)$,
horizontal edge occupation numbers
$i_1,j_1,k_1$ specified above, and
\begin{equation*}
	a=\mathrm{x}_{n-1}-\mathrm{x}_n-1-i_1+k_1
	,\qquad 
	b=\mathrm{x}_{n}-\mathrm{x}_{n+1}-1-k_1+j_1
	,\qquad 
	c=\mathrm{x}_n'-\mathrm{x}_{n+1}-1,
\end{equation*}
as indicated in 
\Cref{fig:Dn_Un_for_particles}, left.

Under the independent bijectivisation, 
the probabilities 
$p^{\downarrow}_{i_1,j_1}[k_1\to k_1']$
are chosen to be independent (as much as possible)
of the old trajectory of the $n$-th particle.
More precisely, they depend on the old state 
$\mathrm{x}_n-k_1$ at time $t$  through $a,b$,
but not on the old state $\mathrm{x}_n$ at time $t+1$.
There are, however, two cases when 
the value of $k_1'$ is deterministically prescribed
by the last inequality in \eqref{eq:Dn_history_action_inequalities}:
\begin{enumerate}[$\bullet$]
	\item (\emph{blocking}) If $c=0$ and $j_1=0$,
		then $k_1'=0$ with probability $1$. This means that $\mathrm{x}_n'$
		is blocked by $\mathrm{x}_{n+1}$ from going down due to close proximity.
	\item (\emph{pushing down})
		If $c=b+1-j_1$,
		then $k_1=1$ and, additionally, $k_1'=1$ with probability $1$.
		This means that $\mathrm{x}_n'$
		is pushed down by $\mathrm{x}_n$ due to close proximity.
\end{enumerate}
We see that in the pushing case, the independent bijectivisation cannot ignore $k_1$ which encoded the old trajectory of the $n$-th particle.

\medskip

Applying the operators
$\tilde D^{(n)}$
sequentially for $t=M-1,M-2,\ldots,1,0 $,
we arrive at the 
operator
$\tilde H_n^{\leftarrow}$ for rewriting history. 
The action of 
$\tilde H_n^{\leftarrow}$ may be viewed as a Markov process
running backwards in time,
which replaces the old trajectory
$\{x_n(t) \}_{0\le t\le M}$
by the new one,
$\{x_n'(t) \}_{0\le t\le M}$. 
The Markov process 
for building
$\{x_n'(t) \}_{0\le t\le M}$
starts from a fixed initial condition 
$x_n'(M)$ such that 
$x_{n+1}(M)<x_n'(M)\le x_n(M)$,
and evolves in the chamber
\begin{equation*}
	x_{n+1}(t)<x_n'(t)\le x_n(t),\qquad  0\le t\le M.
\end{equation*}
The trajectory of the upper neighbor $\{x_{n}(t)\}_{0\le t\le M}$
affects the transition probabilities of 
$x_n'(t)$ due to the push rule described above.
We refer to \Cref{fig:rewriting_history_for_trajectories}, left,
for an illustration. 

Thus, the operator 
$\tilde H_n^{\leftarrow}$
satisfies the first part of \Cref{cor:coupling_trajectories_with_Pn_swap},
where all vertex configurations and operators
are replaced by their exclusion process counterparts
using the gap-particle transformation. Below, in
\Cref{sub:first_particle_general_result},
we explicitly describe the Markov process
$\tilde H_n^{\leftarrow}$ 
for $n=1$ when there is no upper neighbor and 
the transition probabilities are simpler.

\begin{figure}[htbp]
	\centering
	\includegraphics[height=.28\textwidth]{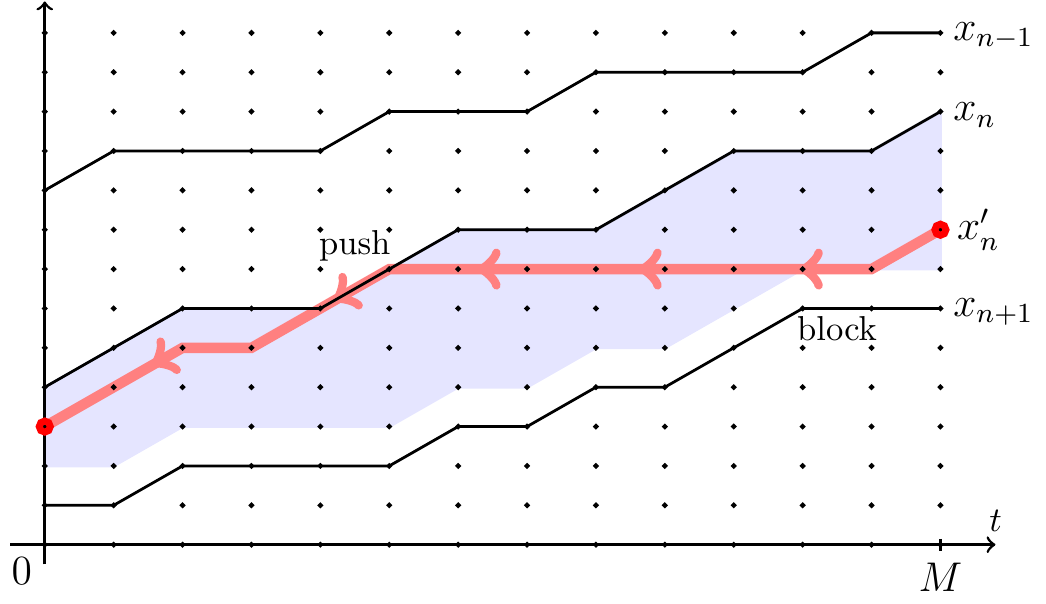}
	\hspace{5pt}
	\includegraphics[height=.28\textwidth]{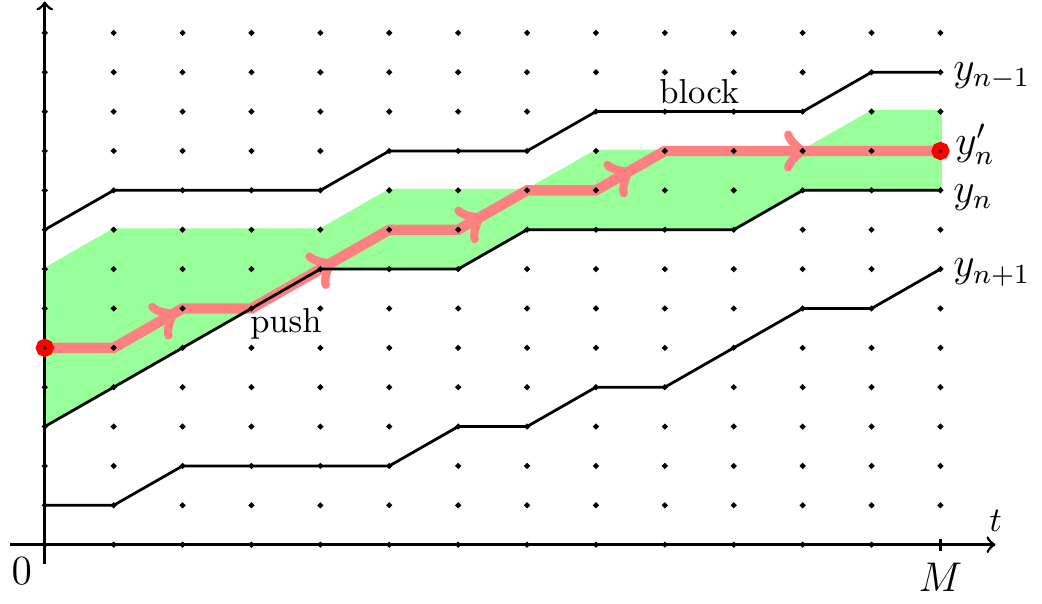}
	\caption{Left: Rewriting history from future to past using the 
		operator $\tilde H_n^{\leftarrow}$. 
	Right: Rewriting 
	history from past to future using the operator 
	$\tilde H_n^{\rightarrow}$. In both cases the allowed chamber for the 
	new trajectory of the $n$-th particle is shaded. 
	When the new trajectory reaches the boundary of this chamber,
it is pushed or blocked depending on the type of the boundary.}
	\label{fig:rewriting_history_for_trajectories}
\end{figure}

\subsection{Rewriting history in particle systems from past to future}
\label{sub:discrete_bij_particle_systems_UP}

The Markov operator $\tilde H_n^{\rightarrow}$
for rewriting history from
past to future is treated very similarly to Markov operator 
$\tilde H_n^{\leftarrow}$ for rewriting history from future to past, as described in \Cref{sub:discrete_bij_particle_systems}.
Here, we only indicate the main notation and definitions.
For instance, we denote particle coordinates by $y_j$ instead of $x_j$ to distinguish from the previous subsection.

Let $n\ge1$ and $|s_{n-1}|\ge |s_n|$ in the 
stochastic higher spin six vertex model with $J=1$ and $u_i=-\beta s_i$.
We first describe the up Markov operator
$\tilde U^{(n)}$ from the independent bijectivisation.
For time $0\le t\le M-1$, it depends on the two-time trajectories
\begin{equation*}
	\mathrm{y}_{n-1}\coloneqq y_{n-1}(t)>
	\mathrm{y}_{n+1}\coloneqq y_{n+1}(t),
	\qquad 
	\mathrm{y}_{n-1}+i_1=y_{n-1}(t+1)
	>
	\mathrm{y}_{n+1}+j_1=y_{n+1}(t+1),
\end{equation*}
where $i_1,j_1\in \left\{ 0,1 \right\}$.
Given the new location $\mathrm{y}_n'$,
the operator $\tilde U^{(n)}$
maps the two-time trajectory 
of the $n$-th particle,
$\mathrm{y}_n\le \mathrm{y}_n+k_1'$, where $k_1'\in \left\{ 0,1 \right\}$,
into a random new trajectory
$\mathrm{y}_n'\le \mathrm{y}_n'+k_1$, where $k_1\in \left\{ 0,1 \right\}$.
See \Cref{fig:Dn_Un_for_particles}, right, for an illustration.
All the coordinates must satisfy
\begin{equation}
	\label{eq:Un_history_action_inequalities}
	\begin{split}
		\mathrm{y}_{n-1}>\mathrm{y}_n>\mathrm{y}_{n+1},\qquad &
		\mathrm{y}_{n-1}+i_1>\mathrm{y}_n+k_1'>\mathrm{y}_{n+1}+j_1,\\
		\mathrm{y}_{n-1}>\mathrm{y}_n'>\mathrm{y}_{n+1},\qquad &
		\mathrm{y}_{n-1}+i_1>\mathrm{y}_n'+k_1>\mathrm{y}_{n+1}+j_1,\\
		\mathrm{y}_{n-1}> \mathrm{y_n}'\ge \mathrm{y}_n,\qquad &
		\mathrm{y}_{n-1}+i_1 > \mathrm{y_n}'+k_1 \ge \mathrm{y}_n+k_1'.
	\end{split}
\end{equation}
The probability to select 
$k_1\in \left\{ 0,1 \right\}$ is 
$p^{\uparrow}_{i_1,j_1}[k_1'\to k_1]$ given in 
\eqref{eq:bij_independent}
with parameters $(s_1,s_2)$ replaced by 
$(s_{n-1},s_n)$,
horizontal edge occupation numbers
$i_1,j_1,k_1'$ specified above, and
\begin{equation*}
	a=\mathrm{y}_{n-1}-\mathrm{y}_n'-1,\qquad 
	b=\mathrm{y}_n'-\mathrm{y}_{n+1}-1,\qquad 
	c=\mathrm{y}_n-\mathrm{y}_{n+1}-1+k_1'-j_1
\end{equation*}
as indicated in \Cref{fig:Dn_Un_for_particles}, right.
There are also blocking and pushing mechanisms present:
\begin{enumerate}[$\bullet$]
	\item (\emph{blocking}) 
		If $a=0$ and $i_1=0$,
		then $k_1=0$ with probability $1$. This means that
		$\mathrm{y}_n'$ is blocked 
		and cannot go up
		due to close proximity to 
		$\mathrm{y}_{n-1}$.
	\item (\emph{pushing up})
		If $c=b+1-j_1$, then $k_1'=1$ and, additionally,  
		$k_1=1$ with probability $1$.
		This means that $\mathrm{y}_n'$ is pushed up by $\mathrm{y}_n$
		due to close proximity.
\end{enumerate}

Applying the operators 
$\tilde U^{(n)}$ sequentially for
$t=0,1,\ldots,M-1 $, we 
arrive at the Markov operator
$\tilde H_n^{\rightarrow}$ for rewriting history from past to future.
Its action may be viewed as a Markov process
which builds the new trajectory
$\{y_n'(t)\}_{0\le t\le M}$ of the $n$-th particle which 
lies in the chamber
\begin{equation*}
	y_n(t)\le y_n'(t)<y_{n-1}(t),\qquad 0\le t\le M.
\end{equation*}
This new trajectory, which replaces the old trajectory $\{y_n(t)\}_{0\le t\le M}$, starts from a fixed initial condition $y_n'(0)$,
and its law depends on the trajectories
of the particles with numbers $n-1,n,n+1$.
See \Cref{fig:rewriting_history_for_trajectories}, right, for an illustration.

Thus, the operator 
$\tilde H_n^{\rightarrow}$
satisfies the second part of \Cref{cor:coupling_trajectories_with_Pn_swap},
where all vertex configurations and operators
are replaced by their exclusion process counterparts
using the gap-particle transformation. Below, in
\Cref{sub:first_particle_general_result},
we explicitly describe 
$\tilde H_n^{\rightarrow}$ 
for $n=1$ when
the transition probabilities are simpler.

\subsection{Resampling the first particle}
\label{sub:first_particle_general_result}

Let us illustrate 
the general results of the previous
\Cref{sub:discrete_bij_particle_systems,sub:discrete_bij_particle_systems_UP} and
consider the case
$n=1$, that is, the system of two particle.
Let $\mathbf{x}(t)=(x_1(t),x_2(t))$, $x_1(t)>x_2(t)$,
be the 
first two particles
in the system given at the beginning of \Cref{sub:discrete_bij_particle_systems}.
More precisely, 
$\mathbf{x}(t)$ 
corresponds (via the gap-particle transformation,
see \Cref{def:gap_particle_transform})
to the stochastic higher spin six vertex model
with $J=1$ and the 
parameters
\begin{equation}
	\label{eq:params_for_2_particles}
	u_i=-\beta s_i,\qquad 
	\beta>0, \qquad |s_0|>|s_1|.
\end{equation}
We will omit $J$ and $\beta$ in the notation 
and simply say that 
$\mathbf{x}(t)$ 
has parameters $(s_0,s_1)$.
Let also $\mathbf{y}(t)=(y_1(t),y_2(t))$
be the process with the swapped parameters
$(s_1,s_0)$.
We will describe two couplings between
$\mathbf{x}(t)$
and
$\mathbf{y}(t)$.
Similar couplings may be explicitly written down for any number of particles, 
but the advantage for $n=1$ is 
that the transition probabilities have a simple form.

We start with rewriting history from future to past.
Let us denote 
\begin{equation}
	\label{eq:d_quantities}
	\mathtt{d}^0_{b,c}
	=
	\frac{(1+\beta s_0^2 q^c)(1-q^{b+1-c})}{\beta(s_0^2-s_1^2 q^{b-c})(1-q^c)}
	,
	\qquad 
	\mathtt{d}^1_{b,c}
	=
	\frac{(1-s_0^2q^c)(1-q^{b-c})}{(\beta+q^c)(s_0^2-s_1^2 q^{b-c-1})},
\end{equation}
where $b,c\in \mathbb{Z}_{\ge0}$.
The quantities \eqref{eq:d_quantities}
take values in $[0,+\infty]$.
In particular, we may have $\mathtt{d}^0_{b,0}=+\infty$ and
$\mathtt{d}^0_{b,b+1}=\mathtt{d}^1_{b,b}=0$, which will respectively correspond to 
blocking and pushing down as in \Cref{sub:discrete_bij_particle_systems}.

Fix $M\in \mathbb{Z}_{\ge1}$. Assume that we are given a chamber 
$\{x_1(t)>x_2(t)\}_{0\le t\le M}$, and also $x_1'(M)$
with
$x_1(M)\ge x_1'(M)>x_2(M)$.
The process $\tilde H_1^{\leftarrow}$
is 
a random walk $x_1'(t)$ in a chamber
(i.e.~$x_1(t)\ge x_1'(t)>x_2(t)$ for all $t$)
which is started from $x_1'(M)$ and
runs in reverse time
$t=M-1,M-2,\ldots,1,0 $. See \Cref{fig:rewriting_history_for_trajectories},
left with $n=1$ and $x_0=+\infty$,
for an illustration.
During time step $t+1\to t$,
this random walk takes steps $0$ or~$-1$ with
probabilities
\begin{equation}
	\label{eq:d_transition_probabilities}
	\frac{\mathtt{d}^{j_1}_{b,c}}{1+\mathtt{d}^{j_1}_{b,c}}
	\quad \textnormal{and}\quad 
	\frac{1}{1+\mathtt{d}^{j_1}_{b,c}},
\end{equation}
respectively, 
where
\begin{equation*}
	b=x_1(t)-x_2(t)-1,
	\qquad
	c=x_1'(t+1)-x_2(t+1)-1,
	\qquad
	j_1=x_2(t+1)-x_2(t)\in \left\{ 0,1 \right\},
\end{equation*}
as in \Cref{fig:Dn_Un_for_particles}, left, with $a=+\infty$.

\begin{proposition}
	\label{prop:d_first_particle_result}
	Fix $M\in \mathbb{Z}_{\ge1}$.
	Let $\mathbf{x}(t)$
	be the system as above, with parameters $(s_0,s_1)$ satisfying 
	\eqref{eq:params_for_2_particles}
	and initial
	conditions so that $x_1(0)>x_2(0)$.
	Also, let $x_1'(M)$ be chosen from the probability distribution
	\begin{equation}
		\label{eq:d_first_particle_result_1}
		\varphi_{q,s_1/s_0^2,s_1^2}(x_1'(M)-x_2(M)-1\mid x_1(M)-x_2(M)-1).
	\end{equation}
	Moreover, let $x_1'(t)$ be the random walk
	in the chamber,
	$x_1(t)\ge x_1'(t)>x_2(t)$,
	with transition probabilities \eqref{eq:d_transition_probabilities} 
	as above. Then, the joint distribution 
	of the new process $\{(x_1'(t),x_2(t))\}_{0\le t\le M}$ is equal to the distribution of the process
	$\mathbf{y}(t)$ 
	with parameters $(s_1,s_0)$
	started from the initial condition
	$y_1'(0)>x_2(0)$,
	where
	$y_1'(0)$ is random and
	chosen from 
	\begin{equation}
		\label{eq:d_first_particle_result_2}
		\varphi_{q,s_1/s_0^2,s_1^2}(y_1'(0)-x_2(0)-1\mid x_1(0)-x_2(0)-1).
	\end{equation}
\end{proposition}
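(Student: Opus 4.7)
The proof is a direct specialization of the first part of \Cref{cor:coupling_trajectories_with_Pn_swap} to $n=1$, combined with explicit identification of (a) the swap-operator action at the left boundary and (b) the transition probabilities of the independent bijectivisation from \Cref{def:independent_bijectivisation}. The cited corollary asserts that if $\mathbf{x}(t)$ is distributed as $\mathfrak{T}$ and $x_1'(M)$ is sampled by applying $\tilde P^{(1)}$ to the leading particle of $\mathbf{x}(M)$, then running the rewriting-history operator $\tilde H_1^{\leftarrow}$ in reverse time produces a coupled trajectory whose joint distribution agrees with that of the swapped process $\mathbf{y}(t)$ started from the random initial configuration $\delta_{(x_1(0),\, x_2(0))}\tilde P^{(1)}$. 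The proposition thus reduces to matching the explicit distributions of $\tilde P^{(1)}$ and of $\tilde H_1^{\leftarrow}$ with the formulas in the statement.

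For (a), the specialization $u_i = -\beta s_i$ forces $z = u_1/u_0 = s_1/s_0$, placing the cross vertex weight in the $q$-Hahn regime of \Cref{prop:qHahn_degeneration_of_R}. At the leftmost position, identity \eqref{eq:qHahn_reduction_of_R_circ} reduces the boundary cross weight $R^{\mathrm{bdry}}_{s_1/s_0,\, s_0,\, s_1}$ to the $q$-beta-binomial weight $\varphi_{q,\, s_1^2/s_0^2,\, s_1^2}$ acting on the gap $g_1 = x_1 - x_2 - 1$. Translating back to particle coordinates via \eqref{eq:gap_particle_transform} yields \eqref{eq:d_first_particle_result_1} for $x_1'(M)$ and, applied at time $0$, \eqref{eq:d_first_particle_result_2} for $y_1'(0)$.

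For (b), I would substitute directly into \eqref{eq:bij_independent}. The weights $w^{\mathrm{RHS}}_{0, j_1}(k_1')$ factor as a product of two $L^{(1)}$-weights from \eqref{eq:L1_weights_explicit} specialized to $u = -\beta s$ and a $q$-Hahn cross weight, with path counts adjusted for the boundary case $a = +\infty$, $i_1 = 0$ (recall that $x_0 = +\infty$ by convention). The ratio $w^{\mathrm{RHS}}_{0, j_1}(0)/w^{\mathrm{RHS}}_{0, j_1}(1)$ should simplify to exactly $\mathtt{d}^{j_1}_{b, c}$ from \eqref{eq:d_quantities}, yielding the step probabilities \eqref{eq:d_transition_probabilities} via the general identity $p^\downarrow[k_1 \to 0] = \mathtt{d}/(1+\mathtt{d})$ built into \Cref{def:independent_bijectivisation}.

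The principal obstacle is the algebraic simplification in (b): varying $k_1'$ shifts several path counts through path conservation, so the numerator and denominator of the ratio differ in the arguments of multiple $q$-Pochhammer symbols as well as in the argument of the $\varphi$-factor produced by the cross weight. Careful bookkeeping of these shifts, together with separate verification of the degenerate cases $c = 0$ and $c = b + 1 - j_1$ (the blocking and downward-pushing mechanisms described after \eqref{eq:Dn_history_action_inequalities}, where $\mathtt{d}^{j_1}_{b, c}$ takes the boundary values $+\infty$ or $0$ reflecting a deterministic choice of $k_1'$ imposed by the inequalities in \eqref{eq:Dn_history_action_inequalities}), constitutes the main computational content of the proof; the rest is a direct invocation of the general framework of \Cref{sec:bijectivisation}.
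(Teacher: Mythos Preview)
Your proposal is correct and follows essentially the same approach as the paper's proof: reduce to the general coupling result of \Cref{thm:coupling_trajectories_with_Pn_swap} (equivalently \Cref{cor:coupling_trajectories_with_Pn_swap}), identify the swap operator $\tilde P^{(1)}$ with the $q$-beta-binomial distribution via \eqref{eq:qHahn_reduction_of_R_circ}, and then verify that $\mathtt{d}^{j_1}_{b,c} = w^{\mathrm{RHS}}_{*,j_1}(0)/w^{\mathrm{RHS}}_{*,j_1}(1)$ by direct computation with the vertex weights of \Cref{fig:vertex_weights_for_bij}. One small imprecision: at the left boundary one has $a=+\infty$ and the weights carry \emph{no dependence} on $i_1$ (hence the paper's ``$*$'' notation), rather than $i_1=0$ specifically; this does not affect your argument.
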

\begin{proof}
	First, note that the distributions
	\eqref{eq:d_first_particle_result_1},
	\eqref{eq:d_first_particle_result_2}
	are precisely given by the corresponding application of the Markov
	swap operator
	$\tilde P^{(1)}$. Then, by
	the first part of \Cref{thm:coupling_trajectories_with_Pn_swap},
	it suffices to show that
	\begin{equation}
		\label{eq:d_first_particle_result_proof_1}
		\mathtt{d}^{0}_{b,c}
		=
		\frac{w^{\mathrm{RHS}}_{*,0}(0)}
		{w^{\mathrm{RHS}}_{*,0}(1)}
		,\qquad 
		\mathtt{d}^{0}_{b,c}=
		\frac{w^{\mathrm{RHS}}_{*,1}(0)}
		{w^{\mathrm{RHS}}_{*,1}(1)}
		,
	\end{equation}
	using the notation of \Cref{sub:discrete_bij_subsection_8_1}.
	These identities 
	are checked in a straightforward
	way using the vertex weights
	in \Cref{fig:vertex_weights_for_bij} with $(s_1,s_2)$ renamed to $(s_0,s_1)$.
	Thus, we have
	\begin{equation}
		\label{eq:d_first_particle_result_proof_2}
		\frac{\mathtt{d}^{j_1}_{b,c}}{1+\mathtt{d}^{j_1}_{b,c}}
		=
		p^{\downarrow}_{*,j_1}[k_1\to 0]
		,
		\qquad 
		\frac{1}{1+\mathtt{d}^{j_1}_{b,c}}
		=
		p^{\downarrow}_{*,j_1}[k_1\to 1]
		.
	\end{equation}
	Recall that star
	in \eqref{eq:d_first_particle_result_proof_1},
	\eqref{eq:d_first_particle_result_proof_2}
	means that there is no dependence on $i_1$ since $a=+\infty$.
	It follows that the random walk
	$x_1'(t)$ with transition probabilities \eqref{eq:d_transition_probabilities}
	is indeed the process $\tilde H_1^{\leftarrow}$.
	This completes the proof.
\end{proof}

Let us similarly write down the 
transition probabilities for the random
walk $\tilde H_1^{\rightarrow}$ in the chamber
$\left\{ y_1(t)>y_2(t) \right\}_{0\le t\le M}$ started
from an initial condition $y_1'(0)$.
For $b,c\in \mathbb{Z}_{\ge0}$, denote:
\begin{equation}
	\label{eq:u_quantities}
	\mathtt{u}^0_{b,c}
	=
	\frac{(1+\beta s_1^2 q^b)(1-q^{b+1-c})}{\beta(s_0^2-s_1^2 q^{b-c})(1-q^{b+1})}
	,
	\qquad 
	\mathtt{u}^1_{b,c}
	=
	\frac
	{
		(1-s_1^2 q^{b-1})(1-q^{b-c})
	}
	{
		(\beta+q^{b})(s_0^2-s_1^2 q^{b-c-1})
	}
	.
\end{equation}
These quantities take values in $[0,+\infty)$.
There is no $+\infty$ values since there is no blocking
because $a=+\infty$ or, equivalently, $y_0 = + \infty$.
In particular, we have
$\mathtt{u}^0_{b,b+1}=\mathtt{u}^1_{b,b}=0$,
corresponding to pushing up rules as in
\Cref{sub:discrete_bij_particle_systems_UP}.

Fix a trajectory of the two-particle system
$\{(y_1(t),y_2(t))\}_{t\ge0}$, where $y_1(t)>y_2(t)$.
Moreover, fix $y_1'(0)$
such that 
$y_1'(0)\ge y_1(0)$.
The process $\tilde H_1^{\rightarrow}$
is 
a random walk $y_1'(t)$ with
$y_1'(t)\ge y_1(t)$ for all $t$,
which runs in forwards time $t=0,1,\ldots $.
See \Cref{fig:rewriting_history_for_trajectories},
right with $n=1$ and $y_0=+\infty$,
for an illustration.
The walk starts from $y_1'(0)$ and,
for the transition $t\to t+1$,
it takes steps $0$ or $+1$ with 
probabilities
\begin{equation}
	\label{eq:u_transition_probabilities}
	\frac{\mathtt{u}^{j_1}_{b,c}}{1+\mathtt{u}^{j_1}_{b,c}}
	\quad \textnormal{and}\quad 
	\frac{1}{1+\mathtt{u}^{j_1}_{b,c}},
\end{equation}
respectively, 
where
\begin{equation*}
	b=y_1'(t)-y_2(t)-1
	,
	\qquad
	c=y_1(t+1)-y_2(t+1)-1
	,
	\qquad
	j_1=y_2(t+1)-y_2(t)
	,
\end{equation*}
as in \Cref{fig:Dn_Un_for_particles}, right, with $a=+\infty$.
The next statement is proven in the same way as \Cref{prop:d_first_particle_result}:
\begin{proposition}
	\label{prop:u_first_particle_result}
	Let $y_1'(0)>y_2(0)$ be fixed,
	and take $y_1(0)$, with $y_2(0)<y_1(0)\le y_1'(0)$,
	to be random and with distribution given by 
	\begin{equation*}
		\varphi_{q,s_1/s_0^2,s_1^2}(y_1(0)-y_2(0)-1\mid y_1'(0)-y_2(0)-1).
	\end{equation*}
	Let $\mathbf{y}(t)$ be the two-particle system
	with initial condition $(y_1(0),y_2(0))$
	and parameters $(s_1,s_0)$
	satisfying \eqref{eq:params_for_2_particles}.
	Given the trajectory $\{\mathbf{y}(t)\}_{\ge0}$,
	construct the random walk $\{y_1'(t)\}_{t\ge0}$
	from the initial condition $y_1'(0)$ with
	$y_1'(t)\ge y_1(t)$ and transition probabilities
	given by \eqref{eq:u_transition_probabilities}.
    Then,  the joint distribution of the new process
	$\{(y_1'(t),y_2(t))\}_{t\ge0}$
	is equal to the joint distribution of the 
	process $\mathbf{x}(t)$ with parameters
	$(s_0,s_1)$ with initial condition 
	$y_1'(0)>y_2(0)$.
\end{proposition}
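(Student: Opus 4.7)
The plan is to mirror the proof of the preceding \Cref{prop:d_first_particle_result}, which handled the down (future-to-past) two-particle coupling. Proposition 8.4 is its past-to-future analogue, and it will follow from the second part of \Cref{thm:coupling_trajectories_with_Pn_swap} (equivalently \Cref{cor:coupling_trajectories_with_Pn_swap}) combined with an explicit verification that the walk with transition probabilities \eqref{eq:u_transition_probabilities} realizes the Markov operator $\tilde H_1^{\rightarrow}$ under the independent bijectivisation.

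First, I would recognize that, conditional on $y_1'(0)$, the distribution of $y_1(0)$ stipulated in the hypothesis is exactly that of $\tilde P^{(1)}_{s_1/s_0,s_0,s_1}$ applied to $\delta_{(y_1'(0),y_2(0))}$, by the $q$-beta-binomial reduction of the cross vertex weights in \Cref{prop:qHahn_degeneration_of_R}. Passing to the gap-particle language, this places us in the hypothesis of the second part of \Cref{thm:coupling_trajectories_with_Pn_swap}: the fixed starting configuration $\hat{\mathbf{g}}$ corresponds to $(y_1'(0),y_2(0))$, its image under $\tilde P^{(1)}$ corresponds to $(y_1(0),y_2(0))$, and the subsequent evolution uses the swapped parameters $(s_1,s_0)$. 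The theorem then guarantees that the trajectory produced by sequentially applying $\tilde U^{(1)}$ has the same joint law as $\mathbf{x}(t)$ under parameters $(s_0,s_1)$ started from $(y_1'(0),y_2(0))$, which is the desired conclusion.

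Second, it remains to verify that the random walk with transitions \eqref{eq:u_transition_probabilities} is precisely $\tilde H_1^{\rightarrow}$ under the independent bijectivisation of \Cref{def:independent_bijectivisation}. Since $n = 1$ forces $a = +\infty$, the up probabilities from \eqref{eq:bij_independent} are independent of $k_1'$ in the bulk (pushing is handled separately by \eqref{eq:Un_history_action_inequalities}), and the verification reduces to the single identity
\begin{equation*}
	\mathtt{u}^{j_1}_{b,c} = \frac{w^{\mathrm{LHS}}_{*,j_1}(0)}{w^{\mathrm{LHS}}_{*,j_1}(1)}, \qquad j_1 \in \{0,1\},
\end{equation*}
with the identifications $b = y_1'(t) - y_2(t) - 1$ and $c = y_1(t+1) - y_2(t+1) - 1$ from the paragraph following \eqref{eq:u_transition_probabilities}. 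This is a direct computation from the vertex weights in \Cref{fig:vertex_weights_for_bij} after the specialization $u_i = -\beta s_i$.

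The main obstacle is bookkeeping in this final verification. The $b$-dependence of $\mathtt{u}^{j_1}_{b,c}$ is concentrated in the factors $(1 + \beta s_1^2 q^b)$ and $(1 - s_1^2 q^{b-1})$, in contrast to the $c$-dependence seen in $\mathtt{d}^{j_1}_{b,c}$; this asymmetry reflects the fact that on the LHS of the Yang-Baxter equation the cross vertex sits above the two $L$-vertices, so the weight ratio picks up a factor from the $L$-vertex attached to $y_1'$ rather than to its image. I would also confirm that the boundary values $\mathtt{u}^0_{b,b+1} = \mathtt{u}^1_{b,b} = 0$ correctly enforce the pushing-up constraint $\mathrm{y}_n' \ge \mathrm{y}_n$ built into \eqref{eq:Un_history_action_inequalities}, so that the two-case formula \eqref{eq:u_transition_probabilities} really does reproduce $\tilde H_1^{\rightarrow}$ on all admissible boundary data.
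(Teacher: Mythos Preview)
Your proposal is correct and follows exactly the approach the paper intends: the paper's proof simply reads ``proven in the same way as \Cref{prop:d_first_particle_result},'' and you have written out precisely what that means---invoke the second part of \Cref{thm:coupling_trajectories_with_Pn_swap} after recognizing the initial distribution as $\tilde P^{(1)}$, then verify $\mathtt{u}^{j_1}_{b,c}=w^{\mathrm{LHS}}_{*,j_1}(0)/w^{\mathrm{LHS}}_{*,j_1}(1)$ by direct computation from the vertex weights, mirroring the $\mathtt{d}^{j_1}_{b,c}=w^{\mathrm{RHS}}_{*,j_1}(0)/w^{\mathrm{RHS}}_{*,j_1}(1)$ check in the down case.
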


We generalize a certain parameter symmetry for the partincle systems using the 
couplings between 
the two-particle systems 
$\mathbf{x}(t)$ with parameters $(s_0,s_1)$ and
$\mathbf{y}(t)$ with parameters $(s_1,s_0)$
in \Cref{prop:d_first_particle_result,prop:u_first_particle_result}.
For instance, take both 
$\mathbf{x}(t)$
and
$\mathbf{y}(t)$
with step initial conditions $\mathbf{x}_{step}$,
i.e.~$x_1(0)=y_1(0)=-1$ and
$x_2(0)=y_2(0)=-2$.
Then, the distributions of the 
trajectories of the second particle,
$\{x_2(t)\}_{t\ge0}$
and 
$\{y_2(t)\}_{t\ge0}$,
are the same. In particular, one may show that the distribution of the the second particle is a \emph{symmetric} function on the parameters $(s_0, s_1)$, without using a coupling argument, making the previous statement true. On the other hand, 
this symmetry breaks when the initial configuration is not $\mathbf{x}_{step}$.
The 
following statement restores 
(in a stochastic way)
the symmetry for 
general initial configurations:

\begin{corollary}
	\label{cor:second_particle_symmetry}
	Fix $x_1(0)>x_2(0)$. Let $\mathbf{x}(t)$ be the two-particle system
	with parameters $(s_0,s_1)$, $|s_0|>|s_1|$, started from $(x_1(0),x_2(0))$.
	Let $y_1'(0)$, where $x_2(0)<y_1'(0)\le x_1(0)$ be 
	random and distributed as
	\begin{equation*}
		\varphi_{q,s_1/s_0^2,s_1^2}(y_1'(0)-x_2(0)-1\mid x_1(0)-x_2(0)-1).
	\end{equation*}
	Start $\mathbf{y}(t)$ with parameters 
	$(s_1,s_0)$ from the random initial configuration
	$(y_1'(0),x_2(0))$. Then
	the distributions of the trajectories
	second particle in both systems,
	$\{x_2(t)\}_{t\ge0}$
	and 
	$\{y_2(t)\}_{t\ge0}$,
	coincide.
\end{corollary}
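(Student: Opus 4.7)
The plan is to derive this corollary as an almost immediate consequence of \Cref{prop:d_first_particle_result} by projecting the coupling onto the second coordinate. The key observation is that the backward random walk $\tilde H_1^{\leftarrow}$ constructed there only updates the trajectory of the first particle; the trajectory $\{x_2(t)\}$ of the second particle is left completely untouched by the rewriting history procedure. Consequently, the equality in joint distribution asserted in that proposition, when restricted to the second coordinate, yields exactly the symmetry claimed here.

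More concretely, I would fix an arbitrary terminal time $M \ge 1$ and set up the construction in \Cref{prop:d_first_particle_result}: start with $\mathbf{x}(t)$ with parameters $(s_0,s_1)$ from $(x_1(0),x_2(0))$, sample $x_1'(M)$ from the $q$-beta-binomial distribution $\varphi_{q,s_1^2/s_0^2,s_1^2}\bigl(\,\cdot\,\mid x_1(M)-x_2(M)-1\bigr)$, and then run the backward random walk $\{x_1'(t)\}_{0 \le t \le M}$ in the chamber bounded by $\{x_1(t)\}$ and $\{x_2(t)\}$. Since the walk modifies only the first coordinate, the projection of $\{(x_1'(t),x_2(t))\}_{0 \le t \le M}$ onto the second coordinate is literally $\{x_2(t)\}_{0 \le t \le M}$, with the original law of the second particle in the $\mathbf{x}$-system. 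On the other hand, by the conclusion of \Cref{prop:d_first_particle_result}, the joint law of $\{(x_1'(t),x_2(t))\}_{0 \le t \le M}$ agrees with that of the $\mathbf{y}$-system having parameters $(s_1,s_0)$ started from $(y_1'(0),x_2(0))$ with $y_1'(0)$ distributed exactly as in the corollary's hypothesis. Projecting this equality onto the second coordinate gives $\{y_2(t)\}_{0 \le t \le M} \stackrel{d}{=} \{x_2(t)\}_{0 \le t \le M}$.

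Since $M$ is arbitrary and the finite-dimensional marginals determine the law of a Markov chain on $\mathscr{X}$, the equality of one-particle distributions extends to the full trajectory for $t \in \mathbb{Z}_{\ge 0}$. The only point requiring any verification is that the initial distribution of $y_1'(0)$ prescribed in the corollary matches the one produced by \Cref{prop:d_first_particle_result}, but both are identified as $\varphi_{q,s_1^2/s_0^2,s_1^2}\bigl(\,\cdot\,\mid x_1(0)-x_2(0)-1\bigr)$ by direct inspection. Honestly, there is no real obstacle: the substantive work has already been done in constructing the rewriting-history coupling, and the corollary is just the observation that this coupling preserves $x_2$ pathwise, so the parameter-swap symmetry of $x_2$ is inherited from an equality of joint laws rather than needing to be established independently.
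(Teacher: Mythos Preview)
Your proposal is correct and follows essentially the same approach as the paper: the paper's proof simply notes that the corollary follows from either \Cref{prop:d_first_particle_result} or \Cref{prop:u_first_particle_result} since both rewriting history processes keep the trajectory of the second particle intact. Your argument is a slightly more explicit version of the same idea, projecting the equality of joint laws onto the second coordinate.
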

\begin{proof}
	This follows from the history rewriting processes
	in either of
	\Cref{prop:d_first_particle_result,prop:u_first_particle_result}
	since both of these processes keep the trajectory of the 
	second particle intact.
\end{proof}

Let us specialize to $q=0$.
We consider the random walk
$\tilde H_1^{\leftarrow}$ 
for rewriting history from future to past,
described before \Cref{prop:d_first_particle_result}.
One may similarly specialize $\tilde H_1^{\rightarrow}$, 
but we omit this for brevity.
The quantities
\eqref{eq:d_quantities}, 
for $q=0$, specialize as follows
\begin{equation}
	\label{eq:d_quantities_q_is_0}
	\mathtt{d}^0_{b,c}
	\big\vert_{q=0}
	=
	\begin{cases}
		+\infty,& c=0;\\
		\frac{1}{\beta s_0^2},&1\le c\le b-1;\\
		\frac{1}{\beta(s_0^2-s_1^2)},&c=b;\\
		0,&c=b+1,
	\end{cases}
	\qquad 
	\mathtt{d}^1_{b,c}
	\big\vert_{q=0}
	=
	\begin{cases}
		\frac{1-s_0^2}{(1+\beta)s_0^2},&c=0;\\
		\frac{1}{\beta s_0^2}&1\le c\le b-2;\\
		\frac{1}{\beta(s_0^2-s_1^2)}&c=b-1;\\
		0,&c=b.
	\end{cases}
\end{equation}
It follows that $x_1'(t)$, the process in reversed time living in the chamber 
$x_1(t)\ge x_1'(t)>x_2(t)$, is a simple
random walk with location-dependent transition probabilities. 
Namely, in the \emph{bulk} of the 
chamber, it takes a step
$-1$ with probability
$\frac{1}{1+1 / (\beta s_0^2)}
=\frac{\beta s_0^2}{1+\beta s_0^2}$
and a step $0$
with the complementary probability
$\frac{1}{1+\beta s_0^2}$.
At the boundary of the chamber, 
the probabilities need to be suitably modified, see
\Cref{fig:down_dynamics_for_q_is_0} 
for an illustration of all cases
which are determined from \eqref{eq:d_quantities_q_is_0}.

\begin{figure}[htb]
	\centering
	\includegraphics[width=.65\textwidth]{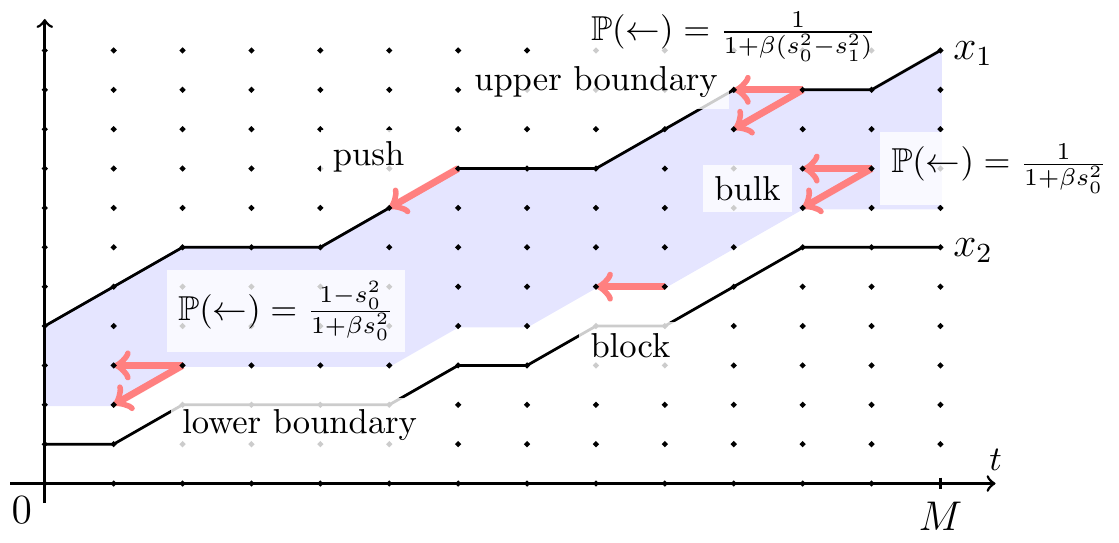}
	\caption{Transition probabilities for the simple
		random walk $x_1'(t)$ for rewriting history from future to past
		in the specialization $q=0$.
		This walk runs in reverse time inside the chamber 
		$x_1(t)\ge x_1'(t)>x_2(t)$.
		Its transition probabilities differ on 
		the boundary
		of the chamber, compared to the bulk.
		In the figure, we only 
		list the probability of the step $0$,
		with the probability of the step $-1$ determined by the complement formula.}
	\label{fig:down_dynamics_for_q_is_0}
\end{figure}

\section{Bijectivisation and rewriting history in continuous time}
\label{sec:ind_bij_cont_time_new}

In this section, we construct
the rewriting history processes for $q$-TASEP and TASEP
evolving in
continuous time using a bijectivisation.
We view the $q$-TASEP as a continuous time limit
of the system with $J=1$ and $u_i=-\beta s_i$
considered in \Cref{sec:discrete_time_bijectivisation}.
Thus, we deal with
is a specialization
of the independent bijectivisation in \Cref{sub:discrete_bij_subsection_8_1}.

\subsection{Limit to continuous time $q$-TASEP}
\label{sub:YBE_for_qTASEP}

Let us take a continuous time limit of the vertex model
from \Cref{sub:discrete_bij_particle_systems}.
Recall that we had set $J=1$ and $u_i=-\beta s_i$, where $\beta>0$.
For this model, the stochastic vertex weights 
and the cross vertex weights for the 
vertical Yang-Baxter equation are given in 
\Cref{fig:vertex_weights_for_bij}.
Now, consider the following limit of the parameters:
\begin{enumerate}[$\bullet$]
	\item First, set $\beta s_i^2=\varepsilon \alpha_i$, for all $i$, 
		where $\alpha_i>0$,
		and $\varepsilon>0$ is fixed for now.
	\item 
		Send $\beta \to+\infty$ and $s_i\to 0$ so that
		$\varepsilon \alpha_i>0$ is fixed.
	\item After this,
		take the limit as $\varepsilon\to0$ 
		and rescale time from discrete to continuous
		as $t=\lfloor \mathsf{t}/\varepsilon \rfloor $,
		where $\mathsf{t}\in \mathbb{R}_{\ge0}$ is the new
		continuous time.
\end{enumerate}
These operations turn the stochastic higher
spin six vertex model 
into the continuous time stochastic $q$-Boson model
\cite{SasamotoWadati1998}, \cite{BorodinCorwin2011Macdonald},
\cite{BorodinCorwinPetrovSasamoto2013} 
with inhomogeneous rates $\alpha_i$. 
Indeed,
the $\varepsilon\to0$
expansions 
of all the vertex weights
in the last operation
are given in 
\Cref{fig:qBoson}.
We see that the cross vertex weights do not depend on $\varepsilon$,
while the weights
in the vertex model itself
are of order $O(\varepsilon)$ or $1-O(\varepsilon)$. 
The weights of type $(g,0;g-1,1)$
correspond to the jump rates in the $q$-Boson model. More precisely,
each stack of $g_n$ vertical arrows at location $n\in \mathbb{Z}_{\ge0}$
(with the agreement $g_0=+\infty$)
emits one horizontal arrow at rate $\alpha_n(1-q^{g_n})$.
This horizontal arrow instantaneously 
travels horizontal distance $1$ and joins the next
stack of arrows at location $n+1$. This is because the 
probability to travel distance at least $2$ is proportional to 
$\varepsilon^2$, which is negligible in the continuous time limit.

The $q$-Boson system corresponds to the
continuous
time $q$-TASEP, where the particle $x_n$ has speed
$\alpha_{n-1}$ for $n\in \mathbb{Z}_{\ge0}$,
via the gap-particle transformation
given in \Cref{def:gap_particle_transform}. That is, each 
$x_n$
jumps to the right by 
one at rate $\alpha_{n-1}(1-q^{g_{n-1}})$,
where $g_{n-1}=x_{n-1}-x_n-1$.

\begin{figure}[htb]
	\centering
	\includegraphics[height=.55\textwidth]{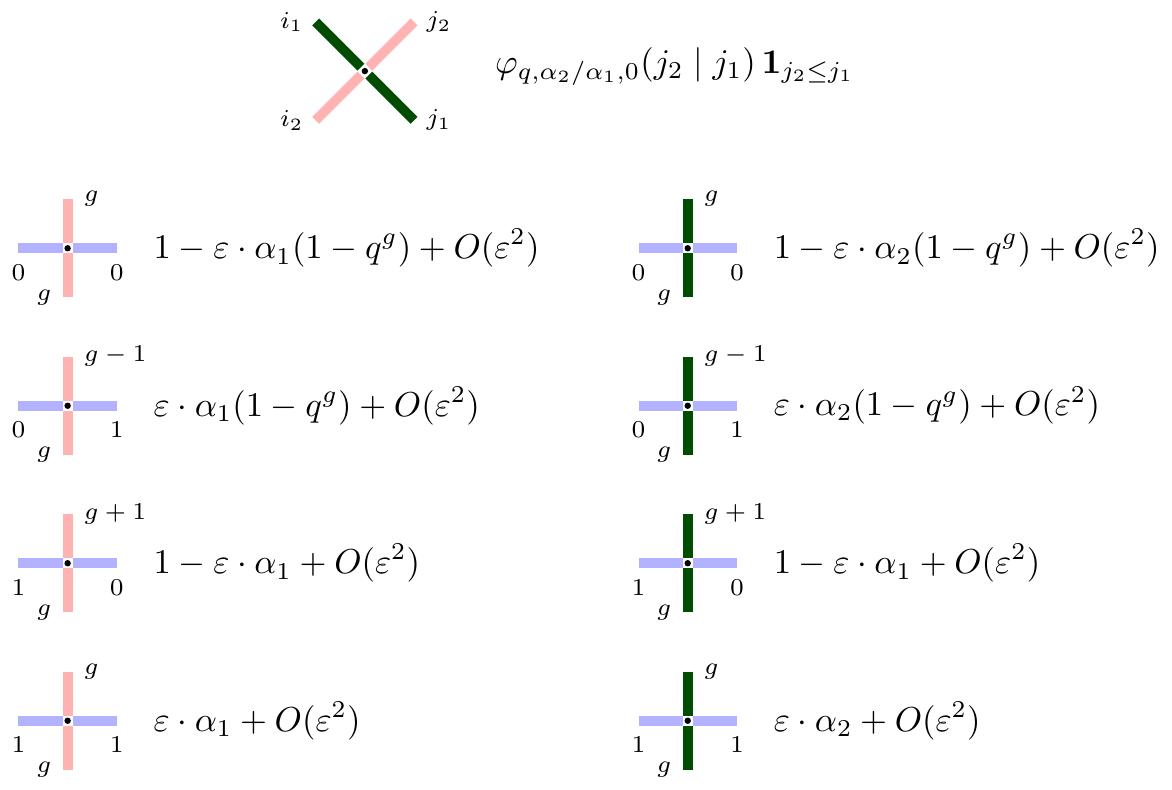}
	\caption{Expansions as $\varepsilon\to0$ of the vertex weights
	entering the Yang-Baxter equation for the continuous
	time $q$-TASEP, where the time is scaled 
	proportionally to $\varepsilon^{-1}$.
	The cross vertex weights are nonnegative
	when $\alpha_1\ge \alpha_2>0$.}
	\label{fig:qBoson}
\end{figure}

\subsection{Independent bijectivisation in continuous time}
\label{sub:continuous_time_bij}

Let us now write down the $\varepsilon\to 0$ expansions, 
under the setting of \Cref{sub:discrete_bij_subsection_8_1},
of the 
transition probabilities
\begin{equation}
	\label{eq:cont_time_things_to_expand}
	p^{\downarrow}_{i_1,j_1}[*\to 0]
	,\qquad 
	p^{\downarrow}_{i_1,j_1}[*\to 1]
	,\qquad 
	p^{\uparrow}_{i_1,j_1}[*\to 0]
	,\qquad 
	p^{\uparrow}_{i_1,j_1}[*\to 1]
\end{equation}
given by
\eqref{eq:bij_independent}, after performing the 
first two steps of the specialization from 
\Cref{sub:YBE_for_qTASEP}.
Here and below ``$*$'' means that 
the transition does not depend on the previous state
as much as possible, which is a feature of the independent
bijectivisation. However, see the blocking and pushing mechanisms
described in 
\Cref{sub:discrete_bij_particle_systems,sub:discrete_bij_particle_systems_UP}.
The resulting expansions of \eqref{eq:cont_time_things_to_expand} would depend on $q$ and
the spectral parameters 
$\alpha_1\ge \alpha_2 > 0$.
We also continue to 
use the notation
\eqref{eq:81_notation_1}--\eqref{eq:81_notation_3}
for the boundary conditions in the Yang-Baxter equation.

\begin{proposition}
	\label{prop:expansions_indep_bij}
	Given the conventions explained before the proposition, we have the following
	$\varepsilon\to0$
	expansions of the down transition probabilities:
	\begin{align}
		\label{eq:expansion_00_down}
		p^{\downarrow}_{00}[*\to 1]
		&
		=
		\mathbf{1}_{c=b+1}+
		\varepsilon\ssp \mathbf{1}_{c\le b}\ssp
		\frac{(\alpha_1-\alpha_2 q^{b-c})(1-q^{a+b+1-c})(1-q^c)}{1-q^{b-c+1}}
		+O(\varepsilon^2).
		\\
		\label{eq:expansion_01_down}
		p^{\downarrow}_{01}[*\to 1]
		&
		=
		\mathbf{1}_{c=b}+
		\varepsilon\ssp
		\mathbf{1}_{c\le b-1}\ssp
		\frac{(\alpha_1-\alpha_2 q^{b-c-1})(1-q^{a+b-c})}{1-q^{b-c}}
		+O(\varepsilon^2);
		\\
		\label{eq:expansion_10_down}
		p^{\downarrow}_{10}[*\to 1]
		&
		=
		\mathbf{1}_{c=b+1}+
		\varepsilon\ssp
		\mathbf{1}_{c\le b}
		\ssp
		\frac{(\alpha_1-\alpha_2 q^{b-c})(1-q^c)}{1-q^{b+1-c}}
		+O(\varepsilon^2);
		\\
		\label{eq:expansion_11_down}
		p^{\downarrow}_{11}[*\to 1]
		&
		=
		\mathbf{1}_{c=b}+
		\varepsilon\ssp
		\mathbf{1}_{c\le b-1}
		\ssp
		\frac{\alpha_1-\alpha_2 q^{b-c-1}}{1-q^{b-c}}+O(\varepsilon^2).
	\end{align}
	For the up transition probabilities,
	we have
	\begin{align}
		\label{eq:expansion_00_up}
		p^{\uparrow}_{00}[*\to 1]
		&
		=
		\mathbf{1}_{c=b+1}
		+\varepsilon\ssp
		\mathbf{1}_{c\le b}
		\ssp
		\frac{(\alpha_1-\alpha_2 q^{b-c})(1-q^a)(1-q^{b+1})}{1-q^{b-c+1}}
		+O(\varepsilon^2);
		\\
		\label{eq:expansion_01_up}
		p^{\uparrow}_{01}[*\to 1]
		&
		=
		\mathbf{1}_{c=b}
		+\varepsilon
		\ssp
		\mathbf{1}_{c\le b-1}
		\ssp
		\frac{(\alpha_1-\alpha_2 q^{b-c-1})(1-q^a)}{1-q^{b-c}}
		+O(\varepsilon^2);
		\\
		\label{eq:expansion_10_up}
		p^{\uparrow}_{10}[*\to 1]
		&
		=
		\mathbf{1}_{c=b+1}
		+\varepsilon
		\ssp
		\mathbf{1}_{c\le b}
		\ssp
		\frac{(\alpha_1-\alpha_2 q^{b-c})(1-q^{b+1})}{1-q^{b-c+1}}
		+O(\varepsilon^2);
		\\
		\label{eq:expansion_11_up}
		p^{\uparrow}_{11}[*\to 1]
		&
		=
		\mathbf{1}_{c=b}+
		\varepsilon\ssp
		\mathbf{1}_{c\le b-1}
		\ssp
		\frac{\alpha_1-\alpha_2 q^{b-c-1}}{1-q^{b-c}}
		+O(\varepsilon^2)
		.
	\end{align}
	In all cases, the complementary probabilities
	follow and are determined by the complement formula, 
	e.g.~$p^{\downarrow}_{i_1,j_1}[*\to 0]=1-p^{\downarrow}_{i_1,j_1}[*\to 1]$.
	The parameters $a,b,c$ in 
	formulas 
	\eqref{eq:expansion_00_down}--\eqref{eq:expansion_11_up}
	satisfy $a,b\ge0$ and $0\le c\le b+\min(a+i_1,1)-j_1$.
\end{proposition}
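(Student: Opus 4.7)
The plan is to substitute the definition \eqref{eq:bij_independent} of the independent bijectivisation and expand each factor in the $\varepsilon\to0$ limit using the vertex-weight expansions displayed in \Cref{fig:qBoson}. Recall that both $w^{\mathrm{LHS}}_{i_1,j_1}(k_1)$ and $w^{\mathrm{RHS}}_{i_1,j_1}(k_1')$ factor as a product of three vertex weights: two copies of $L^{(1)}$ (whose explicit form is \eqref{eq:L1_weights_explicit}) and one cross weight $R$. Since path conservation completely determines the remaining internal indices $k_2,k_3$ (or $k_2',k_3'$) from $k_1$ (or $k_1'$), these three-factor products can be written out explicitly for each of the eight cases $(i_1,j_1,k_1)\in\{0,1\}^3$ and the analogous eight for the RHS. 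The identities \eqref{eq:expansion_00_down}--\eqref{eq:expansion_11_up} are then verified one at a time by forming the indicated ratio and extracting the first two orders in $\varepsilon$.

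Under the specialization $\beta s_i^2=\varepsilon \alpha_i$ with $\beta\to\infty$ first and then $\varepsilon\to0$, the cross weight $R$ is $\varepsilon$-independent and reduces, via \eqref{eq:R_matrix_qHahn_reduction}, to the $q$-beta-binomial $\varphi_{q,\alpha_2/\alpha_1,0}$ (since $s_2^2/s_1^2\to\alpha_2/\alpha_1$ and $s_2^2\to0$). The $L$-factors separate into two classes readable from \Cref{fig:qBoson}: \emph{trivial} factors (no horizontal arrow, stack preserved) contribute $1-O(\varepsilon)$, while \emph{active} factors (emission or absorption of a horizontal arrow) contribute $\varepsilon\alpha(1-q^{\text{stack}})+O(\varepsilon^2)$. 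Consequently, for each fixed $(i_1,j_1)$, the quantity $w^{\mathrm{LHS}/\mathrm{RHS}}_{i_1,j_1}(k_1)$ is itself either $O(1)$, $O(\varepsilon)$, or $O(\varepsilon^2)$, depending on the number of its $L$-factors that are forced to be active by the boundary labels.

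The analysis then splits into two regimes. In the \emph{generic} regime (i.e.\ $c\le b-j_1$), the denominator $w^{\mathrm{RHS}}_{i_1,j_1}(0)+w^{\mathrm{RHS}}_{i_1,j_1}(1)$ is $O(1)$, dominated by the $k_1'=0$ summand in which both $L$-factors are trivial, while $w^{\mathrm{RHS}}_{i_1,j_1}(1)$ has exactly one active $L$-factor and is $O(\varepsilon)$. The leading coefficient is a ratio of two $R$-values, corresponding to the shift $k_2'\mapsto k_2'-1$, which simplifies via the standard $q$-beta-binomial identity
\begin{equation*}
\frac{\varphi_{q,\alpha_2/\alpha_1,0}(c-1\mid b)}{\varphi_{q,\alpha_2/\alpha_1,0}(c\mid b)}
=\frac{\alpha_1}{\alpha_2}\cdot\frac{1-q^c}{1-q^{b-c+1}},
\end{equation*}
producing exactly the factors $(\alpha_1-\alpha_2 q^{b-c})$, $(1-q^{a+b+1-c})$, and $(1-q^c)$ appearing in \eqref{eq:expansion_00_down}--\eqref{eq:expansion_11_down}. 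In the \emph{boundary} regime ($c=b+1-j_1$), the indicator $\mathbf{1}_{k_2'\le b}$ in the $R$-weight \eqref{eq:R_matrix_qHahn_reduction} forces $k_1'=1$, so the $k_1'=0$ term vanishes and the numerator is $O(1)$; this produces the leading indicator term $\mathbf{1}_{c=b+1-j_1}$. The expansions for $p^{\uparrow}$ are obtained by the same procedure, the only difference being that one now works with $w^{\mathrm{LHS}}_{i_1,j_1}$, whose $L$-factors sit on a different pair of vertices and hence feature different stack sizes (the factors $(1-q^a)$ and $(1-q^{b+1})$ in \eqref{eq:expansion_00_up}--\eqref{eq:expansion_10_up} come from this reshuffling).

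The main obstacle I anticipate is purely combinatorial bookkeeping: for each of the eight $(i_1,j_1,k_1')$ configurations, one must correctly identify which $L$-factor is active, read off its stack size, and verify which boundary indicator arises. There is no further algebraic identity needed beyond the single $q$-Pochhammer ratio above and routine cancellations among $q$-Pochhammer symbols, but the bookkeeping has to be done carefully in all eight cases so that pushing/blocking (boundary) and bulk contributions are not conflated.
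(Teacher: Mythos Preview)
Your approach is correct and essentially identical to the paper's, which simply states that the expansions follow in a straightforward way from \Cref{def:independent_bijectivisation} together with the vertex-weight expansions in \Cref{fig:qBoson}. Your proposal is in fact considerably more detailed than the paper's one-line proof, and the organizing principle you identify (trivial versus active $L$-factors, generic versus boundary regime) is the right way to structure the eight-case computation. One small correction: the $q$-beta-binomial ratio you display is missing the factor $(1-(\alpha_2/\alpha_1)q^{b-c})$ coming from $(\mu;q)_{b-c+1}/(\mu;q)_{b-c}$; this extra factor is precisely what combines with the $\alpha_2$ from the active $L$-weight to produce the $(\alpha_1-\alpha_2 q^{b-c})$ you correctly list among the final factors.
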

\begin{proof}
	These expansions are obtained in a straightforward way using 
	\Cref{def:independent_bijectivisation} and
	the explicit formulas 
	for the vertex weights after performing the first two steps
	of the specialization from \Cref{sub:YBE_for_qTASEP}; see \Cref{fig:qBoson}.
\end{proof}

In continuous time, note that 
during each time moment there is \emph{at most 
one jump} of any of the particles, both before and after 
applying a
rewriting history operator. This means that we can eliminate the boundary conditions $i_1=j_1=1$ 
which never occur in continuous time.

Next, consider the case 
when $i_1+j_1=1$. The order $1$ terms in 
\eqref{eq:expansion_01_down}--\eqref{eq:expansion_10_down}
and 
\eqref{eq:expansion_01_up}--\eqref{eq:expansion_10_up}
occur only when both the old trajectory of the $n$-th particle, 
and one of the particles
$x_{n\pm1}$ jump at the same time, which is impossible in continuous time.
The order
$\varepsilon$ terms in 
\eqref{eq:expansion_01_down}--\eqref{eq:expansion_10_down}
and 
\eqref{eq:expansion_01_up}--\eqref{eq:expansion_10_up}
correspond to both the new trajectory of $x_n$ and one of the 
particles $x_{n\pm 1}$ jumping at the same time, 
which is also impossible in continuous time. It follows
multiple events with probability of order $\varepsilon$ occur when $i_1+j_1=1$. Thus, we have that the case $i_1+j_1=1$ cannot lead to new jump in the new trajectory of the $n$-th particle since
such a jump would be an event of probability $O(\varepsilon^2)$,
which vanishes in the continuous time limit.

Therefore, the independent bijectivisation in the continuous time
limit is \emph{completely determined} by the expansions
\eqref{eq:expansion_00_down} and 
\eqref{eq:expansion_00_up} for $i_1=j_1=0$.
In the rest of the current \Cref{sec:ind_bij_cont_time_new},
we use this fact to describe the rewriting history processes
$\tilde H_n^{\leftarrow}$ 
and 
$\tilde H_n^{\rightarrow}$ 
for the continuous time $q$-TASEP and TASEP.

\subsection{Rewriting history from future to past for a parameter swap in $q$-TASEP}
\label{sub:rewriting_history_qTASEP_new}

Here and in the next
\Cref{sub:rewriting_history_qTASEP_UP_new},
we describe the 
 $q$-TASEP's
rewriting history
processes 
$\tilde H_n^{\leftarrow}$ 
and 
$\tilde H_n^{\rightarrow}$ 
from \Cref{def:rewriting_history_operators}.
These processes are based on the independent bijectivisation and are
determined by the 
jump rates coming from
\eqref{eq:expansion_00_down} and 
\eqref{eq:expansion_00_up}, respectively.

Let us start with the process
$\tilde H_n^{\leftarrow}$ 
of rewriting history from future to past. 
Fix $\mathsf{M}\in \mathbb{R}_{\ge0}$, $n\in \mathbb{Z}_{\ge1}$,
and two speeds $\alpha_{n-1}>\alpha_n>0$.
Assume we have three trajectories of consecutive particles,
\begin{equation*}
	x_{n-1}(\mathsf{t})>x_{n}(\mathsf{t})>x_{n+1}(\mathsf{t}),\qquad 
	0\le \mathsf{t}\le \mathsf{M}	
\end{equation*}
and a starting point $x_n'(\mathsf{M})$ with 
\begin{equation*}
	x_n(\mathsf{M})\ge x_n'(\mathsf{M})> x_{n+1}(\mathsf{M}).
\end{equation*}
The process
$x_n'(\mathsf{t})$ starts from
$x_n'(\mathsf{M})$ and
runs in reverse time in the chamber
$x_{n}(\mathsf{t})\ge x_{n}'(\mathsf{t})>x_{n+1}(\mathsf{t})$, 
making jumps down by $1$
in continuous time with rate 
\begin{equation}
	\label{eq:rate_n_down_notation}
	\mathrm{rate}^{\leftarrow}_{n;\alpha_{n-1},\alpha_n}=
	\frac{(\alpha_{n-1}-\alpha_n q^{b-c})(1-q^{a+b+1-c})(1-q^c)}{1-q^{b-c+1}},
\end{equation}
where
(cf. \Cref{fig:Dn_Un_for_particles}, left)
\begin{equation}
	\label{eq:down_process_cont_time_abc_def_new}
	a \coloneqq x_{n-1}(\mathsf{t}-)-x_n(\mathsf{t}-)-1
	,
	\quad 
	b \coloneqq x_n(\mathsf{t}-)-x_{n+1}(\mathsf{t}-)-1,
	\quad 
	c \coloneqq
	x_n'(\mathsf{t})-x_{n+1}(\mathsf{t})-1
	.
\end{equation}
Note that the rate \eqref{eq:rate_n_down_notation}
is a piecewise constant function of the time $\mathsf{t}$,
and the rate changes whenever one of the particles
$x_{n-1},x_{n}$, or $x_{n+1}$ makes a jump.
The particle $x_n'$ is \emph{blocked}
from jumping down by $x_{n+1}$, i.e.~$\mathrm{rate}^{\leftarrow}_{n;\alpha_{n-1},\alpha_n}=0$,
when $c=0$.
The particle $x_n'$ is \emph{pushed down} by a jump of $x_n$, i.e.~$\mathrm{rate}^{\leftarrow}_{n;\alpha_{n-1},\alpha_n}=+\infty$, 
when $c=b+1$.
Note that $a+b+1-c$ is always positive, so 
the factor $1-q^{a+b+1-c}$ does not vanish.
The blocking and pushing
mechanisms are similar to the discrete time case from \Cref{sub:discrete_bij_particle_systems}.
See \Cref{fig:down_new_dynamics}
for an illustration.

\begin{figure}[htpb]
	\centering
	\includegraphics[width=.6\textwidth]{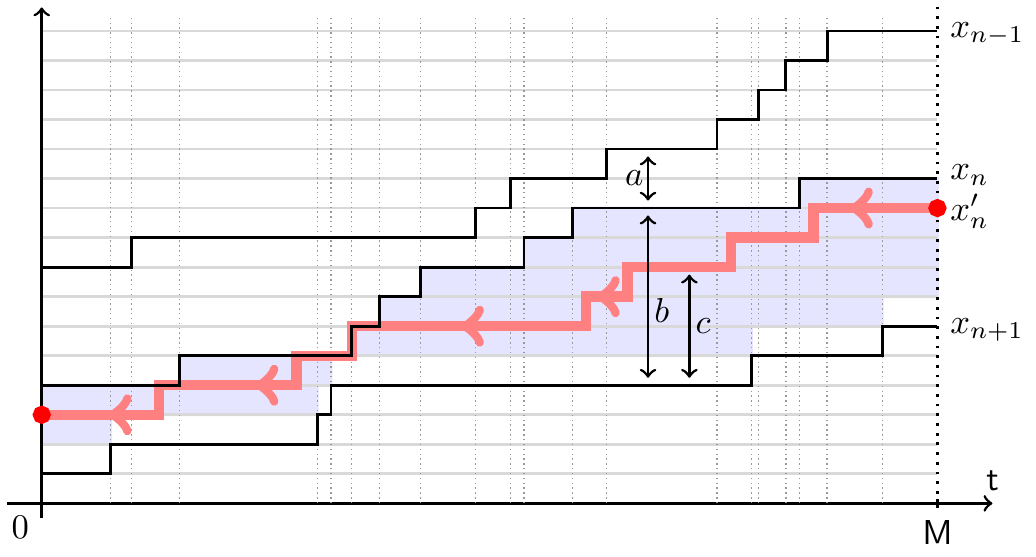}
	\caption{The process
	$\tilde H_n^{\leftarrow}$ 
	of rewriting history from future to past 
	for the continuous time $q$-TASEP 
	and the parameter
	swap $\alpha_{n-1}\leftrightarrow \alpha_n$.
	The allowed chamber for the 
	new trajectory $x_n'(\mathsf{t})$ is shaded.
	The quantities \eqref{eq:down_process_cont_time_abc_def_new}
	indicated in the figure for a particular time interval are
	equal to $a=1,b=5,c=3$ so that the jump rate \eqref{eq:rate_n_down_notation}
	at that particular time interval
	is equal to $(\alpha_{n-1}-\alpha_n q^{2})(1-q^4)$.}
	\label{fig:down_new_dynamics}
\end{figure}

The process 
$\tilde H_n^{\leftarrow}$ described above
produces a coupling of the trajectories of the 
$q$-TASEPs with speed sequences differing by the swap
$\alpha_{n-1}\leftrightarrow \alpha_n$.
Recall that, for the $q$-TASEP, the Markov swap operator 
$\tilde P^{(n)}$ 
depending on the ratio $0\le \alpha_n/\alpha_{n-1}\le 1$
acts on $\mathscr{X}$
by moving the single particle $x_n$ into a random new location
$x_n'$ with probability
\begin{equation}
	\label{eq:qTASEP_swap_cont}
	\varphi_{q,\alpha_{n}/\alpha_{n-1},0}(x_n'-x_{n+1}-1 \mid x_n-x_{n+1}-1).
\end{equation}

Let $\mathbf{x}(0)\in \mathscr{X}$ be an initial condition,
and $\boldsymbol \alpha=(\alpha_0,\alpha_1,\ldots )$ be a 
sequence of particle speeds such that $\alpha_{n-1}>\alpha_n$.
Let $\{\mathbf{x}(\mathsf{t})\}_{0\le \mathsf{t}\le \mathsf{M}}$
be the continuous time $q$-TASEP started from $\mathbf{x}(0)$
with speeds $\boldsymbol\alpha$. 
Let also $\{\mathbf{y}(\mathsf{t})\}_{0\le \mathsf{t}\le \mathsf{M}}$
be the continuous time 
$q$-TASEP started from the random initial condition 
$\delta_{\mathbf{x}(0)}\tilde P^{(n)}$ and evolving with the speeds 
$\sigma_{n-1}\boldsymbol\alpha$, where $\sigma_{n-1}$
is the elementary transposition 
$\alpha_{n-1}\leftrightarrow \alpha_n$.

\begin{proposition}
	\label{prop:cont_time_qTASEP_down_result}
	Given the notation above,
	let 
	$\mathbf{x}'(\mathsf{M})$
	be obtained from 
	$\mathbf{x}(\mathsf{M})$
	by the action of $\tilde P^{(n)}$, that is, by randomly moving $x_n(\mathsf{M})$ 
	to $x_n'(\mathsf{M})$ with probability 
	\eqref{eq:qTASEP_swap_cont}.
	Given the trajectories 
	of the particles $x_j(\mathsf{t})$, $j=n-1,n,n+1$,
	replace the old trajectory $x_n(\mathsf{t})$ by the 
	new one $x_n'(\mathsf{t})$ constructed from the process
	$\tilde H_n^{\leftarrow}$
	started from $x_n'(\mathsf{M})$ and running in reverse time. Then, the resulting trajectory of the whole process
	$\{x_1(\mathsf{t}),\ldots,x_{n-1}(\mathsf{t}),x_n'(\mathsf{t}),x_{n+1}(\mathsf{t}),\ldots  \}_{0\le \mathsf{t}\le \mathsf{M}}$
	is equal in distribution to the trajectory of the process
	$\{\mathbf{y}(\mathsf{t})\}_{0\le \mathsf{t}\le \mathsf{M}}$.
\end{proposition}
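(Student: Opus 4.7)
The plan is to derive \Cref{prop:cont_time_qTASEP_down_result} as a continuous-time limit of the discrete-time coupling statement, \Cref{thm:coupling_trajectories_with_Pn_swap}(1), translated to the exclusion-process side via the gap-particle transformation. The main structural input is already in place: the discrete-time theorem guarantees that iterating the down Markov operator $\tilde D^{(n)}$ against a trajectory of the higher spin model produces, together with the terminal swap $\tilde P^{(n)}$, a trajectory distributed according to the swapped-parameter process. Thus, it suffices to show that the scaling scheme of \Cref{sub:YBE_for_qTASEP} sends (a) the discrete processes to the continuous-time $q$-TASEPs with speed sequences $\boldsymbol\alpha$ and $\sigma_{n-1}\boldsymbol\alpha$, (b) the terminal swap operator to the $q$-TASEP swap \eqref{eq:qTASEP_swap_cont}, and (c) the iterated action of $\tilde D^{(n)}$ to the jump process $\tilde H_n^{\leftarrow}$ with rates \eqref{eq:rate_n_down_notation}.

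First I would verify points (a) and (b): (a) is exactly the standard Poisson-type limit recalled in \Cref{sub:YBE_for_qTASEP}, and (b) follows because the swap $\tilde P^{(n)}$ at the $q$-beta-binomial specialization is independent of $\varepsilon$ (it depends only on the ratio $\alpha_n/\alpha_{n-1}$, via \eqref{eq:qTASEP_swap_cont}). For (c), I would work with the expansions of \Cref{prop:expansions_indep_bij}. The key observation is that in continuous time at most one jump occurs at each instant, so along the limit trajectory the pair $(i_1,j_1)$ from the bijectivisation is essentially always $(0,0)$: the contributions with $i_1+j_1=1$ would require simultaneous old-trajectory and neighbor jumps, which is an $O(\varepsilon^2)$ event and hence negligible. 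Consequently the only surviving transition probability for the new trajectory is \eqref{eq:expansion_00_down}, whose $\varepsilon$-coefficient is precisely the rate \eqref{eq:rate_n_down_notation}. A routine Poisson-process convergence argument — analogous to the one used in \Cref{sub:QTASEP_cont,sub:QTASEP_cont_back} — then turns the discrete operators into a continuous-time jump process on trajectories with exactly these rates.

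Two small points need attention. The blocking case $c=0$ and the pushing case $c=b+1$ must be read off correctly from the discrete transition probabilities rather than from the naive expansion: when $c=b+1$, the down operator is forced to take $k_1'=1$ with probability one (push down by $x_n$), and when $c=0$ the down operator is forced to have $k_1'=0$ (block by $x_{n+1}$). Both are built into the indicator structure of \eqref{eq:expansion_00_down} and survive the limit as rate $+\infty$ and rate $0$ respectively, matching the description of $\tilde H_n^{\leftarrow}$ after \eqref{eq:rate_n_down_notation}. I would also check that the localization of the operator to the coordinates $(g_{n-1},g_n)$, i.e.\ to the single particle $x_n$, is preserved in the limit, so that only the trajectory of $x_n$ is altered while $x_{n-1}, x_n$, and $x_{n+1}$ are used only to determine $(a,b,c)$.

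The main obstacle, and the only nontrivial piece of work, is the convergence of the discrete rewriting history dynamics, iterated over $\lfloor \mathsf{M}/\varepsilon\rfloor$ time steps, to the continuous-time Markov process $\tilde H_n^{\leftarrow}$. Because the trajectories $\{x_{n-1}(\mathsf{t}), x_n(\mathsf{t}), x_{n+1}(\mathsf{t})\}$ are themselves random and change the instantaneous rate \eqref{eq:rate_n_down_notation} in a piecewise-constant fashion, one must justify interchanging the limit and the coupling construction. A clean way to do this is to condition on the trajectories of $x_{n-1}, x_n, x_{n+1}$ (which are piecewise-constant with finitely many jumps on $[0,\mathsf{M}]$ almost surely, since each particle jumps at bounded rate), and on each inter-jump interval apply a one-parameter Poisson limit to reduce to a standard convergence of Bernoulli chains to a continuous-time jump chain with bounded generator on the finite chamber. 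Gluing these conditionally independent pieces back together, and taking expectation over the neighbor trajectories, recovers the statement of \Cref{prop:cont_time_qTASEP_down_result} from its discrete counterpart.
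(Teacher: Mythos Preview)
Your proposal is correct and follows the same approach as the paper: the paper's proof is a single sentence stating that this is a continuous time limit of \Cref{thm:coupling_trajectories_with_Pn_swap} and \Cref{cor:coupling_trajectories_with_Pn_swap}. You have simply fleshed out the details of that limit transition (the role of the expansions in \Cref{prop:expansions_indep_bij}, the elimination of the $i_1+j_1\ge 1$ cases, the blocking/pushing boundary behavior, and the conditioning argument for convergence), all of which the paper leaves implicit or scatters across the surrounding discussion in \Cref{sub:YBE_for_qTASEP,sub:continuous_time_bij}.
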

\begin{proof}
	This is
	a continuous time limit of
	the general \Cref{thm:coupling_trajectories_with_Pn_swap}
	and \Cref{cor:coupling_trajectories_with_Pn_swap}.
\end{proof}

\begin{remark}[TASEP specialization of $\tilde H_n^{\leftarrow}$]
	\label{rmk:down_new_swap_TASEP}
	The $q$-TASEP turns into the TASEP when $q=0$.
	In that case,
	the dynamics 
	$\tilde H_n^{\leftarrow}$ simplifies. Namely, 
	the blocking and pushing mechanisms stay the same, and the 
	jump rates 
	\eqref{eq:rate_n_down_notation}
	become
	\begin{equation}
		\label{eq:rate_n_down_notation_TASEP}
		\mathrm{rate}^{\leftarrow}_{n;\alpha_{n-1},\alpha_n}\Big\vert_{q=0}=
		\alpha_{n-1}-\alpha_n \mathbf{1}_{b=c}.
	\end{equation}
	Thus, the process $x_n'(\mathsf{t})$
	is a Poisson random walk in the chamber
	$x_{n}(\mathsf{t})\ge x_{n}'(\mathsf{t})>x_{n+1}(\mathsf{t})$,
	running in reverse time and jumping down with rate $\alpha_{n-1}$
	in the bulk and $\alpha_{n-1}-\alpha_n$ at the top boundary of the chamber.
	
	Clearly, \Cref{prop:cont_time_qTASEP_down_result} for $q=0$ holds for the process 
	$\tilde H_n^{\leftarrow}$
	with the jump 
	rates \eqref{eq:rate_n_down_notation_TASEP}.
	This proposition for $q=0$ and $n=1$
	immediately implies \Cref{thm:2cars_general_intro} from the Introduction.
\end{remark}

\subsection{Rewriting history from past to future for a parameter swap in $q$-TASEP}
\label{sub:rewriting_history_qTASEP_UP_new}

Let us now consider the process
$\tilde H_n^{\rightarrow}$ 
of rewriting history from past to future.
Fix $n\in \mathbb{Z}_{\ge1}$
and two speeds $\alpha_{n-1}>\alpha_n>0$.
Assume we have three trajectories of consecutive particles,
\begin{equation*}
	y_{n-1}(\mathsf{t})>y_{n}(\mathsf{t})>y_{n+1}(\mathsf{t}),
\end{equation*}
where $\mathsf{t}$ runs over $\mathbb{R}_{\ge0}$,
and a starting point $y_n'(0)$ so that
$y_n(0)\le y_n'(0)< y_{n-1}(0)$.
The process
$y_n'(\mathsf{t})$ 
starts from $y_n'(0)$ and
runs in forward time $\mathsf{t}\in \mathbb{R}_{\ge0}$
in the chamber
$y_n(\mathsf{t})\le y_n'(\mathsf{t})< y_{n-1}(\mathsf{t})$.
In continuous time, the location of $y_n'(\mathsf{t})$ jumps up by $1$ with rate
\begin{equation}
	\label{eq:rate_n_up_notation}
	\mathrm{rate}^{\rightarrow}_{n;\alpha_{n-1},\alpha_n}=
	\frac{(\alpha_{n-1}-\alpha_n q^{b-c})(1-q^a)(1-q^{b+1})}{1-q^{b-c+1}},
\end{equation}
where
(cf. \Cref{fig:Dn_Un_for_particles}, right)
\begin{equation}
	\label{eq:down_process_cont_time_abc_def_UP_new}
	a \coloneqq y_{n-1}(\mathsf{t}-)-y_n'(\mathsf{t}-)-1
	,
	\quad 
	b \coloneqq y_n'(\mathsf{t}-)-y_{n+1}(\mathsf{t}-)-1,
	\quad 
	c \coloneqq
	y_n(\mathsf{t})-y_{n+1}(\mathsf{t})-1
	.
\end{equation}
The rate \eqref{eq:rate_n_up_notation}
is a piecewise constant function of the time $\mathsf{t}$,
and the rate changes whenever one of the particles
$y_{n-1},y_{n}$, or $y_{n+1}$ makes a jump. 
The particle $y_n'$ is \emph{blocked} from jumping up by $y_{n-1}$,
i.e.~$\mathrm{rate}^{\rightarrow}_{n;\alpha_{n-1},\alpha_n}=0$, 
when $a=0$.
The particle $y_n'$ is \emph{pushed up} by a jump of $y_n$,
i.e.~$\mathrm{rate}^{\rightarrow}_{n;\alpha_{n-1},\alpha_n}=+\infty$,
when $c=b+1$.
The blocking and pushing
mechanisms are similar to the discrete time ones from \Cref{sub:discrete_bij_particle_systems_UP}.
See \Cref{fig:up_new_dynamics}
for an illustration.

\begin{figure}[htb]
	\centering
	\includegraphics[width=.6\textwidth]{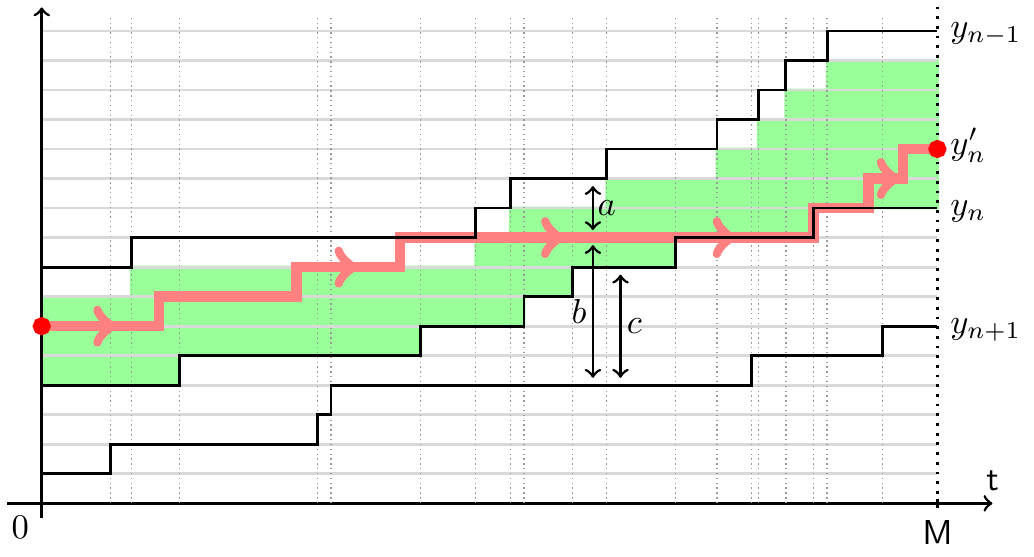}
	\caption{The process
	$\tilde H_n^{\rightarrow}$ 
	of rewriting history from past to future
	for the continuous time $q$-TASEP 
	and the parameter
	swap $\alpha_{n-1}\leftrightarrow \alpha_n$.
	The allowed chamber for the 
	new trajectory $y_n'(\mathsf{t})$ is shaded.
	The quantities \eqref{eq:down_process_cont_time_abc_def_UP_new}
	indicated in the figure for a particular time interval are
	equal to $a=1,b=4,c=3$ so that the jump rate \eqref{eq:rate_n_up_notation} at the indicated time
	is equal to $(\alpha_{n-1}-\alpha_{n} q)(1-q)(1-q^{5})/(1-q^{2})$.}
	\label{fig:up_new_dynamics}
\end{figure}

The process 
$\tilde H_n^{\rightarrow}$ 
produces a coupling of the trajectories of the 
$q$-TASEPs in which speeds differ by the swap
$\alpha_{n-1}\leftrightarrow \alpha_n$.
Recall the notation before \Cref{prop:cont_time_qTASEP_down_result}. 
The process $\mathbf{x}(\mathsf{t})$ is the continuous time $q$-TASEP 
started from a fixed initial configuration $\mathbf{x}(0)\in \mathscr{X}$ and evolves
with the particle speeds
$\boldsymbol\alpha$ so that $\alpha_{n-1}>\alpha_{n}$,
Also, the process
$\mathbf{y}(\mathsf{t})$ is the continuous time $q$-TASEP 
started from the random initial condition
$\delta_{\mathbf{x}(0)}\tilde P^{(n)}$
and evolves with the particle speeds 
$\sigma_{n-1}\boldsymbol\alpha$.

\begin{proposition}
	\label{prop:cont_time_qTASEP_up_result}
	Fix $n\ge1$.
	Given the trajectories 
	of the particles $y_j(\mathsf{t})$, $j=n-1,n,n+1$,
	in $\{\mathbf{y}(\mathsf{t})\}_{\mathsf{t}\ge0}$,
	replace the old trajectory $y_n(\mathsf{t})$ by the 
	new
	one $y_n'(\mathsf{t})$ constructed from the process
	$\tilde H_n^{\rightarrow}$
	started from $y_n'(0)=x_n(0)$. Then, the resulting trajectory of the whole process
	$\{y_1(\mathsf{t}),\ldots,y_{n-1}(\mathsf{t}),y_n'(\mathsf{t}),y_{n+1}(\mathsf{t}),\ldots  \}_{0\le \mathsf{t}\le \mathsf{M}}$
	is equal in distribution with the trajectory of the process
	$\{\mathbf{x}(\mathsf{t})\}_{0\le \mathsf{t}\le \mathsf{M}}$.
\end{proposition}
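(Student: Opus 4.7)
The plan is to mirror the proof of \Cref{prop:cont_time_qTASEP_down_result} and obtain the statement as a Poisson-type continuous-time limit of the second part of \Cref{cor:coupling_trajectories_with_Pn_swap}. I start from the discrete-time stochastic higher spin six vertex model with $J=1$, $u_i=-\beta s_i$, and independent bijectivisation as in \Cref{sec:discrete_time_bijectivisation}. For any $M\in\mathbb{Z}_{\ge1}$, \Cref{cor:coupling_trajectories_with_Pn_swap}(2) already proves the analogous coupling in discrete time: given a trajectory of the $q$-TASEP analog with swapped parameters started from $\delta_{\mathbf{x}(0)}\tilde P^{(n)}$, the sequential application of the up operator $\tilde U^{(n)}$ (equivalently, of the rewriting-history operator $\tilde H_n^{\rightarrow}$), initialized at $\mathbf{x}(0)$, produces a trajectory distributed according to $\mathfrak{T}$, i.e.\ the $q$-TASEP with the original speeds. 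The task is to pass this equality in distribution through the limit scheme $\beta s_i^2=\varepsilon\alpha_i$, $\beta\to\infty$, $s_i\to0$, $t=\lfloor\mathsf{t}/\varepsilon\rfloor$ from \Cref{sub:YBE_for_qTASEP}.

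Under this limit, $T_{\mathbf{u},\mathbf{s}}$ and $T_{\sigma_{n-1}\mathbf{u},\sigma_{n-1}\mathbf{s}}$ converge to the Markov semigroups of the continuous-time $q$-TASEPs with speed sequences $\boldsymbol\alpha$ and $\sigma_{n-1}\boldsymbol\alpha$, and the swap operator $\tilde P^{(n)}$ (which depends only on the ratio $\alpha_n/\alpha_{n-1}$, hence is $\varepsilon$-independent) becomes the continuous-time swap \eqref{eq:qTASEP_swap_cont}. The heart of the argument is the limit of the up transition probabilities governing $\tilde H_n^{\rightarrow}$. By the analysis immediately following \Cref{prop:expansions_indep_bij}, in continuous time only the boundary case $i_1=j_1=0$ is relevant: the case $i_1=j_1=1$ never occurs since at most one particle jumps at any instant, and the cases with $i_1+j_1=1$ contribute only events of probability $O(\varepsilon^2)$, which vanish in the limit. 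For $i_1=j_1=0$, the expansion \eqref{eq:expansion_00_up} shows that a single discrete step of $\tilde U^{(n)}$ produces a new upward jump of $y_n'$ with probability $\varepsilon\cdot\mathrm{rate}^{\rightarrow}_{n;\alpha_{n-1},\alpha_n}+O(\varepsilon^2)$, where the rate is exactly \eqref{eq:rate_n_up_notation}. The indicator $\mathbf{1}_{c=b+1}$ in \eqref{eq:expansion_00_up} produces the deterministic push-up rule (a jump of $y_n$ that would force the inequality $y_n\le y_n'$ to fail pushes $y_n'$ upward), while the factor $1-q^a$, which vanishes precisely at $a=0$, implements the blocking rule at the top boundary $y_n'=y_{n-1}-1$.

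The main obstacle, as usual for such Poisson limits, is justifying convergence at the level of joint distributions on trajectory space rather than merely for fixed-time marginals. For this, I fix a finite time horizon $\mathsf{M}$ and a finite window of particle indices around $n$; since the $q$-TASEP and the auxiliary walk $y_n'(\mathsf{t})$ each have a.s.\ finitely many jumps on $[0,\mathsf{M}]$, the discrete-time coupled system reduces to finitely many Bernoulli trials per continuous time unit with success probabilities of order $\varepsilon$, so a standard Poisson limit argument gives the joint convergence of $(y_1'(\mathsf{t}),\ldots,y_{n-1}(\mathsf{t}),y_n'(\mathsf{t}),y_{n+1}(\mathsf{t}),\ldots)_{0\le\mathsf{t}\le\mathsf{M}}$ to the jump process described in \Cref{sub:rewriting_history_qTASEP_UP_new}. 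A symmetric estimate controls $\{\mathbf{y}(\mathsf{t})\}$ and $\{\mathbf{x}(\mathsf{t})\}$. Passing this joint convergence through both sides of the discrete coupling identity, and then letting $\mathsf{M}\to\infty$ via consistency, yields the stated equality in distribution.
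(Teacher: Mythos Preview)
Your proposal is correct and follows exactly the approach the paper takes: the paper's own proof is a one-sentence reference stating that the result is a continuous-time limit of \Cref{thm:coupling_trajectories_with_Pn_swap} and \Cref{cor:coupling_trajectories_with_Pn_swap}, just as for \Cref{prop:cont_time_qTASEP_down_result}. You have supplied the details of that limit (the relevance of only the $i_1=j_1=0$ case via \Cref{prop:expansions_indep_bij}, the emergence of the rate \eqref{eq:rate_n_up_notation}, and the push/block rules) more explicitly than the paper does, but the route is the same.
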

\begin{proof}
	This statement
	is also
	a continuous time limit of
	the general \Cref{thm:coupling_trajectories_with_Pn_swap}
	and \Cref{cor:coupling_trajectories_with_Pn_swap}, as it was for \Cref{prop:cont_time_qTASEP_down_result}.
\end{proof}

\begin{remark}[TASEP specialization of $\tilde H_n^{\rightarrow}$]
	\label{rmk:up_new_swap_TASEP}
	The $q$-TASEP turns into the TASEP when $q=0$, and
	the dynamics 
	$\tilde H_n^{\rightarrow}$ simplifies in that case. Namely, 
	the blocking and pushing mechanisms stay the same, and the 
	jump rates 
	\eqref{eq:rate_n_up_notation}
	become
	\begin{equation}
		\label{eq:rate_n_up_notation_TASEP}
		\mathrm{rate}^{\rightarrow}_{n;\alpha_{n-1},\alpha_n}\Big\vert_{q=0}
		=\alpha_{n-1}-\alpha_n\mathbf{1}_{b=c}.
	\end{equation}
	Note that here the meaning of $b,c$ differs from that in 
	\eqref{eq:rate_n_down_notation_TASEP}, as the process evolves 
	forward in time $\mathsf{t}$ instead of backwards in time.
	Thus, the process $y_n'(\mathsf{t})$
	is a Poisson random walk in the chamber
	$y_n(\mathsf{t})\le y_n'(\mathsf{t})< y_{n-1}(\mathsf{t})$
	running in forward in time and jumping up with rate $\alpha_{n-1}$
	in the bulk and with rate $\alpha_{n-1}-\alpha_n$ at the bottom boundary of the chamber.
	
	Clearly, \Cref{prop:cont_time_qTASEP_up_result} holds when $q=0$ for the 
	process $\tilde H_n^{\rightarrow}$ with the jump 
	rates \eqref{eq:rate_n_up_notation_TASEP}.
	The case $n=1$ of this proposition with $q=0$
	immediately implies
	\Cref{thm:Poisson_discrete_intro} from the Introduction.
\end{remark}

\section{Limit of rewriting history processes to equal particle speeds}
\label{sec:limit_equal_speeds_new}

In this section, we 
obtain the limits of the rewriting history processes 
from \Cref{sec:ind_bij_cont_time_new}
as the particle speeds $\alpha_i$ become equal.
The corresponding rewriting history processes are
powered by the 
independent bijectivisation for the
intertwining relation 
from \Cref{thm:mapping_qTASEP_TASEP_back_general_IC}.

\subsection{Space of $q$-TASEP trajectories}
\label{sub:space_of_trajectories_def}

Markov operators for rewriting history, as well as their limits
which are continuous time Markov semigroups,
act on the space of continuous time trajectories
which we now describe.

\begin{definition}
	\label{def:cont_time_space_of_trajectories_new}
	Let
	$\mathscr{X}^{[0,\mathsf{M}]}$
	be the \emph{space of trajectories of the continuous time $q$-TASEP}
	over time $0\le \mathsf{t}\le \mathsf{M}$.
	By definition, this space 
	consists of trajectories
	$\mathfrak{x}=\{\mathbf{x}(\mathsf{t})\}_{0\le \mathsf{t}\le\mathsf{M}}$
	satisfying the following conditions:
	\begin{enumerate}[$\bullet$]
		\item For all $\mathsf{t}$ we have
			$\mathbf{x}(\mathsf{t})\in \mathscr{X}$ (recall \Cref{def:state_spaces}).
		\item There exists $N$ such that
			$x_n(\mathsf{t})=-n$ for all $n>N$ and $0\le \mathsf{t}\le\mathsf{M}$.
		\item The trajectory of each particle
			$x_n(\mathsf{t})$ is weakly increasing, piecewise constant,
			and makes increments of size $1$.\footnote{The word \emph{jump} refers to 
			jumps under continuous time Markov processes on spaces of trajectories,
			throughout the current 
			\Cref{sec:limit_equal_speeds_new}.
			When a particle in a given trajectory changes its position (which is a piecewise
			constant function of $\mathsf{t}$), 
			we refer to this as an 
			\emph{increment} of this particle's coordinate.}
		\item At any time moment $0\le \mathsf{t}\le \mathsf{M}$,
			there is at most one such increment.
	\end{enumerate}
	For each $\mathfrak{x}\in \mathscr{X}^{[0,\mathsf{M}]}$, 
	we associate the finite set $T_{\mathfrak{x}}\subset
	(0,\mathsf{M})$ of all $\mathsf{t}$ at which 
	some particle has an increment.
	The space $\mathscr{X}^{[0,\mathsf{M}]}$ has a natural
	topology in which $\mathfrak{x}$ and $\mathfrak{x}'$ are close 
	iff 
	$\mathbf{x}(0)=\mathbf{x}'(0)$,
	$\mathbf{x}(\mathsf{M})=\mathbf{x}'(\mathsf{M})$,
	all particles make all of their increments in the same order, 
	and the increments' times $T_{\mathfrak{x}},T_{\mathfrak{x}'}$ are close
	in the corresponding finite-dimensional space. 
	In this topology, $\mathscr{X}^{[0,\mathsf{M}]}$ has countably 
	many connected components each of which may be identified with an open subset 
	of $\mathbb{R}^d$ for a suitable $d$.
\end{definition}

Markov operators $\tilde \Xi$ on $\mathscr{X}^{[0,\mathsf{M}]}$ which we consider
in the current \Cref{sec:limit_equal_speeds_new}
can map a trajectory $\mathfrak{x}$ to a random new trajectory $\mathfrak{x}'$
such that $T_{\mathfrak{x}'}=(T_{\mathfrak{x}}\setminus T^{-})\cup T^{+}$. That is,
$\tilde\Xi$
removes a random subset $T^-\subset T_{\mathfrak{x}}$
of the existing increment times, and adds random new increment times
belonging to $T^+$. The new increment times belong to continuous intervals, and are
chosen randomly from probability densities with respect to the Lebesgue measure.
Therefore, the operator $\tilde \Xi$ is determined by the transition densities
\begin{equation}
	\label{eq:Markov_op_cont_times_prob_dens_new}
	\frac{
	\tilde \Xi(\mathfrak{x},\mathfrak{x}'+d\ssp T^+)}
	{d\ssp T^+},
\end{equation}
which also incorporate the probabilities for the removed 
increment times.

\subsection{Slowdown operator for the $q$-TASEP}
\label{sub:qTASEP_limit_down_new}

Assume now that the $q$-TASEP particle speeds 
are
\begin{equation}
	\label{eq:alpha_i_geometric_speeds}
	\alpha_i=r^i,\qquad i\in \mathbb{Z}_{\ge0},
\end{equation}
with $0<r<1$.
Taking $r\to 1$ leads to equal particle speeds. We consider this limit below
in \Cref{sub:qTASEP_limit_down_limit_new}.

Recall from \Cref{sub:QTASEP_cont}
that we denote the continuous time Markov semigroup
of the $q$-TASEP with speeds \eqref{eq:alpha_i_geometric_speeds}
by $\{\tilde T_r^{\mathrm{qT}}(\mathsf{t})\}_{\mathsf{t}\in \mathbb{R}_{\ge0}}$,
and the Markov shift operator for this process by 
$\tilde B_{r}^{\mathrm{qT}}$. All these operators
act on the space $\mathscr{X}$ of 
particle configurations.
The (iterated) intertwining relation reads
\begin{equation}
	\label{eq:S10_intertwining_qTASEP_r}
	\tilde T^{\mathrm{qT}}_r(\mathsf{t})
	\bigl(  
	\tilde B^{\mathrm{qT}}_{r}
	\bigr)^{m}
	=
	\bigl(  
	\tilde B^{\mathrm{qT}}_{r}
	\bigr)^{m}\ssp
	\tilde T^{\mathrm{qT}}_r(r^{m}\ssp \mathsf{t}),
\end{equation}
where $\mathsf{t}\in \mathbb{R}_{\ge0}$ and $m\in \mathbb{Z}_{\ge1}$ are arbitrary.

The action of 
$\tilde B_{r}^{\mathrm{qT}}$ is, by definition, the sequential application of 
the
Markov swap operators (denote them by $\tilde P^{(0,n)}_r$)
over $n=1,2,\ldots $, see 
\eqref{eq:B_operator}.
Each swap operator $\tilde P^{(0,n)}_r$ acts by 
randomly moving the particle $x_n$ backwards
and depends on the parameter
$\alpha_n/\alpha_0=r^n$.
Additionally, each swap operator $\tilde P^{(0,n)}_r$
gives rise to two rewriting history processes
which we denote by 
$\tilde H_{\alpha_0,\alpha_n}^{\leftarrow}
=\tilde H_{1,r^n}^{\leftarrow}$ 
and
$\tilde H_{\alpha_0,\alpha_n}^{\rightarrow}
=
\tilde H_{1,r^n}^{\rightarrow}$,
see 
\Cref{sub:continuous_time_bij}, 
by the independent bijectivisation.
Note that these processes depend
not only on the ratio $r^n=\alpha_n/\alpha_0$
but, also, on both these parameters separately, 
see \eqref{eq:rate_n_down_notation} and \eqref{eq:rate_n_up_notation}.
In
\Cref{sub:qTASEP_limit_down_new,sub:qTASEP_limit_down_limit_new},
we focus on the processes
$\tilde H_{\alpha_0,\alpha_n}^{\leftarrow}$ 
of rewriting history from future to past,
and consider the processes $\tilde H_{\alpha_0,\alpha_n}^{\rightarrow}$ below in \Cref{sub:qTASEP_limit_up_new}.

\medskip

Let us describe the slowdown Markov operators
$\tilde{\Xi}_{m,r}^{\leftarrow}$
on the space $\mathscr{X}^{[0,\mathsf{M}]}$ (\Cref{def:cont_time_space_of_trajectories_new}).

\begin{definition}
	\label{def:action_on_trajectories_from_future_to_past_qTASEP}
	Let $\mathfrak{x}=\{\mathbf{x}(\mathsf{t})\}_{0\le \mathsf{t}\le \mathsf{M}}
	\in \mathscr{X}^{[0,\mathsf{M}]}$
	and $m\in \mathbb{Z}_{\ge1}$ be fixed. 
	The action of
	$\tilde{\Xi}_{m,r}^{\leftarrow}$
	on $\mathfrak{x}$ is as follows.
	First, apply the Markov
	shift operator
	$\tilde B^{\mathrm{qT}}_r$ to 
	$\mathbf{x}(\mathsf{M})$ and denote the resulting random configuration by 
	$\mathbf{x}'(\mathsf{M})$. Then, 
	apply the rewriting history Markov operator 
	$\tilde H_{r^m,r^{m+n}}^{\leftarrow}$,
	sequentially for $n=1,2,\ldots $,
	to
	replace the old trajectory 
	$\{x_n(\mathsf{t})\}_{0\le \mathsf{t}\le \mathsf{M}}$
	by the random new trajectory
	$\{x_n'(\mathsf{t})\}_{0\le \mathsf{t}\le \mathsf{M}}$
	given the following data:
	\begin{equation*}
		\{x_{n-1}'(\mathsf{t})\}_{0\le \mathsf{t}\le \mathsf{M}},
		\qquad 
		x_n'(\mathsf{M}), 
		\qquad 
		\{x_{n+1}(\mathsf{t})\}_{0\le \mathsf{t}\le \mathsf{M}}
	\end{equation*}
	with $x_{n-1}'(\mathsf{t})=+\infty$ for $n=1$, by agreement.
	The updates, for $n=1,2,\ldots $, eventually terminate 
	since $x_n(t) = -n$ for $n >N$ if $N$ is large enough
	(recall that this is the property of 
	$\mathscr{X}^{[0,\mathsf{M}]}$).
	The new random trajectory
	$\mathfrak{x}'=\{\mathbf{x}'(\mathsf{t})\}_{0\le \mathsf{t}\le \mathsf{M}}$
	is, by definition, the result of 
	applying the Markov operator 
	$\tilde{\Xi}_{m,r}^{\leftarrow}$
	to $\mathfrak{x}$.
\end{definition}

Let us make two comments regarding
\Cref{def:action_on_trajectories_from_future_to_past_qTASEP}. 
First, the new initial configuration
$\mathbf{x}'(0)$ in $\mathfrak{x}'$
is random and, for $m=1$,  
it is
distributed as $\delta_{\mathbf{x}(0)}\tilde B^{\mathrm{qT}}_r$,
due to the intertwining relation.
Second, there is an important difference 
between 
$\tilde H_{r^{m},r^{m+n}}^{\leftarrow}$ 
and 
$\tilde{\Xi}_{m,r}^{\leftarrow}$.
The former assumes that the new terminal
configuration $x_n'(\mathsf{M})$ is fixed and, 
in the latter,
the terminal configuration evolves randomly.
We use different letters for these
operators because of this.

We now describe the action of the 
operators
$\tilde \Xi_{i,r}^{\leftarrow}$
on the $q$-TASEP measures on trajectories.
Let $\mathfrak{x}=\{\mathbf{x}(\mathsf{t})\}_{0\le \mathsf{t}\le \mathsf{M}}$
be the trajectory of the continuous time $q$-TASEP 
with particle speeds 
\eqref{eq:alpha_i_geometric_speeds}
started from a fixed initial
configuration $\mathbf{x}(0)$.
Fix $m\in \mathbb{Z}_{\ge0}$,
and let 
$\mathfrak{y}=\{\mathbf{y}(\mathsf{t})\}_{0\le \mathsf{t}\le \mathsf{M}}$
be the 
continuous time $q$-TASEP 
with the same speeds
\eqref{eq:alpha_i_geometric_speeds} but, instead, with random initial configuration
$\delta_{\mathbf{x}(0)}\bigl(\tilde B^{\mathrm{qT}}_r\bigr)^m$.

\begin{proposition}
	\label{prop:action_of_Xi_operator}
	Given the above notation,
	apply
	the Markov operators
	$
	\tilde{\Xi}_{0,r}^{\leftarrow},
	\tilde{\Xi}_{1,r}^{\leftarrow},
	\ldots,
	\tilde{\Xi}_{m-1,r}^{\leftarrow}$,
	in this order,
	to the $q$-TASEP trajectory 
	$\mathfrak{x}$.
	Then, the resulting trajectory has the same distribution as 
	$\{\mathbf{y}(r^m\mathsf{t})\}_{0\le \mathsf{t}\le \mathsf{M}}$.
\end{proposition}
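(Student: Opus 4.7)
The plan is to prove by induction on $m$, for which the key step amounts to the following single-step claim: if $\mathfrak{z}$ is distributed as a trajectory on $[0,\mathsf{M}]$ of the continuous time $q$-TASEP with speed sequence $(r^k, r^{k+1}, r^{k+2}, \ldots)$ started from some initial configuration $\mathbf{z}(0)$, then $\tilde\Xi_{k,r}^{\leftarrow}(\mathfrak{z})$ will be distributed as a trajectory on $[0,\mathsf{M}]$ of the $q$-TASEP with shifted speed sequence $(r^{k+1}, r^{k+2}, \ldots)$, started from the random initial configuration $\delta_{\mathbf{z}(0)}\ssp \tilde B^{\mathrm{qT}}_r$. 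Granted this claim, the proposition will follow by noting that the sequential application of $\tilde \Xi_{0,r}^{\leftarrow}, \tilde \Xi_{1,r}^{\leftarrow}, \ldots, \tilde\Xi_{m-1,r}^{\leftarrow}$ to $\mathfrak{x}$ produces a trajectory whose speed sequence has been shifted $m$ times, so it coincides in law with a trajectory on $[0,\mathsf{M}]$ of the $q$-TASEP having speeds $(r^m, r^{m+1}, \ldots)$ and initial configuration $\delta_{\mathbf{x}(0)}\bigl(\tilde B^{\mathrm{qT}}_r\bigr)^m$. Scaling all speeds by the common factor $r^m$ is equivalent to a linear time reparametrization, so this output coincides in law with $\{\mathbf{y}(r^m \mathsf{t})\}_{0\le \mathsf{t}\le \mathsf{M}}$.

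To prove the single-step claim I plan to unfold both layers of $\tilde\Xi_{k,r}^{\leftarrow}$ into atomic operations and interleave them. The terminal action of $\tilde B^{\mathrm{qT}}_r$ is, by \Cref{def:B_operator}, the ordered composition of the Markov swap operators $\tilde P^{(0,n)}_r$ applied to $\mathbf{z}(\mathsf{M})$ over $n=1, 2, \ldots$. The rewriting history layer is the ordered composition of the processes $\tilde H^{\leftarrow}_{r^k, r^{k+n}}$ over $n=1, 2, \ldots$. Because the $n$th rewriting history process alters only the trajectory of particle $x_n$ (leaving $x_{n-1}$ and $x_{n+1}$ unchanged) and only depends on the terminal datum produced by the $n$th swap, I can interleave the two layers into ordered pairs: first the pair with $n=1$, then $n=2$, and so on. Each such pair is precisely an instance of \Cref{prop:cont_time_qTASEP_down_result} and therefore takes a $q$-TASEP trajectory into a $q$-TASEP trajectory whose two speed parameters at positions $n-1$ and $n$ have been exchanged, while the initial configuration is simultaneously updated by the corresponding swap. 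Iterating over $n$ realizes the complete shift $(r^k, r^{k+1}, r^{k+2}, \ldots) \mapsto (r^{k+1}, r^{k+2}, \ldots)$ at the trajectory level.

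The main technical obstacle will be to check that \Cref{prop:cont_time_qTASEP_down_result} genuinely applies at each intermediate stage of the interleaved iteration. After the first $n-1$ paired operations, the partial trajectory must be distributed as a $q$-TASEP whose speed sequence is the partially-shifted one $(r^{k+1}, \ldots, r^{k+n-1}, r^k, r^{k+n}, r^{k+n+1}, \ldots)$; only then does the next pair $(\tilde P^{(0,n)}_r, \tilde H^{\leftarrow}_{r^k, r^{k+n}})$ implement the next elementary swap of adjacent parameters and update of the initial configuration. Establishing this requires a subsidiary induction on $n$ inside the main argument, relying on the locality of $\tilde H_n^{\leftarrow}$, the Markov property of the $q$-TASEP semigroup, and the factorized structure of $\tilde B^{\mathrm{qT}}_r$ from \Cref{def:B_operator}. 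The convergence of the infinite iteration over $n$ will be automatic on $\mathscr{X}^{[0,\mathsf{M}]}$: by the left-packedness condition in \Cref{def:cont_time_space_of_trajectories_new} together with \Cref{lemma:B_operator_well_defined}, there is almost surely a finite threshold $N$ beyond which both $\tilde P^{(0,n)}_r$ and $\tilde H^{\leftarrow}_{r^k, r^{k+n}}$ act as the identity.
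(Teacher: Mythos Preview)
Your proof is correct and follows essentially the same approach as the paper's own proof. The paper simply states that the result follows by iterating \Cref{prop:cont_time_qTASEP_down_result} over all $n$ and then repeating $m$ times, with the observation that multiplying all speeds by $r^m$ amounts to rescaling time. Your write-up makes explicit the double induction (outer on $m$, inner on $n$) and the interleaving of the terminal swaps $\tilde P^{(0,n)}_r$ with the rewriting-history steps $\tilde H^{\leftarrow}_{r^k,r^{k+n}}$, which the paper leaves implicit; this interleaving is indeed legitimate because each swap at the terminal time uses only the as-yet-untouched values $x_n(\mathsf{M}),x_{n+1}(\mathsf{M})$, and each rewrite touches only the $n$th trajectory.
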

Note that the operators
$\tilde{\Xi}_{i,r}^{\leftarrow}$
``slow down'' the time evolution of $q$-TASEP
by shrinking the time variable. 
We call
$\tilde{\Xi}_{i,r}^{\leftarrow}$
the \emph{slowdown operators} 
on trajectories
because of this.
\begin{proof}[Proof of \Cref{prop:action_of_Xi_operator}]
	The result follows by iterating 
	\Cref{prop:cont_time_qTASEP_down_result}
	over all $n$ and, then, repeating $m$ times.
	The result,
	after the application of 
	the composition of the operators
	$
	\tilde{\Xi}_{0,r}^{\leftarrow}
	\tilde{\Xi}_{1,r}^{\leftarrow}
	\ldots
	\tilde{\Xi}_{m-1,r}^{\leftarrow}$,
	is the $q$-TASEP with speeds $(r^m,r^{m+1},\ldots )$
	and initial configuration 
	$\delta_{\mathbf{x}(0)}\bigl(\tilde B^{\mathrm{qT}}_r\bigr)^m$.
	Additionally, note that multiplying all speeds by $r^m$ is the same
	as slowing down the time by the overall factor $r^m$
	since our $q$-TASEP runs in continuous time.
	multiplying all speeds by $r^m$ is the same
	as slowing down the time by the overall factor $r^m$.
	Moreover, this does not affect the random initial configuration 
	$\mathbf{y}(0)$.
	This completes the proof.
\end{proof}

\subsection{Slowdown dynamics for the homogeneous $q$-TASEP}
\label{sub:qTASEP_limit_down_limit_new}

Let us now take the 
limit as in \Cref{sub:QTASEP_cont_back},
\begin{equation}
	\label{eq:r_to_one_limit_S10}
	r\nearrow 1,\qquad 
	m=\lfloor (1-r)^{-1}\tau \rfloor ,
\end{equation}
where $\tau\in \mathbb{R}_{\ge0}$ is the continuous time parameter.
In this limit, 
the $q$-TASEP becomes \emph{homogeneous} with particle speeds
$\alpha_i=1$ for all $i$, and 
the operators 
$\bigl(\tilde B^{\mathrm{qT}}_r\bigr)^m$
turn into the continuous time 
Markov semigroup $\{\tilde B^{\mathrm{qT}}(\tau)\}_{\tau\in \mathbb{R}_{\ge0}}$
on $\mathscr{X}$. This semigroup corresponds to the
backwards $q$-TASEP dynamics 
\cite{petrov2019qhahn}; we recalled the definition in 
\Cref{sub:QTASEP_cont_back}.
Our aim now is to
extend the semigroup
$B^{\mathrm{qT}}(\tau)$ on $\mathscr{X}$
to continuous time Markov dynamics 
on the space 
$\mathscr{X}^{[0,\mathsf{M}]}$
of trajectories
by taking the limit
\eqref{eq:r_to_one_limit_S10}
of the slowdown operators.
The latter dynamics are denoted by
$\tilde \Xi^{\leftarrow}(\tau)$.
An interesting feature of 
$\tilde \Xi^{\leftarrow}(\tau)$,
compared to 
$B^{\mathrm{qT}}(\tau)$,
is that, while the former is a \emph{time-inhomogeneous Markov process}
(its transitions depend on the time variable),
the latter is time-homogeneous.
See \Cref{rmk:time_inhom_slowdown_discussion_new} below for
more discussion.

Let us first define the dynamics
$\tilde \Xi^{\leftarrow}(\tau)$. Then, 
in \Cref{prop:limit_hom_qTASEP_down_new} below,
we show 
that these dynamics are the desired $r\to1$ limit
of the sequential application of the operators
$\tilde{\Xi}_{i,r}^{\leftarrow}$.

\begin{definition}
	\label{def:Xi_down_cont_time}
	Fix $\mathsf{M}\in \mathbb{R}_{\ge0}$.
	The continuous time 
	\emph{slowdown Markov dynamics} 
	$\tilde \Xi^{\leftarrow}(\tau)$
	acting on trajectories 
	$\mathfrak{x}=\{\mathbf{x}(\mathsf{t})\}_{0\le \mathsf{t}\le \mathsf{M}}
	\in\mathscr{X}^{[0,\mathsf{M}]}$
	possesses
	two sources of independent jumps and, also, a random jump propagation 
	mechanism. Almost surely
	there are only finitely many independent jumps
	in finite time.
	The independent jumps are as follows:
	\begin{enumerate}[$\bullet$]
		\item 
			(\emph{terminal jumps})
			The terminal configuration $\mathbf{x}(\mathsf{M})$ in $\mathfrak{x}$ evolves
			according to the backwards $q$-TASEP $\tilde B^{\mathrm{qT}}(\tau)$. 
			Recall, from \Cref{sub:QTASEP_cont_back}, that this means that 
			each particle $x_n(\mathsf{M})$, $n=1,2,\ldots $,
			jumps down to a new location
			$x_n'(\mathsf{M})$, where $x_{n+1}(\mathsf{M})<x_n'(\mathsf{M})<x_n(\mathsf{M})$,
			with rate 
			\begin{equation*}
				\frac{n\ssp (q;q)_{x_n'(\mathsf{M})-x_{n+1}(\mathsf{M})-1}}
				{(1-q^{x_n(\mathsf{M})-x_n'(\mathsf{M})})
				\ssp (q;q)_{x_n(\mathsf{M})-x_{n+1}(\mathsf{M})-1}}.
			\end{equation*}
		\item 
			(\emph{bulk jumps})
			For all $0<\mathsf{t}<\mathsf{M}$,
			each particle $x_n(\mathsf{t})$ can jump down by $1$ according
			to the following mechanism.
			Let
			\begin{equation*}
				T_{\mathfrak{x}}=
				\{
				0=\mathsf{t}_0<\mathsf{t}_1
				<\mathsf{t}_2<\ldots<\mathsf{t}_k<\mathsf{t}_{k+1}=\mathsf{M}\}.
			\end{equation*}
			For $n\ge1$ and $1\le j\le k+1$, attach to each segment
			$[x_n(\mathsf{t}_{j-1}),x_n(\mathsf{t}_{j})]$
			an independent
			exponential clock of (time-inhomogeneous) rate 
			\begin{equation}
				\label{eq:rate_slowdown_qTASEP_new}
				\left( 
					x_n(\mathsf{t}_j)-x_n(\mathsf{t}_{j-1})
				\right)
				\frac{n \ssp e^{-\tau}\ssp
				(1-q^{x_{n-1}(\mathsf{t})-x_n(\mathsf{t})})(1-q^{x_n(\mathsf{t})-x_{n+1}(\mathsf{t})-1})}{1-q},
				\qquad 
				\textnormal{where $\mathsf{t}\in(\mathsf{t}_{j-1},\mathsf{t}_j)$}.
			\end{equation}
			Here $\tau$ is the time variable in the dynamics
			$\tilde \Xi^{\leftarrow}(\tau)$.
			When the clock rings, place a uniformly random point
			$\mathsf{t}_*\in (\mathsf{t}_{j-1},\mathsf{t}_j)$. Then,
			let the
			trajectory of $x_n$ make a new increment at $\mathsf{t}_*$, so that
			$x_n'(\mathsf{t}_*)=x_n(\mathsf{t}_*)-1$
			and 
			$x_n'(\mathsf{t}_*+)=x_n(\mathsf{t}_*+)$.
			Note that if 
			$x_n(\mathsf{t}_*)=x_{n+1}(\mathsf{t}_*)+1$, the particle
			$x_n(\mathsf{t}_*)$ is blocked from jumping down, and the rate 
			\eqref{eq:rate_slowdown_qTASEP_new} vanishes, as it should.
	\end{enumerate}
	
	Let us now describe the \emph{jump propagation mechanism}.
	Assume that a particle $x_n(\mathsf{t}_*)$,
	$\mathsf{t}_*\in(0,\mathsf{M}]$, has jumped down to $x_n'(\mathsf{t}_*)$
	as described above. 
	We either have a terminal jump with $\mathsf{t}_*=\mathsf{M}$
	or a bulk jump with a random $\mathsf{t}_*<\mathsf{M}$.
	The jump then instantaneously 
	propagates left 
	according to a, backwards in time, random walk 
	in the chamber 
	$x_{n}(\mathsf{t})\ge x_{n}'(\mathsf{t})>x_{n+1}(\mathsf{t})$, 
	$0\le \mathsf{t}\le \mathsf{t}_*$. This random walk makes jumps down by $1$ in continuous
	time $\mathsf{t}$ with rate
	\begin{equation}
		\label{eq:rate_slowdown_qTASEP_propagation}
		\frac{e^{-\tau}\ssp (1- q^{b-c})(1-q^{a+b+1-c})(1-q^c)}{1-q^{b-c+1}},
	\end{equation}
	where $a,b,c$ are given in \eqref{eq:down_process_cont_time_abc_def_new}.
	Note that \eqref{eq:rate_slowdown_qTASEP_propagation} vanishes for $b=c$.
	This means that, during the instantaneous jump propagation,
	$x_n'(\mathsf{t})$ 
	either joins the old trajectory of
	$x_n$ and continues to follow it until $\mathsf{t}=0$,
	or $x_n'(\mathsf{t})$
	modifies the
	initial configuration $\mathbf{x}(0)$. In particular, the new trajectory will not deviate from the old trajectory once it has deviated and joined the old trajectory. 
	The trajectories of all other
	particles $x_l$, $l\ne n$, do not change during this instantaneous jump
	propagation.

	We refer to \Cref{fig:slowdown_dynamics} for an illustration
	of the dynamics 
	$\tilde \Xi^{\leftarrow}(\tau)$. 
\end{definition}

\begin{figure}[htb]
	\centering
	\includegraphics[width=.6\textwidth]{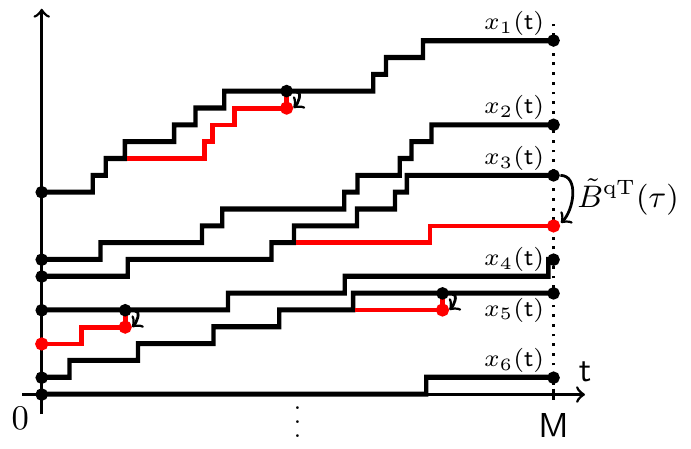}
	\caption{The slowdown dynamics
	$\tilde \Xi^{\leftarrow}(\tau)$
	acting on trajectories. In the figure there are four possible transitions: 
	one initiated at $\mathsf{t}=\mathsf{M}$ by the backwards $q$-TASEP 
	$\tilde B^{\mathrm{qT}}$, and three others
	initiated by bulk independent jumps.
	Note that one of the jump propagations modifies the initial configuration
	$\mathbf{x}(0)$.}
	\label{fig:slowdown_dynamics}
\end{figure}

\begin{proposition}
	\label{prop:limit_hom_qTASEP_down_new}
	Fix $\tau\in \mathbb{R}_{\ge0}$. 
	Then, the limit
	\begin{equation*}
		\lim\nolimits_{r\nearrow 1}
		\tilde{\Xi}_{0,r}^{\leftarrow}
		\tilde{\Xi}_{1,r}^{\leftarrow}
		\ldots
		\tilde{\Xi}_{m-1,r}^{\leftarrow}
		=
		\tilde \Xi^{\leftarrow}(\tau),
		\qquad \textnormal{with $m=\lfloor (1-r)^{-1}\tau \rfloor $},
	\end{equation*}
	converges in the sense of 
	the transition densities \eqref{eq:Markov_op_cont_times_prob_dens_new}
	associated to 
	Markov operators on
	$\mathscr{X}^{[0,\mathsf{M}]}$.
\end{proposition}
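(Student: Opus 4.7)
The plan is to prove the convergence in the transition-density sense by matching the infinitesimal rates of the discrete composition, on the continuous $\tau$-scale, against the three rate structures defining $\tilde{\Xi}^{\leftarrow}(\tau)$. With $\varepsilon = 1-r$ and $m = \lfloor \tau/\varepsilon \rfloor$, the composition $\tilde{\Xi}_{0,r}^{\leftarrow} \tilde{\Xi}_{1,r}^{\leftarrow} \cdots \tilde{\Xi}_{m-1,r}^{\leftarrow}$ should approach the continuous semigroup via a standard Poissonization. Since each connected component of $\mathscr{X}^{[0,\mathsf{M}]}$ is parametrized by finitely many increment times, the claim reduces to pointwise convergence of finite-dimensional transition densities as in \eqref{eq:Markov_op_cont_times_prob_dens_new}.

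First I would expand a single step $\tilde{\Xi}_{i,r}^{\leftarrow}$ in powers of $\varepsilon$ with $r^i$ treated as $e^{-i\varepsilon}$ near $i\varepsilon = s \in [0,\tau]$. Three mechanisms contribute at leading order: (a) a move of the terminal configuration by $\tilde{B}^{\mathrm{qT}}_r$, whose $\varepsilon\to 0$ limit gives the backwards $q$-TASEP rate \eqref{eq:backwards_jumping_probability}; (b) a ``branching'' of some rewritten trajectory $x_n'$ below the old $x_n$ under $\tilde{H}^{\leftarrow}_{r^i,r^{i+n}}$, coming from the $b=c$ case of \eqref{eq:rate_n_down_notation} with $(1-r^n) \approx n\varepsilon$; and (c) the subsequent propagation of this branching to earlier $\mathsf{t}$ via the $b>c$ case of \eqref{eq:rate_n_down_notation}, whose rate $r^i(1-q^{b-c})(1-q^{a+b+1-c})(1-q^c)/(1-q^{b-c+1})$ is $O(1)$ uniformly in $\varepsilon$. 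Contributions from simultaneous branchings across particles or steps are $O(\varepsilon^2)$ and vanish in the limit.

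Passing to the limit, the shift iterations $(\tilde{B}^{\mathrm{qT}}_r)^m$ converge to $\tilde{B}^{\mathrm{qT}}(\tau)$ by the Poisson-type limit recalled in \Cref{sub:QTASEP_cont_back}, providing the terminal jump component of $\tilde{\Xi}^{\leftarrow}(\tau)$. Iterating over $i=0,\ldots,m-1$ produces an inhomogeneous Poisson point process of branching events, with cumulative factor $\sum_{i=0}^{m-1}\varepsilon\ssp r^i \to 1-e^{-\tau}$, yielding an instantaneous rate at continuous time $\tau$ proportional to $ne^{-\tau}$ that matches \eqref{eq:rate_slowdown_qTASEP_new}. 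The propagation rate at continuous time $\tau$ becomes $e^{-\tau}$ times the $r=1$ rate, matching \eqref{eq:rate_slowdown_qTASEP_propagation}. Since this propagation stays $O(1)$ while each discrete step has $\tau$-duration $\varepsilon$, the entire rewriting within one step collapses into an instantaneous modification on the $\tau$-scale, matching the instantaneous propagation mechanism of \Cref{def:Xi_down_cont_time}.

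The main technical obstacle is handling the iteration over all particles $n\ge 1$: the per-particle branching rate scales linearly in $n$, and the rewriting operators $\tilde{H}^{\leftarrow}_{r^i, r^{i+n}}$ must be applied sequentially for all $n$. The left-packing condition of \Cref{def:cont_time_space_of_trajectories_new}, namely $x_n(\mathsf{t})=-n$ for all $n>N$ and $0\le\mathsf{t}\le\mathsf{M}$, caps the effective $n$ at a finite random value, but one still needs a uniform-in-$r$ tail bound on how the iterated shift $(\tilde{B}^{\mathrm{qT}}_r)^m$ and subsequent rewritings perturb this cutoff. I would obtain this via the same termination argument used in the well-definedness of the backwards $q$-TASEP dynamics (cf.\ \Cref{sub:QTASEP_cont_back}), showing that the probability of significant modification for particles with large $n$ decays exponentially, uniformly in $\varepsilon$. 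Given this control, convergence of the transition densities on each connected component reduces to a finite-dimensional statement and follows by dominated convergence applied to the explicit Poisson-expansion formulas.
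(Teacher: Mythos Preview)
Your approach is essentially the same as the paper's: both expand the rates $\mathrm{rate}^{\leftarrow}_{n;r^m,r^{m+n}}$ in $\varepsilon=1-r$, distinguish the $b=c$ case (order $\varepsilon$, yielding the Poisson bulk jumps with rate $ne^{-\tau}\cdot(\text{$q$-factors})$) from the $b>c$ case (order $1$, yielding instantaneous propagation), and handle the terminal configuration via the known convergence $(\tilde B^{\mathrm{qT}}_r)^m\to\tilde B^{\mathrm{qT}}(\tau)$. Your concern about uniform-in-$r$ control of the particle cutoff is more cautious than the paper, which simply invokes the left-packing property of $\mathscr{X}^{[0,\mathsf{M}]}$ to truncate the iteration over $n$ without proving a quantitative tail bound.
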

\begin{proof}
	Let $m=\lfloor (1-r)^{-1}\tau \rfloor $,
	and consider the action of the slowdown
	operator 
	$\tilde{\Xi}_{m,r}^{\leftarrow}$
	on the $n$-th particle.
	By \Cref{thm:mapping_qTASEP_TASEP_back_general_IC},
	the action of the terminal jumps by
	$\tilde B^{\mathrm{qT}}(\tau)$ follows.
	In particular, we note that the events of a terminal jump
	has probability of order $O(1-r)$ as $r \rightarrow 1$.
	It remains to consider bulk jumps and jump propagation.

	Recall 
	that 
	$\tilde H_{r^{m},r^{m+n}}^{\leftarrow}$ 
	replaces the old trajectory $x_n(\mathsf{t})$ by a random walk
	$x_n'(\mathsf{t})$
	in reverse continuous time from $\mathsf{M}$ to $0$
	which makes steps down by 1 at rates
	$\mathrm{rate}^{\leftarrow}_{n;r^m,r^{m+n}}$,
	see \eqref{eq:rate_n_down_notation}.
	If $b=c$, the rate $\mathrm{rate}^{\leftarrow}_{n;r^m,r^{m+n}}$ is of order $O(1-r)$ as $r \rightarrow 1$. Otherwise, if $b>c$, the rate $\mathrm{rate}^{\leftarrow}_{n;r^m,r^{m+n}}$ is of order $O(1)$ as $r \rightarrow 1$.

	Note that only one event with probability $O(1-r)$ may happen in a single moment
	of the new continuous time $\tau$
	in the Poisson-type limit 
	for 
	$\tilde{\Xi}_{0,r}^{\leftarrow}
	\tilde{\Xi}_{1,r}^{\leftarrow}
	\ldots
	\tilde{\Xi}_{m-1,r}^{\leftarrow}$
	as $r\to1$.
	It is either a terminal jump or a bulk jump. Recall that a terminal jump happens according to $\tilde B^{\mathrm{qT}}(\tau)$.

	For bulk jumps, observe that
	\begin{equation*}
		\mathrm{rate}^{\leftarrow}_{n;r^m,r^{m+n}}=
		\frac{n \ssp e^{-\tau}\ssp
		(1-q^{a+1})(1-q^{c})}{1-q}
		\ssp(1-r)+O(1-r)^2,
	\end{equation*}
	where $a,c$ are given in \eqref{eq:down_process_cont_time_abc_def_new}.
	Therefore, during the continuous time $d\tau$, on a segment of length 
	$d\mathsf{t}$ there is a new independent bulk jump with 
	small probability 
	\begin{equation}\label{eq:2022_11_18_down_jump_proof}
		\frac{n \ssp e^{-\tau}\ssp
		(1-q^{x_{n-1}(\mathsf{t})-x_n(\mathsf{t})})(1-q^{x_n(\mathsf{t})-x_{n+1}(\mathsf{t})-1})}
		{1-q}\ssp
		d\tau \ssp d\mathsf{t}.
	\end{equation}
	Moreover, a bulk jump can happen on at most one interval.
	Averaging \eqref{eq:2022_11_18_down_jump_proof} over the segment 
	$[x_n(\mathsf{t}_{j-1}),x_n(\mathsf{t}_{j})]$ where this jump happens
	immediately
	leads to the desired jump rates 
	\eqref{eq:rate_slowdown_qTASEP_new}.

	Finally,
	once an event with probability $O(1-r)$ occurred for a particle $x_n$ at $\mathsf{t}=\mathsf{t}_*$,
	the rest of the $x_n$'s trajectory to the left of $\mathsf{t}_*$ needs to be 
	instantaneously modified. This modification happens according to the random
	walk $\tilde H_{r^m,r^{m+n}}^{\leftarrow}$. This leads to  further nontrivial jumps down by $1$
	when $b>c$. In the case that the new trajectory joins the old trajectory after a terminal or bulk jump, we will have $b=c$ and the probability that the new trajectory jumps down by one again is of order $O(1-r)$, with an overall probability of $O((1-r)^2)$ for this sequence of events. Thus, the new trajectory will not deviate from the old trajectory after deviating once and joining the old trajectory. Then, in the limit as $r\to1$,
	this jump propagation turns into the Poisson random walk with rates
	\eqref{eq:rate_slowdown_qTASEP_propagation}.
	This 
	completes the proof.
\end{proof}

\begin{remark}
	\label{rmk:time_inhom_slowdown_discussion_new}
	Notice that the time inhomogeneity in
	$\tilde \Xi^{\leftarrow}(\tau)$
	is only present in the bulk jumps and jump propagation,
	but not 
	in the terminal jumps.
	From the proof of \Cref{prop:limit_hom_qTASEP_down_new},
	this is because 
	the 
	rates of bulk jumps for $x_n$
	depend on both parameters $\alpha_m=r^m$, 
	$\alpha_{m+n}=r^{m+n}$ before the $r\to 1$ limit,
	while for the terminal jumps they only depend
	on the ratio $\alpha_{m+n}/\alpha_m=\alpha_n/\alpha_0=r^n$.
\end{remark}

From \Cref{prop:limit_hom_qTASEP_down_new,prop:action_of_Xi_operator}
we immediately get the following slowdown action 
of the dynamics
$\tilde \Xi^{\leftarrow}(\tau)$
on trajectories of the 
homogeneous continuous time $q$-TASEP with all particles speeds equal to one:

\begin{proposition}
	\label{prop:action_Xi_down_cont_new}
	Fix
	$\tau\in \mathbb{R}_{\ge0}$, and let
	$\mathfrak{x}=\{\mathbf{x}(\mathsf{t})\}_{0\le \mathsf{t}\le \mathsf{M}}$ and 
	$\{\mathbf{y}(\mathsf{t})\}_{0\le \mathsf{t}\le \mathsf{M}}$
	be the homogeneous continuous time $q$-TASEPs
	started from a fixed initial
	configuration $\mathbf{x}(0)$ and
	from 
	the random initial configuration
	$\delta_{\mathbf{x}(0)}\tilde B^{\mathrm{qT}}(\tau)$, respectively.
	Apply
	$\tilde \Xi^{\leftarrow}(\tau)$
	to
	$\mathfrak{x}$,
	that is, run the slowdown dynamics 
	(\Cref{def:Xi_down_cont_time})
	for time $\tau$ started from
	$\mathfrak{x}$.
	Then, the resulting random trajectory
	is distributed as
	$\{\mathbf{y}(e^{-\tau}\mathsf{t})\}_{0\le \mathsf{t}\le \mathsf{M}}$.
\end{proposition}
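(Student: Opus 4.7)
The plan is to derive this as the $r\nearrow 1$ continuous-time limit of the already-established discrete statement in \Cref{prop:action_of_Xi_operator}, using the convergence of slowdown operators in \Cref{prop:limit_hom_qTASEP_down_new} together with the well-known Poisson-type limit from the geometric-rate $q$-TASEP to the homogeneous continuous time $q$-TASEP.

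First I would fix $\tau\in\mathbb{R}_{\ge0}$ and set $m=\lfloor (1-r)^{-1}\tau\rfloor$, so that as $r\nearrow 1$ we have $r^m\to e^{-\tau}$ and $(\tilde B^{\mathrm{qT}}_r)^m\to \tilde B^{\mathrm{qT}}(\tau)$ in the sense of matrix elements on $\mathscr{X}$ (this is precisely the limit recalled in \Cref{sub:QTASEP_cont_back}). For each $r<1$, let $\mathfrak{x}^{(r)}=\{\mathbf{x}^{(r)}(\mathsf{t})\}_{0\le \mathsf{t}\le \mathsf{M}}$ denote the discrete geometric $q$-TASEP with speeds $\alpha_i=r^i$ and initial condition $\mathbf{x}(0)$, and $\mathfrak{y}^{(r)}$ denote the same process started from the random initial condition $\delta_{\mathbf{x}(0)}(\tilde B^{\mathrm{qT}}_r)^m$. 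By \Cref{prop:action_of_Xi_operator}, the joint law of the trajectory obtained by applying $\tilde\Xi_{0,r}^{\leftarrow}\tilde\Xi_{1,r}^{\leftarrow}\ldots\tilde\Xi_{m-1,r}^{\leftarrow}$ to $\mathfrak{x}^{(r)}$ equals the joint law of $\{\mathbf{y}^{(r)}(r^m\mathsf{t})\}_{0\le \mathsf{t}\le \mathsf{M}}$.

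Next I would pass to the limit $r\nearrow 1$ on both sides of this distributional identity, working with the transition densities \eqref{eq:Markov_op_cont_times_prob_dens_new} on the connected components of $\mathscr{X}^{[0,\mathsf{M}]}$. On the right, as $r\nearrow 1$ the process $\mathfrak{x}^{(r)}$ converges to the homogeneous continuous time $q$-TASEP $\mathfrak{x}$, and the random initial condition $\delta_{\mathbf{x}(0)}(\tilde B^{\mathrm{qT}}_r)^m$ converges in distribution to $\delta_{\mathbf{x}(0)}\tilde B^{\mathrm{qT}}(\tau)$; coupled with $r^m\to e^{-\tau}$, this yields the limiting law $\{\mathbf{y}(e^{-\tau}\mathsf{t})\}_{0\le\mathsf{t}\le\mathsf{M}}$. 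On the left, I apply \Cref{prop:limit_hom_qTASEP_down_new} to identify the limiting action of the composed slowdown operators with the continuous-time semigroup $\tilde\Xi^{\leftarrow}(\tau)$ from \Cref{def:Xi_down_cont_time}, simultaneously using that $\mathfrak{x}^{(r)}\Rightarrow \mathfrak{x}$ as the underlying trajectory on which the operators act.

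The main obstacle is the joint limit: one must interchange the convergence of the random input trajectory $\mathfrak{x}^{(r)}\Rightarrow \mathfrak{x}$ with the convergence of the Markov operators $\tilde\Xi_{0,r}^{\leftarrow}\cdots\tilde\Xi_{m-1,r}^{\leftarrow}\to \tilde\Xi^{\leftarrow}(\tau)$. The cleanest way is to note that both convergences are in the sense of transition densities on each connected component of $\mathscr{X}^{[0,\mathsf{M}]}$ (which is an open subset of some $\mathbb{R}^d$), and the joint densities are products of the $q$-TASEP trajectory density and the slowdown transition density, both of which depend continuously on the data. Since all configurations involved are eventually left-packed (there exists $N$ with $x_n(\mathsf{t})=-n$ for $n>N$, uniformly over the time interval), only finitely many coordinates are active and all sums/products are finite, so pointwise convergence of densities on each component suffices. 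This finite-dimensional reduction, together with the bulk-jump rate computation carried out in the proof of \Cref{prop:limit_hom_qTASEP_down_new}, closes the argument and yields the claimed identity in distribution.
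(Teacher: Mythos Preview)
Your approach is essentially the same as the paper's: the proposition is stated there as an immediate consequence of \Cref{prop:action_of_Xi_operator} and \Cref{prop:limit_hom_qTASEP_down_new}, and you correctly spell out the $r\nearrow 1$ limiting argument that combines them. One small slip: the process $\mathfrak{x}^{(r)}$ you introduce is the \emph{continuous time} $q$-TASEP with geometric speed sequence $\alpha_i=r^i$ (as in \Cref{sub:qTASEP_limit_down_new}), not the discrete-time geometric $q$-TASEP; the rest of your argument already treats it as such, so this is only a wording issue.
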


\subsection{Speedup dynamics for the $q$-TASEP with step initial configuration}
\label{sub:qTASEP_limit_up_new}

The slowdown process
$\tilde \Xi^{\leftarrow}(\tau)$
constructed above in \Cref{sub:qTASEP_limit_down_limit_new}
provides a bijectivisation
of the intertwining relation of
\Cref{thm:mapping_qTASEP_TASEP_back_general_IC},
\begin{equation}
	\label{eq:qTASEP_intertwining_S10}
	\tilde T^{\mathrm{qT}}(\mathsf{t})
	\ssp
	\tilde B^{\mathrm{qT}}(\tau)
	=
	\tilde B^{\mathrm{qT}}(\tau)
	\ssp
	\tilde T^{\mathrm{qT}}\bigl(e^{-\tau}\mathsf{t}\bigr).
\end{equation}
One can informally say that the slowdown process
acts on identity \eqref{eq:qTASEP_intertwining_S10}
from the left-hand side to the right-hand side, see \Cref{prop:action_Xi_down_cont_new}.
The slowdown process
contains, in particular, 
the backwards $q$-TASEP 
$\tilde B^{\mathrm{qT}}(\tau)$ running
on the terminal configuration $\mathbf{x}(\mathsf{M})$
of the trajectory. 
In this subsection
we discuss a bijectivisation of 
\eqref{eq:qTASEP_intertwining_S10}
in another direction, from right to left,
by means of a \emph{speedup process}
$\tilde \Xi^{\rightarrow}(\tau)$.
To simplify notation and formulations, 
we only consider the action of the speedup process
on 
trajectories with the step initial configuration
\begin{equation*}
	\mathbf{y}(0)=\mathbf{x}_{step}=\{\ldots,-3,-2,-1 \}.
\end{equation*}

\begin{remark}
	\label{rmk:speedup_process_general_IC}
	In the general case, the initial configuration $\mathbf{y}(0)$ must be random with distribution $\delta_{\mathbf{x}(0)}\ssp\tilde B^{\mathrm{qT}}(\tau)$ where $\mathbf{x}(0)\in\mathscr{X}$ is fixed.
	In particular, 
	the speedup process itself would 
	need as its input
	a deterministic sequence of down particle jumps 
	at the trajectory's initial configuration. That is, such a sequence
	would record
	the transition from $\mathbf{x}(0)$ to $\mathbf{y}(0)$ during time~$\tau$.
	These deterministic jumps
	can be easily incorporated into
	the definitions and the results given below in 
	the current \Cref{sub:qTASEP_limit_up_new},
	similarly to the second part of
	\Cref{thm:coupling_trajectories_with_Pn_swap}
	and \Cref{prop:cont_time_qTASEP_up_result}.
	However, for simplicity, we omit the discussion
	of this more general case and focus only on 
	trajectories with the step initial configuration.
\end{remark}

The Markov operator 
$\tilde \Xi^{\rightarrow}(\tau)$
of the speedup process, 
on trajectories with the step initial configuration,
is constructed as the $r\nearrow 1$ limit 
of the sequential application of the 
corresponding speedup operators
$\tilde{\Xi}_{0,r}^{\rightarrow}
\tilde{\Xi}_{1,r}^{\rightarrow}
\ldots
\tilde{\Xi}_{m-1,r}^{\rightarrow}$,
where $m=\lfloor (1-r)^{-1}\tau \rfloor$.
In contrast with the slowdown operators
(\Cref{def:action_on_trajectories_from_future_to_past_qTASEP}),
the action of 
$\tilde{\Xi}_{i,r}^{\rightarrow}$
is a sequential application 
of the 
history rewriting processes 
$\tilde H_{r^{-i}; r^{-i+n}}^{\rightarrow}$,
where $n$ decreases from a suitably large $N$ down to~$1$.
Here, $N$ depends on the trajectory
$\mathfrak{y}=\{\mathbf{y}(\mathsf{t})\}_{0\le \mathsf{t}\le\mathsf{M}}
\in \mathscr{X}^{[0,\mathsf{M}]}$
to which 
$\tilde{\Xi}_{i,r}^{\rightarrow}$
is applied. More precisely, $N$ is determined so that 
$y_k(\mathsf{t})=-k$ for all $\mathsf{t}\in[0,\mathsf{M}]$ and
$k\ge N-1$, see
\Cref{def:cont_time_space_of_trajectories_new}.
Similarly to \Cref{prop:action_of_Xi_operator}, one can check that 
the action of 
$\tilde{\Xi}_{0,r}^{\rightarrow}
\tilde{\Xi}_{1,r}^{\rightarrow}
\ldots
\tilde{\Xi}_{m-1,r}^{\rightarrow}$
on a trajectory 
$\mathfrak{y}=\{\mathbf{y}(\mathsf{t})\}_{\mathsf{t}\ge 0}$ 
of the $q$-TASEP with 
particle speeds
$(1,r,r^2,\ldots )$ results in a
trajectory of the $q$-TASEP with
particle speeds
$(r^{-m},r^{-m+1},r^{-m+2},\ldots )$;
 the initial configuration is $\mathbf{x}_{step}$ in both processes.
Equivalently, 
one can say that the action of the Markov operator
$\tilde{\Xi}_{0,r}^{\rightarrow}
\tilde{\Xi}_{1,r}^{\rightarrow}
\ldots
\tilde{\Xi}_{m-1,r}^{\rightarrow}$
speeds up the time $\mathsf{t}$ in the $q$-TASEP
with rates $(1,r,r^2,\ldots )$ by the factor $r^{-m}>1$.

The $r\nearrow 1$ limit of the operators 
$\tilde{\Xi}_{0,r}^{\rightarrow}
\tilde{\Xi}_{1,r}^{\rightarrow}
\ldots
\tilde{\Xi}_{m-1,r}^{\rightarrow}$
in the sense of the transition densities
\eqref{eq:Markov_op_cont_times_prob_dens_new}
is obtained very similarly to 
\Cref{sub:qTASEP_limit_down_limit_new},
with an additional simplification
coming from the step initial configuration.
Therefore, here we will only define
the resulting continuous time speedup process
$\tilde{\Xi}^{\rightarrow}(\tau)$,
and formulate an analogue of 
\Cref{prop:action_Xi_down_cont_new}.

\begin{definition}
	\label{def:Xi_up_cont_time}
	Fix $\mathsf{M}\in \mathbb{R}_{\ge0}$. 
	The continuous time \emph{speedup Markov dynamics}
	$\tilde{\Xi}^{\rightarrow}(\tau)$
	acting on trajectories 
	$\mathfrak{y}=\{\mathbf{y}(\mathsf{t})\}
	_{0\le \mathsf{t}\le\mathsf{M}}
	\in \mathscr{X}^{[0,\mathsf{M}]}$
	with $\mathbf{y}(0)=\mathbf{x}_{step}$
	possesses one source of independent jumps
	(the \emph{bulk jumps}), and a mechanism of random jump propagation.
	Let
	\begin{equation*}
		T_{\mathfrak{y}}=\{0=\mathsf{t}_0<\mathsf{t}_1<\ldots<\mathsf{t}_k
		<\mathsf{t}_{k+1}=\mathsf{M}\}.
	\end{equation*}
	For $n\ge1$ and $1\le j\le k+1$,
	attach to each segment 
	$[y_n(\mathsf{t}_{j-1}),y_n(\mathsf{t}_j)]$
	an independent exponential clock of 
	(time-inhomogeneous) rate
	\begin{equation}
		\label{eq:rate_speedup_qTASEP_new}
		\left( y_n(\mathsf{t}_j)-y_n(\mathsf{t}_{j-1}) \right)
		\frac{n\ssp e^{\tau} (1-q^{y_n(\mathsf{t})-y_{n+1}(t)})
		(1-q^{y_{n-1}(\mathsf{t})-y_{n}(\mathsf{t})-1})}{1-q},
		\qquad 
		\textnormal{where $\mathsf{t}\in (\mathsf{t}_{j-1},\mathsf{t}_{j})$}.
	\end{equation}
	Here, $\tau$ is the time variable in the dynamics $\tilde \Xi^{\rightarrow}(\tau)$.
	When the clock rings, place a uniformly random point
	$\mathsf{t}_*\in (\mathsf{t}_{j-1},\mathsf{t}_j)$,
	and let the trajectory of $y_n$ make a new increment at $\mathsf{t}_*$. In particular,
	$y_n'(\mathsf{t}_*)=y_n(\mathsf{t}_*)+1$
	and 
	$y_n'(\mathsf{t}_*-)=y_n(\mathsf{t}_*-)$.
	Note that if 
	$y_n(\mathsf{t}_*)=y_{n-1}(\mathsf{t}_*)-1$,
	the particle
	$y_n(\mathsf{t}_*)$ is blocked from jumping up, and the rate 
	\eqref{eq:rate_speedup_qTASEP_new} vanishes, as it should.
	Almost surely there are only finitely many independent
	jumps in finite time.

	Let us now describe the \emph{jump propagation}. 
	If a particle $y_n(\mathsf{t}_*)$
	has jumped up to $y_n'(\mathsf{t}_*)$,
	then the new trajectory of
	$y_n'$ coincides with that of $y_n$ for $\mathsf{t}<\mathsf{t}_*$. 
	To the right of $\mathsf{t}_*$, instantaneously (in $\tau$)
	continue the new trajectory $y_n'$
	according to a Poisson
	simple random walk in forward time $\mathsf{t}$
	in the chamber
	$y_n(\mathsf{t})\le y_n'(\mathsf{t})< y_{n-1}(\mathsf{t})$
	which makes jumps up by $1$ in continuous time $\mathsf{t}$
	with rate
	\begin{equation}
		\label{eq:rate_speedup_qTASEP_new_2}
		\frac{e^{\tau}\ssp (1-q^{b-c})(1-q^a)(1-q^{b+1})}{1-q^{b-c+1}},
	\end{equation}
	where $a,b,c$ are given in \eqref{eq:down_process_cont_time_abc_def_UP_new}.
	During this instantaneous jump propagation, $y_n'(\mathsf{t})$
	either
	joins the old trajectory of
	$y_n$ and continues to follow it till $\mathsf{t}=\mathsf{M}$,
	or modifies the terminal configuration 
	$\mathbf{y}(\mathsf{M})$. In particular, the new trajectory will not deviate from the old trajectory once it has deviated and joined the old trajectory.
	The trajectories of all other particles
	$y_l$, $l\ne n$, are no affected by this instantaneous jump propagation.
\end{definition}

The jump rates 
\eqref{eq:rate_speedup_qTASEP_new} 
and 
\eqref{eq:rate_speedup_qTASEP_new_2}
in the speedup process
$\tilde{\Xi}^{\rightarrow}(\tau)$
are 
obtained 
in the $r\to 1$ expansion of the jump rates
\eqref{eq:rate_n_up_notation}
(with $\alpha_{n-1}=r^{-m}$, $\alpha_n=r^{-m+n}$)
for $b=c$ and $b>c$, respectively.
The
argument here is very similar to the proof of 
\Cref{prop:limit_hom_qTASEP_down_new}.

The speedup process 
acts on trajectories of the homogeneous $q$-TASEP with the step initial
configuration as follows:

\begin{proposition}
	\label{prop:action_Xi_up_cont}
	Let 
	$\mathfrak{y}=\{\mathbf{y}(\mathsf{t})\}_{\mathsf{t}\ge 0}$ 
	be the homogeneous continuous time $q$-TASEP started from 
	$\mathbf{x}_{step}$.
	Let $\tau,\mathsf{M}\in \mathbb{R}_{\ge0}$.
	Apply the speedup operator 
	$\tilde \Xi^{\rightarrow}(\tau)$
	to $\mathfrak{y}$ on $[0,\mathsf{M}]$. Then, the resulting random trajectory is distrubuted as 
	$\{\mathbf{y}(e^{\tau}\mathsf{t})\}_{0\le \mathsf{t}\le \mathsf{M}}$.
\end{proposition}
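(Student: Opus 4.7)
My plan is to mirror the proof of \Cref{prop:action_Xi_down_cont_new}, exploiting the symmetry between the $D$-type and $U$-type bijectivisations. The argument splits into two parts: a discrete-time identity showing that the composition $\tilde\Xi_{0,r}^{\rightarrow} \tilde\Xi_{1,r}^{\rightarrow} \cdots \tilde\Xi_{m-1,r}^{\rightarrow}$ of the discrete speedup operators acts on trajectory measures in the expected way, and a Poisson-type limit $r \nearrow 1$ identifying the $r$-composition with the continuous-time process $\tilde\Xi^{\rightarrow}(\tau)$ of \Cref{def:Xi_up_cont_time}, in the sense of transition densities \eqref{eq:Markov_op_cont_times_prob_dens_new}.

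For the discrete identity, I iterate \Cref{prop:cont_time_qTASEP_up_result}. A single application of $\tilde\Xi_{i,r}^{\rightarrow}$ is by definition a sequential application of $\tilde H_n^{\rightarrow}$ for $n = N, N-1, \ldots, 1$, and by \Cref{prop:cont_time_qTASEP_up_result} applied at each $n$ this transforms a trajectory of $q$-TASEP with speeds $(r^{-i}, r^{-i+1}, r^{-i+2}, \ldots)$ started from $\mathbf{x}_{step}$ into a trajectory with speeds $(r^{-i-1}, r^{-i}, r^{-i+1}, \ldots)$ started from $\mathbf{x}_{step}$. The crucial point that justifies iterating without randomizing the initial state is that the $q$-TASEP swap operator $\tilde P^{(n)}$ acts deterministically on $\mathbf{x}_{step}$: the distribution $\varphi_{q,\alpha_n/\alpha_{n-1},0}(\ssp\cdot\mid 0)$ is concentrated at $0$, so the step configuration is fixed. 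Iterating this identity over $i = 0, 1, \ldots, m-1$ yields a trajectory of $q$-TASEP with speeds $(r^{-m}, r^{-m+1}, \ldots) = r^{-m}(1, r, r^2, \ldots)$ started from $\mathbf{x}_{step}$. Since scaling all particle speeds by the common factor $r^{-m}$ is equivalent to rescaling continuous time by the same factor, this trajectory agrees in distribution with $\{\mathbf{y}(r^{-m}\mathsf{t})\}_{0\le\mathsf{t}\le\mathsf{M}}$.

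For the limit, I set $r = 1 - \epsilon$ and $m = \lfloor \tau/\epsilon\rfloor$ so that $r^{-m} \to e^{\tau}$. The rate \eqref{eq:rate_n_up_notation} with $\alpha_{n-1} = r^{-i}$ and $\alpha_n = r^{-i+n}$ admits two regimes. When $b = c$ (an independent deviation away from the old $y_n$), the rate equals $\frac{r^{-i}(1-r^n)(1-q^a)(1-q^{b+1})}{1-q}$, which is $O(\epsilon)$ with leading coefficient $e^\sigma\ssp n\cdot\frac{(1-q^a)(1-q^{b+1})}{1-q}$ upon setting $\sigma = i\epsilon$; accumulating these $O(\epsilon)$ contributions over the $m \sim \tau/\epsilon$ speedup steps (i.e.\ integrating $\sigma$ over $[0,\tau]$) and averaging the spatial length factor across the constant segment $[\mathsf{t}_{j-1},\mathsf{t}_j]$ of the piecewise constant trajectory reproduces exactly the bulk-jump rate \eqref{eq:rate_speedup_qTASEP_new}. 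When $b > c$, the rate is $O(1)$ in $\epsilon$ and converges directly to \eqref{eq:rate_speedup_qTASEP_new_2} via $r^{-i} \to e^\sigma$, which gives the instantaneous forward-in-time jump propagation inside the chamber $y_n(\mathsf{t}) \le y_n'(\mathsf{t}) < y_{n-1}(\mathsf{t})$.

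The principal obstacle is keeping careful track of orders of magnitude to ensure no spurious contributions survive. Two independent deviations originating in a single discrete speedup step have joint probability $O(\epsilon^2)$, and summed over $m = O(\epsilon^{-1})$ steps contribute $O(\epsilon) \to 0$; the same is true for the possibility of chaining two successive bulk deviations through one propagation random walk, since once the propagation has joined the old trajectory we have $b = c$ and a further deviation is of order $O(\epsilon)$. A related subtlety is that a propagation may reach the terminal time $\mathsf{M}$ and thereby modify $\mathbf{y}(\mathsf{M})$, which is allowed by \Cref{def:Xi_up_cont_time} but must be shown to arise with exactly the right conditional law; here the fact that the propagation rate \eqref{eq:rate_speedup_qTASEP_new_2} is the limit of the single-step deviation rate divided by $\epsilon$, and instantaneous in the rescaled time $\tau$, matches the definition exactly. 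Once these higher-order events are controlled, the transition densities of the discrete compositions converge to those of $\tilde\Xi^{\rightarrow}(\tau)$, and combining this with the discrete identity yields \Cref{prop:action_Xi_up_cont}.
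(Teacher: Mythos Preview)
Your proposal is correct and follows essentially the same approach as the paper. The paper does not give an explicit proof of \Cref{prop:action_Xi_up_cont}; it simply notes that the discrete speedup composition $\tilde{\Xi}_{0,r}^{\rightarrow}\cdots\tilde{\Xi}_{m-1,r}^{\rightarrow}$ acts on step-initial $q$-TASEP trajectories by speeding up time by $r^{-m}$ (``similarly to \Cref{prop:action_of_Xi_operator}'') and that the $r\nearrow 1$ limit is obtained ``very similarly to \Cref{sub:qTASEP_limit_down_limit_new}'', with the simplification that $\mathbf{x}_{step}$ is fixed by the swap operators. Your two-part argument (iterating \Cref{prop:cont_time_qTASEP_up_result} using $\varphi_{q,\alpha_n/\alpha_{n-1},0}(\cdot\mid 0)=\delta_0$, then expanding $\mathrm{rate}^{\rightarrow}_{n;r^{-i},r^{-i+n}}$ in the $b=c$ and $b>c$ regimes) is exactly the content of those references, spelled out in the appropriate direction.
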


\subsection{Slowdown and speedup dynamics for the homogeneous TASEP}
\label{sub:TASEP_equal_speeds}

The slowdown and speedup processes
$\tilde \Xi^{\leftarrow}(\tau)$
and
$\tilde \Xi^{\rightarrow}(\tau)$
for the homogeneous TASEP,  
with particle speeds $\alpha_i=1$ for all $i$,
are obtained by setting $q=0$
in the processes 
from \Cref{def:Xi_down_cont_time,def:Xi_up_cont_time}, respectively.
This greatly simplifies the dynamics.
In this subsection we provide the necessary definitions.

We start with the slowdown process
$\tilde \Xi^{\leftarrow}(\tau)$
acting on the space of trajectories
$\mathscr{X}^{[0,\mathsf{M}]}$ (\Cref{def:cont_time_space_of_trajectories_new}),
where $\mathsf{M}\in \mathbb{R}_{>0}$ is fixed:

\begin{definition}
	\label{def:TASEP_slowdown_ind}
	The continuous time slowdown Markov process
	$\{\tilde \Xi^{\leftarrow}(\tau)\}_{0\le \tau\le \tau_0}$
	for TASEP 
	acts on a
	trajectory 
	$\mathfrak{x}=\{\mathbf{x}(\mathsf{t}\mathsf)\}_{0\le \mathsf{t}\le \mathsf{M}}
	\in\mathscr{X}^{[0,\mathsf{M}]}$
	as follows:
	\begin{enumerate}[$\bullet$]
		\item 
			(\emph{terminal jumps})
			Run the backwards Hammersley dynamics 
			at the terminal configuration $\mathbf{x}(\mathsf{M})$.
			In particular, each 
			particle $\mathsf{x}_n(\mathsf{M})$, $n=1,2,\ldots $,
			jumps down to a new location $\mathsf{x}_n'(\mathsf{M})$,
			with $x_{n+1}(\mathsf{M})<x_n'(\mathsf{M})<x_n(\mathsf{M})$ and rate $n$ per available location to land. This process is time-homogeneous in $\tau$, and it is called the backwards Hammersley process.
		\item 
			(\emph{bulk jumps})
			Take an independent two-dimensional Poisson process
			$\mathfrak{P}_{[n]}$ in 
			$[0,\mathsf{M}]\times [0,\tau_0]$,
			for each $n\ge 1$, with inhomogeneous rate
			$e^{-\tau}$. As $\tau$ increases from $0$ to $\tau_0$, each point 
			$(\mathsf{t}_*,\tau_*)$ of $\mathfrak{P}_{[n]}$
			generates a jump down by $1$ of the trajectory 
			$\{x_n(\mathsf{t})\}_{0\le  \mathsf{t}\le \mathsf{M}}$ at 
			$\mathsf{t}=\mathsf{t}_*$. This means that 
			we set
			$x_n'(\mathsf{t}_*)=x_n(\mathsf{t}_*)-1$
			and 
			$x_n'(\mathsf{t}_*+)=x_n(\mathsf{t}_*+)$
			if
			$x_n(\mathsf{t}_*)>x_{n+1}(\mathsf{t}_*)+1$.
			Otherwise, if $x_n(\mathsf{t}_*)=x_{n+1}(\mathsf{t}_*)+1$, the jump down is blocked,
			and the trajectory $\mathfrak{x}$ is not changed.
		\item (\emph{jump propagation})
			Replace, instantaneously at the same moment $\tau=\tau_*$,
			the old trajectory of $x_n$ by the new one, $x_n'$,
			for all $\mathsf{t}\le \mathsf{t}_*$, where
			$\mathsf{t}_*\le \mathsf{M}$ is the time a 
			terminal or a bulk jump which
			occurred.
			The new trajectory $x_n'$ starts from $x_n'(\mathsf{t}_*)$
			and evolves 
			in backwards time $\mathsf{t}$
			in the chamber 
			$x_{n}(\mathsf{t})\ge x_{n}'(\mathsf{t})>x_{n+1}(\mathsf{t})$, 
			$0\le \mathsf{t}\le \mathsf{t}_*$.
			The dynamics of $x_n'(\mathsf{t})$ is that of the Poisson
			simple random walk which makes jumps down at rate $e^{-\tau_*}$, and gets 
			absorbed by the top wall $x_n(\mathsf{t})$ 
			of the chamber once it reaches it. In other words,
			$x_n'(\mathsf{t})$ 
			either joins the old trajectory of
			$x_n$ and continues to follow it till $\mathsf{t}=0$,
			or 
			it modifies the
			initial configuration $\mathbf{x}(0)$. 
			The trajectories of all other
			particles $x_l$, $l\ne n$, do not change during this instantaneous jump
			propagation.
	\end{enumerate}
\end{definition}

The slowdown process
$\tilde \Xi^{\leftarrow}(\tau)$
satisfies the $q=0$ version of \Cref{prop:action_Xi_down_cont_new},
that is, where the $q$-TASEP is replaced by the TASEP.

\medskip

Let us now describe the speedup process
for TASEP. As in \Cref{sub:qTASEP_limit_up_new},
we restrict attention only to trajectories
$\mathfrak{y}=\{\mathbf{y}(\mathsf{t})\}_{0\le \mathsf{t}\le \mathsf{M}}
\in \mathscr{X}^{[0,\mathsf{M}]}$
with the step initial configuration $\mathbf{y}(0)=\mathbf{x}_{step}$
(see \Cref{rmk:speedup_process_general_IC} for more discussion).
The $q=0$ speedup process is given as follows:

\begin{definition}
	\label{def:TASEP_speedup_ind}
	The continuous time process 
	$\tilde \Xi^{\rightarrow}(\tau)$
	for TASEP
	acts on trajectories 
	$\mathfrak{y}=\{\mathbf{y}(\mathsf{t})\}_{0\le \mathsf{t}\le \mathsf{M}}$
	with the step initial configuration as follows:
	\begin{enumerate}[$\bullet$]
		\item 
			(\emph{bulk jumps})
			Take an independent two-dimensional Poisson process $\mathfrak{P}_{[n]}$ in 
			$[0,\mathsf{M}]\times [0,\tau_0]$, for each $n\ge 1$, of inhomogeneous rate
			$e^{\tau}$. As $\tau$ increases from $0$ to $\tau_0$, each point 
			$(\mathsf{t}_*,\tau_*)$ of $\mathfrak{P}_{[n]}$
			generates a jump up by $1$ of the trajectory 
			$\{t_n(\mathsf{t})\}_{0\le  \mathsf{t}\le \mathsf{M}}$ at 
			$\mathsf{t}=\mathsf{t}_*$. This means 
			that
			$y_n'(\mathsf{t}_*)=y_n(\mathsf{t}_*)+1$
			and 
			$y_n'(\mathsf{t}_*-)=y_n(\mathsf{t}_*-)$
			unless this jump is blocked, when $y_n(\mathsf{t}_*)=y_{n-1}(\mathsf{t}_*)-1$).
			In the case of blocking, the trajectory of $y_n$ is not changed.
		\item 
			(\emph{jump propagation})
			Replace, instantaneously at the same moment $\tau=\tau_*$, the trajectory of $y_n$ with a new trajectory $y_n'$.
			If a particle $y_n(\mathsf{t}_*)$
			has jumped up by $1$ to $y_n'(\mathsf{t}_*)$,
			then the new trajectory of
			$y_n'$ coincides with that of $y_n$ for $\mathsf{t}<\mathsf{t}_*$. 
			To the right of $\mathsf{t}_*$, 
			continue the new trajectory $y_n'$
			according to a Poisson
			simple random walk in forward time $\mathsf{t}$
			in the chamber
			$y_n(\mathsf{t})\le y_n'(\mathsf{t})< y_{n-1}(\mathsf{t})$.
			The random walk makes a jump up by $1$ in continuous time $\mathsf{t}$
			at rate $e^{\tau_*}$
			and gets absorbed by the bottom wall 
			$y_n(t)$ of the chamber once it reaches it. In particular,
			$y_n'$
			either
			joins the old trajectory of
			$y_n$ and continues to follow it till $\mathsf{t}=\mathsf{M}$,
			or it modifies the terminal configuration 
			$\mathbf{y}(\mathsf{M})$.
			The trajectories of all other particles
			$y_l$, $l\ne n$, are no affected by this instantaneous jump propagation.
	\end{enumerate}
\end{definition}

The process
$\tilde \Xi^{\rightarrow}(\tau)$
satisfies the $q=0$ version of \Cref{prop:action_Xi_up_cont},
that is, the same statement where the word ``$q$-TASEP'' is replaced by ``TASEP''.

\medskip

The action of the
speedup dynamics on the first particle
produces an interesting 
coupling between standard Poisson processes on the positive half-line with 
different slopes (here by ``slope'' we mean the slope of the counting function
of the Poisson process, which is usually referred to as ``rate'', ``intensity'', or ``density'').
First, observe that the trajectory
$\{y_1(\mathsf{t})\}_{\mathsf{t}\ge 0}$ 
of the first TASEP particle is the 
continuous time
Poisson simple random walk 
with slope $1$.
By \Cref{prop:action_Xi_up_cont} for $q=0$,
after running the speedup process $\tilde \Xi^{\rightarrow}$
for time $\tau>0$,
the resulting trajectory
$\{y_1'(\mathsf{t})\}_{\mathsf{t} \ge0}$ 
of the first particle 
is
the continuous time Poisson simple random walk 
with slope $e^\tau$.
Moreover, note that 
the trajectory of the 
first particle evolves \emph{independently} from all other particles,
under the speedup dynamics.
This independence is a result of setting $q=0$, 
since for $q>0$ the jump rates in 
$\tilde \Xi^{\rightarrow}$ depend on the second trajectory $y_2(\mathsf{t})$.

Let us now describe the evolution of
the trajectory for the first particle.
We may assume that $\mathsf{M}=+\infty$
and give the description for the full trajectories, that is, 
where $\mathsf{t}\in[0,+\infty)$.
Change the time of the speedup process as 
$\hat \tau=e^{\tau}-1$. 
Then, the two-dimensional Poisson process $\mathfrak{P}_{[1]}$
of rate $e^\tau$, with $0\le \tau\le \tau_0$,
turns into the 
homogeneous two-dimensional Poisson process 
$\hat{\mathfrak{P}}_{[1]}$ of rate $1$,
with $0\le \tau\le \hat\tau_0=e^{\tau_0}-1$.

For each point
$(\mathsf{t}_*,\hat{\tau}_*)$
of
$\hat{\mathfrak{P}}_{[1]}$, 
set $y_1'(\mathsf{t}_*)=y_1(\mathsf{t}_*)+1$. Then, instantaneously replace a piece of the trajectory of the first particle
for $\mathsf{t}\ge \mathsf{t}_*$ by an independent 
Poisson random walk in continuous time $\mathsf{t}$
of slope $\hat\tau_*+1$. In particular, $y_1'$ is started from $y_1'(\mathsf{t}_*)$, and the new walk continues until it reaches the old trajectory of $y_1$ at some time $\mathsf{t}_\circ\in[0,+\infty]$.
Note that the case $\mathsf{t}_{\circ}= \infty$ happens with positive probability. Then, if $\mathsf{t}_\circ=\infty$, we independently
resample the whole trajectory to the right of $\mathsf{t}_*$ by a trajectory
with the new slope (jump rate) $\hat \tau_*+1$.
We may think that each jump of the trajectory is an instantaneous
``avalanch'' whose slope $\hat\tau_*+1$ grows with time. This resembles the avalanch
processes in, e.g.,
\cite{povolotsky2003asymmetric},
\cite{le2022equivalence},
but we will not explore this connection further in the present paper.
See \Cref{fig:Hammersley_strikes_back} for an illustration of the process.

\Cref{prop:action_Xi_up_cont}
for $q=0$ immediately implies the following coupling of 
Poisson processes with different slopes:

\begin{proposition}
	\label{prop:neat_last_fact}
	Take a Poisson simple random walk of slope $1$. Apply the
	dynamics on single-particle trajectories
	described above, evolving in continuous time $\hat\tau$. Then,
	the distribution of the resulting trajectory
	at each time $\hat \tau$ 
	is the Poisson simple random walk of slope $\hat \tau+1$.
\end{proposition}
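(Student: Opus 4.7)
The plan is to derive Proposition \ref{prop:neat_last_fact} as a direct specialization of Proposition \ref{prop:action_Xi_up_cont} at $q=0$, restricted to the marginal trajectory of the first particle. I will begin by observing two structural features of the speedup dynamics $\tilde\Xi^{\rightarrow}(\tau)$ at $q=0$ described in \Cref{def:TASEP_speedup_ind}: first, the rates \eqref{eq:rate_speedup_qTASEP_new} for the trajectory $\{y_1(\mathsf{t})\}$ do not depend on $y_2$ (the factor $1-q^{y_1-y_2}$ at $q=0$ equals $1$ whenever $y_1>y_2$, and the blocking factor involving $y_0=+\infty$ is absent), and second, the jump propagation step for $y_1$ takes place in the chamber $y_1(\mathsf{t})\le y_1'(\mathsf{t})<+\infty$, which imposes no upper boundary. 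Consequently the marginal process on $\{y_1(\mathsf{t})\}$ is an autonomous Markov process on trajectories, and it coincides with the single-particle dynamics described immediately before the statement.

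Next I would check the time reparametrization. In the autonomous dynamics for $y_1$, bulk jumps are generated by a two-dimensional Poisson process on $[0,+\infty)\times[0,\tau_0]$ of inhomogeneous rate $e^{\tau}$, and after each bulk jump at $(\mathsf{t}_*,\tau_*)$ the trajectory to the right of $\mathsf{t}_*$ is instantaneously replaced by a Poisson walk of slope $e^{\tau_*}$, absorbed by the old trajectory when it hits it. Substituting $\hat\tau=e^{\tau}-1$, the mapping $(\mathsf{t},\tau)\mapsto(\mathsf{t},\hat\tau)$ transforms the inhomogeneous Poisson process of rate $e^{\tau}$ into the homogeneous unit-rate Poisson process on $[0,+\infty)\times[0,\hat\tau_0]$, while the new slope at time $\hat\tau_*$ becomes $e^{\tau_*}=\hat\tau_*+1$. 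This matches verbatim the dynamics described in the paragraph preceding the Proposition.

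With the marginal identified, I would invoke Proposition \ref{prop:action_Xi_up_cont} at $q=0$ applied to the homogeneous TASEP trajectory $\mathfrak{y}=\{\mathbf{y}(\mathsf{t})\}_{\mathsf{t}\ge0}$ started from $\mathbf{x}_{step}$, which yields that after running $\tilde\Xi^{\rightarrow}(\tau)$ for time $\tau$ the resulting full trajectory is distributed as $\{\mathbf{y}(e^{\tau}\mathsf{t})\}_{\mathsf{t}\ge0}$. Projecting onto the first coordinate, the marginal trajectory of $y_1'$ at time $\tau$ is distributed as $\{y_1(e^{\tau}\mathsf{t})\}_{\mathsf{t}\ge0}$. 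Since $\{y_1(\mathsf{t})\}$ is a Poisson simple random walk of slope $1$ started at $y_1(0)=-1$, the time-scaled process $\{y_1(e^{\tau}\mathsf{t})\}$ is a Poisson simple random walk of slope $e^{\tau}=\hat\tau+1$. This is exactly the conclusion.

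The only subtle point—and what I view as the one place to take care—is justifying the autonomy of the marginal on $y_1$, that is, arguing that although the jump propagation mechanism in \Cref{def:Xi_up_cont_time} formally couples particles through the chamber $y_n\le y_n'<y_{n-1}$, the chamber for $n=1$ has no upper wall and the rate \eqref{eq:rate_speedup_qTASEP_new_2} at $q=0$ reduces to a constant $e^{\tau}$ independent of the positions of the remaining particles. Once this is observed, the remainder is just a bookkeeping step to match the specific single-particle prescription preceding the statement, and the time change $\hat\tau=e^\tau-1$ to pass from inhomogeneous to homogeneous Poisson driving.
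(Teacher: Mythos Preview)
Your proposal is correct and matches the paper's approach exactly: the paper states that the proposition follows immediately from \Cref{prop:action_Xi_up_cont} at $q=0$, after having already recorded (in the paragraphs preceding the statement) both the autonomy of the first-particle marginal at $q=0$ and the time change $\hat\tau=e^{\tau}-1$. You have simply spelled out these steps in more detail than the paper does, including the explicit verification that the chamber for $n=1$ has no upper wall and that the $q=0$ rates lose their dependence on $y_2$.
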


\begin{figure}[htpb]
	\centering
	\includegraphics[width=.7\textwidth]{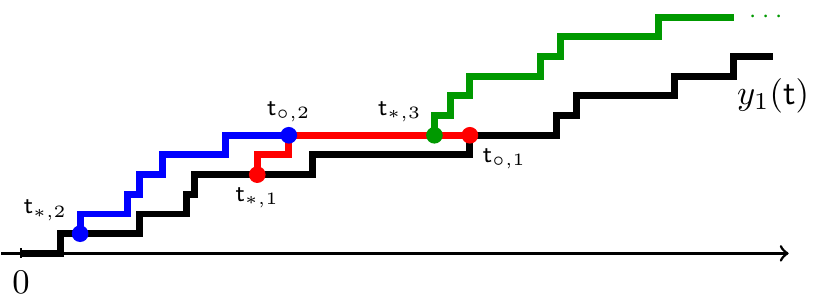}
	\caption{Three consecutive jumps in the process 
	$\tilde \Xi^{\rightarrow}$ acting on the Poisson simple random walk
	$y_1(\mathsf{t})$. Each $i$-th jump
	originates from $\mathsf{t}_{*,i}$, 
	which are the horizontal coordinates of the points of a two-dimensional
	Poisson process $\hat{\mathfrak{P}}_{[1]}$,
	and terminates at $\mathsf{t}_{\circ,i}\in[0,+\infty]$.
	One can say that the trajectory grows by instantaneous ``avalanches''.}
	\label{fig:Hammersley_strikes_back}
\end{figure}

We obtain a Markov chain
which preserves the Poisson process on $[0,1]$ as a corollary.  
The construction of this Markov process is based on the fact that,
after one jump in the process
$\tilde \Xi^{\rightarrow}$, we can dilate the horizontal line to 
decrease
the slope of the Poisson simple random walk
back to slope $1$.

\begin{definition}[{Markov chain preserving the Poisson random walk on $[0,1]$}]
	\label{def:Poisson_Markov}
	Let $y_1(\mathsf{t})$ be a trajectory 
	of the Poisson random walk of slope $1$ on $[0,1]$
	with $y_1(0)=0$.
	Independently, sample two random variables:
	$\mathsf{t}_*\in [0,1]$ with uniform distribution, and 
	$\zeta>0$ with exponential distribution with parameter $1$. 
	\begin{enumerate}
		\item If $\mathsf{t}_*>1 / (1+\zeta)$, do nothing
			and denote $Y_1(\mathsf{t})=y_1(\mathsf{t})$.
			This event happens with probability $1-e \Gamma(0,1)\approx 0.4$,
			where $\Gamma(0,1)=\int_1^\infty e^{-t}t^{-1}dt$ is the incomplete Gamma function.
		\item If $\mathsf{t}_*\le 1 / (1+\zeta)$,
			start a new independent Poisson
			random walk $y_1'(\mathsf{t})$ of slope $\zeta+1$ 
			from the location $(\mathsf{t}_*,y_1(\mathsf{t}_*)+1)$ and running in 
			forward time $\mathsf{t}\in[\mathsf{t}_*, 1 / (1+\zeta)]$.
			When the new walk reaches the old trajectory of $y_1(\mathsf{t})$
			or $\mathsf{t}$ reaches the coordinate $1 / (1+\zeta)$, stop the update.
			Denote the resulting trajectory by $Y_1(\mathsf{t})$, $0\le \mathsf{t}\le 1$.
	\end{enumerate}
	Forget the configuration of $Y_1(\mathsf{t})$
	for $\mathsf{t}\in (1 / (1+\zeta) , 1]$
	after the update in both cases. 
	Then,
	dilate the segment $[0,1 / (1+\zeta)]$ 
	by means of multiplication by $1+\zeta$. 
	In particular, the
	result of the application of the Markov transition operator 
	is, by definition, the trajectory 
	$Z_1(\mathsf{t})=Y_1(\mathsf{t} / (1+\zeta))$, $0\le \mathsf{t}\le 1$.
\end{definition}

\begin{corollary}
	\label{cor:Poisson_dilation_lemma}
	Let $\{y_1(\mathsf{t})\}_{0\le \mathsf{t}\le 1}$, 
	be a trajectory 
	of the Poisson simple random walk on $[0,1]$ with $y_1(0)=0$. 
	Then, 
	the trajectory $\{Z_1(\mathsf{t})\}_{0\le \mathsf{t}\le 1}$ described in \Cref{def:Poisson_Markov}
	has the same distribution as $\{y_1(\mathsf{t})\}_{0\le \mathsf{t}\le 1}$.
\end{corollary}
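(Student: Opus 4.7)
The plan is to prove \Cref{cor:Poisson_dilation_lemma} by conditioning on $\zeta$ and invoking \Cref{prop:neat_last_fact}. First I would record the elementary scaling fact: a Poisson simple random walk of slope $1+\zeta$ on the interval $[0, 1/(1+\zeta)]$ becomes a Poisson walk of slope $1$ on $[0, 1]$ after horizontal dilation by the factor $1+\zeta$. Consequently, the corollary reduces to the following conditional distributional identity: given $\zeta$, the restricted trajectory $\{Y_1(\mathsf{t})\}_{0 \le \mathsf{t} \le 1/(1+\zeta)}$ produced by the one-step update of \Cref{def:Poisson_Markov} has the law of a Poisson walk of slope $1+\zeta$ on $[0, 1/(1+\zeta)]$.

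To establish this conditional identity, I would realize the pair $(\mathsf{t}_*, \zeta)$ as the first (bottom-most) point of a two-dimensional homogeneous Poisson process of rate $1$ on $[0, 1] \times [0, \infty)$, which is precisely the driving process of the continuous-time speedup dynamics in the discussion preceding \Cref{prop:neat_last_fact}. Under this identification, \Cref{prop:neat_last_fact} asserts that the trajectory at deterministic time $\hat\tau$ is a Poisson walk of slope $1+\hat\tau$. The locality of the speedup process, namely that avalanches propagate only rightward in $\mathsf{t}$, implies that avalanches originating with $\mathsf{t}$-coordinate in $(1/(1+\zeta), 1]$ do not affect the trajectory on $[0, 1/(1+\zeta)]$; this is exactly the motivation for the case distinction in \Cref{def:Poisson_Markov}. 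A Palm-calculus argument at the first point of the driving Poisson process, combined with this locality, should identify the restricted $Y_1$ from \Cref{def:Poisson_Markov} (conditional on $\zeta$) with the restriction of the time-$\zeta$ speedup trajectory to $[0, 1/(1+\zeta)]$. Once this conditional identity is in hand, the dilation by $1+\zeta$ combined with averaging over $\zeta \sim \mathrm{Exp}(1)$ yields $Z_1 \stackrel{d}{=} y_1$, as required.

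The main obstacle will be the conditional distributional identification. The subtle point is that at deterministic time $\hat\tau$ the speedup dynamics generates a Poisson$(\hat\tau)$-distributed number of avalanches, while \Cref{def:Poisson_Markov} appears to describe at most one avalanche. Bridging this gap requires a careful strong-Markov argument at the stopping time $\zeta$, together with a precise accounting of how the rightward-propagating avalanches interact with the truncation at $\mathsf{t} = 1/(1+\zeta)$. Once this identification is made rigorous, the rescaling and the averaging step outlined above are straightforward and close out the proof.
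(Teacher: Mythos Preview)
The paper itself does not supply a proof: the corollary is presented as following from \Cref{prop:neat_last_fact} via the one-line heuristic ``after one jump in $\tilde\Xi^{\rightarrow}$, dilate to restore slope~$1$'', with no further detail. Your plan matches this intended route exactly.

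However, you correctly flag, and then do not resolve, the essential difficulty. Your key conditional identity --- that given $\zeta$, the restricted update $Y_1|_{[0,1/(1+\zeta)]}$ is a Poisson walk of slope $1+\zeta$ --- does \emph{not} follow from \Cref{prop:neat_last_fact} by the mechanism you describe. Identifying $(\mathsf{t}_*,\zeta)$ with the first point of the driving two-dimensional Poisson process in $[0,1]\times[0,\infty)$, and conditioning on that first point having height $\zeta$, yields a \emph{Bernoulli}$(1/(1+\zeta))$ number of avalanches in the rectangle $[0,1/(1+\zeta)]\times[0,\zeta]$, not the \emph{Poisson}$(\zeta/(1+\zeta))$ number that an unconditional speedup on $[0,1/(1+\zeta)]$ run for the deterministic time $\zeta$ would generate. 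A bare strong-Markov step does not close this gap either: immediately before $\zeta$ the trajectory is still $y_1$ (slope~$1$), not slope $1+\zeta^-$, so the deterministic-time conclusion of \Cref{prop:neat_last_fact} cannot be transported to the stopping time. The ``careful strong-Markov / Palm argument'' you invoke is therefore not a routine detail but the entire content of the corollary; neither you nor the paper actually carries it out. The paper's closing paragraph implicitly acknowledges this, pointing back to the discrete Yang--Baxter route (rather than to \Cref{prop:neat_last_fact} alone) as the real justification, and noting that a direct proof would be of independent interest.
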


To the best of our knowledge,
\Cref{prop:neat_last_fact}
and \Cref{cor:Poisson_dilation_lemma}
are new. 
It is possible to isolate the proof
of \Cref{prop:neat_last_fact}
from the rest of the paper. That proof would essentially follow by
iterating the
Yang-Baxter equation and taking Poisson-type limits. It would be 
interesting to find direct proofs
of
\Cref{prop:neat_last_fact}
and \Cref{cor:Poisson_dilation_lemma}
which would not rely on discrete models and Poisson limits.

\begin{bibdiv}
\begin{biblist}

\bib{aigner2020_Macdonald_RSK}{article}{
      author={Aigner, F.},
      author={Frieden, G.},
       title={{qRSt: A probabilistic Robinson--Schensted correspondence for
  Macdonald polynomials}},
        date={2021},
     journal={Intern. Math. Research Notices},
       pages={rnab083},
        note={arXiv:2009.03526 [math.CO]},
}

\bib{angel2011deterministic}{article}{
      author={Angel, O.},
      author={Holroyd, A.},
      author={Soo, T.},
       title={Deterministic thinning of finite poisson processes},
        date={2011},
     journal={Proc. AMS},
      volume={139},
      number={2},
       pages={707\ndash 720},
        note={arXiv:0912.0047 [math.PR]},
}

\bib{theodoros2019_determ}{article}{
      author={Assiotis, T.},
       title={{Determinantal Structures in Space Inhomogeneous Dynamics on
  Interlacing Arrays}},
        date={2020},
     journal={Ann. Inst. H. Poincar\'e},
      volume={21},
       pages={909\ndash 940},
        note={arXiv:1910.09500 [math.PR]},
}

\bib{ball2005poisson}{article}{
      author={Ball, K.},
       title={Poisson thinning by monotone factors},
        date={2005},
     journal={Electron. Commun. Probab.},
      volume={10},
       pages={60\ndash 69},
}

\bib{baxter2007exactly}{book}{
      author={Baxter, R.},
       title={{Exactly solved models in statistical mechanics}},
   publisher={Courier Dover Publications},
        date={2007},
}

\bib{BorodinCorwin2011Macdonald}{article}{
      author={Borodin, A.},
      author={Corwin, I.},
       title={Macdonald processes},
        date={2014},
     journal={Probab. Theory Relat. Fields},
      volume={158},
       pages={225\ndash 400},
        note={arXiv:1111.4408 [math.PR]},
}

\bib{BorodinCorwin2013discrete}{article}{
      author={Borodin, A.},
      author={Corwin, I.},
       title={{Discrete time q-TASEPs}},
        date={2015},
     journal={Int. Math. Res. Notices},
      volume={2015},
      number={2},
       pages={499\ndash 537},
        note={arXiv:1305.2972 [math.PR]},
}

\bib{CorwinBarraquand2015Beta}{article}{
      author={Barraquand, G.},
      author={Corwin, I.},
       title={{Random-walk in Beta-distributed random environment}},
        date={2016},
     journal={Probab. Theory Relat. Fields},
      volume={167},
      number={3-4},
       pages={1057\ndash 1116},
        note={arXiv:1503.04117 [math.PR]},
}

\bib{BCG6V}{article}{
      author={Borodin, A.},
      author={Corwin, I.},
      author={Gorin, V.},
       title={Stochastic six-vertex model},
        date={2016},
     journal={Duke J. Math.},
      volume={165},
      number={3},
       pages={563\ndash 624},
        note={arXiv:1407.6729 [math.PR]},
}

\bib{BorodinCorwinPetrovSasamoto2013}{article}{
      author={Borodin, A.},
      author={Corwin, I.},
      author={Petrov, L.},
      author={Sasamoto, T.},
       title={{Spectral theory for the q-Boson particle system}},
        date={2015},
     journal={Compos. Math.},
      volume={151},
      number={1},
       pages={1\ndash 67},
        note={arXiv:1308.3475 [math-ph]},
}

\bib{BorFerr2008DF}{article}{
      author={Borodin, A.},
      author={Ferrari, P.},
       title={{Anisotropic growth of random surfaces in 2+1 dimensions}},
        date={2014},
     journal={Commun. Math. Phys.},
      volume={325},
       pages={603\ndash 684},
        note={arXiv:0804.3035 [math-ph]},
}

\bib{BorodinGorin2008}{article}{
      author={Borodin, A.},
      author={Gorin, V.},
       title={Shuffling algorithm for boxed plane partitions},
        date={2009},
     journal={Adv. Math.},
      volume={220},
      number={6},
       pages={1739\ndash 1770},
        note={arXiv:0804.3071 [math.CO]},
}

\bib{bender1972enumeration}{article}{
      author={Bender, E.A.},
      author={Knuth, D.E.},
       title={Enumeration of plane partitions},
        date={1972},
     journal={J. Comb. Theo. A},
      volume={13},
      number={1},
       pages={40\ndash 54},
}

\bib{bosnjak2016construction}{article}{
      author={Bosnjak, G.},
      author={Mangazeev, V.},
       title={{Construction of $R$-matrices for symmetric tensor
  representations related to $U_{q}(\widehat{sl_{n}})$}},
        date={2016},
     journal={Jour. Phys. A},
      volume={49},
      number={49},
       pages={495204},
        note={arXiv:1607.07968 [math-ph]},
}

\bib{BufetovMucciconiPetrov2018}{article}{
      author={Bufetov, A.},
      author={Mucciconi, M.},
      author={Petrov, L.},
       title={{Yang-Baxter random fields and stochastic vertex models}},
        date={2021},
     journal={Adv. Math.},
      volume={388},
       pages={107865},
        note={arXiv:1905.06815 [math.PR]},
}

\bib{Borodin2014vertex}{article}{
      author={Borodin, A.},
       title={On a family of symmetric rational functions},
        date={2017},
     journal={Adv. Math.},
      volume={306},
       pages={973\ndash 1018},
        note={arXiv:1410.0976 [math.CO]},
}

\bib{BorodinPetrov2016_Hom_Lectures}{article}{
      author={Borodin, A.},
      author={Petrov, L.},
       title={{Lectures on Integrable probability: Stochastic vertex models and
  symmetric functions}},
        date={2016},
     journal={Lecture Notes of the Les Houches Summer School},
      volume={104},
        note={arXiv:1605.01349 [math.PR]},
}

\bib{BorodinPetrov2013NN}{article}{
      author={Borodin, A.},
      author={Petrov, L.},
       title={{Nearest neighbor Markov dynamics on Macdonald processes}},
        date={2016},
     journal={Adv. Math.},
      volume={300},
       pages={71\ndash 155},
        note={arXiv:1305.5501 [math.PR]},
}

\bib{BorodinPetrov2016inhom}{article}{
      author={Borodin, A.},
      author={Petrov, L.},
       title={Higher spin six vertex model and symmetric rational functions},
        date={2018},
     journal={Selecta Math.},
      volume={24},
      number={2},
       pages={751\ndash 874},
        note={arXiv:1601.05770 [math.PR]},
}

\bib{BorodinPetrov2016Exp}{article}{
      author={Borodin, A.},
      author={Petrov, L.},
       title={Inhomogeneous exponential jump model},
        date={2018},
     journal={Probab. Theory Relat. Fields},
      volume={172},
       pages={323\ndash 385},
        note={arXiv:1703.03857 [math.PR]},
}

\bib{BufetovPetrovYB2017}{article}{
      author={Bufetov, A.},
      author={Petrov, L.},
       title={{Yang-Baxter field for spin Hall-Littlewood symmetric
  functions}},
        date={2019},
     journal={Forum Math. Sigma},
      volume={7},
       pages={e39},
        note={arXiv:1712.04584 [math.PR]},
}

\bib{Basuetal2014_slowbond}{article}{
      author={Basu, R.},
      author={Sidoravicius, V.},
      author={Sly, A.},
       title={Last passage percolation with a defect line and the solution of
  the slow bond problem},
        date={2014},
     journal={arXiv preprint},
        note={arXiv:1408.3464 [math.PR]},
}

\bib{basu2017invariant}{article}{
      author={Basu, R.},
      author={Sarkar, S.},
      author={Sly, A.},
       title={Invariant measures for tasep with a slow bond},
        date={2017},
     journal={arXiv preprint},
        note={arXiv:1704.07799},
}

\bib{borodin_wheeler2018coloured}{article}{
      author={Borodin, A.},
      author={Wheeler, M.},
       title={{Coloured stochastic vertex models and their spectral theory}},
        date={2018},
     journal={arXiv preprint},
        note={arXiv:1808.01866 [math.PR]},
}

\bib{corwin2014brownian}{article}{
      author={Corwin, I.},
      author={Hammond, A.},
       title={{Brownian Gibbs property for Airy line ensembles}},
        date={2014},
     journal={Invent. math.},
      volume={195},
      number={2},
       pages={441\ndash 508},
        note={arXiv:1108.2291 [math.PR]},
}

\bib{costin2012blockage}{article}{
      author={Costin, O.},
      author={Lebowitz, J.},
      author={Speer, E.},
      author={Troiani, A.},
       title={The blockage problem},
        date={2013},
     journal={Bull. Inst. Math. Acad. Sinica (New Series)},
      volume={8},
      number={1},
       pages={47\ndash 72},
        note={arXiv:1207.6555 [math-ph]},
}

\bib{CorwinKPZ}{article}{
      author={Corwin, I.},
       title={{The Kardar-Parisi-Zhang equation and universality class}},
        date={2012},
     journal={Random Matrices Theory Appl.},
      volume={1},
       pages={1130001},
        note={arXiv:1106.1596 [math.PR]},
}

\bib{Corwin2014qmunu}{article}{
      author={Corwin, I.},
       title={{The $q$-Hahn Boson process and $q$-Hahn TASEP}},
        date={2014},
     journal={Int. Math. Res. Notices},
      number={rnu094},
        note={arXiv:1401.3321 [math.PR]},
}

\bib{Corwin2016Notices}{article}{
      author={Corwin, I.},
       title={{Kardar-Parisi-Zhang Universality}},
        date={2016},
     journal={Notices of the AMS},
      volume={63},
      number={3},
       pages={230\ndash 239},
}

\bib{COSZ2011}{article}{
      author={Corwin, I.},
      author={O'Connell, N.},
      author={Sepp{\"a}l{\"a}inen, T.},
      author={Zygouras, N.},
       title={{Tropical Combinatorics and Whittaker functions}},
        date={2014},
     journal={Duke J. Math.},
      volume={163},
      number={3},
       pages={513\ndash 563},
        note={arXiv:1110.3489 [math.PR]},
}

\bib{CorwinPetrov2015}{article}{
      author={Corwin, I.},
      author={Petrov, L.},
       title={Stochastic higher spin vertex models on the line},
        date={2016},
     journal={Commun. Math. Phys.},
      volume={343},
      number={2},
       pages={651\ndash 700},
        note={arXiv:1502.07374 [math.PR]},
}

\bib{CorwinPetrov2015arXiv}{article}{
      author={Corwin, I.},
      author={Petrov, L.},
       title={{Stochastic higher spin vertex models on the line}},
        date={2016},
     journal={Commun. Math. Phys.},
      volume={343},
      number={2},
       pages={651\ndash 700},
        note={Updated version including erratum. Available at
  \url{https://arxiv.org/abs/1502.07374v2}},
}

\bib{DiaconisFill1990}{article}{
      author={Diaconis, P.},
      author={Fill, J.A.},
       title={Strong stationary times via a new form of duality},
        date={1990},
     journal={Ann. Probab.},
      volume={18},
       pages={1483\ndash 1522},
}

\bib{dimitrov2020two}{article}{
      author={Dimitrov, E.},
       title={{Two-point convergence of the stochastic six-vertex model to the
  Airy process}},
        date={2020},
     journal={arXiv preprint},
        note={arXiv:2006.15934 [math.PR]},
}

\bib{dotsenko2013two}{article}{
      author={Dotsenko, Victor},
       title={Two-point free energy distribution function in (1+ 1) directed
  polymers},
        date={2013},
     journal={Jour. Phys. A},
      volume={46},
      number={35},
       pages={355001},
}

\bib{derbyshev2012totally}{article}{
      author={Derbyshev, A.},
      author={Poghosyan, S.},
      author={Povolotsky, A.},
      author={Priezzhev, V.},
       title={The totally asymmetric exclusion process with generalized
  update},
        date={2012},
     journal={Jour. Stat. Mech.},
      number={P05014},
        note={arXiv:1203.0902 [cond-mat.stat-mech]},
}

\bib{gurel2013poisson}{article}{
      author={Gurel-Gurevich, O.},
      author={Peled, R.},
       title={Poisson thickening},
        date={2013},
     journal={Israel J. Math.},
      volume={196},
      number={1},
       pages={215\ndash 234},
        note={arXiv:0911.5377 [math.PR]},
}

\bib{GasperRahman}{book}{
      author={Gasper, G.},
      author={Rahman, M.},
       title={{Basic hypergeometric series}},
   publisher={Cambridge University Press},
        date={2004},
}

\bib{GwaSpohn1992}{article}{
      author={Gwa, L.-H.},
      author={Spohn, H.},
       title={Six-vertex model, roughened surfaces, and an asymmetric spin
  {H}amiltonian},
        date={1992},
     journal={Phys. Rev. Lett.},
      volume={68},
      number={6},
       pages={725\ndash 728},
}

\bib{Gravner-Tracy-Widom-2002a}{article}{
      author={Gravner, J.},
      author={Tracy, C.},
      author={Widom, H.},
       title={Fluctuations in the composite regime of a disordered growth
  model},
        date={2002},
     journal={Commun. Math. Phys.},
      volume={229},
       pages={433\ndash 458},
}

\bib{halpin2015kpzCocktail}{article}{
      author={{Halpin-Healy}, T.},
      author={Takeuchi, K.},
       title={{A KPZ cocktail-shaken, not stirred...}},
        date={2015},
     journal={J. Stat. Phys},
      volume={160},
      number={4},
       pages={794\ndash 814},
        note={arXiv:1505.01910 [cond-mat.stat-mech]},
}

\bib{howitt_warren2009dynamics}{article}{
      author={Howitt, C.},
      author={Warren, J.},
       title={{Dynamics for the Brownian web and the erosion flow}},
        date={2009},
     journal={Stochast. Proc. Appl.},
      volume={119},
      number={6},
       pages={2028\ndash 2051},
        note={arXiv:math/0702542 [math.PR]},
}

\bib{imamura2013equal}{article}{
      author={Imamura, T.},
      author={Sasamoto, T.},
      author={Spohn, H.},
       title={{On the equal time two-point distribution of the one-dimensional
  KPZ equation by replica}},
        date={2013},
     journal={Jour. Phys. A},
      volume={46},
      number={35},
       pages={355002},
        note={arXiv:1305.1217 [cond-mat.stat-mech]},
}

\bib{Its-Tracy-Widom-2001a}{article}{
      author={Its, A.},
      author={Tracy, C.},
      author={Widom, H.},
       title={{Random words, Toeplitz determinants and integrable systems,
  II}},
        date={2001},
     journal={Physica D},
      volume={152-153},
       pages={199\ndash 224},
}

\bib{janowsky1992slow_bond}{article}{
      author={Janowsky, S.},
      author={Lebowitz, J.},
       title={Finite-size effects and shock fluctuations in the asymmetric
  simple-exclusion process},
        date={1992},
     journal={Physical Review A},
      volume={45},
      number={2},
       pages={618},
}

\bib{johansson2000shape}{article}{
      author={Johansson, K.},
       title={{Shape fluctuations and random matrices}},
        date={2000},
     journal={Commun. Math. Phys.},
      volume={209},
      number={2},
       pages={437\ndash 476},
        note={arXiv:math/9903134 [math.CO]},
}

\bib{JohanssonRahman2019}{article}{
      author={Johansson, K.},
      author={Rahman, M.},
       title={Multi-time distribution in discrete polynuclear growth},
        date={2021},
     journal={Comm. Pure Appl. Math.},
      volume={74},
      number={12},
       pages={2561\ndash 2627},
        note={arXiv:1906.01053 [math.PR]},
}

\bib{SaenzKnizelPetrov2018}{article}{
      author={Knizel, A.},
      author={Petrov, L.},
      author={Saenz, A.},
       title={{Generalizations of TASEP in discrete and continuous
  inhomogeneous space}},
        date={2019},
     journal={Commun. Math. Phys.},
      volume={372},
       pages={797\ndash 864},
         url={https://doi.org/10.1007/s00220-019-03495-44},
        note={arXiv:1808.09855 [math.PR].},
}

\bib{KulishReshSkl1981yang}{article}{
      author={Kulish, P.},
      author={Reshetikhin, N.},
      author={Sklyanin, E.},
       title={{Yang-Baxter equation and representation theory: I}},
        date={1981},
     journal={Letters in Mathematical Physics},
      volume={5},
      number={5},
       pages={393\ndash 403},
}

\bib{le2022equivalence}{article}{
      author={Le~Doussal, P.},
       title={{Equivalence of mean-field avalanches and branching diffusions:
  From the Brownian force model to the super-Brownian motion}},
        date={2022},
     journal={Jour. Phys. A},
        note={arXiv:2203.10512 [cond-mat.stat-mech]},
}

\bib{liu2022multipoint}{article}{
      author={Liu, Z.},
       title={Multipoint distribution of tasep},
        date={2022},
     journal={Ann. Probab.},
      volume={50},
      number={4},
       pages={1255\ndash 1321},
        note={arXiv:1907.09876 [math.PR]},
}

\bib{Mangazeev2014}{article}{
      author={Mangazeev, V.},
       title={{On the Yang--Baxter equation for the six-vertex model}},
        date={2014},
     journal={Nuclear Physics B},
      volume={882},
       pages={70\ndash 96},
        note={arXiv:1401.6494 [math-ph]},
}

\bib{Matrin-batch-2009}{article}{
      author={Martin, J.},
       title={Batch queues, reversibility and first-passage percolation},
        date={2009},
     journal={Queueing Systems},
      volume={62},
      number={4},
       pages={411\ndash 427},
        note={arXiv:0902.2026 [math.PR]},
}

\bib{matetski2017kpz}{article}{
      author={Matetski, K.},
      author={Quastel, J.},
      author={Remenik, D.},
       title={{The KPZ fixed point}},
        date={2021},
     journal={Acta Math.},
      volume={227},
      number={1},
       pages={115\ndash 203},
        note={arXiv:1701.00018 [math.PR]},
}

\bib{OConnell2003Trans}{article}{
      author={O'Connell, N.},
       title={{A path-transformation for random walks and the
  Robinson-Schensted correspondence}},
        date={2003},
     journal={Trans. AMS},
      volume={355},
      number={9},
       pages={3669\ndash 3697},
}

\bib{OConnell2003}{article}{
      author={O'Connell, N.},
       title={{Conditioned random walks and the RSK correspondence}},
        date={2003},
     journal={J. Phys. A},
      volume={36},
      number={12},
       pages={3049\ndash 3066},
}

\bib{Oconnell2009_Toda}{article}{
      author={O'Connell, N.},
       title={{Directed polymers and the quantum Toda lattice}},
        date={2012},
     journal={Ann. Probab.},
      volume={40},
      number={2},
       pages={437\ndash 458},
        note={arXiv:0910.0069 [math.PR]},
}

\bib{OSZ2012}{article}{
      author={O'Connell, N.},
      author={Sepp{\"a}l{\"a}inen, T.},
      author={Zygouras, N.},
       title={{Geometric RSK correspondence, Whittaker functions and
  symmetrized random polymers}},
        date={2014},
     journal={Invent. Math.},
      volume={197},
       pages={361\ndash 416},
        note={arXiv:1110.3489 [math.PR]},
}

\bib{Petrov2017push}{article}{
      author={Petrov, L.},
       title={{PushTASEP in inhomogeneous space}},
        date={2020},
     journal={Electron. J. Probab.},
      volume={25},
      number={114},
       pages={1\ndash 25},
        note={arXiv:1910.08994 [math.PR]},
}

\bib{petrov2019qhahn}{article}{
      author={Petrov, L.},
       title={Parameter permutation symmetry in particle systems and random
  polymers},
        date={2021},
     journal={SIGMA},
      volume={17},
      number={021},
       pages={34},
        note={arXiv:1912.06067 [math.PR]},
}

\bib{Povolotsky2013}{article}{
      author={Povolotsky, A.},
       title={{On integrability of zero-range chipping models with factorized
  steady state}},
        date={2013},
     journal={J. Phys. A},
      volume={46},
       pages={465205},
        note={arXiv:1308.3250 [math-ph]},
}

\bib{povolotsky2003asymmetric}{article}{
      author={Povolotsky, A.M.},
      author={Priezzhev, V.B.},
      author={Hu, C.-K.},
       title={The asymmetric avalanche process},
        date={2003},
     journal={Jour. Stat. Phys.},
      volume={111},
      number={5},
       pages={1149\ndash 1182},
        note={arXiv:cond-mat/0208173 [cond-mat.stat-mech]},
}

\bib{prolhac2011two}{article}{
      author={Prolhac, S.},
      author={Spohn, H.},
       title={Two-point generating function of the free energy for a directed
  polymer in a random medium},
        date={2011},
     journal={Jour. Stat. Mech.},
      volume={2011},
      number={01},
       pages={P01031},
}

\bib{PetrovSaenz2019backTASEP}{article}{
      author={Petrov, L.},
      author={Saenz, A.},
       title={{Mapping TASEP back in time}},
        date={2021},
     journal={Probab. Theory Relat. Fields},
        note={arXiv:1907.09155 [math.PR]. Online first},
}

\bib{PetrovTikhonov2019}{article}{
      author={Petrov, L.},
      author={Tikhonov, M.},
       title={{Parameter symmetry in perturbed GUE corners process and
  reflected drifted Brownian motions}},
        date={2020},
     journal={Jour. Stat. Phys.},
      volume={181},
       pages={1996\ndash 2010},
        note={arXiv:1912.08671 [math.PR]},
}

\bib{quastel2019kp}{article}{
      author={Quastel, J.},
      author={Remenik, D.},
       title={Kp governs random growth off a one dimensional substrate},
        date={2019},
     journal={arXiv preprint},
        note={arXiv:1908.10353 [math.PR]},
}

\bib{QuastelSpohnKPZ2015}{article}{
      author={Quastel, J.},
      author={Spohn, H.},
       title={{The one-dimensional KPZ equation and its universality class}},
        date={2015},
     journal={J. Stat. Phys},
      volume={160},
      number={4},
       pages={965\ndash 984},
        note={arXiv:1503.06185 [math-ph]},
}

\bib{seppalainen2001slow_bond}{article}{
      author={Sepp\"{a}l\"{a}inen, T.},
       title={Hydrodynamic profiles for the totally asymmetric exclusion
  process with a slow bond},
        date={2001},
     journal={J. Stat. Phys},
      volume={102},
      number={1-2},
       pages={69\ndash 96},
        note={arXiv:math/0003049 [math.PR]},
}

\bib{Seppalainen2012}{article}{
      author={Sepp{\"a}l{\"a}inen, T.},
       title={{Scaling for a one-dimensional directed polymer with boundary
  conditions}},
        date={2012},
     journal={{Ann. Probab.}},
      volume={40(1)},
       pages={19\ndash 73},
        note={arXiv:0911.2446 [math.PR]},
}

\bib{SasamotoWadati1998}{article}{
      author={Sasamoto, T.},
      author={Wadati, M.},
       title={{Exact results for one-dimensional totally asymmetric diffusion
  models}},
        date={1998},
     journal={J. Phys. A},
      volume={31},
       pages={6057\ndash 6071},
}

\bib{TW_ASEP1}{article}{
      author={Tracy, C.},
      author={Widom, H.},
       title={{Integral formulas for the asymmetric simple exclusion process}},
        date={2008},
     journal={Comm. Math. Phys},
      volume={279},
       pages={815\ndash 844},
        note={arXiv:0704.2633 [math.PR]. Erratum: Commun. Math. Phys.,
  304:875--878, 2011.},
}

\bib{TW_ASEP2}{article}{
      author={Tracy, C.},
      author={Widom, H.},
       title={{Asymptotics in ASEP with step initial condition}},
        date={2009},
     journal={Commun. Math. Phys.},
      volume={290},
       pages={129\ndash 154},
        note={arXiv:0807.1713 [math.PR]},
}

\bib{Vershik1986}{article}{
      author={Vershik, A.},
      author={Kerov, S.},
       title={The characters of the infinite symmetric group and probability
  properties of the {R}obinson-{S}hensted-{K}nuth algorithm},
        date={1986},
     journal={SIAM J. Alg. Disc. Math.},
      volume={7},
      number={1},
       pages={116\ndash 124},
}

\bib{warren1997branching}{incollection}{
      author={Warren, J.},
       title={{Branching processes, the Ray-Knight theorem, and sticky Brownian
  motion}},
        date={1997},
   booktitle={S{\'e}minaire de probabilit{\'e}s xxxi},
   publisher={Springer},
       pages={1\ndash 15},
}

\bib{woelki2005steady}{thesis}{
      author={Woelki, M.},
       title={Steady states of discrete mass transport models},
        type={Ph.D. Thesis},
        date={2005},
}

\end{biblist}
\end{bibdiv}

\medskip

\textsc{L. Petrov, University of Virginia, Charlottesville, VA 22904, USA}

E-mail: \texttt{lenia.petrov@gmail.com}

\medskip

\textsc{A. Saenz, Oregon State University, Corvallis, OR 97331, USA}

E-mail: \texttt{saenzroa@oregonstate.edu}

\end{document}